\newcommand{\ds}{\displaystyle}
\newcommand{\tensor}{\otimes}
\newcommand{\leftsub}[2]{{\vphantom{#2}}_{#1}{#2}} 
\newcommand{\xycirc}[2]{\leftsub{#1}{\circ}_{#2}}
\newcommand{\op}{\mathcal}
\newcommand{\cdc}{,\dots,}
\newcommand{\M}{\mathbb{M}}
\newcommand{\FT}{\mathsf{ft}}
\newcommand{\FTGK}{\mathsf{FT}}
\newcommand{\uex}{\text{\mbox{!`}}}
\numberwithin{equation}{section}
\newtheorem{theorem}{Theorem}[section]
\theoremstyle{plain}
\newtheorem{assumption}[theorem]{Assumption}
\newtheorem*{nonumbertheoremA}{Theorem A}
\newtheorem*{nonumbertheoremB}{Theorem B}
\newtheorem*{nonumbertheoremC}{Theorem C}
\newtheorem{corollary}[theorem]{Corollary}
\newtheorem{lemma}[theorem]{Lemma}
\newtheorem{proposition}[theorem]{Proposition}
\theoremstyle{definition}
\newtheorem{definition}[theorem]{Definition}
\newtheorem{example}[theorem]{Example}
\newtheorem{remark}[theorem]{Remark}
\begin{document}

\title{Massey products for graph homology} 
\author{Benjamin C. Ward}
\email{benward@bgsu.edu}


\begin{abstract} This paper shows that the operad encoding modular operads is Koszul.  Using this result we construct higher composition operations on (hairy) graph homology which characterize its rational homotopy type.  \end{abstract}
\maketitle

\section{Introduction.}

Graph complexes are combinatorial objects which can be used to compute invariants of topological spaces. They were introduced by Kontsevich \cite{KontSymp},\cite{Kont} building on earlier combinatorial models for moduli spaces \cite{Penner} and incorporating influence from Feynman diagrams in quantum field theory.  Depending on the particular combinatorics of the graphs involved, graph complexes may be used to calculate cohomology of moduli spaces of Riemann surfaces \cite{KontSymp},\cite{GeK2}, moduli spaces of tropical curves \cite{CGP}, or embedding spaces of manifolds \cite{ALT},\cite{AT}.  Further variants of graph complexes may be used to study the Grothendieck-Teichmuller Lie algebra \cite{WTw} or automorphisms of free groups \cite{KontSymp},\cite{CVog}.

To encode the construction of graph complexes, Getzler and Kapranov introduced in \cite{GeK2} the notions of modular operads and the Feynman transform, which we denote by $\FTGK$.  With this notion, graph homology with labels in a cyclic operad $\op{O}$ may be defined as:
\begin{equation*}
\op{G_O}:= H_\ast(\FTGK(\iota_!\op{O}))
\end{equation*}
where $\iota_!\op{O}$ is the extension of $\op{O}$ to higher genus by $0$.  The functor $\FTGK$ is an involution up to homotopy and preserves quasi-isomorphisms.  This means that {\it if} $\FTGK(\iota_!\op{O})$ were equivalent to its own homology we could conclude $\FTGK(\op{G_O})\sim \iota_!\op{O}$.  This would be a powerful computational tool because it would imply, among other things, that the graph homology was generated in genus $0$.  Alas this formality property is rarely present, and the act of taking homology loses information about the homotopy type of the Feynman transform.

In this paper we give a way to systematically account for this loss of information by constructing an analog of Massey products for modular operads.  Taking a suitable generalization of the Feynman transform which incorporates these higher operations, which we denote $\FT$, we do indeed find 
\begin{equation*}
\FT(\op{G_O})\sim \iota_!\op{O}.
\end{equation*}

In constructing these higher operations, our model to follow comes from rational homotopy theory. 
The cochain complex of a simply connected topological space need not be equivalent to its cohomology in the category of commutative algebras, but there are higher cohomology operations which fit together to form a homotopy commutative algebra such that the cochains and cohomology are equivalent in this larger category (see \cite{Kad}, after \cite{Sullivan}).
This fundamental example motivated the successful homotopy transfer theory for algebras over operads \cite{LV,Berglund} in which the property of Koszulity plays a fundamental role in furnishing small cofibrant resolutions encoding homotopy invariant structure. In this paper we endeavor to resolve the composition in modular operads, and to this end our first needed result is:

\begin{nonumbertheoremA}
	The operad encoding modular operads is Koszul.
\end{nonumbertheoremA}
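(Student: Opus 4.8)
The plan is to give $\M$ an explicit quadratic presentation, to exhibit it as built from two manifestly Koszul suboperads by a distributive law, and then to invoke the standard criterion that a distributive law between Koszul operads which induces an isomorphism on the underlying $\mathbb{S}$-modules produces a Koszul operad.  The colours of $\M$ are the isomorphism classes of stable corollas, and $\M$ is generated by its weight-one operations, the connected graphs with a single edge.  There are exactly two families of these: the \emph{external} gluings, which join a leg of one corolla to a leg of a second corolla --- a binary operation of $\M$, additive in the genus --- and the \emph{self-gluings}, which join two legs of a single corolla --- a unary operation of $\M$, raising the genus by one.  These generate because every connected stable graph is obtained from its vertex-corollas by inserting edges one at a time (and, using stability, the intermediate graphs remain stable).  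I would then check that the presentation is quadratic: for a fixed two-edge graph the two insertion orders agree, and for a fixed target graph the poset of partial edge-insertions is connected through such ``squares'', so every relation among the one-edge operations is a consequence of the two-edge ones.  The space of relations splits into three families --- purely external, purely self, and mixed.

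Accordingly I would isolate two quadratic suboperads.  Let $\MT\subseteq\M$ be generated by the external gluings; its operations are the trees of corollas glued together, so up to the additive genus grading --- a harmless extra grading --- $\MT$ is (a graded enhancement of) the operad encoding cyclic operads, whose Koszulity either is known or follows by showing that the relevant posets of partial contractions of trees are Cohen--Macaulay (a shellability argument in the spirit of Fresse).  Let $\op{L}\subseteq\M$ be generated by the self-gluings; it is concentrated in unary operations, and its only relations state that self-gluings along disjoint pairs of legs commute, so it has a quadratic PBW basis --- the sorted matchings of legs --- and is Koszul.  I would then produce the rewriting rule ``self-glue, then external-glue $\Rightarrow$ external-glue, then self-glue'', i.e.\ a map $\lambda\colon\MT\circ_{(1)}\op{L}\to\op{L}\circ_{(1)}\MT$: the two legs being self-glued lie on a single input corolla, hence are not consumed by any external gluing, so the self-gluing may be postponed to the very end.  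Verifying the coherence axiom makes $\lambda$ a distributive law and identifies $\M$ with $\op{L}\vee_{\lambda}\MT$ (its mixed relations being those imposed by $\lambda$), and the assertion that the canonical map $\op{L}\circ\MT\to\M$ is an isomorphism of $\mathbb{S}$-modules is precisely the statement that every connected graph has a unique normal form as a spanning tree of external gluings followed by the self-gluings that realise the remaining edges.  The distributive-law criterion then gives that $\M$ is Koszul, with Koszul dual cooperad $\M^{\uex}$ assembled from those of $\MT$ and $\op{L}$; one checks this matches the expected identification $\M^{\uex}\cong\Mdual$, which is exactly what links $\M$ to the Feynman transform $\FTGK$.

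The step I expect to be the main obstacle is carrying the orientation signs through this argument.  The Feynman transform --- hence $\M^{\uex}$ --- is twisted by the determinant of the set of edges, so the cogenerators and co-relations of $\M^{\uex}$ acquire homological degrees, and one must check that $\lambda$ is a distributive law \emph{with signs}: that commuting a self-gluing past an external gluing, and that the $\Z/2$ interchanging the two new half-edges produced by a self-gluing, introduce no sign discrepancy.  Further points, more routine but not automatic, are to set up quadratic Koszul duality for groupoid-coloured operads with a determinant twist so that the classical distributive-law theorem applies verbatim, and to establish the normal form, which requires that any two spanning trees of a fixed graph be connected through moves internal to $\MT$, so that no relations beyond those of the two factors intervene.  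As an alternative route --- and a useful sanity check --- one can avoid the distributive law: Koszulity of $\M$ is equivalent to the acyclicity of its Koszul complex, a complex of corolla-decorated graphs whose differential sums edge contractions, and I would prove that acyclicity by an explicit contracting homotopy which splits off and contracts a canonically chosen spanning-tree edge --- the same spanning-tree mechanism that underlies the distributive law.
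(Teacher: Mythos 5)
Your distributive-law strategy breaks down at the step you state as the key identification: that the canonical map $\op{L}\circ\MT\to\M$ is an isomorphism of colored sequences because ``every connected graph has a unique normal form as a spanning tree of external gluings followed by the self-gluings that realise the remaining edges.'' Spanning trees are not unique, and the fiber of $\op{L}\circ\MT\to\M$ over a modular graph $\gamma$ has dimension equal to the number of spanning trees of $\gamma$, so the map is surjective but far from injective. The smallest counterexample is the graph with two vertices joined by two parallel edges: the two normal forms ``glue one edge externally, then self-glue the other'' are distinct elements of $\op{L}\circ_{(1)}\MT$ but are identified in $\M$ --- this is exactly the relation $(\xi_{1,n+m-2})(\xycirc{1}{1})=(\xi_{1,n+m-2})(\xycirc{n}{2})$ appearing in the paper's presentation of $\M$ (left of Figure \ref{fig:rel1}). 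Consequently the mixed quadratic relations of $\M$ are not merely the graph of a rewriting morphism $\MT\circ_{(1)}\op{L}\to\op{L}\circ_{(1)}\MT$: there are genuine relations \emph{among the normal forms themselves}, which is precisely what the distributive-law criterion (in any colored or twisted variant) forbids. So the hypothesis of the theorem you want to invoke fails, and no amount of care with signs or with the groupoid-colored setup repairs it; the obstruction is combinatorial, not homological.

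Your closing ``alternative route'' is much closer to what actually works, and to what the paper does: rather than a distributive law, the paper proves Koszulity directly by showing the map $\Omega(\M^\ast)\to\M_\mathfrak{K}$ is a quasi-isomorphism. The cobar complex splits over modular graphs $\gamma$, the fiber $C_\ast(\gamma)$ is a complex of (mod-$2$ edge-ordered) nestings of $\gamma$ --- equivalently, cellular chains on a graph associahedron of the line graph of $\gamma$ --- and one contracts it onto $\Sigma^{-|\gamma|}k$ by induction on edges via an explicit deformation retract built from a chosen edge $e$ (not a chosen spanning tree, which again would not be canonical). If you pursue your sketch, be aware that this fiberwise contraction, with its sign and ordering bookkeeping, is the entire content of the proof rather than a sanity check; the case analysis around disconnection when $e$ is removed from a nest is where the real work lies.
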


The notion of Koszulity used in Theorem A requires a modest generalization of the state-of-the-art.  In order to encode modular operads as algebras over a quadratic operad, we use a notion of colored operads where the colors form not just a set but a groupoid.  Generalizing Koszul duality and homotopy transfer theory to this context is a necessary but straight-forward exercise and is carried out in Section $\ref{GCOSec}$.  Beyond just modular operads, this section illustrates how the theory of Koszul duality for groupoid colored operads is an effective tool to establish homotopy transfer theory for generalizations of operads admitting bar-cobar duality, see Remark $\ref{cubicalrmk}$ and Corollary $\ref{cycopscor}$.

With this generalization of Koszul duality in hand, we first define a quadratic groupoid colored operad $\M$ whose algebras are modular operads, we then identify its quadratic dual $\M^!$ to be a suspension of the operad encoding  $\mathfrak{K}$-twisted modular operads, denoted $\M_{\mathfrak{K}}$, and we prove Theorem A by showing that the natural map 
\begin{equation*}
\Omega(\M^\ast)\stackrel{\sim}\longrightarrow \M_\mathfrak{K}
\end{equation*}
is a levelwise quasi-isomorphism (here $\Omega$ denotes the cobar construction).  Our proof of this fact is a novel and direct analysis which emphasizes the combinatorics of the fiber.  We define a weak modular operad to be an algebra over $\Omega(\M_\mathfrak{K}^\ast)$.


Returning to the example of graph homology, we apply our results in the following way.  First we use homotopy transfer theory to give $\op{G_O}$ the structure of a weak modular operad such that $\op{G_O}\sim \FTGK(\iota_!(\op{O}))$ (where the right hand side has its (strong) modular operad structure).  This endows graph homology with operations of the form:
\begin{equation*}
 \text{mp}_\gamma\colon \left(\ds\bigotimes_{i=1}^r \op{G_O}(v_i)\right)
\longrightarrow \op{G_O}(v_0)
\end{equation*}
for each (modular) graph $\gamma$ with vertices of type (color) $(v_1\cdc v_r;v_0)$.

We call these transferred operations ``Massey products''.  This terminology is derived from an analogy between weak modular operads and $A_\infty$-algebras in which the transferred operations, due to Kadeishvili \cite{Kad1}, generalize classical Massey products in algebraic topology, see \cite[Section 9.4.5]{LV} and the references there-in.  The use of operadic versions of Massey products to establish non-formality results finds a forerunner in \cite{Liv15}.  In our case, the fact that graph homology is defined via an extension by zero allows us to prove the following structural result:

\begin{nonumbertheoremB} Let $\op{O}$ be a Koszul cyclic operad with Koszul dual $\op{O}^!$.  Every graph homology class in $\op{G_O}$ is either a generator of $\op{O}^!$ or is in the image of some Massey product.
\end{nonumbertheoremB}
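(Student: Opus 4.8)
The plan is to treat separately the genus-zero part of $\op{G_O}$, which is controlled directly by Koszulity of $\op{O}$, and the positive-genus part, where the extension by zero enters through the equivalence $\FT(\op{G_O})\sim\iota_!\op{O}$. In genus zero the Feynman transform is the cyclic bar construction: a connected genus-zero graph is a tree, and since $\iota_!\op{O}$ agrees with $\op{O}$ in genus zero, $\FTGK(\iota_!\op{O})$ in genus zero is the cyclic bar complex $B^c\op{O}$. Koszulity of $\op{O}$ says exactly that $B^c\op{O}$ is formal, with homology the Koszul dual cyclic (co)operad of $\op{O}$, which up to operadic suspension is $\op{O}^!$. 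Hence the weak modular operad structure transferred onto $\op{G_O}$ restricts in genus zero, on the tree-shaped operations $\text{mp}_\gamma$, to the strict cyclic operad structure of $\op{O}^!$. As $\op{O}^!$ is quadratic it is generated in weight one, i.e. by the elements in its generating arity; so every class in the genus-zero part of $\op{G_O}$ is either a generator of $\op{O}^!$ or a nontrivial composite of generators, hence in the image of $\text{mp}_\gamma$ for some tree $\gamma$ with at least one edge. (Conversely a weight count shows no generator is hit by a nontrivial $\text{mp}_\gamma$ once degenerate compositions with the cyclic unit are discarded, so the dichotomy is genuine.)

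For positive genus I would feed $\op{G_O}$ into the enhanced Feynman transform. Since $\op{G_O}$ carries no internal differential, the differential of $\FT(\op{G_O})$ is assembled entirely from the Massey products: on a graph $\Gamma$ decorated by $\op{G_O}$ it is the sum over connected subgraphs $\Gamma'\subseteq\Gamma$ of the collapse of $\Gamma'$ via $\text{mp}_{\Gamma'}$. Fix $g\ge 1$ and a class $x$ in $\op{G_O}$ of genus $g$: its decoration $\tilde x$ of the corresponding corolla has no edges, so it is a cycle. By the main equivalence $\FT(\op{G_O})\sim\iota_!\op{O}$, which is a levelwise quasi-isomorphism, the homology of $\FT(\op{G_O})$ vanishes in genus $g\ge 1$, so $\tilde x=d\eta$ with $\eta=\sum_\Gamma\eta_\Gamma$ supported on connected graphs with at least one edge. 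Projecting $d\eta$ onto the corolla summand kills everything except the total collapses of the $\Gamma$, yielding $x=\sum_\Gamma\pm\,\text{mp}_\Gamma(\eta_\Gamma)$, which places $x$ in the image of the Massey product operations. Passing to the reduced transform one may moreover assume that every $\Gamma$ appearing in $\eta$ has all of its vertices of strictly smaller genus, or of equal genus and strictly fewer legs; iterating downward and grafting the smaller graphs produced for the $\eta_\Gamma$ into $\Gamma$ by means of the coherence relations among the $\text{mp}_\gamma$, one concludes that $\op{G_O}$ is, as a weak modular operad, generated by the generators of $\op{O}^!$, which is the assertion.

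The step I expect to be the main obstacle is the bookkeeping in the positive-genus argument. One must unwind the definition of the enhanced Feynman transform of a weak modular operad, from the section on $\M_\mathfrak{K}$, to confirm that its differential is literally assembled from the $\text{mp}_\gamma$ and that the corolla component of collapsing a graph $\Gamma$ equals $\text{mp}_\Gamma$ up to the determinant sign, and one must track the orientation and automorphism data throughout. Turning the raw conclusion that $x$ is a sum of Massey products applied to simpler classes into the clean dichotomy of the statement is exactly where the downward induction and the coherence relations must be made precise. By comparison the genus-zero step is a repackaging of the bar-construction side of Koszul duality, and the only care needed there is with the operadic suspensions, so that the phrase ``generator of $\op{O}^!$'' names the correct subspace of the genus-zero part of $\op{G_O}$.
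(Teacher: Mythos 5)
Your overall strategy is the same as the paper's: handle genus zero via Koszulity of $\op{O}$ (so that $\op{G_O}(0,-)\cong\op{O}^!$ and every non-generator class is a composite of generators, hence in the image of a tree-shaped Massey product), and in positive genus play the acyclicity of the weak Feynman transform of $\op{G_O}$ against a corolla-supported class. However, in the positive-genus step you have the differential of $\FT(\op{G_O})$ going the wrong way. As defined in Section $\ref{secWFT}$, $\FT(\op{G_O})$ is built from graphs decorated by the linear dual of $\op{G_O}$, and its differential is $\sum_\gamma(\Sigma\mu_\gamma)^\ast$ extended to the free $\mathfrak{K}$-twisted modular operad: it \emph{expands} a vertex into a graph and strictly increases the number of edges. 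Hence a corolla-supported element $\eta^\ast$ is never a boundary, and the crux of the paper's proof (Theorem $\ref{vanishing}$) is that it therefore cannot be a cycle, since $H_\ast(\FT(\op{G_O}))(g,n)\cong\iota_!\op{O}(g,n)=0$ for $g\geq 1$; so $d(\eta^\ast)\neq 0$, which dually says some $\mu_\gamma$ hits $\eta$. Your argument instead asserts that the corolla class ``has no edges, so it is a cycle'' and then writes it as a boundary $d\eta$ — against the paper's $\FT$ this is false on both counts, and the complex you actually describe (graphs decorated by $\op{G_O}$ itself, with differential collapsing connected subgraphs via $\mathrm{mp}_{\Gamma'}$) is not $\FT(\op{G_O})$ but its predual, the bar construction $\mathsf{B}(\op{G_O})$.

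Once that is recognized, your argument is correct and is precisely the linear dual of the paper's: in the finite-dimensional setting the paper imposes, $\FT(\op{G_O})\cong\mathsf{B}(\op{G_O})^\ast$, so $H_\ast(\mathsf{B}(\op{G_O}))$ also vanishes in genus $g\geq 1$; in $\mathsf{B}(\op{G_O})$ the corolla class genuinely is a cycle, hence a boundary, and projecting $d\eta$ onto the corolla summand exhibits the class as a sum of total collapses $\pm\,\mathrm{mp}_\Gamma(\eta_\Gamma)$. So the defect is a dualization/direction slip rather than a missing idea, but as written the key step does not parse with the paper's definition of $\FT$, and you should either dualize your decorations or phrase the argument in terms of $\mathsf{B}$ and its quasi-isomorphism invariance (Lemma $\ref{barqi}$). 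Two smaller remarks: your closing claim that $\op{G_O}$ is generated as a weak modular operad by the generators of $\op{O}^!$, obtained by a downward iteration using unspecified coherence relations, is stronger than Theorem B and is not substantiated as written; it is also unnecessary, since the stated dichotomy already follows from the genus-zero identification with $\op{O}^!$ together with the positive-genus boundary argument, which is exactly how the paper concludes.
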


To prove this result, as well as to the organize this family of new operations, we generalize the Feynman transform to this setting.  More precisely we define the weak Feynman transform as pairs of functors
\begin{center}
	$	\FT\colon \left\{ \text{(weak) modular operads} \right\} \leftrightarrows \left\{ \text{(weak) }\mathfrak{K}\text{-modular operads} \right\}\colon \FT$
\end{center}
such that $\FT^2\sim id$ and that $\FT$ preserves $\infty$-quasi-isomorphisms.  In particular, since $\FT(\op{G_O})\sim \iota_!\op{O}$, every graph homology class in genus $g\geq 1$ which is not a boundary is also not a cycle.

 Since graph homology is not just a weak modular operad, but a (strong) modular operad as well, the differential in its weak Feynman transform is a sum of the classical Feynman transform differential with terms corresponding to Massey products of $2$ edges or more.  Filtering $\FT(\op{G_O})$ by internal degree has the effect of isolating the (classical) Feynman transform differential, thus:

\begin{nonumbertheoremC}
	There is a spectral sequence whose first page is the homology of $\FTGK(\op{G_O})$ and which converges to $\iota_!\op{O}$.  The higher differentials correspond to sums of linear duals of Massey products.
\end{nonumbertheoremC}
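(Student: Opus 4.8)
The plan is to realize the spectral sequence as the one carried by the ``internal degree'' filtration of the weak Feynman transform $\FT(\op{G_O})$, and then to identify its bottom pages with the classical objects. So the first task is to unwind the construction of $\FT$ in order to record the explicit shape of the differential on $\FT(\op{G_O})$. In each color (genus, legs) $v$, the underlying graded vector space of $\FT(\op{G_O})(v)$ is a colimit over iso-classes of connected graphs $\gamma$ of type $v$ of the spaces $\bigotimes_{u\in\mathrm{Vert}(\gamma)}\op{G_O}(u)$ twisted by the orientation line of the edge set, and --- since weak $\mathfrak{K}$-modular operads are algebras over the cobar resolution furnished by Theorem A --- the differential decomposes as $d=\sum_{k\ge 0}d_k$, where $d_k$ is the sum, over connected subgraphs $\gamma_0\subseteq\gamma$ with exactly $k+1$ internal edges, of the operation that contracts $\gamma_0$ to a vertex and re-decorates by (the linear dual of) the Massey product $\mathrm{mp}_{\gamma_0}$. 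In particular $d_0$ uses only single-edge contractions, and because the one-edge part of a transferred weak structure is the induced strict structure, $d_0$ is precisely the differential of the classical Feynman transform $\FTGK(\op{G_O})$ attached to the strict structure on graph homology.

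Next comes a degree count. The Massey product indexed by a graph with $e$ internal edges has homological degree $e-1$, so, together with the degree shift those $e$ edges carry in $\FT$, one finds that $d_k$ lowers total homological degree by $1$ (as a differential must) while raising the \emph{internal degree} --- the grading inherited from $\op{G_O}$ --- by exactly $k$. Consequently the decreasing filtration $F^p\FT(\op{G_O})$ spanned by the terms of internal degree $\ge p$ is respected by $d$, with $d_0$ its associated-graded part. The associated spectral sequence thus has $E^0=\FTGK(\op{G_O})$ equipped with the Feynman transform differential $d_0$, whence the first page is $E^1=H_\ast(\FTGK(\op{G_O}))$; and for $r\ge 1$ the page-$r$ differential is induced by $d_r$, i.e.\ is the sum of the linear duals of the Massey products indexed by subgraphs with exactly $r+1$ internal edges --- the two assertions of the theorem.

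It remains to address convergence and the abutment. Under the standing finiteness hypotheses on $\op{O}$ only finitely many graphs contribute to each $\FT(\op{G_O})(v)$, and each contributes a degreewise-bounded finite-dimensional summand; hence the internal-degree filtration is bounded in every color and the spectral sequence converges to $H_\ast(\FT(\op{G_O}))$. That this abutment is $\iota_!\op{O}$ follows because $\FT$ preserves $\infty$-quasi-isomorphisms and $\op{G_O}\sim\FTGK(\iota_!\op{O})$, so $\FT(\op{G_O})\sim\FTGK^2(\iota_!\op{O})\sim\iota_!\op{O}$ (taking $\op{O}$ with trivial differential; otherwise the abutment is $\iota_! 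H_\ast(\op{O})$).

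The step carrying the genuine content is the first one: establishing the precise combinatorial form of the weak Feynman transform differential --- that it splits as $\sum_k d_k$ with $d_k$ governed by the $(k+1)$-edge Massey products, and in particular that $d_0$ reproduces $\FTGK$ of the strict structure. This rests on the explicit description of the Koszul resolution coming from Theorem A and on the homotopy-transfer compatibility between the strong and the weak $\mathfrak{K}$-modular structures on $\op{G_O}$ (the quadratic part of a transferred structure being the induced structure on homology). Once that is in hand, the filtration bookkeeping and the convergence argument are routine.
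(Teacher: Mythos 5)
Your proposal is correct and follows essentially the same route as the paper: it is the internal-degree filtration of $\FT(\op{G_O})$ (with its transferred weak structure), whose associated graded isolates the one-edge, i.e.\ classical, Feynman transform differential, with higher pages governed by duals of $(r+1)$-edge Massey products, and with abutment identified via the facts that $\FT$ preserves $\infty$-quasi-isomorphisms and $\FT^2\sim id$. The only cosmetic difference is bookkeeping of the bigrading/parity conventions, which the paper handles in Lemma~$\ref{sslem}$ and the remark following it.
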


This is the analog of the Milnor-Moore spectral sequence from rational homotopy theory \cite{RHT}. The computational facility of this and a related spectral sequence considered in subsection $\ref{sssec}$ is that it allows us to, roughly speaking, pair classes in $H_\ast(\FTGK(\iota_!\op{O}))$ with classes in $H_\ast(\FTGK(\iota_\ast(\op{O}^!))$ for $\op{O}$ a Koszul cyclic operad, and $\iota_\ast$ denoting the modular envelope.  

For example, when $\op{O}$ is the Lie operad we may compare Lie graph homology, denoted $H_\ast(\Gamma_{g,n})$ after \cite{CHKV}, with a graph complex computing the top weight homology of the moduli space of punctured Riemann surfaces, denoted $H_\ast(\Delta_{g,n})$ after \cite{CGP}, via:

\begin{corollary}\label{sscor}  Fix $(g,n)$, a pair of natural numbers with $g\geq 1$ and $2g+n\geq 3$.
	\begin{enumerate}
		\item There is an upper half plane spectral sequence whose 0-page is 
	$\FTGK(H_\ast(\Gamma_{\bullet,\bullet}))(g,n)$, whose bottom row computes $H_\ast(\Delta_{g,n})$ and which converges to $0\cong \iota_!(Lie)(g,n)$.
		\item  There is an upper half plane spectral sequence whose 0-page coincides with $\FTGK(H_\ast(\Delta_{\bullet,\bullet}))(g,n)$ as graded vector spaces, whose bottom row computes $H_\ast(\Gamma_{g,n})$ and which converges to $k\cong \iota_\ast(Com)(g,n)$.
	\end{enumerate}	
\end{corollary}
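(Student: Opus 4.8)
The plan is to specialize Theorem C (for part~(1)) and the related construction of \S\ref{sssec} (for part~(2)) to the cyclic operad $\op{O}=Lie$, using three facts: (a) $Lie$ is a Koszul cyclic operad whose cyclic Koszul dual is a suspension of $Com$, and dually $Com^{!}$ is a suspension of $Lie$, which is classical (cf.\ \cite{GeK2}); (b) the modular operad $\op{G}_{Lie}=H_\ast(\FTGK(\iota_!Lie))$ is the Lie graph homology $H_\ast(\Gamma_{\bullet,\bullet})$ of \cite{CHKV}; and (c) the modular operad $\op{G}_{Com}=H_\ast(\FTGK(\iota_!Com))$ is, up to a degree shift, the top weight homology $H_\ast(\Delta_{\bullet,\bullet})$ of \cite{CGP}, computed equally well by the weighted graph complex $\FTGK(\iota_\ast Com)$.

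For part~(1) I would apply Theorem C directly with $\op{O}=Lie$. This produces a spectral sequence whose $0$-page is $\FT(\op{G}_{Lie})(g,n)$ with the internal-degree filtration, whose associated graded carries only the classical Feynman differential, so that the $0$-page underlying complex is $\FTGK(H_\ast(\Gamma_{\bullet,\bullet}))(g,n)$ and $E_1 = H_\ast(\FTGK(\op{G}_{Lie}))(g,n)$; and which converges to $\iota_!Lie(g,n)$. Since $g\geq 1$, extension by zero makes the abutment $0$ (the hypothesis $2g+n\geq 3$ only ensuring stability). For the bottom row: the extreme piece of the filtration is spanned by the graphs all of whose vertices have genus $0$ — each such vertex labeled by $\op{G}_{Lie}(0,m)$, which is concentrated in a single degree exactly because $Lie$ is cyclic Koszul — and the (expansion-type) Feynman differential preserves this locus, splitting genus-$0$ vertices into genus-$0$ vertices. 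Hence the bottom row is $\FTGK\big(\iota_!(\op{G}_{Lie}|_{g=0})\big)$; by fact~(a) the genus-$0$ truncation $\op{G}_{Lie}|_{g=0}$ is a suspension of the cyclic operad $Com$, so by fact~(c) its homology is $\op{G}_{Com}(g,n)\cong H_\ast(\Delta_{g,n})$. Finally the spectral sequence is upper half plane because, for fixed $(g,n)$, the filtration is bounded and $\FT(\op{G}_{Lie})(g,n)$ is a bounded complex (finitely many stable graphs, each graph homology group finite-dimensional in each degree).

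For part~(2) I would run the construction of \S\ref{sssec} starting instead from the modular operad $\iota_\ast Com$, the modular envelope of the Koszul dual of $Lie$. This yields a spectral sequence abutting to $\iota_\ast Com(g,n)$, which is $k$ for every stable $(g,n)$ by the standard computation of the modular envelope of the commutative operad (here again stability is what is used). Its $0$-page is $\FTGK$ of $\mathcal{G}:=H_\ast(\FTGK(\iota_\ast Com))$ with its transferred weak structure; by fact~(c), $\mathcal{G}(g,n)\cong H_\ast(\Delta_{g,n})$ as graded vector spaces, so the $0$-page coincides with $\FTGK(H_\ast(\Delta_{\bullet,\bullet}))(g,n)$ as graded vector spaces — though its differential now incorporates the Massey products of $\mathcal{G}$, not merely the classical Feynman differential. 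Exactly as in part~(1), the bottom row is the genus-$0$-vertex locus $\FTGK\big(\iota_!(\mathcal{G}|_{g=0})\big)$, and $\mathcal{G}|_{g=0}=\op{G}_{Com}|_{g=0}$ is a suspension of the cyclic operad $Lie$ by Koszul duality $Com^{!}=Lie$; hence the bottom row is (a suspension of) the Lie graph complex and computes $\op{G}_{Lie}(g,n)\cong H_\ast(\Gamma_{g,n})$ by fact~(b). The upper-half-plane assertion follows as before.

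The step I expect to be the main obstacle is the precise identification of the bottom row in each case: establishing that the extreme associated-graded row of the internal-degree filtration of the weak Feynman transform is exactly the classical Feynman transform of the genus-$0$ truncation of graph homology — a suspension of the Koszul-dual cyclic operad extended by zero, via cyclic Koszulity — and then matching this combinatorial complex with the graph complexes of \cite{CGP} and \cite{CHKV} on the nose, carrying along the degree shifts, the suspension and $\mathfrak{K}$-twist bookkeeping, and the orientation and sign conventions (``odd'' versus ``even'', treatment of loop edges). A secondary, part~(2)-specific point is the comparison $H_\ast(\FTGK(\iota_\ast Com))\cong H_\ast(\FTGK(\iota_!Com))$ between the weighted and unweighted commutative graph complexes, which I would take from \cite{CGP}; with these in hand the rest is a direct unwinding of Theorem C and \S\ref{sssec}.
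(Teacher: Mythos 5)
Your overall architecture is the paper's: part (2) is proved exactly as you say, by running the genus--label filtration of Subsection \ref{ss2} on $H_\ast(\Delta)$ with its transferred weak structure, identifying the bottom row via Corollary \ref{cycopss} together with $\iota^\ast(H_\ast(\FTGK(\iota_\ast Com)))\cong s\Sigma^{-1}Lie$ (Koszulity of $Com$) and \cite{CHKV}, and the abutment via $\FT\FTGK(\iota_\ast Com)\sim \iota_\ast Com$. (The hypothesis of Corollary \ref{cycopss}, that the transferred genus-zero structure is strict, does need a word, but it holds for degree reasons because the genus-zero homology is concentrated on the Koszul diagonal.) The genuine problem is in part (1). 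You invoke Theorem C, i.e.\ the internal-degree filtration of Subsection \ref{ss1} applied to Lie graph homology, but you then identify its bottom row with the locus of graphs all of whose vertices have genus label $0$. That locus is the bottom row of the \emph{other} filtration (the genus-label filtration), and it is not a row of the internal-degree filtration at all: a graph whose genus-zero vertices are labeled by $\op{G}_{Lie}(0,n_v)$, concentrated in degree $3-n_v$, has internal degree $\sum_v(n_v-3)$, which varies with the number of vertices for fixed $(g,n)$. It is a subcomplex of $\FTGK(\op{G}_{Lie})$, as you note, but being a subcomplex does not make it a row of the associated graded, so your identification of the bottom row of $E^1$ fails as stated.

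For the internal-degree filtration the correct bottom row is the locus where every vertex label has homological degree zero: graphs with \emph{arbitrary} stable genus labels, each vertex decorated by $H_0(\Gamma_{g_v,n_v})\cong k$. Since the degree-zero part of $H_\ast(\Gamma)$ is the modular operad $\iota_\ast Com$, this row is (the dual of) $\FTGK(\iota_\ast Com)(g,n)$, which computes $H_\ast(\Delta_{g,n})$ directly by \cite{CGP} --- no weighted-versus-unweighted comparison is needed, so your ``secondary point'' disappears. Two further consequences of this correction: first, the grading normalization is not mere bookkeeping --- one must use $H_\ast(\Gamma)=\op{G}_{s\Sigma^{-1}Lie}$ rather than $\op{G}_{Lie}$, since only in that normalization is ``all labels in $H_0$'' a single internal-degree row (with $\op{G}_{Lie}$ the $H_0$-locus spreads over several rows, because the regrading shift depends on $(g_v,n_v)$). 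Second, if you insist on the genus-zero-vertex bottom row, you are in effect running the genus-label filtration on $H_\ast(\Gamma)$; that does give a correct upper half plane spectral sequence converging to $0$, but its $0$-page agrees with $\FTGK(H_\ast(\Gamma_{\bullet,\bullet}))(g,n)$ only as graded vector spaces (its $d_0$ retains the tree-indexed Massey products), so it proves a statement of the shape of part (2) with the roles of $\Gamma$ and $\Delta$ swapped, not part (1) as stated, whose $0$-page is the complex $\FTGK(H_\ast(\Gamma_{\bullet,\bullet}))(g,n)$ with the classical Feynman differential.
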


In short, these spectral sequences tell us that every commutative graph homology class is witnessed by a graph labeled by Lie graph homology classes, and vice versa.
As an application we compute:

\begin{corollary}\label{g3cor} $H_{d}(\FTGK(\iota_\ast Com))(3,0) = \begin{cases}
	\mathbb{Q} & \text{ if } d=-6 \\
	0 & \text{ else }
	\end{cases}$
\end{corollary}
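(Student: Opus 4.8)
The plan is to reduce the computation to an explicit finite graph complex and then to invoke a known graph‑homology calculation. The starting point is the elementary observation that the modular envelope of the commutative cyclic operad is combinatorially trivial: $\iota_\ast Com(g,n)=\mathbb{Q}$, concentrated in degree $0$, for every stable pair $(g,n)$ --- since $Com$ is one‑dimensional in each arity with all compositions equal to the identity, any $Com$‑decorated, genus‑labelled graph built from genus‑$0$ vertices may be reduced, by contracting all of its non‑loop edges, to a single genus‑$0$ vertex carrying $b_1$ self‑loops and the $n$ legs. Granting this, $\FTGK(\iota_\ast Com)(3,0)$ becomes the completely explicit complex spanned by isomorphism classes of connected genus‑$3$ graphs with no legs and with stable vertices of arbitrary genus, each graph placed in degree $-|E|$ and oriented by an ordering of its edge set up to sign, with differential the signed sum over all ways of splitting a vertex (equivalently, $d$ raises the edge count), possibly creating a new self‑loop. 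The first real step is then to identify this complex, up to a fixed degree shift and up to the usual acyclic subcomplexes (bivalent vertices, parallel or self edges, disconnecting edges, and vertices of positive genus), with the $\mathbb{Q}$‑linear dual of the rational cellular chain complex of the symmetric $\Delta$‑complex $\Delta_{g,n}$ of stable tropical curves, whose differential is the alternating sum of edge contractions \cite{CGP}; the key point is that ``split a vertex'' in the Feynman transform is Poincar\'e dual to ``contract an edge'' in $\Delta_{g,n}$. This would give
\[
H_d\bigl(\FTGK(\iota_\ast Com)\bigr)(g,n)\;\cong\;\widetilde H_{-d-1}\bigl(\Delta_{g,n};\mathbb{Q}\bigr).
\]

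With this dictionary in hand the corollary follows from a computation already in the literature: for $(g,n)=(3,0)$ one has $\widetilde H_\ast(\Delta_3;\mathbb{Q})=\mathbb{Q}$, concentrated in degree $5$ and carried by the class of the complete graph $K_4$ (equivalently, the unique nonzero top‑weight class of $H^\ast(\mathcal{M}_3;\mathbb{Q})$; see \cite{CGP}). Since $K_4$ has six edges it lies in Feynman‑transform degree $-6$, and reading off $-d-1=5$ yields $H_d(\FTGK(\iota_\ast Com))(3,0)=\mathbb{Q}$ for $d=-6$ and $0$ otherwise. (That $K_4$ is a cycle, is not a boundary, and is the only nonzero generator in its degree can also be checked by hand --- any contraction of an edge of $K_4$ acquires an orientation‑reversing edge‑swap, and $K_4$ cannot be obtained by splitting a vertex --- so the genuine content is the vanishing in all other degrees.)

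I expect the technical core of the argument to be exactly the identification in the first paragraph: matching the Feynman‑transform differential with the dual of the contraction differential while keeping track of all signs (a careful chase of the determinant lines of edge sets), verifying that the graphs the Feynman transform records but the reduced complex of \cite{CGP} discards really do span acyclic subcomplexes --- this is where the main input of \cite{CGP} enters, now on the Feynman‑transform side --- and pinning down the global degree shift coming from the normalization of $\FTGK$. A second route, which avoids the $\Delta_{g,n}$ dictionary, is to run the spectral sequence of Corollary~\ref{sscor}(2) (or the companion sequence of Subsection~\ref{sssec}) in bidegree $(3,0)$: its bottom row computes the Lie graph homology $H_\ast(\Gamma_{3,0})$, which is rationally trivial since $\Gamma_{3,0}$ models $\mathrm{Out}(F_3)$, cf.\ \cite{CHKV}, while the remaining rows are controlled by the low‑genus groups $\widetilde H_\ast(\Delta_{g',n'})$ and $H_\ast(\Gamma_{g',n'})$ with $g'\leq 2$, which are known and whose contributions the higher differentials must cancel; since the sequence converges to $\iota_\ast Com(3,0)=\mathbb{Q}$, the $K_4$ class in degree $-6$ is the only class that can --- and therefore must --- survive. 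On this second route the obstacle is instead the bookkeeping: assembling the precise list of low‑genus inputs and checking that the $K_4$ class is a permanent cycle.
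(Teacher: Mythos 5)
Your main route is correct as a verification, but it is genuinely different from the paper's proof and in a sense runs the logic backwards. The identification of $\FTGK(\iota_\ast Com)(g,n)$ with the (shifted) linear dual of the cellular chain complex of $\Delta_{g,n}$ is indeed available -- the paper records exactly this, citing \cite{CGP} -- and $\widetilde H_\ast(\Delta_3;\mathbb{Q})\cong \mathbb{Q}$ concentrated in degree $5$ (the $K_4$ class) is in the literature, so quoting these two facts does establish the displayed formula, with $K_4$'s six edges giving $d=-6$. What this buys is brevity; what it gives up is the entire point of Corollary~\ref{g3cor}: the cited value of $\widetilde H_\ast(\Delta_3)$ rests on the computer calculation of \cite[Appendix A]{CGP} (equivalently the genus-$3$ graph complex computation), and the paper's Theorem~\ref{genus3} is advertised precisely as an independent, differential-free rederivation of that number. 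The paper's proof runs the internal-degree spectral sequence of Subsection~\ref{ss1} applied to Lie graph homology $H_\ast(\Gamma)$ with its transferred weak modular structure: there the unknown group $H_\ast(\FTGK(\iota_\ast Com))(3,0)$ appears as the $E^1$ bottom row, while every entry off the bottom row is spanned by graphs carrying a positive-degree class of some $H_\ast(\Gamma_{g',n'})$; the vanishing results of \cite{CHKV}, elementary stability and valence counting, and one representation-theoretic check (that $V_{2,1,1}$ has no invariants under the Klein four-group generated by $(12)$ and $(34)$, killing the one-vertex genus-$1$ two-loop graph) leave a single one-dimensional entry, in bidegree $(-5,-2)$, carried by the two-vertex three-edge graph with genus labels $1$ and $0$. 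Convergence to $0$ in genus $3$ then forces the bottom row to be one-dimensional, exactly in degree $-6$, with no differential ever computed.

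Your second route, as sketched, has a genuine gap. The spectral sequence of Corollary~\ref{sscor}(2) has zero page $\FTGK(H_\ast(\Delta_{\bullet,\bullet}))(3,0)$, so the group you are trying to determine (namely $H_\ast(\Delta_{3,0})$, equivalently $H_\ast(\FTGK(\iota_\ast Com))(3,0)$ up to dualization and shift) occurs among its \emph{inputs}, as the label of the one-vertex graph in the top row of the genus-label filtration -- it is not an abutment that ``survives''. Running that sequence is therefore either circular or requires exactly the cancellation analysis you defer as ``bookkeeping''; convergence to $k\cong\iota_\ast Com(3,0)$ by itself does not force the $K_4$ class to be the only survivor. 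The sequence that actually isolates the unknown in its bottom row, with all other entries controlled by known vanishing, is the Lie-side internal-degree sequence described above, and that is the one the paper uses.
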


This calculation agrees with the computer calculations done in \cite[Appendix A]{CGP}.
The novel feature of this calculation is that it requires no analysis of any differentials; it relies only on the representation theory of Lie graph homology and the elementary properties of the spectral sequences constructed above.  With significant additional work it may be possible to generalize this computation; we refer to Section $\ref{futuredirections}$ for this among a list of additional applications and questions stemming from the results of this paper.

\tableofcontents

\subsection*{Acknowledgement}  I would like to thank Dan Petersen for sharing with me his observation that non-formality of the modular operad $H_\ast(\Gamma_{g,n})$ may be seen as a consequence of the non-surjectivity of the assembly map.  I would also like to thank Ralph Kaufmann for the insight that using groupoid colors/Feynman categories allows one to encode modular operads via a homogeneous-quadratic presentation.  This paper has also benefited from helpful conversations with Alexander Berglund, Martin Markl and Bruno Vallette.  Finally I would like to thank several anonymous referees whose detailed comments have significantly improved this paper.

\section{Koszul duality for groupoid colored operads.}\label{GCOSec}
Our conceptual starting point is the work of Van der Laan \cite{Vdl} who resolves the colored operad encoding non-symmetric operads using Koszul duality.  This colored operad is homogeneous quadratic and its colors form a set.  Encoding modular operads as homogeneous quadratic, it is necessary to allow operads whose colors form a groupoid.  

Groupoid colored operads were introduced by Petersen \cite{Pet} and simultaneously developed from the perspective of symmetric monoidal categories in \cite{KW}.  It has been shown \cite{BKW} that groupoid colored operads may also be viewed as set colored operads in which the automorphisms in the groupoid are viewed as unary operations in the operad.  However, viewing our operads of interest as groupoid colored will be desirable for two reasons.  First the passage from groupoid colored operads to set colored operads does not preserve quadraticity.  By viewing our operads as groupoid colored they may be presented as (homogeneous) quadratic, allowing us to apply a direct generalization of the classical theory of Koszul duality.  Second, in characteristic zero, any chain complex with a finite group action may be viewed as an {\it equivariant} deformation retract of its homology (Lemma $\ref{eqlem}$).  This means that homotopy transfer theory for algebras over colored operads can be implemented without resolving automorphisms (resolving automorphisms is of interest in characteristic $p$, see \cite{DV}). This makes groupoid colored operads a natural starting point for studying homotopy transfer theory in characteristic zero.

Thus, the results of this section are a straight-forward but essential generalization of the state-of-the-art.  After introducing the notion of groupoid colored operads, we develop Koszul duality (Subsection $\ref{KDsec}$), bar and cobar constructions (Subsection $\ref{abcsec}$) and homotopy transfer theory (Subsection $\ref{httsec}$).  We assume familiarity with Koszul duality for set-colored operads and emphasize the novel features and arguments of the groupoid colored case.  In particular, we closely follow the book of Loday-Vallette \cite{LV} which gives the theory in the uncolored case.  The debt this section owes to that work should not be understated.

\subsection{Graphs, trees and colored trees.}\label{graphs}

\begin{definition}\label{graphdef} An {\bf abstract graph} is a 4-tuple $\Gamma=(V,F,a,i)$ consisting of a non-empty finite set $V$, a finite set $F$, a function $a\colon F\to V$ and an involution $i\colon F\to F$.   A {\bf subgraph} is a pair of subsets of $V$ and $F$ which are closed under $a$ and $i$. 
\end{definition}

This definition will be accompanied by the following terminology:

\begin{itemize}
\item The elements of $V$ are called the {\bf vertices} of $\Gamma$.  
\item The elements of $F$ are called the {\bf flags} of $\Gamma$.  They may also be referred to as {\bf half-edges}.  
\item The orbits of $i$ of order 2 are called the {\bf edges} of $\Gamma$. 
\item  The fixed points of $i$ are called the {\bf legs} of $\Gamma$.

\item If $a(f)=w$ we say both that $f$ is {\bf adjacent} to $w$ and that $w$ is {\bf adjacent} to $f$.  The set $a^{-1}(w)$ of flags adjacent to $w$ will also be denoted fl($w$).

\item The integer $|a^{-1}(w)|$ is called the {\bf valence} of $w$.  It is denoted $||w||$.
\item A graph is called {\bf reduced} if $||w||\geq 2 \ \forall \ w\in V$.
\end{itemize}

It is a standard construction to associate a 1-dimensional CW complex to an abstract graph.  In the presence of legs, this construction adjoins an extra $0$-cell to the end of each leg.
 We say a graph is {\bf connected} (resp.\ {\bf simply connected}) if this CW complex is connected (resp.\ simply connected).

\begin{definition}  A simply connected graph will be called a \textbf{tree}. 
\end{definition}

This definition will be accompanied by the following terminology:

\begin{itemize}
	\item A {\bf corolla} is a tree with no edges.
	\item    A {\bf rooted tree} is a tree along with a distinguished leg, called the {\bf root}. 
\item The non-distinguished legs of a rooted tree are called {\bf leaves}.  

\item Given a pointed set $X$ with basepoint $\ast$, an { \bf $X$-labeled rooted tree} is a rooted tree $T$ along with a bijection between $X$ and the set of legs of $T$ which takes $\ast$ to the root. 

\item An $n${\bf-tree} is a $\{0,1\cdc n\}$-labeled rooted tree, where $0$ is taken to be the base point of the set $\{0,1\cdc n\}$.

\end{itemize}
In particular an $n$-tree has $n$ leaves labeled by $\{1\cdc n\}$, while we regard the base point $0$ as labeling the root.  Every rooted tree $T$ may naturally be considered as a $\text{Leg}(T)$-tree.

In the CW complex corresponding to a rooted tree there is a unique shortest path from a vertex $w$ to the root.  This path will intersect a unique flag adjacent to $w$, and we call this distinguished adjacent flag the {\bf output} of $w$.  If $w$ is a vertex in a rooted tree, we consider the set fl$(w)$ to be a pointed set with the output flag as the basepoint.  Flags which are not outputs are called {\bf inputs}.  Every edge in a rooted tree consists of one input flag and one output flag.  The number of inputs adjacent to a vertex $w$ in a rooted tree will be called the {\bf arity} of the vertex and will be denoted $|w|$.

In this paper we will consider colored trees.  We define a {\bf coloring} of a graph $(V,F,a,i)$ by a set $C$ to be a function $\text{clr}\colon F\to C$ such that $\text{clr}=\text{clr}\circ i$.  Informally, a coloring of a graph by $C$ assigns an element of $C$ to each edge and leg of the graph.  We are now prepared to define the class of trees we will consider in this paper.

\begin{definition}\label{vcoloredtree}  Let $\mathbb{V}$ be a small category and $X$ be a set.  A $\mathbb{V}$-{\bf colored $X$-tree} is a reduced, $X$-labeled rooted tree along with a coloring by the set $ob(\mathbb{V})$.   A $\mathbb{V}$-{\bf colored tree} is a reduced $n$-tree along with a coloring by the set $ob(\mathbb{V})$. 
\end{definition}

We will more often have use for the latter definition, in which the leaves of the tree $T$ are labeled by $\{1\cdc n\}$.  By abuse of notation we will often refer to a $\mathbb{V}$-colored tree by simply $T$, keeping the leaf and edge labels implicit.  While the morphisms in $\mathbb{V}$ do not play a role in the definition of a $\mathbb{V}$-colored tree, they will play a vital role in the definition of the monad of $\mathbb{V}$-colored trees below.

The {\bf type} of a $\mathbb{V}$-colored $X$-tree is the function $\nu \colon X \to ob(\mathbb{V})$ which sends an $x\in X$ to the object which colors the flag labeled by $x$.  In the special case of a $\mathbb{V}$-colored tree, this function is equivalent to the ordered list of objects $(\text{clr}(l_1)\cdc \text{clr}(l_n) ; \text{clr}(r))$ where $l_j\in F$ denotes the leaf labeled by $j$ and $r\in F$ denotes the root of $T$.  In this case we also use the notation $\vec{v}=(v_1\cdc v_n;v_0)$ to denote type.  See Figure $\ref{basictree}$ for an example. Similarly we define the {\bf type of a vertex} $w$ in a $\mathbb{V}$-colored tree to be the function fl$(w)\to ob(\mathbb{V})$ sending a flag to the object of $\mathbb{V}$ which colors it.

	\begin{figure}
	\includegraphics[scale=1.0]{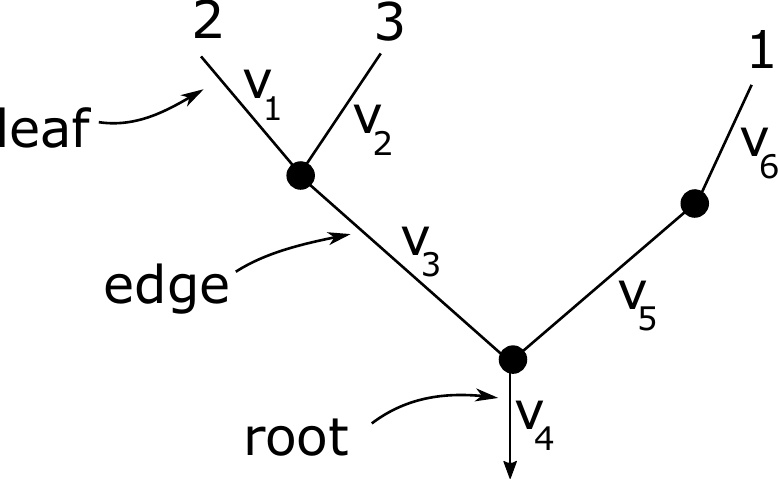} 
	\caption{ A $\mathbb{V}$-colored tree (pictured) is a rooted tree with leaves labeled by $\{1\cdc n \}$ and with edges and leaves labeled by $v_i\in ob(\mathbb{V})$.   This $\mathbb{V}$-colored tree has three vertices, two edges, three leaves, eight flags and is of type $(v_6,v_1,v_2;v_4)$.		} \label{basictree}
\end{figure}

\subsubsection{Substitution of colored trees.} \label{treesub}

We define a substitution of $\mathbb{V}$-colored trees starting from the following data.
Let $T$ be a $\mathbb{V}$-colored $X$-tree of type $\lambda$, let $w$ be a vertex in $T$ of type $\nu$, and let $U$ be a $\mathbb{V}$-colored fl$(w)$-tree of type $\nu$.

 We then form a new $\mathbb{V}$-colored $X$-tree of type $\lambda$, denoted $T\circ_w U$ as follows.  The set of vertices is $V(T\circ_w U)=(V(T)\setminus w)\sqcup V(U)$.  The set of flags is
$F(T\circ_w U)= (F(T)\setminus \text{fl}(w))\sqcup F(U)$. 
The adjacency map $a\colon F\to V$ is the union $a_T$ and $a_{U}$.
Finally the involution is the same except on the former legs of $U$.  To define the involution on these legs, we let $\psi\colon \text{fl}(w)\to \text{leg}(U)$ denote the bijection given by the fl$(w)$ labeling of $U$ and define $i(\ell):= i_T(\psi^{-1}(\ell))$.  The resulting abstract graph $T\circ_w U$ is a rooted tree, its leaves inherit a labeling by the set $X$, and the inherited $\mathbb{V}$-coloring still respects the involution.  Thus $T\circ_w U$ is a $\mathbb{V}$-colored $X$-tree of type $\lambda$.

Let us record several facts about substitution for future use.  The proofs of these lemmas are all immediate.

\begin{lemma}\label{sameedges}  Inclusion of flags induces a bijection $ Edges(T)\sqcup Edges(U) \cong Edges(T\circ_w U)$.
\end{lemma}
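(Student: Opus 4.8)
The plan is to exhibit the asserted bijection explicitly and then verify it is well defined and invertible by a short case analysis organized by how the two flags of an edge meet $\text{fl}(w)$. The only combinatorial input needed is that $T$ and $U$, being simply connected, have no loops; in particular no edge of $T$ can have both of its flags in $\text{fl}(w)$, since such an edge would be a loop at $w$. Hence every edge of $T$ either avoids $\text{fl}(w)$ entirely or meets it in exactly one flag.

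First I would define the map $\Phi\colon Edges(T)\sqcup Edges(U)\to Edges(T\circ_w U)$. On $Edges(U)$, send $\{f,g\}$ to itself under the inclusion $F(U)\hookrightarrow F(T\circ_w U)$; this lands in $Edges(T\circ_w U)$ because $f,g$ are not legs of $U$ and the involution of $T\circ_w U$ restricts to $i_U$ on flags of $U$ that are not former legs. On $Edges(T)$: an edge $e$ with both flags in $F(T)\setminus\text{fl}(w)$ is sent to itself under $F(T)\setminus\text{fl}(w)\hookrightarrow F(T\circ_w U)$; an edge $e=\{f,g\}$ with $f\in\text{fl}(w)$ and $g\notin\text{fl}(w)$ is sent to $\{g,\psi(f)\}$, where $\psi\colon\text{fl}(w)\to\text{leg}(U)$ is the labeling bijection. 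In the first of these two subcases one checks the involution of $T\circ_w U$ still sends $f\mapsto g$ (using the defining formula on the matched flag $i_T(f)$ when $i_T(f)\in\text{fl}(w)$, which here does not occur), and in the second the defining formula gives $i(\psi(f))=i_T(\psi^{-1}\psi(f))=i_T(f)=g$, so the image is indeed an edge.

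Then I would build the inverse by partitioning $Edges(T\circ_w U)$ according to which of the summands $(F(T)\setminus\text{fl}(w))\sqcup F(U)$ contains each of the two flags of an edge: both flags in $F(U)$, both in $F(T)\setminus\text{fl}(w)$, or one in each. Using the description of the involution of $T\circ_w U$ — it is $i_U$ on non-legs of $U$, it is $i_T$ on $F(T)\setminus\text{fl}(w)$ except that a flag $g$ there with $i_T(g)\in\text{fl}(w)$ is instead paired with $\psi(i_T(g))$, and it is $\ell\mapsto i_T(\psi^{-1}(\ell))$ on former legs of $U$ — one sees that an edge of the first type is precisely (the image of) a unique edge of $U$, and an edge of either of the last two types is precisely (the image of) a unique edge of $T$; these assignments are manifestly inverse to $\Phi$.

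I do not expect a genuine obstacle here, consistent with the remark that the proof is immediate; the only point needing a moment's care is the bookkeeping of the involution at the grafting site. Concretely one must observe that if $\ell$ is a former leg of $U$ appearing in an edge of $T\circ_w U$, then its partner is $i_T(\psi^{-1}(\ell))$, which by the no-loop property of $T$ lies in $F(T)\setminus\text{fl}(w)$ (and is distinct from $\psi^{-1}(\ell)$), so that no edge of $T\circ_w U$ can have both flags being former legs of $U$, and every edge meeting $F(U)$ in a former leg of $U$ falls into the "one flag in each" case and comes from the edge $\{i_T(\psi^{-1}(\ell)),\psi^{-1}(\ell)\}$ of $T$. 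Once this is noted, exhaustiveness of the three cases and mutual inverseness of $\Phi$ and the constructed map are routine, and the colorings and leaf labels play no role since they are carried along by inclusion of flags.
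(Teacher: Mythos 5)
Your proof is correct, and since the paper simply declares this lemma (along with its neighbors) immediate and records no argument, your explicit case analysis is exactly the routine verification intended: direct bookkeeping of the involution of $T\circ_w U$ at the grafting site, using that a tree has no loop at $w$. You also correctly handle the one genuinely delicate point, namely that a former leg $\ell$ of $U$ lying in an edge must be paired with $i_T(\psi^{-1}(\ell))\in F(T)\setminus\mathrm{fl}(w)$, so no edge consists of two former legs of $U$.
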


\begin{lemma}\label{commutativesub} If $S\circ_y(T\circ_w U)$ is defined then $(S\circ_y T)\circ_w U$ is also defined and $S\circ_y(T\circ_w U)=(S\circ_y T)\circ_w U$.
\end{lemma}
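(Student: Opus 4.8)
The plan is to show that $S\circ_y(T\circ_w U)$ and $(S\circ_y T)\circ_w U$ are literally the same $\mathbb{V}$-colored tree by comparing, term by term, the data that defines a colored tree — the vertex set, the flag set, the adjacency map, the involution, and the inherited leaf labeling and $ob(\mathbb{V})$-coloring — after first checking that the right-hand side is even defined. For the latter I would unpack the hypothesis: that $S\circ_y(T\circ_w U)$ is defined means, first, that $w\in V(T)$ and $U$ is a $\mathbb{V}$-colored $\text{fl}(w)$-tree whose type equals that of the vertex $w$ in $T$, so that $T\circ_w U$ exists; and, second, that $T\circ_w U$ is a $\text{fl}(y)$-tree of the type of the vertex $y$ in $S$. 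Since $T\circ_w U$ has the same type and the same labeled set of legs as $T$, the latter is the same as requiring that $T$ be a $\text{fl}(y)$-tree of that type, i.e.\ that $S\circ_y T$ exist. Then $S\circ_y T$ is defined, and since $V(S\circ_y T)=(V(S)\setminus y)\sqcup V(T)$ contains $w$ with $\text{fl}_{S\circ_y T}(w)=\text{fl}_T(w)$ and unchanged inherited coloring, $w$ has the same type in $S\circ_y T$ as in $T$; hence $(S\circ_y T)\circ_w U$ is defined.

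For the equality itself, the vertex sets of both sides are $(V(S)\setminus y)\sqcup(V(T)\setminus w)\sqcup V(U)$: this is immediate on the left, and on the right it follows from $w\in V(T)$, which forces $V(S\circ_y T)\setminus w=(V(S)\setminus y)\sqcup(V(T)\setminus w)$. The same reassociation of disjoint unions, together with $\text{fl}_{S\circ_y T}(w)=\text{fl}_T(w)$, makes the flag sets equal, namely $(F(S)\setminus\text{fl}(y))\sqcup(F(T)\setminus\text{fl}(w))\sqcup F(U)$; and on each of these three summands the adjacency map, the leaf labeling, and the $ob(\mathbb{V})$-coloring are, on either side, just the corresponding data of $S$, of $T$, or of $U$. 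All of these checks are immediate.

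The one step requiring genuine — if still routine — bookkeeping is matching the two involutions, since a substitution alters $i$ only on the legs of the inserted subtree, via the rewiring rule $i(\ell)=i_{\mathrm{outer}}(\psi^{-1}(\ell))$. Here I would observe that both orders of substitution reconnect each loose half-edge to the same partner: a flag of $F(S)\setminus\text{fl}(y)$ that $i_S$ pairs with $f\in\text{fl}(y)$ is, on either side, reconnected to the leg of $T$ (or the corresponding leg of $T\circ_w U$) that carries the label $f$, and that leg is in turn reconnected a second time to a leg of $U$ precisely when it is adjacent to $w$; this composite reconnection is order-independent because both orders use the same $\text{fl}(y)$-labeling of the legs of $T$ and the same matching $\psi\colon\text{fl}(w)\to\text{leg}(U)$ (one has $\text{fl}_{S\circ_y T}(w)=\text{fl}_T(w)$ on the nose), and because trees have no loops, so no flag is rewired more than twice and the process terminates. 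The analogous statements for the loose flags of $T$ and for the legs of $U$ are identical, so the two involutions — hence the two trees — coincide. As an alternative that sidesteps the case analysis, one can note that each of $S\circ_y(T\circ_w U)$ and $(S\circ_y T)\circ_w U$ is a colimit of the single diagram $U\leftarrow\text{fl}(w)\to T\leftarrow\text{fl}(y)\to S$ in an appropriate category of trees with gluing data, and then the asserted identity is precisely the associativity of colimits.
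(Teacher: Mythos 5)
Your proof is correct, and it is precisely the routine unwinding of Subsection \ref{treesub} that the paper has in mind when it declares the proofs of Lemmas \ref{sameedges}–\ref{samevertices} immediate: the paper records no argument, and your verification (definedness of the right-hand side, reassociation of the disjoint unions of vertices and flags, and the order-independence of the rewiring of the involution) is the intended one. Your treatment of the involution even handles correctly the implicit symmetric redirection of flags formerly paired with $\text{fl}(y)$ or $\text{fl}(w)$, which the paper's definition states only loosely.
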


\begin{lemma}\label{totalsub}  Let $T$ be a $\mathbb{V}$-colored $X$-tree of type $\lambda$ with vertices $w_1\cdc w_r$.  Let $w_i$ be of type $\nu_i$, and fix a $\mathbb{V}$-colored fl$(w_i)$-tree of type $\nu_i$, called $U_i$ for each $i$.  Then the iterated substitution
	\begin{equation*}
		 (...((T\circ_{w_1}U_{1})\circ_{w_2} U_{2})... )\circ_{w_r} U_r
	\end{equation*}
is well defined and independent of the order of the vertices.
\end{lemma}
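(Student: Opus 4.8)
The plan is to give an order-free description of the iterated substitution and then match it with the iterated one. Given the data of the statement, let $\psi_i\colon\text{fl}(w_i)\to\text{leg}(U_i)$ be the bijection encoding the $\text{fl}(w_i)$-labeling of $U_i$, and define the \emph{simultaneous substitution} $T\langle U_1\cdc U_r\rangle$ to be the abstract graph with vertex set $(V(T)\setminus\{w_1\cdc w_r\})\sqcup\bigsqcup_{i}V(U_i)$, flag set $(F(T)\setminus\bigsqcup_{i}\text{fl}(w_i))\sqcup\bigsqcup_{i}F(U_i)$, adjacency map the union of $a_T$ (restricted) with the maps $a_{U_i}$, and involution equal to $i_T$ off the former legs of the $U_i$ and given on the former legs of $U_i$ by $i(\ell):=i_T(\psi_i^{-1}(\ell))$. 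By exactly the verification made for a single substitution $T\circ_w U$, this inherits a $\mathbb{V}$-coloring (using that each $U_i$ has type $\nu_i$, equal to the type of $w_i$) and is a $\mathbb{V}$-colored $X$-tree of type $\lambda$; crucially, this description makes no reference to the order of the pairs $(w_i,U_i)$. So it suffices to prove that the iterated substitution $(\cdots(T\circ_{w_1}U_1)\cdots)\circ_{w_r}U_r$ is defined and equal to $T\langle U_1\cdc U_r\rangle$.

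The engine of the argument is the following observation, immediate from the definition of $T\circ_w U$: if $w\ne w'$ are distinct vertices of a $\mathbb{V}$-colored tree $T$, then $w'$ is still a vertex of $T\circ_w U$, its adjacent-flag set $\text{fl}(w')$ is unchanged, its output flag (the flag of $T$ adjacent to $w'$ on the path to the root, which is not disturbed by replacing $w$ with $U$) is unchanged, and hence its type is unchanged. Indeed $\text{fl}(w')=a_T^{-1}(w')$ is disjoint from $\text{fl}(w)$ since $a_T$ is a function, and it meets neither $F(U)$ nor the set of former legs of $U$, so the substitution alters nothing adjacent to $w'$. Consequently, if $U'$ is a $\mathbb{V}$-colored $\text{fl}(w')$-tree of the type of $w'$, then $(T\circ_w U)\circ_{w'}U'$ is defined.

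I then induct on $r$, peeling off the last substitution; the cases $r\le 1$ are trivial. In general, applying the observation repeatedly shows that $w_r$ is still a vertex, with unchanged $\text{fl}(w_r)$ and type, of $T':=(\cdots(T\circ_{w_1}U_1)\cdots)\circ_{w_{r-1}}U_{r-1}$, so $T'\circ_{w_r}U_r$ is defined, and $T'=T\langle U_1\cdc U_{r-1}\rangle$ by the induction hypothesis. Substituting this expression for $T'$ into the definition of $T'\circ_{w_r}U_r$ and simplifying the vertex set, flag set, adjacency map and involution — here one uses that $\text{fl}(w_r)$ is disjoint from every $\text{fl}(w_i)$ and every $F(U_i)$ with $i<r$, so that reassigning the involution on the former legs of $U_r$ is independent of the reassignments already made — one reads off $T'\circ_{w_r}U_r=T\langle U_1\cdc U_r\rangle$, completing the induction. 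Finally, since $T\langle U_1\cdc U_r\rangle$ is unchanged under permuting the indices, running the same argument for any ordering $w_{\sigma(1)}\cdc w_{\sigma(r)}$ of the vertices (with the correspondingly permuted $U_i$) produces the same result, giving independence of order.

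There is no serious obstacle here — the paper rightly calls the proof immediate — but the one point that must not be skipped is the local observation of the second paragraph, namely that a vertex untouched by a given substitution retains all of its local data (its adjacent flags, its distinguished output flag, and its coloring); it is precisely this that makes each successive substitution in the iterate legal and that underlies the order-independence.
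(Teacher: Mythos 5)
Your proof is correct. The paper gives no argument here --- it declares the substitution lemmas immediate --- and your order-free simultaneous substitution $T\langle U_1\cdc U_r\rangle$ (which is precisely the object the paper goes on to denote $T\circ\{U_i\}$), together with the local observation that a vertex untouched by a substitution retains its adjacent flags, its output flag and its coloring, is exactly the routine verification being elided, so your write-up simply supplies the intended details.
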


We call this a {\bf total substitution} and denote it by $T\circ \{U_i\}$

\begin{lemma}\label{samevertices} In a total substitution, inclusion of vertices induces a bijective correspondence $V(T\circ \{U_i\})\cong \sqcup_i V(U_i)$.
\end{lemma}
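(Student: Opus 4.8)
The plan is to induct on the number $r$ of vertices of $T$, applying the single-substitution formula $V(T\circ_w U)=(V(T)\setminus w)\sqcup V(U)$ from Subsection \ref{treesub} once for each vertex. Since Lemma \ref{totalsub} tells us that the iterated substitution $T\circ\{U_i\}$ is independent of the order of the vertices, I may list the vertices of $T$ as $w_1\cdc w_r$ in any convenient order, set $T_0:=T$ and $T_k:=T_{k-1}\circ_{w_k}U_k$ for $1\leq k\leq r$, and note that $T\circ\{U_i\}=T_r$.

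The key claim, proved by induction on $k$, is
\[
V(T_k)=\bigl(V(T)\setminus\{w_1\cdc w_k\}\bigr)\sqcup V(U_1)\sqcup\dots\sqcup V(U_k),
\]
with the identification given on each summand by the evident inclusion of vertices. The case $k=0$ is the tautology $V(T_0)=V(T)$. For the inductive step I first observe that passing from $T$ to $T_{k-1}$ only alters the tree near the vertices $w_1\cdc w_{k-1}$, hence leaves $\mathrm{fl}(w_k)$, its $\mathbb{V}$-coloring, and therefore the type $\nu_k$ of $w_k$, unchanged; thus $w_k$ is still a vertex of $T_{k-1}$ and $U_k$ is still valid substitution data at it, so $T_k=T_{k-1}\circ_{w_k}U_k$ is defined and $V(T_k)=(V(T_{k-1})\setminus w_k)\sqcup V(U_k)$. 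Substituting the inductive hypothesis for $V(T_{k-1})$ gives the claim. Taking $k=r$ and using $\{w_1\cdc w_r\}=V(T)$, the first summand is empty, so $V(T\circ\{U_i\})=V(T_r)\cong\bigsqcup_{i=1}^r V(U_i)$; since each single-substitution identification was an inclusion of vertices, so is the composite.

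I expect the argument to be pure bookkeeping. The only point requiring a moment's care is the verification in the inductive step that a not-yet-substituted vertex $w_k$ retains its local data (flag set, coloring, type) after the earlier substitutions at $w_1\cdc w_{k-1}$, so that the formula $V(T\circ_w U)=(V(T)\setminus w)\sqcup V(U)$ applies verbatim at the $k$-th stage; this is immediate from the explicit construction of $T\circ_w U$ in Subsection \ref{treesub}, which modifies flags, adjacency, and the involution only in a neighborhood of the substituted vertex. There is no real obstacle; alternatively one could state a mildly more general lemma about simultaneously substituting trees at a chosen subset of vertices and induct on the size of that subset, but the vertex-by-vertex induction above is the most economical route given Lemma \ref{totalsub}.
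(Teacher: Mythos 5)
Your argument is correct and is exactly the bookkeeping the paper has in mind: the paper offers no written proof, declaring Lemmas \ref{sameedges}--\ref{samevertices} immediate from the definition of substitution, and your vertex-by-vertex induction using $V(T\circ_w U)=(V(T)\setminus w)\sqcup V(U)$ (with the observation that earlier substitutions do not disturb the local data at a not-yet-substituted vertex) is the straightforward unwinding of that definition. Nothing further is needed.
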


\subsubsection{Empty $\mathbb{V}$-colored trees}\label{emptytree}

We will occasionally refer to the objects of $\mathbb{V}$ as {\bf empty $\mathbb{V}$-colored trees}.  This terminology is meant to conflate an object of $\mathbb{V}$ with a directed line segment labeled by it, which could be thought of as a ``$\mathbb{V}$-colored tree with no vertices'', although this is technically not a tree by our definition.  To reinforce this graphical intuition we will typically denote the empty colored tree corresponding to an object $v$ by the symbol $|_v$.  We consider $|_v$ to be of type $(v;v)$.

Let $T$ be a $\mathbb{V}$-colored $X$-labeled tree with a vertex $w$ of valence two of type $(v;v)$. 
We denote the {\bf substitution of the empty colored tree} $v$ at $w$ by $T\circ_w (|_v)$.  
Informally we picture pasting the empty $\mathbb{V}$-colored tree over the vertex, thus removing it. 
Formally,  $T\circ_w (|_v)$ is defined as follows.  If $w$ is the lone vertex of $T$ we define $T\circ_w (|_v) := |_v$.  Else we define $T\circ_w (|_v)$ to be the $\mathbb{V}$-colored $X$-labeled tree with vertices $V(T)\setminus w$, flags $F(T) \setminus fl(w)$, adjacencies by restriction, and the unique involution such that both $i$ agrees with $i_T$ except on the set $F(T\circ_w(|_v)) \cap i_T(\text{fl}(w))$ and that the resulting graph is connected.

\subsubsection{Isomorphisms of graphs}\label{isomorphismsofgraphs}

\begin{definition} An isomorphism $\phi\colon (V_1,F_1,a_1,i_1) \to (V_2,F_2,a_2,i_2)$ between abstract graphs is a pair of bijections $\phi_V\colon V_1\stackrel{\cong}\to V_2$ and  $\phi_F\colon F_1\stackrel{\cong}\to F_2$ such that $\phi_F i_1 = i_2 \phi_F $ and $\phi_V a_1 = a_2 \phi_F$. 
\end{definition}

Clearly an isomorphism between two graphs specifies isomorphisms between their sets of vertices, flags, edges and legs.  Hence we may define:

\begin{definition} An isomorphism of $\mathbb{V}$-colored $X$-labeled trees is an isomorphism of graphs which respects the coloring and labeling.
\end{definition}


In the remainder of this paper we are primarily interested in such labeled trees and graphs up to color and label preserving isomorphisms.

\subsection{Monad of $\mathbb{V}$-colored trees.}
Fix a co-complete and closed symmetric monoidal category $(\op{C},\tensor, 1_\op{C})$. In practice $\op{C}$ will be either sets or chain complexes.  Also fix a small category and groupoid $\mathbb{V}$.  The purpose of this subsection will be to introduce a certain monad of $\mathbb{V}$-colored trees.  We first introduce the category on which this monad will act.  Throughout we denote the symmetric groups by $S_n$.

\subsubsection{$\mathbb{V}$-colored sequences.}
For a natural number $r\geq 1$ we let $\mathbb{V}$-corollas$_r$ be the following category.  The objects are the objects of $\mathbb{V}^{\times r}\times \mathbb{V}^{op}$.  We call an object of this category a {\bf $\mathbb{V}$-color scheme} of length $r$.  We will occasionally write $\mathbb{V}$-color schemes using vector notation: $\vec{v}= (v_1\cdc v_r; v_0)$, and write $|\vec{v}|=r$.  The morphisms of $\mathbb{V}$-corollas$_r$ having source $(v_1\cdc v_r; v_0)$ coincide with the set 
\begin{equation*}
S_r\times Aut(v_1)\times...\times Aut(v_r)\times Aut(v_0)^{op}.
\end{equation*}

  The target of such a morphism $(\sigma, -)$ is $(v_{\sigma^{-1}(1)}\cdc v_{\sigma^{-1}(r)}; v_0)$.  Composition of morphisms is given by 
\begin{equation}\label{composition}
(\tau,\times_{i\geq 1}\phi_i, \phi_0^{op})\circ (\sigma, \times_{i\geq 1}\gamma_i, \gamma_0^{op}) = (\tau\sigma, \times_{i\geq 1} \phi_{\sigma(i)}\gamma_i, \gamma_0^{op}\phi_0^{op})
\end{equation}
Define $\mathbb{V}$-corollas to be the disjoint union of the categories $\mathbb{V}$-corollas$_r$ over $r\geq 1$. A {\bf  $\mathbb{V}$-colored sequence} is defined to be a functor from $\mathbb{V}$-corollas to $\op{C}$.  Unpacking this definition, a $\mathbb{V}$-colored sequence specifies a collection of objects (in $\op{C}$) $A(v_1\cdc v_r;v_0)$ for $v_i\in ob(\mathbb{V})$ with an action of the group $\times_i Aut(v_i)$ on the left and $Aut(v_0)$ on the right, along with compatible isomorphisms corresponding to permuting the non-zero indices.  

For $\vec{v}= (v_1\cdc v_r; v_0)$ we use the notation:
\begin{equation}\label{autvecv}
Aut(\vec{v}):= Aut(v_1)\times...\times Aut(v_r)\times Aut(v_0)^{op}
\end{equation}

\subsubsection{The object $A(T)$.}\label{ATSec}
In this construction we make use of unordered monoidal products, see \cite{MSS} Definition II.1.58.

Fix vertex $w$ in a $\mathbb{V}$-colored tree of arity $|w|=r$ and fix a $\mathbb{V}$-colored sequence $A$. If we choose an auxiliary ordering of the input flags of $w$, then we may say $w$ is of type $(v_1\cdc v_r;v_0)$.  We then define
\begin{equation}\label{A(w)}
A(w)= \left[\ds\coprod_{\sigma\in S_r}  A(v_{\sigma(1)}\cdc v_{\sigma(r)}; v_0)   \right]_{S_r}
\end{equation}
 This object is independent of the order of the $v_i$ that we chose and it inherits a natural action of the group $(\times_i Aut(v_i))\times Aut(v_0)^{op}$, where here $\times_i$ denotes the unordered product.

Now let $T$ be a $\mathbb{V}$-colored tree with vertices Vert$(T)$ and form the unordered tensor product $\bigotimes_{w \in \text{Vert}(T)} A(w)$.  Given an edge $E$ of $T$ with input flag $x$, output flag $y$, and color $v_E:=\text{clr}(x)=\text{clr}(y)$, the group $Aut(v_E)$ acts on $\bigotimes_{w \in \text{Vert}(T)} A(w)$ by acting simultaneously on the $a(x)$ factor on the left and the $a(y)$ factor on the right.  Formula $\ref{composition}$ ensures this action commutes across different edges, and hence we have an action of the group $\times_{E\in\text{Edges}(T)}Aut(v_E)$ on $\bigotimes_{w \in \text{Vert}(T)} A(w)$.  Let us write $[\bigotimes_{w \in \text{Vert}(T)}A(w)]_{\text{Edges}(T)}$ for the coinvariants of this action and define
\begin{equation}\label{AT}
A(T) := \left[ \ds\bigotimes_{w\in \text{Vert}(T)} A(w)\right]_{\text{Edges}(T)}.
\end{equation}

This definition may be understood informally by regarding $A(T)$ as the space of $\mathbb{V}$-colored trees $T$ whose vertices are labeled by $A$, modulo the relation that an automorphism in $Aut(v)$ can be transported across an edge labeled by $v$.

\subsubsection{The monad $\mathbb{T_V}$.}
We consider $\mathbb{V}$-colored sequences as a category whose morphisms are given by natural transformations.  Let us define a functor
\begin{equation*}
\mathbb{T_V}\colon \mathbb{V}\text{-colored sequences}\to \mathbb{V}\text{-colored sequences}
\end{equation*}
on objects via
\begin{equation}\label{free1}
\mathbb{T_V}(A)(\vec{v}) := \ds\coprod_{\substack{\text{isomorphism classes of} \\ \mathbb{V} \text{-colored trees $T$ } \\ \text{of type }  \vec{v}}} A(T) 
\end{equation}

Here we have abused notation by conflating a tree $T$ with its isomorphism class, but since $A(T)$  is independent of the choice of representative of the isomorphism class of $T$, no confusion should result.  For brevity we no longer write ``isomorphism classes of'' below, but we continue to consider trees only up to label and color preserving isomorphisms.

Observe that $\mathbb{T_V}(A)$ is naturally a $\mathbb{V}$-colored sequence since, if $T$ is a $\mathbb{V}$-colored tree of type $\vec{v}$, then for any leg labeled by some $v_i$ we may define an action of $Aut(v_i)$ on $A(T)$ by acting on the factor corresponding to the unique vertex adjacent to said leg.

Since a morphism of $\mathbb{V}$-colored sequences is a natural transformation, it is given by a collection of equivariant maps $A(\vec{v})\to B(\vec{v})$ and so induces a map $A(T)\to B(T)$.  Taking the coproduct of such maps specifies the image of $\mathbb{T_V}$ on morphisms.

\begin{theorem} \label{monadthm} Substitution of trees endows the functor $\mathbb{T_V}$ with the structure of a monad.
\end{theorem}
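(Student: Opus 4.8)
The plan is to exhibit the monad structure on $\mathbb{T_V}$ by specifying the multiplication $\mu\colon \mathbb{T_V}\mathbb{T_V}\to \mathbb{T_V}$ and unit $\eta\colon \mathrm{Id}\to \mathbb{T_V}$, and then to verify associativity and unitality. The unit is the inclusion of the summand indexed by the one-vertex (corolla) tree, using the canonical identification $A(C_{\vec v})\cong A(\vec v)$ for $C_{\vec v}$ the $\vec v$-colored corolla. For the multiplication, the key observation is that an element of $\mathbb{T_V}(\mathbb{T_V}(A))(\vec v)$ is indexed by a $\mathbb{V}$-colored tree $T$ of type $\vec v$ together with, at each vertex $w$, a class in $\mathbb{T_V}(A)(w)$, i.e.\ essentially a $\mathbb{V}$-colored $\mathrm{fl}(w)$-tree $U_w$ of the appropriate type with $A$-labeled vertices. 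The total substitution of Lemma~\ref{totalsub}, $T\circ\{U_w\}$, produces a single $\mathbb{V}$-colored tree of type $\vec v$, and Lemma~\ref{samevertices} identifies its vertex set with $\sqcup_w V(U_w)$, while Lemma~\ref{sameedges} (iterated) identifies its edge set with $\mathrm{Edges}(T)\sqcup\bigsqcup_w \mathrm{Edges}(U_w)$. Under these identifications the tensor product defining $A(T\circ\{U_w\})$ matches the nested tensor product coming from $\mathbb{T_V}(\mathbb{T_V}(A))$, and the two layers of edge-coinvariants assemble into the single edge-coinvariant quotient. This defines $\mu$ on the summand indexed by $(T,\{U_w\})$, and one takes the coproduct over all such data; one must check this is well-defined on isomorphism classes, which follows because an isomorphism of the pair $(T,\{U_w\})$ induces an isomorphism of $T\circ\{U_w\}$ respecting all structure.

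First I would set up the indexing carefully: unwind \eqref{free1} applied twice to see that $\mathbb{T_V}(\mathbb{T_V}(A))(\vec v)=\coprod_T \bigl[\bigotimes_{w\in V(T)}\mathbb{T_V}(A)(w)\bigr]_{\mathrm{Edges}(T)}$, then expand each $\mathbb{T_V}(A)(w)=\coprod_{U_w}A(U_w)$ using \eqref{A(w)} and the definition of $\mathbb{T_V}(A)$ as a $\mathbb{V}$-colored sequence (with its leg-$Aut$ action on $A(U_w)$), distributing the coproduct past the tensor product and the coinvariants. Next I would define $\mu$ summand-wise via the total-substitution identifications above and verify it is equivariant for the $Aut(\vec v)$-action (the leg actions on both sides correspond under substitution, since substitution does not touch the legs of $T$). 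Then associativity: both $\mu(\mathbb{T_V}\mu)$ and $\mu(\mu \mathbb{T_V})$ send a triple-nested datum $(S,\{T_y\},\{U_w\})$ to an $A$-labeled tree, and they agree precisely because iterated total substitution is associative — which is exactly the content packaged by Lemmas~\ref{commutativesub} and \ref{totalsub} (commutativity/independence of order of vertex substitutions, applied at two scales), together with the matching of vertex sets and edge sets so that the coinvariant quotients agree on the nose. Unitality reduces to the fact that substituting corollas at every vertex of $T$ returns $T$, and that substituting $T$ into the single corolla $C_{\vec v}$ returns $T$ — both immediate from the construction in \S\ref{treesub}.

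The main obstacle I anticipate is purely bookkeeping rather than conceptual: keeping track of the three nested groups of coinvariants/equivariance — the internal $S_r$-symmetrizations in $A(w)$ of \eqref{A(w)}, the edge-coinvariants at each level, and the leg-$Aut$ actions — and checking that the canonical combinatorial bijections (Lemmas~\ref{sameedges}--\ref{samevertices}) are compatible with all of them, so that the maps $\mu$, $\mu(\mathbb{T_V}\mu)$, $\mu(\mu\mathbb{T_V})$ descend to the stated (co)invariant objects and literally coincide. The subtlety is that coinvariants do not commute with coproducts over isomorphism classes in a naive way, so I would phrase the argument as: choose representatives, define everything on the level of the (non-quotiented) tensor products indexed by tree-with-labels data, observe all constructions are natural with respect to isomorphisms of that data, and conclude by passing to colimits. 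Once the identifications of vertices, edges, and flag-types under substitution are in place, associativity and unitality are forced, so essentially all the work is in making Lemmas~\ref{sameedges}, \ref{commutativesub}, \ref{totalsub} and \ref{samevertices} do their job uniformly across the coinvariant structures.
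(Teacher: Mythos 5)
Your proposal is correct and follows essentially the same route as the paper's proof: expand $\mathbb{T_V}\circ\mathbb{T_V}(A)$ by distributing coproducts past tensors and edge-coinvariants, identify each summand with $A(T\circ\{U_w\})$ via Lemmas \ref{sameedges}, \ref{totalsub} and \ref{samevertices}, define the multiplication by inclusion of summands, and deduce associativity from associativity of tree substitution (Lemma \ref{commutativesub}) with the unit given by corolla inclusions. The bookkeeping concerns you raise (commuting colimits past the monoidal product and the coinvariants) are exactly what the paper handles by invoking the closed monoidal hypothesis on $\op{C}$.
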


\begin{proof}  The proof closely follows the uncolored case, see e.g.\ \cite[p.\ 89]{MSS} or \cite[p.\ 160]{LV}.
		
	Let us first observe that if $A$ is a $\mathbb{V}$-colored sequence and $w$ is a vertex of a $\mathbb{V}$-colored tree of type $\eta\colon \text{fl}(w)\to ob(\mathbb{V})$, then combining formulas $\ref{A(w)}$ and $\ref{free1}$ yields:
	
\begin{equation}
(\mathbb{T_V}(A))(w) \cong \ds\coprod_{\substack{\mathbb{V} \text{-colored fl$(w)$-trees $U$ } \\ \text{of type }  \eta}} A(U) 
\end{equation}
	
	Using this we compute:
	
	\begin{eqnarray*}
	\mathbb{T_V} \circ \mathbb{T_V}(A) (\vec{v})
	& = & \ds\coprod_{\substack{\mathbb{V} \text{-colored trees $T$ } \\ \text{of type }  \vec{v}}} \left[\ds\bigotimes_{w \in \text{Vert}(T)} \mathbb{T_V}(A)(w)\right]_{\text{Edges}(T)} \\
		& \cong & \ds\coprod_{\substack{\mathbb{V} \text{-colored trees $T$ } \\ \text{of type }  \vec{v}}} \left[\ds\bigotimes_{w \in \text{Vert}(T)} 
		\left[  \ds\coprod_{\substack{\mathbb{V} \text{-colored fl$(w)$-trees $U_w$ } \\ \text{of the type of  }   w}} A(U_w)  \right]
		\right]_{\text{Edges}(T)} \\
			& = & \ds\coprod_{\substack{\mathbb{V} \text{-colored trees $T$ } \\ \text{of type }  \vec{v}}} \left[\ds\bigotimes_{w \in \text{Vert}(T)} 
	\left[  \ds\coprod_{\substack{\mathbb{V} \text{-colored fl$(w)$-trees } \\ \text{ $U_w$ of the type of  }   w}}
	\left[ \ds\bigotimes_{x \in \text{Vert}(U_w)} A(x) \right]_{\text{Edges}(U_w)}
	 \right]
	\right]_{\text{Edges}(T)} \\
		& \cong & \ds\coprod_{\substack{\mathbb{V} \text{-colored trees $T$ } \\ \text{of type }  \vec{v} \text{ along with} \\ \text{ a fl(w) tree } U_w \\  \forall \ w \in \text{Vert}(T)}} \left[\ds\bigotimes_{ x \in \coprod_w\text{Vert}(U_w)} A(x)\right]_{\text{Edges}(T \circ \{U_w\})} \\
	\end{eqnarray*}

The validity of the last step rests on our ability to iteratively commute the monoidal product with colimits in a fixed variable, which is allowed due to our assumption that $\op{C}$ is closed monoidal.  We then employ the bijective correspondence of the edges of the total substitution $T \circ \{U_w\}$ and the union of the edges of its pieces (Lemmas $\ref{sameedges}$ and $\ref{totalsub}$).  

Using the bijective correspondence between the vertices of $T\circ\{U_w\}$  and the disjoint union of the vertices of the $U_w$ (Lemma $\ref{samevertices}$), each term in this coproduct specifies a summand of $\mathbb{T_V}(A)(\vec{v})$ and taking the coproduct of these inclusions gives us the morphism $\mathbb{T_V} \circ \mathbb{T_V}(A) (\vec{v}) \to  \mathbb{T_V}(A) (\vec{v})$.  The associativity of this composition follows from the associativity of tree substitution (Lemma $\ref{commutativesub}$).  The monadic unit $id \Rightarrow \mathbb{T_V}$ is given by corolla inclusions.
\end{proof}

\subsubsection{The monad $\overline{\mathbb{T}}_\mathbb{V}$.}
There is a variant of the above construction which includes the empty $\mathbb{V}$-colored trees (see Subsection $\ref{emptytree}$).  Given a $\mathbb{V}$-colored sequence $A$ we define $A(|_v):=  \coprod_{Aut(v)} 1_\op{C}$  with its induced left and right $Aut(v)$-action.  We then define a functor
\begin{equation*}
\overline{\mathbb{T}}_\mathbb{V}\colon \mathbb{V}\text{-colored sequences}\to \mathbb{V}\text{-colored sequences}
\end{equation*}
via $\overline{\mathbb{T}}_\mathbb{V}(A)(\vec{v}) := \coprod A(T)$, where now the coproduct is taken over isomorphism classes of {\it possibly empty} $\mathbb{V}$-colored trees of type $\vec{v}$.

\begin{theorem}  Substitution of possibly empty trees endows the functor $\overline{\mathbb{T}}_\mathbb{V}$ with the structure of a monad.
\end{theorem}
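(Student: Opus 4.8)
The plan is to follow the proof of Theorem~$\ref{monadthm}$ essentially verbatim, carrying along the extra generators $|_v$ and making sure that substitution of empty trees is absorbed into the bookkeeping. First I would record the analog, for $\overline{\mathbb{T}}_\mathbb{V}$, of the identity used in the previous proof: if $A$ is a $\mathbb{V}$-colored sequence and $w$ is a vertex of a (possibly empty) $\mathbb{V}$-colored tree of type $\eta\colon \mathrm{fl}(w)\to ob(\mathbb{V})$, then combining the definition of $\overline{\mathbb{T}}_\mathbb{V}$ with formula $\ref{A(w)}$ gives
\begin{equation*}
(\overline{\mathbb{T}}_\mathbb{V}(A))(w) \cong \ds\coprod_{\substack{\text{possibly empty }\mathbb{V}\text{-colored } \\ \mathrm{fl}(w)\text{-trees }U\text{ of type }\eta}} A(U),
\end{equation*}
where in the valence-two case $\eta=(v;v)$ the empty tree $|_v$ contributes the summand $A(|_v)=\coprod_{Aut(v)}1_\op{C}$. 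Note that the empty tree $|_v$ only ever appears as a $\mathrm{fl}(w)$-tree over a bivalent vertex of type $(v;v)$, and the substitution $T\circ_w(|_v)$ from Subsection~$\ref{emptytree}$ has exactly the effect of deleting that bivalent vertex; in particular the correspondences of Lemmas~$\ref{sameedges}$, $\ref{commutativesub}$, $\ref{totalsub}$, $\ref{samevertices}$ extend to this setting once one checks that $Edges(T\circ_w(|_v))\cong Edges(T)$ (one edge of $T$ is merged with a former leg, and no edge is created or destroyed) and that empty-tree substitutions commute with each other and with ordinary substitutions.

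Next I would run the same chain of isomorphisms as in the proof of Theorem~$\ref{monadthm}$, replacing ``$\mathbb{V}$-colored trees'' by ``possibly empty $\mathbb{V}$-colored trees'' throughout and using the displayed formula above to expand $\overline{\mathbb{T}}_\mathbb{V}(A)(w)$ at each vertex. The monoidal product commutes with the coproducts in each fixed variable because $\op{C}$ is closed monoidal, exactly as before, so one arrives at
\begin{equation*}
\overline{\mathbb{T}}_\mathbb{V}\circ\overline{\mathbb{T}}_\mathbb{V}(A)(\vec{v}) \cong \ds\coprod_{\substack{\text{possibly empty }T\text{ of type }\vec{v}\text{ with a} \\ \text{possibly empty }\mathrm{fl}(w)\text{-tree }U_w\ \forall w}} \left[\ds\bigotimes_{x\in\coprod_w \mathrm{Vert}(U_w)} A(x)\right]_{\mathrm{Edges}(T\circ\{U_w\})}.
\end{equation*}
Each summand is a summand of $\overline{\mathbb{T}}_\mathbb{V}(A)(\vec{v})$ via the total-substitution tree $T\circ\{U_w\}$ (now possibly empty, precisely when $T$ and every $U_w$ is empty, or when the deletions collapse everything), and taking the coproduct of these inclusions defines the multiplication $\mu\colon\overline{\mathbb{T}}_\mathbb{V}\circ\overline{\mathbb{T}}_\mathbb{V}\Rightarrow\overline{\mathbb{T}}_\mathbb{V}$. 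Associativity follows from associativity of (possibly-empty) tree substitution, and the unit $id\Rightarrow\overline{\mathbb{T}}_\mathbb{V}$ is again given by corolla inclusions, with the left unit law using that $T\circ_w(|_{v})$ deletes the inserted bivalent vertex so that inserting $|_v$ along every edge and collapsing recovers $T$.

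The only genuinely new point — and the one I expect to require the most care — is the interaction of the empty trees $|_v$ with the unit and with iterated substitution: one must verify that $A(|_v)=\coprod_{Aut(v)}1_\op{C}$ behaves as a two-sided unit for the grafting of $A$-labeled trees along a $v$-colored edge, i.e.\ that grafting an empty colored tree onto an edge and then collapsing the resulting bivalent vertex is the identity at the level of the objects $A(T)$, compatibly with the $Aut(v)$-coinvariants in formula~$\ref{AT}$. This is where formula~$\ref{composition}$ is again invoked, to see that the $Aut(v)$-action introduced by the new edge is identified, under the coinvariants, with the action already present on the $A$-labeled corolla; granting this, the verification of the monad axioms is formally identical to the non-empty case. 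I would therefore present the proof as: (i) state the expansion of $\overline{\mathbb{T}}_\mathbb{V}(A)(w)$; (ii) remark that Lemmas~$\ref{sameedges}$–$\ref{samevertices}$ hold for possibly-empty trees; (iii) copy the computation of $\overline{\mathbb{T}}_\mathbb{V}\circ\overline{\mathbb{T}}_\mathbb{V}$ and define $\mu$; (iv) check associativity and the unit laws, the latter being the only place the empty-tree conventions of Subsection~$\ref{emptytree}$ do real work.
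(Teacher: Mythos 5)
Your overall route is the paper's: reduce everything to the proof of Theorem \ref{monadthm} and isolate what changes when some $U_w=|_v$. But the key bookkeeping claim in your step (ii) is false, and it occurs exactly at the point where the possibly-empty case genuinely differs from Lemma \ref{sameedges}. Substituting the empty tree at a bivalent vertex $w$ of type $(v;v)$ does change the edge count: if both flags of $w$ lie on edges of $T$, those two edges are merged into a single edge of $T\circ_w(|_v)$, and if one flag of $w$ is a leg, the adjacent edge becomes a leg; in either case the number of edges drops by one, so there is no bijection $Edges(T\circ_w(|_v))\cong Edges(T)$, only a surjection $Edges(T)\twoheadrightarrow Edges(T\circ_w(|_v))$ identifying the edges formerly adjacent to $w$. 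Consequently, in the crucial identification
\begin{equation*}
\left[\ds\bigotimes_{w\in \text{Vert}(T)}A(U_w)\right]_{\text{Edges}(T)}\;\cong\; A(T\circ\{U_w\}),
\end{equation*}
the two sides do not have matching groups of coinvariants, and the left side carries an extra tensor factor $A(|_v)=\coprod_{Aut(v)}1_{\op{C}}$ for each empty $U_w$, which for nontrivial $Aut(v)$ is not the monoidal unit. So this step is not ``formally identical to the non-empty case''; it is the one place an actual argument is required.

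The missing ingredient is the freeness of the $Aut(v)$-action on $A(|_v)$ (it is the regular bimodule): taking coinvariants at an edge adjacent to the deleted bivalent vertex cancels the factor $A(|_v)$ and transfers the residual action to the merged edge, one cancellation for each identification made by the surjection above. This is precisely how the paper justifies the displayed isomorphism, and hence the definition of the multiplication on summands involving empty trees. Your final paragraph gestures at the right phenomenon ($A(|_v)$ acting as a unit for grafting, compatibly with the coinvariants of formula \ref{AT}), but you attribute the verification to formula \ref{composition} rather than to freeness of $\coprod_{Aut(v)}1_{\op{C}}$, and, more importantly, you frame it as only being needed for the unit laws, whereas it is needed already to define $\mu$: without it, the summand of $\overline{\mathbb{T}}_\mathbb{V}\circ\overline{\mathbb{T}}_\mathbb{V}(A)(\vec{v})$ indexed by $(T,\{U_w\})$ with some $U_w$ empty does not map isomorphically onto the summand $A(T\circ\{U_w\})$ of $\overline{\mathbb{T}}_\mathbb{V}(A)(\vec{v})$. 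Replace the false edge bijection by the surjection and insert the freeness/cancellation argument, and the rest of your outline does match the paper's proof.
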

\begin{proof} 
	This follows similarly to the  proof of Theorem $\ref{monadthm}$.  The most noteworthy difference being that one or more of the $U_w$ could be the empty tree.  In this case we note that if $U_w = |_v$, then there is a surjective map  
	\begin{equation}\label{s}
	 Edges(T)\sqcup Edges(|_v) = Edges(T) \twoheadrightarrow Edges(T\circ_w |_v)
	 \end{equation}
	  which identifies two edges if they were formerly adjacent to the vertex $w$ (the generalization of Lemma $\ref{sameedges}$ to the case of possibly empty trees).  Using the fact that the $Aut(v)$ action on each $A(|_v)$ is free, we may cancel a tensor factor of $A(|_v)$ for each identification made in the surjection $\ref{s}$.	Thus the isomorphism \begin{equation*}
\left[\ds\bigotimes_{w \in \text{Vert}(T)}  A(U_w) 
		\right]_{\text{Edges}(T)} \cong  A(T \circ \{U_w\})
	\end{equation*}
	is still valid in the case that one or more of the $U_w$ is empty.  With this observation, the proof follows as in the proof of Theorem $\ref{monadthm}$.	
\end{proof}

\subsection{Groupoid colored operads}

We are now prepared to define {\bf groupoid colored operads}.  The generic terminology groupoid colored operad will mean a $\mathbb{V}$-colored operad for some groupoid $\mathbb{V}$, and is  defined as follows:

\begin{definition}  A non-unital $\mathbb{V}$-colored operad is an algebra over the monad $\mathbb{T_V}$.  A unital $\mathbb{V}$-colored operad is an algebra over the monad $\overline{\mathbb{T}}_\mathbb{V}$.  Morphisms of $\mathbb{V}$-colored operads are morphisms of algebras over the relevant monad.
\end{definition}

In this paper $\mathbb{V}$-colored operads will often be referred to simply as operads if $\mathbb{V}$ is given, and we shall work with both the unital and non-unital variants as the situation dictates.

By definition, then, a $\mathbb{V}$-colored operad $\op{P}$ comes with a morphism
\begin{equation}\label{operadstructuremap}
 \eta_{T}\colon\op{P}(T)\to \op{P}(\vec{v})
\end{equation}

 for every $\mathbb{V}$-colored tree of type $\vec{v}$, which we call contracting the tree.  These contractions are equivariant with respect to permuting the leaf labels of $T$ and acting by automorphisms of the objects coloring the leaves.  The property of being an algebra over the monad ensures that contracting a tree can be done one edge at a time, all at once, or via any combination of subtrees without changing the result.  In the unital case we may take $T$ to be the empty $\mathbb{V}$-colored trees, which gives us maps $1_\op{C} \to \op{P}(v,v)$ which are compatible with the contractions.

\begin{example}  Every uncolored operad is a groupoid colored operad for the trivial groupoid.
	Every set colored operad is a groupoid colored operad for the groupoid whose objects are the set of colors and which has only identity morphisms.
\end{example}

\begin{example}{(Free operads).} By general considerations in the theory of Monads, the forgetful functor which takes a $\mathbb{V}$-colored operad to its underlying $\mathbb{V}$-colored sequence has a left adjoint given simply by evaluation.
Here we may consider both the unital and non-unital variants of this free operad construction.  If we denote them by denoted $\overline{F}$ and $F$ respectively,  they are related  by:
\begin{equation}\label{freeunital}
\overline{F}(A)(\vec{v})=\begin{cases}
1_\op{C}\coprod F(A)(\vec{v}) & \text{ if } \vec{v}=(v;v) \text{ for some } v\in ob(\mathbb{V})  \\
F(A)(\vec{v}) & \text{else}
\end{cases}
\end{equation}
\end{example}

\begin{example} {(Endomorphism operads)}.  A functor $X\colon \mathbb{V}\to\op{C}$ will be called a {\bf $\mathbb{V}$-module}. To such an $X$ we associate the endomorphism $\mathbb{V}$-colored sequence:
	\begin{equation*}
	End_X(v_1\cdc v_n; v_0) := Hom_\op{C}(X(v_1)\tensor\dots\tensor X(v_n),X(v_0)),
	\end{equation*}
	with the inherited $Aut(v_i)$ actions and $S_n$ action by permuting the inputs.  This $\mathbb{V}$-colored sequence carries the natural structure of a unital $\mathbb{V}$-colored operad via composition of functions. 	Note that associativity of composition of functions ensures both associativity of the operadic structure maps $\eta_T$ (as usual) but also compatibility with the action of the groups $Edges(T)$.
\end{example}

\begin{example} {(Feynman categories).}  A Feynman category \cite{KW} with vertices $\mathbb{V}$ gives rise to a $\mathbb{V}$-colored operad in sets.  In this way we can use the examples given in \cite{KW} to build groupoid colored operads which encode operads, cyclic operads, modular operads etc.  We do not assume familiarity with {\it loc.\ cit.,} and give a detailed construction of the groupoid colored operad which encodes modular operads below (Section $\ref{secmodops}$).
	\end{example}

\begin{example}\label{I} (Initial operad).  Since $\op{C}$ is closed monoidal, there is a unique non-unital operad formed by taking the initial object in each color scheme. Likewise, there is a unique unital operad structure on the following $\mathbb{V}$-colored sequence:	\begin{equation*}
	\op{I}(\vec{v})=\begin{cases}
	\coprod_{Aut(v)}1_\op{C} & \text{ if } \vec{v}=(v;v) \text{ for some } v\in ob(\mathbb{V})  \\
	\text{initial object in } \op{C} & \text{else}
	\end{cases}
	\end{equation*}
Any unital operad $\op{P}$ admits a unique morphism $\op{I} \to \op{P}$.

\end{example}

\begin{example} (Trivial operad).  There is a non-unital operad formed by taking the terminal object (when it exists) in every color scheme.  Similarly, there is a unique unital operad structure, denoted $\op{T}$, on the $\mathbb{V}$-colored sequence:
	\begin{equation*}
	\op{T}(\vec{v})=\begin{cases}
	1_\op{C} & \text{ if } \vec{v}=(v;v) \text{ for some } v\in ob(\mathbb{V})  \\
	\text{terminal object in } \op{C} & \text{else}
	\end{cases}
	\end{equation*}
\end{example}

\begin{definition}\label{augdef}  An augmentation of a unital groupoid colored operad is a morphism $\op{P}\to\op{T}$.  An augmented operad is an operad along with an augmentation.  If $\op{C}$ is an Abelian category, the augmentation ideal is the kernels of these maps, and is denoted $\overline{\op{P}}$.
\end{definition}

\subsubsection{Monoidal Definition}  The original definition of a groupoid colored operad was given in \cite[Definition 3.8]{Pet} as a monoid for a monoidal product on the category of what we have called $\mathbb{V}$-colored sequences.  To be precise, \cite{Pet} does not make the restriction that the category $\mathbb{V}$ is a groupoid (see however his Remark 3.13), nor does he make the restriction that trees have at least one input as we have done (see Remark $\ref{arityzerormk}$ below).  With these restrictions we prove the equivalence of the monoidal and monadic definitions of groupoid colored operads.

\begin{definition}\label{levels} 
	A $\mathbb{V}$-colored rooted tree is {\bf level} if the unique shortest path from each leaf to the root passes through the same number of vertices.  We say a tree has $n$ levels if it is a level tree for which this number is $n$.  We define the empty $\mathbb{V}$-colored trees to be level with $n=0$.
\end{definition}

In any rooted tree the set of vertices has a partial order given by saying $w_i\leq w_j$ if $w_i$ lies on the unique shortest path between $w_j$ (inclusive) and the root.  We define the height of a vertex $ht(w)$ to be the number of vertices which are less than or equal to it in this partial order.

\begin{definition} \label{monoidaldef}  Let $T$ be a tree with $n$ levels and $A_1\cdc A_n$ be $\mathbb{V}$-colored sequences.  Define:
\begin{equation*}
(A_1\circ ... \circ A_n) (T) := \left[ \ds\bigotimes_{w\in \text{Vert}(T)} A_{ht(w)}(w)\right]_{\text{Edges}(T)}
\end{equation*}	
\end{definition}

The notation is as in Section $\ref{ATSec}$.  We then define

\begin{equation}
(A_1\circ ... \circ A_n)(\vec{v}) := \ds\coprod_{\substack{  \text{$T$ of type }  \vec{v}  \\ \text{with $n$ levels} }} (A_1\circ ... \circ A_n)(T) 
\end{equation}
where the coproduct is taken over isomorphism classes of possibly empty $\mathbb{V}$-colored trees. Taken over all $\vec{v}$, this gives $A_1\circ ... \circ A_n$ the structure of a $\mathbb{V}$-colored sequence. 

\begin{lemma} \cite[Proposition 3.6]{Pet} 
The operation $\circ$ is a monoidal product on the category of $\mathbb{V}$-colored sequences with monoidal unit $\op{I}$ (as in Example $\ref{I}$). 
\end{lemma}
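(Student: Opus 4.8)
The plan is to verify the monoid axioms for $\circ$ directly, using the combinatorial facts about substitution of (possibly empty) $\mathbb{V}$-colored trees already established. First I would unwind the definition of $(A_1\circ\cdots\circ A_n)(\vec v)$ by noting that a level tree $T$ with $n$ levels decomposes canonically into its horizontal slices: for each height $h\in\{1,\dots,n\}$, the vertices of height $h$ are the vertices of a disjoint union of $\mathbb{V}$-colored trees $U^{(h)}_\bullet$, one for each vertex at height $h-1$ (with a single root corolla at $h=1$). This is precisely the ``total substitution'' picture of Lemma $\ref{totalsub}$ and Lemma $\ref{samevertices}$, iterated. Thus a level tree with $n$ levels of type $\vec v$ is the same data as a choice of $1$-level tree together with, inductively, a choice of fl$(w)$-tree with one level for each leaf $w$ of the previous stage. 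This identification is what makes $\circ$ both associative and unital.

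For \textbf{associativity}, I would show $(A_1\circ\cdots\circ A_k)\circ(A_{k+1}\circ\cdots\circ A_n)$ and $(A_1\circ\cdots\circ A_j)\circ(A_{j+1}\circ\cdots\circ A_n)$ both compute, on $\vec v$, the coproduct over $n$-level trees $T$ of type $\vec v$ of $\big[\bigotimes_{w\in\mathrm{Vert}(T)}A_{ht(w)}(w)\big]_{\mathrm{Edges}(T)}$. The key point is that an $n$-level tree is equivalently a $k$-level tree $T'$ with an $(n-k)$-level tree grafted at each leaf, and the edges of $T$ are in bijection with the edges of $T'$ disjoint union the edges of the grafted pieces (Lemma $\ref{sameedges}$, and its extension to empty trees via the surjection $\ref{s}$). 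The commutation of $\bigotimes$ with the coproducts in each variable — legitimate because $\op{C}$ is closed symmetric monoidal, exactly as in the proof of Theorem $\ref{monadthm}$ — then lets one rewrite the nested coproduct. This is essentially the computation in the proof of Theorem $\ref{monadthm}$, carried out with the height-labeled decoration $A_{ht(w)}$ in place of a single $A$. The case that some grafted $U_w$ is empty is handled exactly as in the second monad theorem: the free $Aut(v)$-action on $A(|_v)$ cancels the tensor factor against the edge-identification in $\ref{s}$.

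For \textbf{unitality}, I would check $\op{I}\circ A\cong A\cong A\circ\op{I}$. Here one uses that an $n$-level tree all of whose height-$h$ vertices have type $(v;v)$ contributes, via the $A_h=\op{I}$ slots and the definition $\op{I}(v;v)=\coprod_{Aut(v)}1_\op{C}$, a copy of $1_\op{C}$ per such vertex, and by the argument in the second monad theorem these ``unit vertices'' can be absorbed by the edge coinvariants: grafting an empty tree $|_v$, equivalently contracting a valence-two vertex of type $(v;v)$ (Subsection $\ref{emptytree}$), identifies the level-$n$ tree with a level-$(n-1)$ one without loss of information. Peeling off the top or bottom level consisting entirely of such unit corollas identifies $\op{I}\circ A$ and $A\circ\op{I}$ with the colored sequence $A$ viewed as concentrated on $1$-level (i.e.\ bare corolla) trees. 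One must also check the coherence (pentagon and triangle) diagrams, but with the explicit ``tree with marked level structure'' model these are all instances of the associativity/independence-of-order statements in Lemmas $\ref{commutativesub}$ and $\ref{totalsub}$, so they commute on the nose.

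The main obstacle I anticipate is purely bookkeeping: making the slicing of a level tree into horizontal layers precise and functorial, and tracking the edge-coinvariants correctly across the grafting boundaries — in particular ensuring the $Aut(v_E)$-coinvariants at an edge $E$ that is ``cut'' by the level decomposition match up with the identification $\mathrm{Edges}(T)\cong\bigsqcup\mathrm{Edges}(\text{slices})$ and, when empty trees intervene, with the surjection $\ref{s}$. None of this is conceptually hard given Lemmas $\ref{sameedges}$–$\ref{samevertices}$ and the closed monoidal hypothesis on $\op{C}$; it is the same mechanism that powered Theorem $\ref{monadthm}$, now applied levelwise. Since the lemma is attributed to \cite[Proposition 3.6]{Pet}, I would also remark that the only new content here is checking that the restriction to groupoid $\mathbb{V}$ and to trees with at least one input does not disturb the argument, which it does not: the monoidal unit remains $\op{I}$, and the arity-$\geq 1$ restriction is preserved under grafting.
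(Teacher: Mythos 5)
Your verification is correct, but it is worth noting that the paper itself offers no argument here: it simply cites \cite[Proposition 3.6]{Pet}, where the statement is proved in greater generality (arbitrary small categories of colors and non-reduced sequences), whereas you give a self-contained check in the restricted setting by recycling the computation from Theorem $\ref{monadthm}$ and its unital variant. Your route is sound: binary associativity amounts to the bijection between $n$-level trees and $k$-level trees with $(n-k)$-level trees substituted at their vertices, with $\mathrm{Edges}$ matching up by Lemma $\ref{sameedges}$ and the tensor/coproduct interchange justified by the closed monoidal hypothesis, exactly as in the monad proof; unitality is the free-$Aut(v)$-action cancellation of $\op{I}(v;v)=\coprod_{Aut(v)}1_\op{C}$ against the edge coinvariants, which is the mechanism used for $\overline{\mathbb{T}}_\mathbb{V}$. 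One small correction: empty trees and the surjection $\ref{s}$ play no role in the associativity step, since the trees expanded at a vertex of a $2$-level tree are themselves $2$-level and hence nonempty; the only place the unit/absorption mechanism is genuinely needed is the unit isomorphisms $\op{I}\circ A\cong A\cong A\circ\op{I}$ (where trees whose $\op{I}$-labelled vertices are not of arity one and type $(v;v)$ contribute the initial object and drop out), which you handle correctly. With that caveat, and the coherence diagrams reduced as you say to Lemmas $\ref{commutativesub}$ and $\ref{totalsub}$, your argument gives an acceptable direct proof of what the paper imports from \cite{Pet}.
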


\begin{definition}\cite[Definition 3.8]{Pet}  A $\mathbb{V}$-colored operad is a monoid in the monoidal category of $\mathbb{V}$-colored sequences.
\end{definition}

\begin{theorem}  The monoidal and (unitial) monadic definitions of groupoid colored operads are equivalent.
\end{theorem}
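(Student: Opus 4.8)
The plan is to establish a mutually inverse pair of constructions between monoids for the product $\circ$ and algebras over the monad $\overline{\mathbb{T}}_\mathbb{V}$, and then to check that these constructions are compatible with morphisms, i.e.\ that they assemble into an equivalence of categories. The bridge between the two pictures is the observation that an $n$-fold product $A_1\circ\dots\circ A_n$ is built out of level trees, and every possibly-empty $\mathbb{V}$-colored tree becomes level after inserting arity-one identity vertices (empty colored trees $|_v$) along flags to equalize path lengths. Concretely, I would first record a natural isomorphism $\overline{\mathbb{T}}_\mathbb{V}(A)\cong \coprod_{n\geq 0}\, (\op{I}^{\circ a}\circ A\circ \op{I}^{\circ b})/\!\sim$ relating the monad, applied levelwise with at most one ``genuine'' factor $A$, to iterated $\circ$-products with the unit $\op{I}$; the relation $\sim$ records that padding with copies of $\op{I}$ does not change the tree, which is exactly the cancellation of free $Aut(v)$-factors used in the proof that $\overline{\mathbb{T}}_\mathbb{V}$ is a monad. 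Said differently, $\overline{\mathbb{T}}_\mathbb{V}$ is the free monoid monad for $(\circ,\op{I})$, by the standard argument that the free monoid on $A$ in a monoidal category with suitable colimits is $\coprod_n A^{\circ n}$ — here truncated/collapsed because $\op{I}$ is the unit and arity-one padding is invisible.

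Granting that identification, the equivalence is formal: for a monoid $(\op{P},\mu\colon \op{P}\circ\op{P}\to\op{P},\, u\colon \op{I}\to\op{P})$ one builds the monad-algebra structure map $\overline{\mathbb{T}}_\mathbb{V}(\op{P})\to\op{P}$ by iterating $\mu$ and inserting $u$ on the padding factors; associativity of $\mu$ and unitality with respect to $u$ give exactly the two monad-algebra axioms, using the monad multiplication of $\overline{\mathbb{T}}_\mathbb{V}$ described in the proof of Theorem $\ref{monadthm}$. Conversely, an algebra $\gamma\colon\overline{\mathbb{T}}_\mathbb{V}(\op{P})\to\op{P}$ restricts, on the summands indexed by two-level trees, to a map $\op{P}\circ\op{P}\to\op{P}$, and on the summands indexed by empty colored trees to $\op{I}\to\op{P}$; the monad axioms (restricted to the relevant level-$\leq 2$ and level-$\leq 3$ trees) force these to be a unital associative monoid structure. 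I would then check that the two assignments are inverse to each other — the monoid recovered from $\gamma$ reproduces $\gamma$ because every $n$-level tree contraction factors, by the monad-algebra axiom, through the corresponding iterated two-level contractions — and that both are functorial, so they yield inverse equivalences of categories. This is again the standard ``monoid $=$ algebra for the free-monoid monad'' dictionary, adapted to the present combinatorial monad.

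The step I expect to be the main obstacle is the first one: pinning down precisely the natural isomorphism between $\overline{\mathbb{T}}_\mathbb{V}$ and the free-monoid monad for $\circ$, because the bookkeeping is delicate on two fronts. First, the passage from an arbitrary (possibly empty) $\mathbb{V}$-colored tree to a level tree is not unique — one must choose where to insert the arity-one padding vertices $|_v$ — so one has to verify that the resulting summand of an iterated $\circ$-product is independent of this choice, which is exactly where the freeness of the $Aut(v)$-action on $A(|_v)$ and the cancellation argument from the second monad theorem re-enter. Second, one must confirm that the monad multiplication of $\overline{\mathbb{T}}_\mathbb{V}$ (total substitution of trees) matches the free-monoid multiplication $(A^{\circ m})\circ(A^{\circ n})\to A^{\circ(m+n)}$ under this identification, including the compatibility with the coinvariants over $\mathrm{Edges}$ and the unordered tensor/coproduct manipulations; this is routine but must be done with care, since it is precisely the content that makes the monoidal and monadic pictures coincide rather than merely resemble one another. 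Once this dictionary is set up cleanly, the rest of the proof is bookkeeping of the kind already carried out in the proofs above, and I would refer to \cite{LV}*{Section 5.8} and \cite{MSS} for the uncolored template.
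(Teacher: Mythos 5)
Your proposal is correct and is essentially the paper's argument: one direction obtains the monoid multiplication and unit by restricting the $\overline{\mathbb{T}}_\mathbb{V}$-algebra structure maps to level trees and to the empty colored trees, and the other direction extends a monoid structure to an arbitrary tree $T$ by grafting unit-labeled arity-one vertices at the leaves of sub-maximal height to produce a canonical level tree $\hat{T}$ with $A(T)\cong A(\hat{T})$ --- exactly the padding-by-$|_v$ mechanism, with the free $Aut(v)$-action supplying the cancellation, that you describe. The one caveat is that your preliminary identification of $\overline{\mathbb{T}}_\mathbb{V}$ with the free monoid monad is neither needed nor correct as displayed (a tree with several $A$-labeled vertices is not of the form $\op{I}^{\circ a}\circ A\circ \op{I}^{\circ b}$, and since $\circ$ fails to preserve coproducts in its right-hand variable the free monoid is not the naive $\coprod_n A^{\circ n}$; one would have to mix $A$- and $\op{I}$-labeled vertices within each level), while the paper's canonical choice of leveling, with all padding pushed to the top, also disposes of the independence-of-choices issue you flag.
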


\begin{proof}
This result is the groupoid colored analog of \cite[Theorem 1.68]{MSS}, and we merely sketch the equivalence.  Given a $\overline{\mathbb{T}}_\mathbb{V}$-algebra $A$ we have maps $\eta_{T} \colon A(T)\to A(\vec{v})$ (Formula $\ref{operadstructuremap})$ for each tree, and so in particular for each level tree.  The coproduct of such maps gives us $A^{\circ n}(\vec{v})\to A(\vec{v})$ as desired.  In addition, the operations corresponding to the empty tree allow us to define the unit for the monoid structure $\op{I}\to A$.  

On the other hand given a monoid $A$ for $\circ$ we have an operation $A(T)\to A(\vec{v})$ for every level tree $T$.  Given a tree $T$ which is not level, there is a unique level tree $\hat{T}$ with a specified isomorphism $A(T)\stackrel{\cong}\to A(\hat{T})$, constructed by grafting vertices labeled by the monoidal unit $\op{I}(v,v)$ to those leaves of $T$
whose adjacent vertices are of sub-maximal height
 until the result is level.  This gives us the structure map $\eta_{A,T}\colon A(T)\stackrel{\cong}\to A(\hat{T})\to A(\vec{v})$.  
The verification that associativity of the monoidal product corresponds to the associativity condition for algebras over a monad is straightforward.	
\end{proof}	

\begin{remark}  \label{arityzerormk}
	When considering Definition $\ref{monoidaldef}$, it should be recalled (Definition $\ref{vcoloredtree}$) that we consider only reduced trees, meaning all vertices have arity $\geq 1$.  However, it is possible to drop this assumption while allowing our $\mathbb{V}$-colored sequences to have empty inputs to produce a monoidal product for such non-reduced $\mathbb{V}$-colored sequences (compare \cite[Definition 3.8]{Pet}).  
\end{remark}

\subsubsection{Algebras over operads.}
Let $X$ be a $\mathbb{V}$-module and $\op{P}$ be a unital $\mathbb{V}$-colored operad.  We define a $\op{P}$-{\bf algebra structure on} $X$ to be a $\mathbb{V}$-colored operad map $\op{P}\to End_X$.  Such a $\op{P}$-algebra structure specifies an $S_{|\vec{v}|}$-invariant family of $Aut(v_0)$-equivariant maps
\begin{equation}\label{algmaps}
\op{P}(\vec{v})\tensor_{in(\vec{v})} X(in(\vec{v}))\to X(v_0)
\end{equation}
where $X(in(\vec{v})):=\tensor_{i\geq 1} X(v_i)$ and where $\tensor_{in(\vec{v})}$ means coinvariants with respect to the $\times_{i\geq1}Aut(v_i)$ action.

We may also give a monoidal characterization of algebras over groupoid colored operads.  First, to a $\mathbb{V}$-module $X$ we may associate a non-reduced $\mathbb{V}$-colored sequence by defining $X(\emptyset; v):=X(v)$ and $X(\vec{v})$ to be the initial object otherwise.  If $A$ is any $\mathbb{V}$-colored sequence then the non-reduced composition $A\circ X$ is a non-reduced $\mathbb{V}$-colored sequence which is non-trivial only for color schemes $\vec{v}$ having empty inputs.  We may thus consider $A\circ X$ to be a $\mathbb{V}$-module.  Explicitly:

\begin{equation*}
A\circ X(v_0) = \left[\ds\coprod_{\vec{v}=(-\cdc-;v_0)} A(\vec{v})\tensor_{in(\vec{v})} X(in(\vec{v})) \right]_{S_{|\vec{v}|}}
\end{equation*}

A $\op{P}$-algebra structure on $X$ determines a morphism of $\mathbb{V}$-modules $\op{P}\circ X \to X$ via the maps in Formula $\ref{algmaps}$, and a $\op{P}$-algebra structure on $X$ may be characterized as such a morphism which is associative and unital with respect to the monoid structure on $\op{P}$, compare \cite[p.\ 133]{LV} in the uncolored case.  With this, \cite[Propoition 5.2.1]{LV} carries over verbatim to prove:

\begin{lemma} 
	The forgetful functor from $\op{P}$-algebras to $\mathbb{V}$-modules has a left adjoint $\mathsf{F}_\op{P}$ defined by  $\mathsf{F}_\op{P}(X)=\op{P}\circ X$ with algebra structure maps $\op{P}\circ\mathsf{F}_\op{P}\to \op{P}\circ X $ induced by the monoid structure $\op{P}\circ \op{P} \to \op{P}$.	
\end{lemma}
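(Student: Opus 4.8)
The plan is to mirror the classical argument (Loday--Vallette Proposition 5.2.1) showing the free $\op{P}$-algebra on $X$ is $\op{P}\circ X$, checking at each step that the groupoid actions and the unordered monoidal products cause no trouble. First I would make precise the monoidal characterization of a $\op{P}$-algebra alluded to just above the statement: a $\op{P}$-algebra structure on the $\mathbb{V}$-module $X$ is the same as a morphism of $\mathbb{V}$-modules $\gamma_X\colon\op{P}\circ X\to X$ which is associative with respect to the monoid multiplication $\mu\colon\op{P}\circ\op{P}\to\op{P}$ — i.e. $\gamma_X\circ(\mu\circ\mathrm{id}_X)=\gamma_X\circ(\mathrm{id}_\op{P}\circ\gamma_X)$ as maps $\op{P}\circ\op{P}\circ X\to X$ — and unital with respect to $\op{I}\to\op{P}$. (Here I use that, by Remark~\ref{arityzerormk}, the non-reduced composition $\circ$ extends to $\mathbb{V}$-colored sequences with empty inputs, and that $\op{P}\circ(\op{Q}\circ X)\cong(\op{P}\circ\op{Q})\circ X$ for $\mathbb{V}$-colored sequences $\op{P},\op{Q}$ and a $\mathbb{V}$-module $X$, which follows from the associativity of $\circ$ together with the explicit formula for $A\circ X$ — expanding $\op{P}\circ(\op{Q}\circ X)$ over two-level trees whose bottom vertex absorbs the $X$-inputs reproduces $(\op{P}\circ\op{Q})\circ X$ summed over the same trees.)

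With this in hand, given a $\mathbb{V}$-module $X$ I would set $\mathsf{F}_\op{P}(X):=\op{P}\circ X$ and equip it with the $\mathbb{V}$-module structure it already carries as a composition; the algebra structure map is $\mu\circ\mathrm{id}_X\colon\op{P}\circ(\op{P}\circ X)\cong(\op{P}\circ\op{P})\circ X\to\op{P}\circ X$. Associativity of this structure map is exactly associativity of $\mu$ (post-composed with $\mathrm{id}_X$ and transported across the canonical iso), and unitality is unitality of $\mu$ against $\op{I}\to\op{P}$; both are immediate once the iso $\op{P}\circ(\op{P}\circ X)\cong(\op{P}\circ\op{P})\circ X$ is known to be natural and compatible with the relevant structure maps, which I would check diagrammatically. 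Next I would construct the adjunction bijection
\begin{equation*}
\mathrm{Hom}_{\op{P}\text{-alg}}(\mathsf{F}_\op{P}(X),Y)\;\cong\;\mathrm{Hom}_{\mathbb{V}\text{-mod}}(X,Y).
\end{equation*}
A $\mathbb{V}$-module map $f\colon X\to Y$ is sent to $\gamma_Y\circ(\mathrm{id}_\op{P}\circ f)\colon\op{P}\circ X\to\op{P}\circ Y\to Y$; conversely an algebra map $g\colon\op{P}\circ X\to Y$ is restricted along the unit $X\cong\op{I}\circ X\to\op{P}\circ X$. That these are mutually inverse is a two-diagram chase: one direction uses unitality of $\gamma_Y$, the other uses that $g$ intertwines the two structure maps together with unitality of $\mu$. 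Naturality in $X$ and $Y$ is formal. Finally I would note that a left adjoint is automatically a functor, so no separate construction on morphisms is needed; alternatively $\mathsf{F}_\op{P}(h)=\mathrm{id}_\op{P}\circ h$ on a $\mathbb{V}$-module map $h$.

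The step I expect to be the only real point requiring care is the natural isomorphism $\op{P}\circ(\op{P}\circ X)\cong(\op{P}\circ\op{P})\circ X$ and, more basically, the compatibility of the level-tree bookkeeping defining $\circ$ with the passage to the non-reduced setting of Remark~\ref{arityzerormk} — one must be sure that absorbing the $X$-legs into arity-zero-at-the-leaves vertices is compatible with the $Aut(v)$-coinvariants along edges, so that the unordered tensor products and the $\mathrm{Edges}(T)$-coinvariants match up on both sides. Once that bookkeeping is in place, everything else is a transcription of \cite[Proposition 5.2.1]{LV}, and indeed the excerpt already asserts that that proposition ``carries over verbatim''; the proof I would write simply spells out the verbatim translation, pointing to the associativity of $\circ$ (the groupoid-colored analog of which is \cite[Proposition 3.6]{Pet}) and the monoid axioms for $\op{P}$ as the inputs.
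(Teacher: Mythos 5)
Your proposal is correct and follows essentially the same route as the paper, which simply observes that after establishing the monoidal characterization of $\op{P}$-algebras, \cite[Proposition 5.2.1]{LV} carries over verbatim; you spell out the adjunction bijection and the associativity isomorphism $\op{P}\circ(\op{P}\circ X)\cong(\op{P}\circ\op{P})\circ X$ that the paper leaves implicit, with appropriate attention to the $Aut(v)$-coinvariants along edges.
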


\begin{remark}\label{non-unital algebras}  Algebras over non-unital operads are defined as follows.  The forgetful functor from unital to non-untial $\mathbb{V}$-colored operads has a left adjoint given by $-\oplus \op{I}$ (as in Example $\ref{I}$).  We define $\underline{\op{P}}:= \op{P}\oplus \op{I}$ for a non-unital operad $\op{P}$.
We may then equivalently define a $\op{P}$-algebra structure on a $\mathbb{V}$-module to be either a unital $\mathbb{V}$-colored operad map $\underline{\op{P}}\to End_X$ or a non-unital $\mathbb{V}$-colored operad map $\op{P}\to End_X$.  In particular, for $\op{P}$ non-unital the free $\op{P}$-algebra functor is given by $\mathsf{F}_\op{P}(X)= \mathsf{F}_{\underline{\op{P}}}(X) = \underline{\op{P}}\circ X\cong (\op{P}\circ X)\coprod X$.
\end{remark}

\subsection{Algebraic $\mathbb{V}$-colored cooperads.}\label{coopsec}

In this paper we will not study the categorical dual of the notion of a groupoid colored operad in full generality.  Rather, after \cite{LV}, we will consider a specialization suited to Koszul duality.  These will be called conilpotent groupoid colored cooperads.   

For this, we now narrow our focus in two ways.  First, we fix the base category $\op{C}$ to be the category of dg vector spaces over a field $k$ of characteristic $0$ with homological grading conventions.  Second we impose the following assumption, which we carry for remainder of this paper:
\begin{assumption}\label{assumption:finiteaut}
	The group Aut$(v)$ is finite for every $v\in ob(\mathbb{V})$.
\end{assumption}

This assumption would eventually be needed below, in that several constructions require summing over automorphism groups, but imposing it at this point allows us the convenience of identifying invariants with coinvariants in the definition of cooperads and their coalgebras.

\subsubsection{The comonad structure on $\mathbb{T_V}$.}
Recall our calculation:

\begin{equation*}
\mathbb{T_V}\circ\mathbb{T_V}(A)(\vec{v}) \cong \ds\bigoplus_{\substack{ T, \{U_w\}  \\ T \text{ of type }  \vec{v} \text{ and}  \\  T\circ\{U_w\} \text{  well defined.}}} \left[\ds\bigotimes_{ x \in \coprod_w\text{Vert}(U_w)} A(x)\right]_{\text{Edges}(T \circ \{U_w\})}
\end{equation*}
For a $\mathbb{V}$-colored tree $R$ of type $\vec{v}$ we define a map
$A(R) \to \mathbb{T_V}\circ\mathbb{T_V}(A)(\vec{v})$ as follows.  Pick a summand, thus fixing $T$ and $\{U_w\}$ for which the total substitution $T\circ\{U_w\}$ is defined.  If $R=T\circ\{U_w\}$, then this summand is canonically isomorphic to $A(R)$ and we include via this isomorphism.  If $R\neq T\circ\{U_w\}$ then the map to this summand is $0$.  Observe that for any tree $R$ there are only finitely many total substitutions, since each total substitution is specified by a fixed quantity of (possibly empty) subsets of the finite set of edges.  Thus this map lands in the direct sum.  It is also clearly $Aut(\vec{v})$-equivariant.  Taking the coproduct of these maps induces a natural transformation $\mathbb{T_V}\Rightarrow \mathbb{T_V}\circ\mathbb{T_V}$, and associativity of the total substitution (Lemma $\ref{totalsub}$) ensures this natural transformation is comonadic, with counit given by projection to corollas.

\begin{definition} \label{cooperads} A conilpotent $\mathbb{V}$-colored cooperad is a coalgebra over the comonad $\mathbb{T_V}$.
\end{definition}
All cooperads we consider in this paper are conilpotent unless explicitly stated.  The reader expecting conilpotent cooperads to come equipped with a counit is referred to Remark $\ref{counitrmk}$.  Note the functor $\overline{\mathbb{T}}_\mathbb{V}$ does not have a similar comonadic structure, since in the presence of the empty $\mathbb{V}$-colored trees there are infinitely many total substitutions which can result in a given tree.

By definition, then, a conilpotent $\mathbb{V}$-colored cooperad $\op{Q}$ comes with a morphism $\op{Q}(\vec{v}
)\to \op{Q}(T)$ for every $\mathbb{V}$-colored tree of type $\vec{v}$, which we call blowing up or expanding the tree.  Since $\mathbb{T_V}$ is defined using a direct sum, for each vector in $\op{Q}(\vec{v})$, there are only finitely many such expansions which are non-zero.  Notice that in the $\mathbb{V}$-colored case, this is a substantially more restrictive condition than in the uncolored case.  In particular there may be infinitely many $\vec{v}$-trees with one edge, but only finitely many such expansions can be non-zero. The sum of such expansions specifies a coassociative map of $\mathbb{V}$-colored sequences $\Delta_\op{Q}\colon \op{Q}\to\op{Q}\circ\op{Q}$.

By general considerations in the theory of comonads, the forgetful functor which takes a $\mathbb{V}$-colored cooperad to its underlying $\mathbb{V}$-colored sequence has a right adjoint, which we denote by $F_c$,
Explicitly, $F_c(A)=\mathbb{T_V}(A)$, with cooperad structure given by the comonadic structure map $\mathbb{T_V}(A)\to \mathbb{T_V}(A)\circ\mathbb{T_V}(A)$.

\begin{remark}\label{counitrmk}  In the uncolored case, our definition of a conilpotent cooperad is equivalent but not equal to the definition given in \cite{LV}, which is given as a condition on coaugmented and counital cooperads.  Concretely, if $\op{Q}$ is a conilpotent cooperad in the sense of Definition $\ref{cooperads}$ then the $\mathbb{V}$-colored sequence $\underline{\op{Q}} : = \op{I}\oplus \op{Q}$ is a conilpotent cooperad in the sense of \cite{LV} (see {\it loc.cit.} Proposition 5.8.5).  With this equivalence in mind we may refer to a $\mathbb{V}$-colored sequence as a {\bf unital conilpotent cooperad} if it is isomorphic to a $\mathbb{V}$-colored sequence of the form $\op{I}\oplus \op{Q}$, for a (non-unital) conilpotent cooperad $\op{Q}$.
\end{remark}

\subsubsection{Coalgebras over cooperads.}  

\begin{definition}\label{conilcoalg}  Let $\op{Q}$ be a conilpotent cooperad.    A conilpotent $\op{Q}$-coalgebra is a map of $\mathbb{V}$-colored sequences $\eta_X\colon X\to \op{Q}\circ X$ such that $(\Delta_\op{Q} \circ id_X) \circ \eta_X = (id_\op{Q} \circ \eta_X) \circ \eta_X$.  For brevity, we refer to conilpotent $\op{Q}$-coalgebas as just $\op{Q}$-coalgebras from now on.
\end{definition}

In particular, a $\op{Q}$-coalgebra structure on $X$ is a collection of morphisms
	\begin{equation*}
	X(v_0)\to \left[ \ds\bigoplus_{\vec{v}=(-...-; v_0)}\op{Q}(\vec{v})\tensor_{in(\vec{v})} X(in(\vec{v})) \right]_{S_{|\vec{v}|}}
	\end{equation*}
	
which are compatible with cooperadic tree expansions in $\op{Q}$.  We remark that a $\op{Q}$-coalgebra structure always extends to a map $X \to \underline{\op{Q}}\circ X$ via the identity.  

\begin{lemma} \label{cofree coalgebra} The forgetful functor from $\op{Q}$-coalgebras to $\mathbb{V}$-modules has a right adjoint, which we denote $F^c_\op{Q}$.  It is given by the formula $F^c_\op{Q}(X) = \underline{\op{Q}}\circ X = (\op{Q}\oplus \op{I})\circ X \cong (\op{Q}\circ X)\oplus X$.
\end{lemma}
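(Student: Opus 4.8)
The plan is to verify directly that the $\mathbb{V}$-module $\underline{\op{Q}}\circ X = (\op{Q}\oplus\op{I})\circ X$ carries a natural $\op{Q}$-coalgebra structure and that it satisfies the universal property of a cofree coalgebra, mirroring the argument for $F^c$ (cofree cooperads) and for uncolored cofree coalgebras in \cite[Chapter 5]{LV}. First I would record the identification $\underline{\op{Q}}\circ X \cong (\op{Q}\circ X)\oplus X$, which follows from the fact that $\op{I}$ is the monoidal unit for $\circ$ together with the explicit description of $\circ X$ on $\mathbb{V}$-modules: a summand of $\underline{\op{Q}}\circ X(v_0)$ indexed by a color scheme $\vec{v}=(v_1\cdc v_n;v_0)$ with empty inputs and an $n$-level (possibly empty) tree decomposes according to whether the bottom level is labeled by a generator of $\op{Q}$ or by the unit $\op{I}(v;v)$, and in the latter reduced case collapses to a copy of $X(v_0)$.

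Next I would construct the corequired coalgebra structure map $\eta\colon \underline{\op{Q}}\circ X \to \op{Q}\circ(\underline{\op{Q}}\circ X)$. The natural candidate is induced by the comonad comultiplication $\Delta_{\op{Q}}\colon \op{Q}\to\op{Q}\circ\op{Q}$ composed with the identity on $X$; concretely, given an element of $\op{Q}(\vec{v})\tensor_{in(\vec{v})}X(in(\vec{v}))$ one applies the cooperadic tree expansion to the $\op{Q}$-factor and regroups the tree according to its two bottom levels. On the $X$ summand one uses the inclusion $X \hookrightarrow \op{I}\circ X \hookrightarrow \op{Q}\circ(\underline{\op{Q}}\circ X)$ coming from the coaugmentation $\op{I}\to\underline{\op{Q}}$. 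Coassociativity of $\eta$ in the sense of Definition $\ref{conilcoalg}$ then follows from coassociativity of $\Delta_{\op{Q}}$ (the comonad axiom established in the preceding subsection) and the unit axiom for the comonad; the conilpotency/finiteness condition is automatic because, as observed after Definition $\ref{cooperads}$, only finitely many tree expansions of each element of $\op{Q}(\vec{v})$ are nonzero, so the corresponding sum in $\op{Q}\circ(\underline{\op{Q}}\circ X)$ is finite.

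For the adjunction itself I would exhibit the counit $\varepsilon\colon F^c_\op{Q}(X)\to X$ as the projection $(\op{Q}\circ X)\oplus X \twoheadrightarrow X$ onto the reduced summand, and check that precomposition with $\varepsilon$ gives a bijection
\begin{equation*}
\mathrm{Hom}_{\op{Q}\text{-coalg}}(Y, F^c_\op{Q}(X)) \xrightarrow{\ \cong\ } \mathrm{Hom}_{\mathbb{V}\text{-mod}}(Y, X).
\end{equation*}
Given $f\colon Y\to X$, the unique coalgebra lift $\tilde f\colon Y\to \underline{\op{Q}}\circ X$ is forced: using the $\op{Q}$-coalgebra structure $\eta_Y\colon Y\to\op{Q}\circ Y$ one iterates to land in $\op{Q}^{\circ k}\circ Y$, applies $f$ on the last factor, and sums over $k\geq 0$ (the $k=0$ term landing in the $X$ summand); conilpotency of $\eta_Y$ guarantees the sum is finite in each arity and color. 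One then checks $\varepsilon\circ\tilde f = f$ and that $\tilde f$ is the only coalgebra morphism with this property by an induction on the number of levels, exactly as in \cite[Proposition 5.8.5 / 5.2.x]{LV}.

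The main obstacle, and the only place the groupoid-colored setting genuinely intervenes, is bookkeeping the (co)invariants: one must check that the tree-expansion maps, the regrouping by levels, and the summations over $k$ are all compatible with the $\times_i Aut(v_i)$-actions on inputs, the $Aut(v_0)$-action on the output, the $S_n$-actions permuting inputs, and the edge-automorphism coinvariants defining $A(T)$ in $\S\ref{ATSec}$. Here Assumption $\ref{assumption:finiteaut}$ and the characteristic-zero hypothesis are what make invariants agree with coinvariants, so these identifications are exact rather than merely up to natural transformations; modulo this verification — which is routine but notationally heavy — the proof is a direct transcription of the uncolored argument.
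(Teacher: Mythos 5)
Your overall plan (decompose $\underline{\op{Q}}\circ X\cong(\op{Q}\circ X)\oplus X$, take the counit to be projection onto $X$, and verify the universal property directly) is the same as the paper's, but two of your key steps are wrong as written. First, the coalgebra structure map: by Definition $\ref{conilcoalg}$ the target of a $\op{Q}$-coalgebra structure has outer factor the \emph{non-counital} cooperad $\op{Q}$, so there is no map $X\hookrightarrow\op{I}\circ X\hookrightarrow\op{Q}\circ(\underline{\op{Q}}\circ X)$ ``coming from the coaugmentation $\op{I}\to\underline{\op{Q}}$'' --- the coaugmentation lands in $\underline{\op{Q}}$, not in $\op{Q}$. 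In the paper's construction the $X$-summand is sent to zero, and the term you are groping for lives on the other summand: besides $\Delta_\op{Q}\circ id_X\colon\op{Q}\circ X\to\op{Q}\circ\op{Q}\circ X$ one must add the map $\op{Q}\circ X\cong\op{Q}\circ\op{I}\circ X$, viewed as landing in $\op{Q}$ applied to the $X$-summand of $\underline{\op{Q}}\circ X$. This extra term is not a cosmetic detail: since the paper's $\Delta_\op{Q}$ is the reduced (genuinely two-level) coproduct, omitting the term still yields a coassociative map, but the resulting coalgebra fails the universal property (the canonical lift is then not a coalgebra morphism), so your ``regroup by the two bottom levels'' recipe must explicitly include the expansion whose bottom level is trivial.

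Second, the lift formula. Your candidate, obtained by iterating $\eta_Y$ into $\op{Q}^{\circ k}\circ Y$, applying $f$, and summing over $k\geq 0$, lands in $\bigoplus_{k}\op{Q}^{\circ k}\circ X$, which is not $\underline{\op{Q}}\circ X$, and there is no multiplication $\op{Q}^{\circ k}\to\op{Q}$ available to bring it back; that formula belongs to the counital tensor-coalgebra picture, where the cofree object is itself graded by levels. Here the cooperad $\op{Q}$ already contains all levels internally, and the correct lift of $\phi\colon A\to X$ is the one-step map $\op{Q}(\phi)\eta_A\oplus\phi$, exactly as in the paper. Uniqueness is then immediate: for any coalgebra map $g=g_1\oplus g_0$ into $(\op{Q}\circ X)\oplus X$, comparing the components of the coalgebra-map identity whose inner entries all lie in the $X$-summand forces $g_1=\op{Q}(g_0)\eta_A$; no induction on levels is needed, and neither Assumption $\ref{assumption:finiteaut}$ nor the characteristic-zero hypothesis (invariants versus coinvariants) plays any role in this particular lemma.
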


\begin{proof}  
	This is the coalgebraic analog of the formula in Remark $\ref{non-unital algebras}$ above.  The $\mathbb{V}$-module $(\op{Q}\oplus \op{I})\circ X$ has the structure of a $\op{Q}$-coalgebra via the sum of the maps $\op{Q}\circ X \stackrel{\Delta_\op{Q}}\longrightarrow \op{Q}\circ \op{Q}\circ X $ and $\op{Q}\circ X\cong \op{Q}\circ \op{I}\circ X$ with $\op{I}\circ X$ mapping to zero.

To show that $F^c_\op{Q}$ is right adjoint to the forgetful functor we first observe that a $\op{Q}$-coalgebra map $A\to \op{Q}\circ X \oplus X$ specifies a $\mathbb{V}$-module map $A\to X$ by projection to $X$.   Conversely, if $A$ is a $\op{Q}$-coalgebra and $X$ is a $\mathbb{V}$-module, then a $\mathbb{V}$-module map $\phi \colon A\to X$ specifies the $\op{Q}$-coalgebra map $\op{Q}(\phi) \eta_A \oplus \phi$. These correspondences are natural and inverse to each other, hence the claim.
\end{proof}

\subsubsection{Effect of linear dualization}

We denote the linear dual of a vector space $V$ by $V^\ast$.  If $A$ is a $\mathbb{V}$-colored sequence then $A^\ast$ has the natural structure of a $\mathbb{V}$-colored sequence.

\begin{lemma}
	The linear dual of a conilpotent cooperad is a non-unital operad.
\end{lemma}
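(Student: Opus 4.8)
The plan is to obtain the operad structure on $\op{Q}^\ast$ by linearly dualizing the cooperad structure on $\op{Q}$. By Definition $\ref{cooperads}$, $\op{Q}$ is a coalgebra over the comonad $\mathbb{T_V}$, hence carries a coassociative, counital structure map $\Delta_\op{Q}\colon \op{Q}\to \mathbb{T_V}(\op{Q})$, and by definition of a non-unital operad it suffices to equip $\op{Q}^\ast$ with a monad-algebra structure map $\gamma\colon \mathbb{T_V}(\op{Q}^\ast)\to \op{Q}^\ast$. The bridge between the two sides is a natural comparison map
\begin{equation*}
\iota_A\colon \mathbb{T_V}(A^\ast)\longrightarrow \big(\mathbb{T_V}(A)\big)^\ast,
\end{equation*}
defined for every $\mathbb{V}$-colored sequence $A$, which I would construct as follows. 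In color scheme $\vec v$ the source is $\bigoplus_T A^\ast(T)$ and the target is $\big(\bigoplus_T A(T)\big)^\ast=\prod_T A(T)^\ast$, the coproducts running over isomorphism classes of $\mathbb{V}$-colored trees of type $\vec v$; since each such $T$ has finitely many vertices and edges and since $\mathrm{char}(k)=0$ and $Aut(v)$ is finite for all $v$ (Assumption $\ref{assumption:finiteaut}$), so that coinvariants may be identified with invariants in formulas $\ref{A(w)}$--$\ref{AT}$, there is a canonical pairing $A^\ast(T)\tensor A(T)\to k$ for each $T$; assembling these over $T$ (and pairing distinct trees to zero) yields $\iota_A$, which one checks is injective and natural in $A$.

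Granting this, I would define the structure map on $\op{Q}^\ast$ as the composite
\begin{equation*}
\gamma\colon \mathbb{T_V}(\op{Q}^\ast)\xrightarrow{\ \iota_\op{Q}\ } \big(\mathbb{T_V}(\op{Q})\big)^\ast \xrightarrow{\ (\Delta_\op{Q})^\ast\ } \op{Q}^\ast.
\end{equation*}
Concretely, a class in $\mathbb{T_V}(\op{Q}^\ast)(\vec v)$ is a finite sum $\sum_i \xi_{T_i}$ with $\xi_{T_i}\in \op{Q}^\ast(T_i)$, and $\gamma$ sends it to the functional $q\mapsto \sum_i \langle \xi_{T_i},(\Delta_\op{Q}q)_{T_i}\rangle$, where $\Delta_\op{Q}q=\sum_T (\Delta_\op{Q}q)_T$ is the tree-expansion of $q$; this sum is finite precisely because $\op{Q}$ is conilpotent, so $\gamma$ is well defined. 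Differentials and Koszul signs are carried along in the usual way, and the statement makes no assertion about a unit, consistent with the fact (noted in the excerpt) that $\overline{\mathbb{T}}_\mathbb{V}$ carries no comonad structure.

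It would then remain to verify the monad-algebra axioms for $(\op{Q}^\ast,\gamma)$, and these follow formally by dualizing the comonad-coalgebra axioms of $\op{Q}$ once one knows that $\iota$ intertwines the comonad structure of $\mathbb{T_V}$ on the source with the dual of the monad structure of $\mathbb{T_V}$ on the target. The decisive identity is
$\iota_{A^\ast}\circ\mu_{A^\ast}=(\nu_A)^\ast\circ\iota^{(2)}_A$,
where $\mu$ is the monad multiplication, $\nu$ the comonad comultiplication, and $\iota^{(2)}_A:=\iota_{\mathbb{T_V}(A)}\circ \mathbb{T_V}(\iota_A)\colon \mathbb{T_V}\mathbb{T_V}(A^\ast)\to (\mathbb{T_V}\mathbb{T_V}(A))^\ast$; granting it, together with naturality of $\iota$, the two composites $\mathbb{T_V}\mathbb{T_V}(\op{Q}^\ast)\rightrightarrows \op{Q}^\ast$ in the associativity square become $(\nu_\op{Q}\circ\Delta_\op{Q})^\ast\circ\iota^{(2)}_\op{Q}$ and $(\mathbb{T_V}(\Delta_\op{Q})\circ\Delta_\op{Q})^\ast\circ\iota^{(2)}_\op{Q}$, which agree by coassociativity of $\Delta_\op{Q}$; likewise the unit axiom for $\gamma$ is the dual of the counit axiom for $\Delta_\op{Q}$, using that the corolla-inclusion unit $id\Rightarrow\mathbb{T_V}$ is $\iota$-dual to the corolla-projection counit $\mathbb{T_V}\Rightarrow id$. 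I expect the one genuinely technical point to be verifying this intertwining identity: it amounts to unwinding the construction of the comonad structure on $\mathbb{T_V}$ (given above precisely as a formal dual of the monad multiplication, with summand-inclusion replacing summation) and checking that the a priori infinite direct sums and products of tree-summands behave, which is exactly where conilpotence and the finiteness of the automorphism groups enter. Everything else is bookkeeping.
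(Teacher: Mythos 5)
Your proposal is correct and follows essentially the same route as the paper: dualize the coalgebra structure map $\Delta_{\op{Q}}$ and precompose with the canonical injection $\mathbb{T_V}(\op{Q}^\ast)=\bigoplus_T \op{Q}^\ast(T)\hookrightarrow \bigl(\bigoplus_T \op{Q}(T)\bigr)^\ast$ (using Assumption $\ref{assumption:finiteaut}$ for the pairing on each $\op{Q}(T)$ and conilpotence for finiteness), with associativity inherited from coassociativity. The paper states this in two lines where you spell out the intertwining of monad multiplication with comonad comultiplication; that is just the expanded bookkeeping of the same argument (modulo the harmless notational slip $\iota_{A^\ast}$ for $\iota_A$).
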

\begin{proof}  Given a conilpotent cooperad structure $\op{Q} \to  \mathbb{T_V}(\op{Q})$ we have structure map, we take its linear dual and precomposing with the canonical injections:
\begin{equation*}
\mathbb{T_V}(\op{Q}^\ast)(\vec{v})=\ds\bigoplus \op{Q}^\ast(T) \hookrightarrow \ds\bigoplus (\op{Q}(T))^\ast \hookrightarrow \ds (\bigoplus \op{Q}(T))^\ast \to \op{Q}^\ast(\vec{v}).
\end{equation*}

The coassociativity of the original coalgebra structure maps will give associativity of this composite, and hence an operad structure on $\op{Q}^\ast$.
\end{proof}
For the converse of this result, we first define:
\begin{definition}\label{fddef}
	A $\mathbb{V}$-colored sequence $A$ is finite dimensional if each $A(\vec{v})$ is finite dimensional.  An operad or cooperad is finite dimensional if its underlying $\mathbb{V}$-colored sequence is.  A $\mathbb{V}$-module is finite dimensional if each $X(v)$ is finite dimensional.
\end{definition}
Unlike in the uncolored case, an operad being finite dimensional is not a sufficient condition for the linear dual to be a cooperad if there are infinitely many colors.  In practice however, our examples of interest satisfy the following additional condition which alleviates this issue:

\begin{definition}\label{reduced} 
	A $\mathbb{V}$-colored sequence $A$ is called reduced if for each $v_0\in \text{ob}(\mathbb{V})$ there are only finitely many $\mathbb{V}$-colored trees $T$ with root colored by $v_0$ such that $A(T)$ is non-zero.  A non-unital $\mathbb{V}$-colored operad $\op{P}$ is called reduced if its underlying $\mathbb{V}$-colored sequence is reduced.  A unital, augmented $\mathbb{V}$-colored operad is called reduced if its augmentation ideal is reduced.
\end{definition}

The reduced hypothesis will ensure that direct products which arise by dualizing are also direct sums.  The following results are immediate.

\begin{lemma}\label{lindualop}  The linear dual of a finite dimensional and reduced non-unital operad is a conilpotent cooperad.
\end{lemma}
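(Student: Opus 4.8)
The plan is to dualize the operad structure on $\op{P}$ to a cooperad structure on $\op{P}^\ast$, using the finite-dimensionality and reducedness hypotheses precisely where they are needed to convert products back into sums. Recall that a conilpotent $\mathbb{V}$-colored cooperad is by definition a coalgebra over the comonad $\mathbb{T_V}$, i.e.\ a map $\op{P}^\ast \to \mathbb{T_V}(\op{P}^\ast)$ satisfying coassociativity. So the first step is to produce, for each color scheme $\vec{v}$, a map $\op{P}^\ast(\vec{v}) \to \mathbb{T_V}(\op{P}^\ast)(\vec{v}) = \bigoplus_{T \text{ of type } \vec{v}} \op{P}^\ast(T)$ by dualizing the operad structure maps $\eta_T\colon \op{P}(T)\to \op{P}(\vec{v})$.

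The key point is the following chain of natural maps, valid for each $\vec{v}$:
\begin{equation*}
\op{P}^\ast(\vec{v}) \xrightarrow{\ (\eta_T)^\ast_T\ } \prod_{T} \op{P}(T)^\ast \xleftarrow{\ \cong\ } \prod_T \op{P}^\ast(T) \xleftarrow{\ \cong\ } \bigoplus_T \op{P}^\ast(T) = \mathbb{T_V}(\op{P}^\ast)(\vec{v}).
\end{equation*}
Here the first isomorphism uses that each $\op{P}(T)$ is finite dimensional (which follows from $\op{P}$ being finite dimensional, since $\op{P}(T)$ is built from finitely many tensor factors $\op{P}(w)$, each a subquotient of a finite sum of finite-dimensional spaces) together with Assumption \ref{assumption:finiteaut} so that passing to (co)invariants commutes with dualization. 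The second isomorphism — that the product equals the direct sum — is exactly where the reduced hypothesis enters: by Definition \ref{reduced}, for each root color $v_0$ there are only finitely many trees $T$ with root colored $v_0$ and $\op{P}(T)\neq 0$, hence only finitely many $T$ of any fixed type $\vec{v}$ with $\op{P}^\ast(T)\neq 0$, so the product is finite and agrees with the coproduct. To define the comonad coalgebra structure map I would then invert this second isomorphism and use a section of the first: concretely, the image of $(\eta_T)^\ast_T$ lands in the subspace $\bigoplus_T \op{P}^\ast(T)$ because a functional on $\op{P}(\vec{v})$ pulled back along the finitely many nonzero $\eta_T$ gives only finitely many nonzero components.

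Next I would verify coassociativity: the comonad structure map $\mathbb{T_V}\Rightarrow \mathbb{T_V}\circ \mathbb{T_V}$ constructed above Definition \ref{cooperads} is obtained by formally dualizing the total-substitution decomposition of $\mathbb{T_V}\circ\mathbb{T_V}$, and the coalgebra axiom for $\op{P}^\ast$ over this comonad is precisely the linear dual of the associativity axiom for $\op{P}$ as a $\mathbb{T_V}$-algebra (Theorem \ref{monadthm}), using once more that all relevant spaces are finite dimensional so that dualization is exact and faithful. Conilpotency is automatic since we have produced an honest coalgebra over $\mathbb{T_V}$, which is the definition. I would also note $Aut(\vec{v})$-equivariance, which is inherited from equivariance of the $\eta_T$.

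The main obstacle — really the only subtle point — is keeping careful track of the distinction between direct products and direct sums as one dualizes. Linear duality sends coproducts to products, and a priori $\left(\bigoplus_T \op{P}(T)\right)^\ast = \prod_T \op{P}(T)^\ast$ is strictly larger than $\bigoplus_T \op{P}^\ast(T) = \mathbb{T_V}(\op{P}^\ast)(\vec{v})$; without the reduced hypothesis there is no reason the dualized structure map lands in the smaller space, and indeed the lemma's statement flags that finite-dimensionality alone is insufficient when there are infinitely many colors. So the crux is to invoke Definition \ref{reduced} at the right moment to collapse this discrepancy; once that is done, everything else is the routine observation that finite-dimensional linear duality is an exact, (co)invariants-preserving, product-to-sum-preserving duality that turns the monad axioms into comonad axioms verbatim.
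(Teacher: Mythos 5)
Your argument is correct and is exactly the one the paper intends: the lemma is stated as ``immediate'' right after the remark that the reduced hypothesis makes the direct products arising from dualization into direct sums, and your chain $\op{P}^\ast(\vec{v}) \to \prod_T \op{P}(T)^\ast \cong \bigoplus_T \op{P}^\ast(T)$ (finite-dimensionality plus finite automorphism groups for $\op{P}(T)^\ast\cong\op{P}^\ast(T)$, reducedness to collapse the product) followed by dualizing associativity is precisely that intended argument spelled out.
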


\begin{lemma}\label{fdfreeop} Let $A$ be a finite dimensional, reduced $\mathbb{V}$-colored sequence.  Then $F(A)$ is a finite dimensional, reduced $\mathbb{V}$-colored operad and $F(A)^\ast\cong F_c(A^\ast)$.
\end{lemma}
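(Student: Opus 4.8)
The plan is to argue from the explicit models $F(A)=\mathbb{T_V}(A)$ (free operad, with operad structure the monad multiplication of Theorem~\ref{monadthm}) and $F_c(A^\ast)=\mathbb{T_V}(A^\ast)$ (cofree cooperad, with cooperad structure the comonad comultiplication of Subsection~\ref{coopsec}), and to verify the three assertions one color scheme at a time, using on $\vec{v}$ the description $F(A)(\vec{v})=\coprod_{T}A(T)$, the coproduct running over isomorphism classes of $\mathbb{V}$-colored trees $T$ of type $\vec{v}$.

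First I would establish the two finiteness statements. A fixed tree $T$ has a finite vertex set, and by Formula~\ref{A(w)} each $A(w)$ is a quotient of the finite direct sum $\coprod_{\sigma\in S_{|w|}}A(\dots)$ of the finite-dimensional spaces $A(\vec{v}\,')$, so $A(w)$, and hence the quotient $A(T)$ of the finite tensor product $\bigotimes_{w}A(w)$, is finite-dimensional; as $A$ is reduced only finitely many $T$ of type $\vec{v}$ have $A(T)\neq 0$, whence $F(A)(\vec{v})$ is a finite direct sum of finite-dimensional spaces. For reducedness of $F(A)$, I would invoke the calculation recalled in Subsection~\ref{coopsec}: grouped by $T$ it gives a canonical isomorphism $F(A)(T)\cong\coprod_{\{U_w\}}A(T\circ\{U_w\})$, the coproduct over choices of an $\mathrm{fl}(w)$-tree $U_w$ at each vertex of $T$. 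Thus $F(A)(T)\neq 0$ forces $A(R)\neq 0$ for some total substitution $R=T\circ\{U_w\}$, which has the same root color as $T$; only finitely many such $R$ occur because $A$ is reduced, and, as observed in Subsection~\ref{coopsec}, each $R$ arises from only finitely many pairs $(T,\{U_w\})$, so only finitely many $T$ with a fixed root color satisfy $F(A)(T)\neq 0$. In particular $F(A)$ is reduced, so by Lemma~\ref{lindualop} its linear dual is already known to be a conilpotent cooperad.

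It remains to identify that dual with $F_c(A^\ast)$. On $\vec{v}$ we have $F(A)^\ast(\vec{v})=\bigl(\coprod_{T}A(T)\bigr)^\ast=\prod_{T}A(T)^\ast$, and the finitely-many-nonzero-summands property established above collapses this to $\coprod_{T}A(T)^\ast$. Next I would identify $A(T)^\ast$ with $A^\ast(T)$ using Assumption~\ref{assumption:finiteaut}, finiteness of the symmetric groups, and characteristic zero: linear duality sends finite direct sums to finite direct sums, tensor products of finite-dimensional spaces to tensor products of duals, and coinvariants under a finite group to invariants, which in characteristic zero agree with coinvariants; applied to Formula~\ref{A(w)} this yields $A(w)^\ast\cong A^\ast(w)$, and applied to Formula~\ref{AT}, whose $\mathrm{Edges}(T)$-action is by the finite group $\times_{E\in\mathrm{Edges}(T)}\mathrm{Aut}(v_E)$, it yields $A(T)^\ast\cong A^\ast(T)$. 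Hence $F(A)^\ast(\vec{v})\cong\coprod_{T}A^\ast(T)=\mathbb{T_V}(A^\ast)(\vec{v})=F_c(A^\ast)(\vec{v})$, naturally in $\vec{v}$. Finally I would check that this levelwise isomorphism intertwines the cooperad structures: the cooperad structure on $F(A)^\ast$ is by definition the transpose of the monad multiplication of $F(A)$, which sends the summand $A(T\circ\{U_w\})$ of $F(F(A))(\vec{v})$ identically onto the $R$-summand $A(R)$ of $F(A)(\vec{v})$ for $R=T\circ\{U_w\}$ and all remaining summands to zero; transposing this prescription recovers exactly the summand-matching rule by which the comonad comultiplication on $\mathbb{T_V}$, hence the cofree cooperad structure on $F_c(A^\ast)$, was defined in Subsection~\ref{coopsec}.

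The step I expect to demand the most care is the last one: checking that linear duality commutes with the two nested layers of coinvariants --- the $S_{|w|}$-coinvariants inside $A(w)$ and the $\mathrm{Edges}(T)$-coinvariants inside $A(T)$ --- and with the unordered monoidal products, and that the resulting levelwise isomorphisms are natural and compatible with the (co)monadic structure maps rather than merely existing abstractly. This is routine characteristic-zero, finite-group, finite-dimensional bookkeeping, but it is the only place where Assumption~\ref{assumption:finiteaut} and the ``reduced'' hypothesis are actually used, and it is where the details must be written out carefully.
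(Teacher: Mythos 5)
Your proof is correct and is essentially the argument the paper intends: the paper records Lemma \ref{fdfreeop} without proof (``immediate''), noting only that the reduced hypothesis ensures the direct products arising from dualization are in fact direct sums, which is exactly the pivot of your argument. The details you supply --- finite-dimensionality of each $A(T)$, reducedness of $F(A)$ via the total-substitution decomposition of $F(A)(T)$, the exchange of finite-group coinvariants and invariants under duality in characteristic zero (where Assumption \ref{assumption:finiteaut} enters), and the summand-matching compatibility of the monad multiplication with the comonad comultiplication --- are precisely the routine verifications the paper omits, and you carry them out correctly.
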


 \begin{lemma}\label{fdfree} Let $\op{P}$ be a finite dimensional, reduced $\mathbb{V}$-colored operad and let $X$ be a finite dimensional $\mathbb{V}$-module.  
 	Then $F_\op{P}(X)^\ast$ is also a finite dimensional and $F_\op{P}(X)^\ast \cong  F^c_{\op{P}^\ast}(X^\ast)$.
 \end{lemma}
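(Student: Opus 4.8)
The plan is to combine the previous two lemmas with the description of the free algebra and free coalgebra functors. Recall from Remark \ref{non-unital algebras} that for a non-unital operad $\op{P}$ and $\mathbb{V}$-module $X$ we have $\mathsf{F}_\op{P}(X) = \underline{\op{P}}\circ X \cong (\op{P}\circ X)\oplus X$, and dually from Lemma \ref{cofree coalgebra} that $F^c_{\op{P}^\ast}(X^\ast) = \underline{\op{P}^\ast}\circ X^\ast \cong (\op{P}^\ast\circ X^\ast)\oplus X^\ast$. So the content of the claim reduces to showing $(\op{P}\circ X)^\ast \cong \op{P}^\ast\circ X^\ast$ as $\mathbb{V}$-modules, compatibly with the (co)algebra structure maps, since the $\oplus X$ summand dualizes to $\oplus X^\ast$ trivially.

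First I would establish the finite dimensionality. Unwinding the explicit formula for $\op{P}\circ X(v_0)$ as a coinvariant quotient of $\coprod_{\vec{v}=(-\cdc-;v_0)} \op{P}(\vec{v})\tensor_{in(\vec{v})} X(in(\vec{v}))$, the reduced hypothesis on $\op{P}$ (Definition \ref{reduced}) guarantees that only finitely many color schemes $\vec{v}$ with root $v_0$ contribute a nonzero term — indeed a color scheme is a one-vertex tree — and for each such $\vec{v}$ the term $\op{P}(\vec{v})\tensor_{in(\vec{v})} X(in(\vec{v}))$ is finite dimensional since $\op{P}$ and $X$ both are and Assumption \ref{assumption:finiteaut} makes the relevant (co)invariants finite dimensional quotients. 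Hence $\op{P}\circ X(v_0)$ is a finite direct sum of finite dimensional spaces, so finite dimensional, and then so is $\mathsf{F}_\op{P}(X)(v_0)$.

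Next, because the sum is finite, linear dualization commutes with it, and the standard natural isomorphisms $(\op{P}(\vec{v})\tensor_{in(\vec{v})} X(in(\vec{v})))^\ast \cong \op{P}(\vec{v})^\ast \tensor_{in(\vec{v})} X(in(\vec{v}))^\ast$ (using finite dimensionality and the characteristic zero identification of invariants with coinvariants, as in the discussion preceding Lemma \ref{lindualop}), together with $(\oplus_{\vec v} -)_{S_{|\vec v|}}^\ast \cong (\oplus_{\vec v} -^\ast)^{S_{|\vec v|}} \cong (\oplus_{\vec v} -^\ast)_{S_{|\vec v|}}$, assemble into a natural isomorphism of $\mathbb{V}$-modules $(\op{P}\circ X)^\ast \cong \op{P}^\ast \circ X^\ast$. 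This is essentially Lemma \ref{fdfreeop} applied levelwise together with the dualization of the composition product $\circ$, which behaves just like the monad $\mathbb{T_V}$ of Lemma \ref{fdfreeop} in each fixed color scheme since everything in sight is a finite coproduct. Adding the $\oplus X$ / $\oplus X^\ast$ summands gives $\mathsf{F}_\op{P}(X)^\ast \cong F^c_{\op{P}^\ast}(X^\ast)$ as $\mathbb{V}$-modules; that this isomorphism intertwines the $\mathsf{F}_\op{P}(X)$-algebra structure map $\op{P}\circ \mathsf{F}_\op{P}(X) \to \op{P}\circ X$ with the $F^c_{\op{P}^\ast}(X^\ast)$-coalgebra structure map follows formally by dualizing, exactly as the operad/cooperad duality of Lemma \ref{lindualop} dualizes associativity to coassociativity.

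The main obstacle, and the only place any care is genuinely required, is the bookkeeping at the level of $\mathbb{V}$-colored sequences versus $\mathbb{V}$-modules: one must check that the reduced condition on $\op{P}$ really does bound the color schemes contributing to $\op{P}\circ X(v_0)$ (this is why the hypothesis is phrased in terms of trees rather than dimensions), and that the $\tensor_{in(\vec v)}$ coinvariants and the outer $S_{|\vec v|}$-coinvariants both dualize correctly — both handled by Assumption \ref{assumption:finiteaut} and $\operatorname{char} k = 0$. Everything else is the routine naturality of linear duality for finite direct sums and finite-dimensional tensor products, so I would state it as immediate once Lemmas \ref{lindualop} and \ref{fdfreeop} are in hand, exactly as the text does.
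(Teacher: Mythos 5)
Your proof is correct, and it fills in exactly the routine verification the paper leaves implicit: the text states Lemma $\ref{fdfree}$ (together with Lemmas $\ref{lindualop}$ and $\ref{fdfreeop}$) without proof, declaring them immediate consequences of the reduced and finite-dimensionality hypotheses. Your reduction to $(\op{P}\circ X)^\ast\cong\op{P}^\ast\circ X^\ast$, with the reduced condition (applied to corollas) bounding the contributing color schemes and Assumption $\ref{assumption:finiteaut}$ plus characteristic zero handling the (co)invariants, is precisely the intended argument.
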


\subsection{Quadratic duality for groupoid-colored operads.}\label{KDsec}
We now begin the development of Koszul duality for groupoid colored operads.  To fix a context where this is possible we make;

\begin{assumption}  From now on, all unital groupoid colored operads are augmented unless stated otherwise.
\end{assumption}
Formally adjoining a unit to a non-unital operad gives a unital operad with a canonical augmentation, and most of our examples of interest arise in this manner.   Below we may consider either non-unital or unital and augmented operads and use the variant of the free operad construction $F$ which is appropriate to the context, keeping in mind Formula $\ref{freeunital}$. 

 \subsubsection{Suspension and determinants}  We write $\Sigma$ and $\Sigma^{-1}$ for degree shift operators in dg vector spaces.  If $A$ is a $\mathbb{V}$-module, we define $\Sigma^\pm A$ as the object-wise application of $\Sigma^{\pm}$.
By abuse of notation we let $\Sigma^\pm k$ be the constant $\mathbb{V}$-module with target $\Sigma^\pm k$  -- this means the field $k$ in degree $\pm 1$ as a graded vector space and every automorphism acts by the identity. 
Then we define $End_{\Sigma k }=: \Lambda^{-1}$ and $End_{{\Sigma^{-1}} k}=: \Lambda$.  We define $s^\pm := \Lambda^{\pm}\tensor - $, denoting the color-wise tensor product.  In particular $s$ raises degrees and $s^{-1}$ lowers degrees.  

We call the functor $s$ (resp.\ $s^{-1}$) {\bf operadic suspension} (resp. desuspension), but we may view it as an endofunctor on the category of operads, the category of $\mathbb{V}$-colored sequences and the category of conilpotent cooperads.    For cooperads we observe that the standard basis of each 1-dimensional vector space $\Lambda^\pm(\vec{v})$ gives rise to a canonical isomorphism of $\mathbb{V}$-colored sequences $(\Lambda^{\pm})^\ast\cong \Lambda^\mp$.  This isomorphism endows $\Lambda^{\pm}$ with decomposititon maps, and although $\Lambda^{\pm}$ do not form cooperads by our definition (there are potentially infinitely many such expansions with a fixed source), the result when tensoring $\Lambda^{\pm}$  with a conilpotent cooperad will be a conilpotent cooperad.

\begin{definition}\label{det}
	
	Let $V$ be a finite dimensional vector space of dimension $n$.  We define Det$(V)$ to be the top exterior power of $V$.  In particular Det$(V)$ is a 1-dimensional graded vector space concentrated in degree $n$.  We also define det$(V)$ to be $\Sigma^{-n}Det(V)$.  In particular det$(V)$ is concentrated in degree $0$.

	For a finite set $X$ we define Det$(X)$ to be Det(span$\{X\}$). In particular Det$(X)$ is a one-dimensional vector space concentrated in degree $|X|$ with an alternating action of $S_{|X|}$.  We also write det$(X)$ for $\Sigma^{-|X|}\text{Det}(X)$, concentrated in degree $0$.  We refer to a unit vector in det$(X)$ as a {\bf mod 2 order of the set} $X$.  Finally we define $\text{Det}^{-1}(X)$ to be $\Sigma^{-2|X|}\text{Det}(X)$ and observe that $\text{Det}(X)\tensor \text{Det}^{-1}(X)\cong k$ as $S_{|X|}$-modules.\end{definition}

\begin{lemma}\label{detfacts} \cite[Lemma 4.7]{GeK2} Given finite dimensional vector spaces $V$ and $W$, 
$\text{Det}(V\oplus W)\cong \text{Det}(V)\tensor \text{Det}(W)$.	
\end{lemma}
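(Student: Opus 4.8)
The plan is to prove $\text{Det}(V\oplus W)\cong\text{Det}(V)\tensor\text{Det}(W)$ by reducing to the standard fact about top exterior powers of a direct sum and then checking that the grading is correctly accounted for. First I would recall that for a finite dimensional vector space $U$ of dimension $m$, the object $\text{Det}(U)=\Lambda^m U$ is the top exterior power, placed in degree $m$ per Definition $\ref{det}$. So with $\dim V=n$ and $\dim W=p$ we have $\dim(V\oplus W)=n+p$, and $\text{Det}(V\oplus W)=\Lambda^{n+p}(V\oplus W)$ lives in degree $n+p$, while $\text{Det}(V)\tensor\text{Det}(W)=\Lambda^n V\tensor\Lambda^p W$ also lives in degree $n+p$ (degrees add under $\tensor$). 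So the degrees match automatically.

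Next I would exhibit the isomorphism on the level of underlying vector spaces. Choosing bases $e_1\cdc e_n$ of $V$ and $f_1\cdc f_p$ of $W$, the wedge $e_1\wedge\dots\wedge e_n\wedge f_1\wedge\dots\wedge f_p$ spans $\Lambda^{n+p}(V\oplus W)$, and $(e_1\wedge\dots\wedge e_n)\tensor(f_1\wedge\dots\wedge f_p)$ spans $\Lambda^n V\tensor\Lambda^p W$; sending one generator to the other and extending linearly gives a linear isomorphism. To see this is canonical (basis-independent up to the usual scalar), I would instead describe the map intrinsically: the inclusions $V\hookrightarrow V\oplus W$ and $W\hookrightarrow V\oplus W$ induce $\Lambda^n V\to\Lambda^n(V\oplus W)$ and $\Lambda^p W\to\Lambda^p(V\oplus W)$, and the wedge product $\Lambda^n(V\oplus W)\tensor\Lambda^p(V\oplus W)\to\Lambda^{n+p}(V\oplus W)$ composed with these gives a natural map $\Lambda^n V\tensor\Lambda^p W\to\Lambda^{n+p}(V\oplus W)$; one checks on a basis that it is an isomorphism of one-dimensional spaces. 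Since both sides are concentrated in the single degree $n+p$, this linear isomorphism is automatically a degree-preserving (hence dg, with zero differential) isomorphism.

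I do not expect any serious obstacle here: the statement is essentially the classical computation $\Lambda^{\text{top}}(V\oplus W)\cong\Lambda^{\text{top}}V\tensor\Lambda^{\text{top}}W$, and the only thing to be careful about is bookkeeping the homological degrees, which Definition $\ref{det}$ sets up so that they add correctly on both sides. If desired, one can record that the induced sign rule (the Koszul sign picked up when transposing the two tensor factors) is compatible with reordering a basis of $V\oplus W$, but for the bare isomorphism statement this is not needed. The mildest subtlety is ensuring the identification is natural in $V$ and $W$ rather than merely an abstract isomorphism of one-dimensional spaces, which the intrinsic description via inclusions and wedge product handles.
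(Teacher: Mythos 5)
Your proof is correct and is the standard argument: the paper itself gives no proof of this lemma, simply citing \cite[Lemma 4.7]{GeK2}, and your intrinsic construction via the inclusions and the wedge product $\Lambda^n V\tensor\Lambda^p W\to\Lambda^{n+p}(V\oplus W)$, together with the observation that the homological degrees add correctly on both sides, is exactly the canonical verification one would expect.
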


\begin{lemma} \label{suspfreelemma}
	Suspension and desuspension commute with the free operad and cofree cooperad constructions.
\end{lemma}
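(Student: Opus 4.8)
The plan is to show that the operadic (de)suspension functor $s^{\pm} = \Lambda^{\pm} \otimes -$ intertwines the free operad monad $F$ (resp. the cofree cooperad comonad $F_c$) with itself, i.e. to exhibit natural isomorphisms $F(s^{\pm}A) \cong s^{\pm}F(A)$ and $F_c(s^{\pm}A) \cong s^{\pm}F_c(A)$ for every $\mathbb{V}$-colored sequence $A$. First I would unwind both sides using Formula $\ref{free1}$ and the definition of $A(T)$ in $\ref{AT}$: on the one hand $F(s^{\pm}A)(\vec{v}) = \coprod_T [\bigotimes_{w \in \mathrm{Vert}(T)} (s^{\pm}A)(w)]_{\mathrm{Edges}(T)}$, and on the other hand $s^{\pm}F(A)(\vec{v}) = \Lambda^{\pm}(\vec{v}) \otimes \coprod_T [\bigotimes_{w} A(w)]_{\mathrm{Edges}(T)}$. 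Matching summands tree-by-tree, it suffices to produce, for each fixed $\mathbb{V}$-colored tree $T$ of type $\vec{v}$, a natural isomorphism $[\bigotimes_{w}(s^{\pm}A)(w)]_{\mathrm{Edges}(T)} \cong \Lambda^{\pm}(\vec{v}) \otimes [\bigotimes_{w} A(w)]_{\mathrm{Edges}(T)}$.

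The key step is a determinant bookkeeping argument. Since $(s^{\pm}A)(w) = \Lambda^{\pm}(\mathrm{type}(w)) \otimes A(w)$, pulling all the $\Lambda^{\pm}$ factors out of the tensor product over vertices gives $\bigotimes_{w}(s^{\pm}A)(w) \cong \big(\bigotimes_{w}\Lambda^{\pm}(\mathrm{type}(w))\big) \otimes \bigotimes_{w} A(w)$. Now $\Lambda^{\pm}$ is (up to sign conventions) the endomorphism operad of $\Sigma^{\pm 1}k$, so $\Lambda^{\pm}(\mathrm{type}(w))$ is a one-dimensional space sitting in degree $\pm(\|w\| - 1)$ — equivalently, by Lemma $\ref{detfacts}$ and Definition $\ref{det}$, it may be identified with $\mathrm{Det}^{\pm 1}$ of the set of flags of $w$ modulo the output flag, or some such normalization. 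The combinatorial identity to verify is that $\bigotimes_{w \in \mathrm{Vert}(T)} \Lambda^{\pm}(\mathrm{type}(w))$, after taking coinvariants along $\mathrm{Edges}(T)$ (where each edge's $\mathrm{Aut}(v_E)$ acts trivially on these one-dimensional determinant lines, so coinvariants just means matching up the two flags of each edge), collapses to $\Lambda^{\pm}(\vec{v})$; this is precisely a telescoping of determinant lines over a tree, using that each internal edge contributes a flag to exactly two vertices and that $\mathrm{Det}(X) \otimes \mathrm{Det}^{-1}(X) \cong k$. Here Assumption $\ref{assumption:finiteaut}$ and the characteristic-zero hypothesis let us identify invariants with coinvariants so the bookkeeping is clean. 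Once the vertex-determinant factor is identified with $\Lambda^{\pm}(\vec{v})$, it commutes past the remaining $\mathrm{Edges}(T)$-coinvariants on $\bigotimes_w A(w)$ since it carries no edge-group action, yielding the desired per-tree isomorphism; taking the coproduct over isomorphism classes of trees gives the statement for $F$. The cooperad case $F_c(A) = \mathbb{T}_\mathbb{V}(A)$ is formally identical since $F_c$ has the same underlying functor, and one checks the isomorphism is compatible with the comonadic structure map (tree expansion) — but this is automatic because the structure map is built from the same tree-substitution combinatorics that the isomorphism respects summand-wise.

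I would then remark that the statement also includes commuting with the unital free operad $\overline{F}$ and the unital cofree cooperad: by Formula $\ref{freeunital}$ the only extra summand is $1_\op{C}$ in color schemes $(v;v)$, and $\Lambda^{\pm}(v;v)$ is $k$ in degree $0$ (a vertex of arity... well, the empty tree $|_v$), so operadic suspension acts trivially there and compatibility is immediate.

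The main obstacle I anticipate is not conceptual but notational: pinning down the precise normalization of $\Lambda^{\pm}$ on $\mathbb{V}$-color schemes and tracking the signs/degrees in the telescoping determinant identity over the tree so that the isomorphism is genuinely natural in $A$ and equivariant for the $\mathrm{Aut}(\vec{v})$-action on legs. In the uncolored case this is classical (it underlies the operadic suspension formalism of \cite{GeK2}, \cite{MSS}); the groupoid-colored generalization requires only that the determinant lines carry trivial edge-automorphism actions — which holds because $\Lambda^{\pm}$ is built from the constant $\mathbb{V}$-module $\Sigma^{\pm}k$ on which all automorphisms act by the identity — so the argument goes through with the bookkeeping localized entirely in the (color-blind) symmetric-group/determinant combinatorics, exactly as in \cite{LV}. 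Hence I expect to present this as a short proof that reduces to the uncolored computation plus the observation that nothing in that computation sees the colors beyond their underlying sets.
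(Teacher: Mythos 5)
Your proposal is correct and follows essentially the same route as the paper: reduce to the per-tree isomorphism $(s^{\pm}A)(T)\cong \Lambda^{\pm}(\vec{v})\otimes A(T)$ via the telescoping determinant identity of the uncolored case (Lemma $\ref{detfacts}$), with the only colored-specific point being that $\Lambda^{\pm}(\vec{v})$ carries the trivial $Aut(\vec{v})$-action and hence passes freely through the $\mathrm{Edges}(T)$-coinvariants. The aside about invariants versus coinvariants in characteristic zero is unnecessary (triviality of the edge action alone suffices), but it does not affect the argument.
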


\begin{proof}  Note the isomorphism $\Lambda(\vec{v})\cong \text{Det}(\{v_1\cdc v_n\})\tensor \text{Det}^{-1}(\{v_0\})$ for each color scheme $\vec{v}=(v_1\cdc v_n; v_0)$.
	Then the only difference with the uncolored case \cite[Section 2.10]{GeK1} is that we mod out by the action of the automorphisms at each edge of a tree.  But since $\Lambda(\vec{v})$ has trvial $Aut(\vec{v})$-action we may still, as in {\it loc.cit.,} apply Lemma $\ref{detfacts}$ to show that $(sA)(T)\cong A(T)\tensor \Lambda(\vec{v})$ for a $\vec{v}$-tree $T$, from which the claim follows.
\end{proof}

\subsubsection{Operadic ideals} See {\it e.g.\ }\cite[Section 2.1.3]{GK} in the uncolored case. Given a $\mathbb{V}$-colored sequence, a $\mathbb{V}$-colored subsequence $I\subset A$ is a collection of subspaces $I(\vec{v})\subset A(\vec{v})$ closed under the $S_{|\vec{v}|}$ and $Aut(\vec{v})$ actions.  If $\op{P}$ is an operad, an ideal $I\subset \op{P}$ is a $\mathbb{V}$-colored subsequence satisfying the property that the image of any structure map $\eta_{T}$ (see Formula $\ref{operadstructuremap}$) on a tensor with a factor in $I$ lands in $I$.
Note this property is unambiguous with respect to the edge action, since each $I(\vec{v})$ is $Aut(\vec{v})$ closed.

Given an ideal $I\subset \op{P}$, we may form an operad $\op{P}/I$ with underlying $\mathbb{V}$-colored sequence $\op{P}(\vec{v})/I(\vec{v})$ and the inherited structure maps.  In particular the collection of quotient maps $\op{P}\to \op{P}/I$ assemble to a morphism of operads. Given a family of subspaces (or subsets) $R(\vec{v})\subset \op{P}(\vec{v})$, we let $\langle R \rangle$ denote the intersection of all ideals of $\op{P}$ which contain $R$.  This is easily seen to be an ideal itself.

\subsubsection{Quadratic data.}
Let $E$ be a $\mathbb{V}$-colored sequence. 
A homogeneous element in the direct sum $F(E)(\vec{v})$ (Equation $\ref{free1}$) 
specifies a tree, and hence a number of edges.  We call this integer the {\bf weight} of the homogeneous element.  It specifies an additional grading on $F(E)(\vec{v})$, which we denote $F^r(E)(\vec{v})$.

A quadratic presentation of an operad $\op{P}$ is an isomorphism $\op{P}\cong F(E)/\langle R \rangle$, where $R$ is a $\mathbb{V}$-colored subsequence concentrated in weight $1$, i.e.\ $R(\vec{v})\subset F^1(E)(\vec{v})$.  Conversely, to any such $E$ and $R$ we may present a quadratic operad $\op{P}(E,R):= F(E)/\langle R \rangle$, and so we call such a pair $(E,R)$ {\bf quadratic data}.  Here there is a notion of both unital and non-unital quadratic operads depending on the which variant of the free operad construction is used.
 
To quadratic data $(E,R)$ we may also associate a conilpotent cooperad, denoted $\op{Q}(E,R)$.  It is defined to be the union of all sub-cooperads of $\op{Q}\subset F_c(E)$ for which the following composite of $\mathbb{V}$-colored sequences vanishes:
$\op{Q}\hookrightarrow F_c(E)=F(E) \twoheadrightarrow  F^1(E)/R.$

\subsubsection{Quadratic duality.} 

If $\op{P}(E,R)$ is a non-unital quadratic operad, the quadratic dual conilpotent cooperad is denoted $\op{P}^\uex$ and is defined to be  $\op{Q}(\Sigma E,\Sigma^2 R)$.  If $\op{P}(E,R)$ is a unital quadratic operad, the quadratic dual unital conilpotent cooperad (see Remark $\ref{counitrmk}$) is also denoted $\op{P}^\uex$ and is defined to be  $\op{Q}(\Sigma E,\Sigma^2 R)\oplus \op{I}$.  In both cases, the quadratic dual operad of the quadratic operad $\op{P}(E,R)$ is defined to be the operad $s(\op{P}^\uex)^\ast$ and is denoted $\op{P}^!$.

\subsection{Bar-cobar duality for groupoid colored operads}
In this subsection we define a bar-cobar adjunction between non-unital operads and conilpotent cooperads.   
For (co)augmented, (co)unital operads and cooperads, the bar-cobar constructions are defined by applying these constructions to the (co)augmentation ideal and then adjoining the (co)unit.

\subsubsection{The bar construction of an operad.}

Let $\op{P}$ be a non-unital operad and consider the sequence of $\mathbb{V}$-colored sequences
\begin{equation*}
F_c(\Sigma\op{P}) \to F^1_c(\Sigma \op{P}) = F^1(\Sigma \op{P})\cong \Sigma^2 F^1( \op{P}) \to\Sigma^2\op{P}\to \Sigma\op{P}
\end{equation*}
The first map is projection, the second is the operadic structure map and the last is a degree shift.  This composite has a unique extension to a cooperad map by the universal property of cofreeness.  Call this cooperad map $\partial$.  

We then argue $\partial^2=0$ as usual.  To see this use Lemma $\ref{detfacts}$ to identify
\begin{equation}
(\Sigma\op{P})(T)\cong  \text{Det}(\text{Vert}(T))\tensor \op{P}(T) \cong \Sigma \text{Det}(\text{Edges}(T))\tensor \op{P}(T)
\end{equation}

This last isomorphism is given by identifying every edge with the vertex above it.  The terms in $\partial^2$ will contract two edges in a fixed order, and the presence of the Det(Edges$(T)$) factor means that switching this order will produce a negative sign.  The sum of all such contractions will thus cancel in pairs.  We define $B(\op{P})=(F_c(\Sigma\op{P}), \partial+d_\op{P})$, where $d_\op{P}$ is the differential induced by that of $\op{P}$. 

\subsubsection{The cobar construction of a cooperad.}

Let $\op{Q}$ be a conilpotent cooperad and consider the sequence of $\mathbb{V}$-colored sequences
\begin{equation*}
\Sigma^{-1}\op{Q} \to
\Sigma^{-2}\op{Q} \to
\Sigma^{-2} F^1_c( \op{Q}) \cong
F^1(\Sigma^{-1} \op{Q}) \hookrightarrow
F(\Sigma^{-1}\op{Q}) 
\end{equation*}
The first map is a shift, the second is the cooperadic structure map and the last is inclusion.  This composite has a unique extension to an operad map $\partial$ by the universal property of freeness, and $\partial^2=0$ as above.  We define $\Omega(\op{Q})=(F(\Sigma^{-1}\op{Q}), \partial+d_\op{Q})$.

\begin{definition}
	A map of $\mathbb{V}$-colored sequences $\phi\colon A\to A^\prime$ is called a quasi-isomorphism if each $\phi_{\vec{v}}\colon A(\vec{v})\to A^\prime(\vec{v})$ is a quasi-isomorphism.  A map of operads or cooperads is a quasi-isomorphism if the underlying map of $\mathbb{V}$-colored sequences is.
\end{definition}

\begin{lemma}\label{barcobaradjunction} The pair $(\Omega,B)$ form an adjoint pair whose unit and counit are quasi-isomorphisms.
\end{lemma}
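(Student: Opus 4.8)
The plan is to reduce the statement to the classical (uncolored) Loday--Vallette bar-cobar adjunction by a careful bookkeeping of the groupoid structure, invoking the equivariant deformation retract principle (Lemma $\ref{eqlem}$, cited in the excerpt) to handle the automorphism actions. First I would establish that $(\Omega, B)$ is an adjoint pair: this is purely formal and follows the uncolored argument in \cite[Section 6.5]{LV} nearly verbatim, since both $F$ and $F_c$ are (co)free and the twisting cochain that mediates the adjunction --- namely the canonical map $\pi\colon B(\op{P})\to\Sigma\op{P}\to\op{P}$ or equivalently the composite $\Sigma^{-1}\op{Q}\hookrightarrow\op{Q}\to\Omega(\op{Q})$ --- satisfies the Maurer--Cartan equation by the same sign manipulation (using the $\text{Det}(\text{Edges})$ factor) that was used to prove $\partial^2=0$ above. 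The groupoid colors enter only through the extra coinvariants $[-]_{\text{Edges}(T)}$, and since these are taken with respect to finite groups in characteristic zero, colimits and limits over them are exact and commute with everything in sight, so the adjunction isomorphism $\mathrm{Hom}_{\text{operads}}(\Omega\op{Q},\op{P})\cong \mathrm{Tw}(\op{Q},\op{P})\cong \mathrm{Hom}_{\text{cooperads}}(\op{Q},B\op{P})$ goes through unchanged.

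Next I would prove that the counit $\Omega B(\op{P})\to\op{P}$ and the unit $\op{Q}\to B\Omega(\op{Q})$ are quasi-isomorphisms. The standard approach is to exhibit an explicit contracting homotopy on the ``bar-cobar'' resolution $\Omega B(\op{P})$, or equivalently to filter $\Omega B(\op{P})$ and run a spectral sequence whose $E_1$-page collapses onto $\op{P}$. In the uncolored case this is the content of \cite[Theorem 6.6.5]{LV}: one filters by, say, the total number of vertices appearing in the two-level tree description, and the associated graded differential becomes the Koszul differential of an acyclic ``bar-cobar'' complex attached to each color scheme. Here the only new feature is that each piece of the complex is a chain complex with a finite group ($Aut(v_0)$ on the right, $\prod Aut(v_i)$ on the left, and edge-automorphisms) acting on it, and we must know that the acyclicity survives passage to (co)invariants. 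This is exactly where Lemma $\ref{eqlem}$ (any chain complex with a finite group action in characteristic zero is an equivariant deformation retract of its homology) is used: the contracting homotopy in the uncolored argument can be averaged over the relevant finite groups to produce an equivariant one, so taking coinvariants preserves the acyclicity.

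I expect the main obstacle to be \emph{not} the homological algebra --- which is genuinely routine once one trusts Lemma $\ref{eqlem}$ --- but rather the bookkeeping needed to verify that the filtration argument is well-posed in the reduced groupoid-colored setting. Specifically, the subtlety flagged earlier in the excerpt (``there may be infinitely many $\vec{v}$-trees with one edge, but only finitely many expansions can be non-zero'') means one must check that the filtration of $\Omega B(\op{P})(\vec{v})$ by vertex count (or edge count) is \emph{exhaustive and bounded below} in each color scheme, so that the spectral sequence converges. This is guaranteed by conilpotency of $\op{Q} = \op{P}^{\text{!`}}$-type cooperads and by working with reduced operads (Definition $\ref{reduced}$), but it must be stated carefully. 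Concretely, the plan is: (1) reduce to the augmentation ideal and adjoin the (co)unit at the end, as the excerpt already prescribes; (2) set up the filtration on $\Omega B(\overline{\op{P}})$ by the number of internal vertices; (3) identify the $E_0$-differential as the one that only contracts edges created by the bar-then-cobar process, recognizing the resulting complex color-scheme-by-color-scheme as a direct sum of the classical acyclic bar-cobar complexes tensored up with finite-dimensional permutation-type modules over the edge and leaf automorphism groups; (4) apply Lemma $\ref{eqlem}$ to pass to coinvariants while preserving acyclicity, concluding $E_1 = \overline{\op{P}}$ concentrated on the weight-zero line; (5) deduce the counit is a quasi-isomorphism, and dualize (using Lemmas $\ref{lindualop}$ and $\ref{fdfreeop}$ on finite-dimensional reduced pieces, or a direct cofiltration argument) to obtain the unit statement. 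The honest hard part is item (3): writing down the $E_0$-differential precisely and matching it term-by-term with the uncolored Koszul complex requires care with the $\text{Det}(\text{Edges})$ signs and with which edges are ``old'' versus ``new,'' but it is a finite combinatorial check rather than a conceptual difficulty.
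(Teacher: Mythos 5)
Your plan is correct and is essentially the paper's own approach: the paper proves this lemma simply by citing it as a straightforward generalization of \cite[Theorem 3.2.16]{GK} and \cite[Theorem 6.6.3]{LV} (or as a special case of Lemma $\ref{barcobaralg}$, whose proof is exactly your weight-filtration spectral sequence with Maschke's theorem handling the finite automorphism groups in characteristic zero). One small caveat: for the unit, use your ``direct cofiltration'' option rather than dualizing via Lemmas $\ref{lindualop}$ and $\ref{fdfreeop}$, since the lemma does not assume finite dimensionality or reducedness.
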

\begin{proof}  This is a straight-forward generalization of \cite[Theorem 3.2.16]{GK} as well as \cite[Theorem 6.6.3]{LV}.  It may also be viewed as a specific case of \cite[Theorem 7.4.3]{KW} or as a special case of Lemma $\ref{barcobaralg}$ below.
\end{proof}

\subsubsection{Koszulity}

Let $\op{P}\cong F(E)/\langle R \rangle$ be a quadratic operad.  From $E\hookrightarrow \op{P}$, form the cooperad map $F_c(\Sigma E)\to F_c(\Sigma\op{P})$.  Precomposing with the inclusion $\op{P}^\uex \to F_c(\Sigma E)$ induces a map of cooperads $\op{P}^\uex \to F_c(\Sigma\op{P})$.  This latter cooperad map sends generators $\Sigma E$ to weight zero in the target, so contracting an edge is not possible (non-trivially) on the image of this map.  Thus this is a dg map with respect to the differential in the bar construction.  We call this map of cooperads $\zeta_{\op{P}^\uex}\colon \op{P}^\uex \to B(\op{P})$.

\begin{definition} \label{Koszuldef} A quadratic $\mathbb{V}$-colored operad $\op{P}$ is Koszul if the map $\zeta_{\op{P}^\uex}\colon \op{P}^\uex \to B(\op{P})$ is a quasi-isomorphism.  
\end{definition}

Combining this definition with Lemma $\ref{barcobaradjunction}$ we see that such a $\op{P}$ is Koszul if and only if the composition $\Omega(\op{P}^\uex)\to \Omega(B(\op{P}))\to\op{P}$ is a quasi-isomorphism.

\begin{definition}  If $\op{P}$ is Koszul we define $\op{P}_\infty:=\Omega(\op{P}^\uex)$.
\end{definition}

\subsection{Bar-cobar duality for algebras over groupoid colored operads.}\label{abcsec}
We define a {\bf twisting morphism} from a conilpotent cooperad $\op{Q}$ to a non-unital operad $\op{P}$ to be a map of $\mathbb{V}$-colored sequences $\alpha \colon\op{Q}\to\op{P}$ such that the induced map $\Omega(\op{Q})\to\op{P}$ is a morphism of operads, or equivalently such that the coinduced map $\op{Q}\to B(\op{P})$ is a map of cooperads.

\begin{example}\label{twistingmorphisms}  The counit of the adjunction $\Omega(B(\op{P}))\to\op{P}$ gives rise to a twisting morphism $B(\op{P})\to\op{P}$.  The unit of the adjunction $\op{Q}\to B(\Omega(\op{Q}))$ gives rise to a twisting morphism $\op{Q}\to \Omega(\op{Q})$.  If $\op{P}$ is quadratic, the cooperad map $\op{P}^\uex\to B(\op{P})$ gives rise to a twisting morphism $\op{P}^\uex\to\op{P}$.
\end{example}

A twisting morphism $\alpha\colon\op{Q}\to\op{P}$ may be used to define a differential on the monoidal product $\op{P}\circ\op{Q}$ of Definition $\ref{monoidaldef}$. This differential is the unique derivation which extends $\op{Q}\stackrel{\Delta}\to\op{Q}\circ\op{Q}\stackrel{\alpha\circ id}\longrightarrow\op{P}\circ\op{Q}$ with $\Delta$ as in Subsection $\ref{coopsec}$.  The resulting dg $\mathbb{V}$-colored sequence is called the (left) Koszul complex of $\alpha$ and is denoted $\op{P}\circ_\alpha\op{Q}:=(\op{P}\circ\op{Q}, d_\alpha)$.  One may also form the (right) Koszul complex of $\alpha$, $(\op{Q}\circ\op{P}, d_\alpha)$.  A twisting morphism is called Koszul if these complexes are acyclic.  In the case where $\op{P}$ is quadratic, comparing $\op{P}\circ\op{P}^\uex$ with $\op{P}\circ B(\op{P})$ with differentials coming from the twisting morphisms of Example $\ref{twistingmorphisms}$ gives the following criterion for Koszulity.  See \cite[Theorem 6.6.1]{LV} in the uncolored case.

\begin{lemma}[Koszul Criterion] \label{KC}
	Let $\op{P}$ be a quadratic operad with twisting morphism $\alpha\colon\op{P}^\uex\to\op{P}$.  Then $\op{P}$ is Koszul if and only if $\alpha$ is Koszul.
\end{lemma}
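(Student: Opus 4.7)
The plan is to adapt the uncolored argument \cite[Theorem 6.6.1]{LV} using the bar construction $B(\op{P})$ as an intermediary between $\op{P}^\uex$ and $\op{P}$. The first step is to observe that $\alpha$ factors as $\alpha = \pi \circ \zeta_{\op{P}^\uex}$, where $\pi\colon B(\op{P}) \to \op{P}$ is the universal bar twisting morphism from Example~\ref{twistingmorphisms}: the map $\zeta_{\op{P}^\uex}$ sends the generators $\Sigma E \subset \op{P}^\uex$ identically into $\Sigma E \subset \Sigma\op{P} \subset B(\op{P})$, while $\pi$ is the projection-desuspension onto $\op{P}$, so the composite recovers the canonical inclusion $E\hookrightarrow\op{P}$ that uniquely determines $\alpha$. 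Since $\zeta_{\op{P}^\uex}$ is a morphism of cooperads, applying $\op{P} \circ -$ produces a morphism of Koszul complexes
$$\Phi \colon \op{P}\circ_\alpha \op{P}^\uex \longrightarrow \op{P}\circ_\pi B(\op{P}).$$

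The second step is to verify that the target $\op{P}\circ_\pi B(\op{P})$ is always acyclic. In the uncolored case this is proved by an explicit contracting homotopy that inserts the leftmost $\op{P}$-tensor factor as a new top-level vertex of the bar cooperad tree, and the construction carries over verbatim to our setting: the monad $\mathbb{T_V}$ and the cofree cooperad $F_c$ are defined by the same tree-summation formulas as in the uncolored case, and the edge-automorphism coinvariants of Subsection~\ref{ATSec} are preserved by the insertion. Equivalently, this acyclicity is essentially the content of Lemma~\ref{barcobaradjunction} applied to $\op{P}$.

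The third step is the standard spectral sequence comparison through $\Phi$. Both Koszul complexes inherit a natural filtration from the quadratic weight grading, which is respected in compatible ways by the internal, bar/cobar, and Koszul differentials, so that on the $E^0$-page the map $\Phi$ reduces to $\op{P} \circ \zeta_{\op{P}^\uex}$ on the underlying $\mathbb{V}$-colored sequences. Since $\op{P}$ is levelwise flat over a field of characteristic zero, this is a quasi-isomorphism if and only if $\zeta_{\op{P}^\uex}$ is. Combined with the acyclicity from the previous step, this yields the equivalence: $\zeta_{\op{P}^\uex}$ is a quasi-isomorphism if and only if $\op{P}\circ_\alpha \op{P}^\uex$ is acyclic. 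The right Koszul complex is handled by the mirror argument.

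The principal technical obstacle is convergence of this spectral sequence in the groupoid colored setting. In the uncolored case the arity grading gives an automatic upper bound on cooperad weight in each arity, so the filtration is bounded and the comparison theorem applies directly. In our setting one must check, for each color scheme $\vec{v}$, that only finitely many cooperad weights contribute to the $\vec{v}$-components of the two Koszul complexes. This finiteness follows from the reducedness condition of Definition~\ref{reduced}, which is satisfied by the quadratic operads to which this criterion will be applied throughout the paper, but correctly formulating the hypothesis in sufficient generality is the step that does not transport for free from the uncolored theory.
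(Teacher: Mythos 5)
Your proposal is correct and follows essentially the paper's own route: the paper justifies this lemma precisely by comparing $\op{P}\circ_\alpha\op{P}^\uex$ with $\op{P}\circ_\pi B(\op{P})$ via the factorization of $\alpha$ through $\zeta_{\op{P}^\uex}$, deferring the acyclicity and filtration/spectral-sequence details to the uncolored case in \cite[Theorem 6.6.1]{LV}, exactly as you do. One small correction: the convergence concern in your last paragraph is not actually an issue and needs no reducedness hypothesis (the lemma assumes none) --- since $\op{P}$ is quadratic, both Koszul complexes split as direct sums over total quadratic weight, and within each fixed total weight the filtration by the weight of the cooperad factor is bounded, so the comparison argument converges color scheme by color scheme without any further finiteness assumption.
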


We now give the bar-cobar construction associated to a Koszul twisting morphism.  For the remainder of this section we assume that $\op{P}$ is Koszul and consider $\alpha \colon \op{P}^\uex\to \op{P}_\infty$.  We also recall (Definition $\ref{conilcoalg}$) our standing convention that coalgebras are conilpotent.

\subsubsection{Bar construction of a $\op{P}_\infty$-algebra}
In this section we define a functor $
\mathsf{B}\colon \{\op{P}_\infty\text{-algebras} \}\to \{\op{P}^\uex\text{-coalgebras} \}$
in two steps.  We first define the underlying graded $\op{P}^\uex$ coalgebra $\mathsf{B}(A):=\mathsf{F}^c_{\op{P}^\uex}(A) \cong (\op{P}^\uex \circ A)\oplus A$ after Lemma $\ref{cofree coalgebra}$.  We then endow $\mathsf{B}(A)$ with a differential as follows.
First, given a color scheme $\vec{v}$, consider the sequence
\begin{equation*}
\op{P}^\uex(\vec{v})\tensor_{in(\vec{v})} A(in(\vec{v}))
\to \op{P}_\infty(\vec{v})\tensor_{in(\vec{v})} A(in(\vec{v})) \to
A(v_0)
\end{equation*}
The first map in this sequence is induced by the twisting morphism $\op{P}^\uex\to \Omega\op{P}^\uex$ (Example $\ref{twistingmorphisms}$).   The second is the $\op{P}_\infty$ structure map (Formula $\ref{algmaps}$).  In particular this composite is degree $-1$ and $Aut(v_0)$-equivariant.  These maps assemble to a map of $\mathbb{V}$-modules $\op{P}^\uex \circ A \to A$, and by taking the direct sum with $id_A$, we have a map of $\mathbb{V}$-modules $\mathsf{F}^c_{\op{P}^\uex}(A)\to A$.  By the cofreeness of $F^c_{\op{P}^\uex}$, this map of $\mathbb{V}$-modules extends uniquely to a map of $\op{P}^\uex$-coalgebras.  Call this map $\partial \colon \mathsf{B}(A)\to\mathsf{B}(A)$; it has degree $-1$.

\begin{lemma} $\partial \colon \mathsf{B}(A)\to\mathsf{B}(A)$ is square zero.
\end{lemma}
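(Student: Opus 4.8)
The plan is to reduce the claim $\partial^2 = 0$ to a statement about the twisting morphism $\alpha\colon \op{P}^\uex \to \op{P}_\infty$, exactly as in the uncolored case treated in \cite[Section 11.2--11.3]{LV}, and then to check that the groupoid-colored bookkeeping does not introduce any new subtleties. Since $\partial$ is by construction the unique lift to a map of $\op{P}^\uex$-coalgebras of a degree $-1$ map of $\mathbb{V}$-modules $\mathsf{F}^c_{\op{P}^\uex}(A) \to A$, the composite $\partial^2$ is again a coderivation of the cofree coalgebra $\mathsf{F}^c_{\op{P}^\uex}(A)$, hence is itself determined by its corestriction to the cogenerators $A$. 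So the first step is to identify this corestriction and show it vanishes. This is where the Koszulity of $\op{P}$ enters: the relevant identity is that the differential $d_\alpha$ on the Koszul complex $\op{P}_\infty \circ_\alpha \op{P}^\uex$ (or equivalently the compatibility of $\alpha$ with the internal differentials of $\op{P}^\uex$ and $\op{P}_\infty = \Omega(\op{P}^\uex)$, which is precisely the twisting-morphism condition from Subsection $\ref{abcsec}$) squares to zero.

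The key steps, in order, are as follows. First I would recall that $\partial$ decomposes as $\partial = \partial_\alpha + d_A + d_{\op{P}^\uex}$, where $\partial_\alpha$ is the ``bar differential'' built from the twisting morphism composed with the $\op{P}_\infty$-algebra structure maps of $A$, $d_A$ is induced by the internal differential of $A$, and $d_{\op{P}^\uex}$ is induced by the internal differential of the cooperad $\op{P}^\uex$. Second, I would expand $\partial^2$ into its cross terms. The terms $d_A^2$, $d_{\op{P}^\uex}^2$ vanish by hypothesis, and $d_A d_{\op{P}^\uex} + d_{\op{P}^\uex} d_A = 0$ by bilinearity of the tensor differential. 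The term $\partial_\alpha d_A + d_A \partial_\alpha + \partial_\alpha d_{\op{P}^\uex} + d_{\op{P}^\uex}\partial_\alpha$ vanishes because the $\op{P}_\infty$-algebra structure maps on $A$ are chain maps and because $\alpha$ commutes with the internal differentials (this is part of $\alpha$ being a twisting morphism). The crucial term is $\partial_\alpha^2$: corestricted to $A$, it is a sum over ways of expanding an element of $\op{P}^\uex$ into a two-level tree $\op{P}^\uex \circ \op{P}^\uex$ via $\Delta_{\op{P}^\uex}$, applying $\alpha \circ \alpha$, and then composing into $\op{P}_\infty$ and acting on $A$. Third, I would use that the composite $\op{P}^\uex \xrightarrow{\Delta} \op{P}^\uex \circ \op{P}^\uex \xrightarrow{\alpha \circ \alpha} \op{P}_\infty \circ \op{P}_\infty \xrightarrow{\gamma} \op{P}_\infty$ equals (up to sign) the part of the cobar differential on $\op{P}_\infty = \Omega(\op{P}^\uex)$ that is picked out by $\alpha$; more precisely, the Maurer--Cartan-type equation $d\alpha + \alpha \star \alpha = 0$ (which is the defining property of the twisting morphism $\op{P}^\uex \to \Omega(\op{P}^\uex)$, Example $\ref{twistingmorphisms}$) converts $\partial_\alpha^2$ into a term that is killed against the internal differential terms already accounted for. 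Collecting everything, $\partial^2 = 0$ on cogenerators, hence $\partial^2 = 0$ as a coderivation.

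The new feature to check, relative to \cite{LV}, is that all of the maps involved are equivariant for the groupoid: the structure map in Formula $\ref{algmaps}$ is an $S_{|\vec{v}|}$-invariant family of $Aut(v_0)$-equivariant maps defined on $\tensor_{in(\vec{v})}$-coinvariants, the decomposition $\Delta_{\op{P}^\uex}$ respects the edge-automorphism actions baked into the monoidal product $\circ$, and Assumption $\ref{assumption:finiteaut}$ together with the characteristic-zero hypothesis lets us identify invariants with coinvariants throughout, so that forming these composites on (co)invariants is unambiguous. The point is that each individual term in the expansion of $\partial^2$ is already a well-defined map of $\mathbb{V}$-modules for exactly the reasons that make the free and cofree constructions well-defined (Subsection $\ref{coopsec}$ and Lemma $\ref{cofree coalgebra}$), so the cancellations, which are sign cancellations coming from the $\mathrm{Det}(\mathrm{Edges})$ factors exactly as in the proof that $\partial^2 = 0$ for the bar construction of an operad, go through verbatim.

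\textbf{Main obstacle.} I expect the only real work to be the careful identification of the corestriction of $\partial_\alpha^2$ to $A$ with the ``$\alpha \star \alpha$'' term of the Maurer--Cartan equation for the twisting morphism $\op{P}^\uex \to \op{P}_\infty$, i.e.\ tracking that the two levels of expansion in $\op{P}^\uex \circ \op{P}^\uex$ followed by composition into $\op{P}_\infty$ reproduce precisely the quadratic part of the cobar differential paired against $\alpha$; everything else is the standard coderivation argument plus a routine verification that the groupoid-equivariance and the identification of invariants with coinvariants (Assumption $\ref{assumption:finiteaut}$) make each step well-defined. In other words, the difficulty is notational rather than conceptual: once one sets up the bijections of summands correctly --- as was already done in the proof of Theorem $\ref{monadthm}$ and in the comonad construction of Subsection $\ref{coopsec}$ --- the vanishing follows from $d\alpha + \alpha \star \alpha = 0$.
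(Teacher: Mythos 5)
Your plan is correct in substance, but it takes a genuinely different route from the paper. You reduce $\partial^2=0$ to its corestriction onto the cogenerators $A$ and verify the vanishing there directly, using associativity of the $\op{P}_\infty$-action to identify the quadratic term with $\gamma(\alpha\circ\alpha)\Delta$ and then invoking the Maurer--Cartan identity for the universal twisting morphism $\alpha\colon\op{P}^\uex\to\Omega(\op{P}^\uex)$. The paper instead argues as in \cite[Lemma 11.2.1]{LV}: it notes that colimits of $\mathbb{V}$-colored sequences are created by forgetting the $Aut(\vec{v})$-actions, identifies $\mathsf{B}(A)$ as a coequalizer of $(\op{P}^\uex\circ_\alpha \op{P}_\infty)\circ\op{P}_\infty\circ A\rightrightarrows(\op{P}^\uex\circ_\alpha \op{P}_\infty)\circ A$, and lets $\partial^2=0$ descend from the square-zero differential on the Koszul complex $\op{P}^\uex\circ_\alpha\op{P}_\infty$; this packages the twisting-morphism identity once and for all and avoids redoing the sign bookkeeping on cogenerators, whereas your computation makes the cancellation explicit and is closer to what one needs when writing transfer formulas. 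Both routes rest on the same inputs (the twisting-morphism condition plus the equivariance and finiteness observations you mention). One correction to your bookkeeping: the cross terms $\partial_\alpha d_A+d_A\partial_\alpha$ do not vanish outright, because the chain-map property of the $\op{P}_\infty$-action on $A$ is with respect to the full differential of $\op{P}_\infty$, including its cobar part; these cross terms leave behind the contribution built from $d_{\op{P}_\infty}\circ\alpha$, and it is precisely this contribution that the Maurer--Cartan equation cancels against $\alpha\star\alpha$ --- which is what your third step in fact anticipates, so the argument closes, but your second step should not claim those terms vanish on their own.
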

\begin{proof}  A straight forward generalization of \cite[Lemma 11.2.1]{LV}.
	Colimits in the category of $\mathbb{V}$-colored sequences are created by the functor which forgets the $Aut(\vec{v})$ action at each color scheme.  One may identify $\mathsf{B}(A)$ as a coequalizer in the category of $\mathbb{V}$-colored sequences for the maps $(\op{P}^\uex\circ_\alpha \op{P}_\infty) \circ \op{P}_\infty \circ A \rightrightarrows (\op{P}^\uex\circ_\alpha \op{P}_\infty) \circ A$ induced by the operad structure on $\op{P}_\infty$ and the $\op{P}_\infty$-algebra structure on $A$.  
\end{proof}
This defines $\mathsf{B}$ on objects, namely: $\mathsf{B}(A):= (\mathsf{F}^c_{\op{P}^\uex}(A),\partial)$.  On morphisms we observe that for each color scheme $\vec{v}$, a $\op{P}_\infty$-algebra map $A\to A^\prime$ determines a sequence
\begin{equation}\label{barmorph}
\op{P}^\uex(\vec{v})\tensor_{in(\vec{v})} A(in(\vec{v}))
\to
\op{P}_\infty(\vec{v})\tensor_{in(\vec{v})} A(in(\vec{v})) 
\to
A^\prime(v_0) 
\end{equation}
which in turn assemble to a morphism of $\op{P}^\uex$-coalgebras $\mathsf{B}(A)\to \mathsf{B}(A^\prime)$.  

Since the coinvariants in line $\ref{barmorph}$ are taken with respect to a finite group, if the $\op{P}_\infty$-algebra map $A\to A^\prime$ is a quasi isomorphism, then, by the Kunneth theorem, the induced map 
\begin{equation}\label{eq}
\op{P}^\uex(\vec{v})\tensor_{in(\vec{v})} A(in(\vec{v}))
\to
\op{P}^\uex(\vec{v})\tensor_{in(\vec{v})} A^\prime(in(\vec{v}))
\end{equation}
will be as well, if we restrict to the differential induced by $d_A$, respectively $d_A^\prime$.  Taking the direct sum over all such $\vec{v}$, we find the induced map between the first pages of the spectral sequences associated to $\mathsf{B}(A)$ and $\mathsf{B}(A^\prime)$ via the degree filtration of $\op{P}^\uex$.  Since this map converges to an isomorphism, the original map was a quasi-isomorphism, thus:

\begin{lemma}\label{barqi}
The bar construction $\mathsf{B}$ preserves quasi-isomorphisms.
\end{lemma}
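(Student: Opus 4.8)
The plan is to follow the standard spectral-sequence argument for showing that the bar construction preserves weak equivalences, adapted to the groupoid-colored setting where the only subtlety is the presence of the $Aut(\vec v)$-coinvariants. Suppose $f\colon A\to A'$ is a quasi-isomorphism of $\op{P}_\infty$-algebras. I want to show the induced map $\mathsf{B}(f)\colon\mathsf{B}(A)\to\mathsf{B}(A')$ is a quasi-isomorphism of the underlying $\mathbb{V}$-colored sequences. Since the homology of a $\mathbb{V}$-colored sequence is computed colorwise (and the $Aut(\vec v)$-action plays no role in the underlying chain complex), it suffices to check this for each color $v_0$, where $\mathsf{B}(A)(v_0)=\bigoplus_{\vec v=(-\cdots-;v_0)}\bigl[\op{P}^\uex(\vec v)\tensor_{in(\vec v)}A(in(\vec v))\bigr]_{S_{|\vec v|}}\oplus A(v_0)$.

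First I would set up the filtration on $\mathsf{B}(A)$ by the weight (number of edges) of $\op{P}^\uex$, which is bounded below by $0$ in each color scheme. The twisting morphism part of $\partial$ strictly decreases this filtration degree (it applies the $\op{P}_\infty$-structure map, which lowers weight), while the internal differential $d_A$ preserves it. Hence the associated graded differential on each filtration layer is exactly the one induced by $d_A$ on $\op{P}^\uex(\vec v)\tensor_{in(\vec v)}A(in(\vec v))$, together with $d_{A'}$ on the target side. The $E^0$-page of the resulting spectral sequence is thus the direct sum over color schemes of these tensor-product complexes, and $\mathsf{B}(f)$ induces the map of $E^0$-pages displayed in line \eqref{eq} of the excerpt.

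Next I would argue that this $E^0$-map is a quasi-isomorphism. Because $f\colon A(v_i)\to A'(v_i)$ is a quasi-isomorphism in each color, the Künneth theorem gives that $\op{P}^\uex(\vec v)\tensor A(v_{i_1})\tensor\cdots\tensor A(v_{i_r})\to\op{P}^\uex(\vec v)\tensor A'(v_{i_1})\tensor\cdots\tensor A'(v_{i_r})$ is a quasi-isomorphism (here $\op{P}^\uex(\vec v)$ is a fixed chain complex carried along). Passing to $\tensor_{in(\vec v)}$-coinvariants and then to $S_{|\vec v|}$-coinvariants is taking coinvariants for the finite group $Aut(v_{i_1})\times\cdots\times Aut(v_{i_r})\rtimes S_{|\vec v|}$, which over a field of characteristic $0$ is exact (Maschke), hence commutes with homology; so the coinvariant map is still a quasi-isomorphism. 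Summing over the (possibly infinitely many, but that is harmless for a direct sum on homology) color schemes $\vec v$ with root $v_0$, and adding the identity on the $A(v_0)$ summand, shows $\mathsf{B}(f)$ is an isomorphism on $E^1$.

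Finally, I would invoke the comparison theorem for spectral sequences: the filtration is exhaustive and bounded below (weight $\geq 0$ in each color scheme), so convergence is unproblematic, and a map inducing an isomorphism on $E^1$ induces an isomorphism on the abutments; thus $\mathsf{B}(f)$ is a quasi-isomorphism colorwise, which is what was to be shown. The main obstacle — really the only point requiring the groupoid-colored hypotheses — is the interchange of homology with the $Aut(\vec v)$- and $S_{|\vec v|}$-coinvariants, and this is exactly where Assumption \ref{assumption:finiteaut} (finiteness of the automorphism groups) together with the characteristic-zero hypothesis is used; everything else is formal bookkeeping parallel to \cite[Lemma 11.2.1]{LV} and the argument already sketched in the excerpt preceding the statement.
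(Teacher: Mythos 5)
Your proof is correct and follows essentially the same route as the paper: filter $\mathsf{B}(A)$ by the weight of $\op{P}^\uex$ so that the associated graded differential is the internal one, use the K\"unneth theorem together with finiteness of the automorphism groups (and characteristic $0$) to see that the map of first pages in line $\ref{eq}$ is an isomorphism, and conclude by convergence of the spectral sequence. Your write-up merely makes explicit a few points the paper leaves implicit (exhaustiveness and boundedness below of the filtration, and the Maschke-type exactness of coinvariants), so there is nothing to add.
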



\subsubsection{Cobar construction of a $\op{P}^\uex$-coalgebra}
The cobar construction associated to $\alpha$ is a functor
\begin{equation*}
\Omega\colon \{\op{P}^\uex\text{-coalgebras} \}\to \{\op{P}_\infty\text{-algebras} \},
\end{equation*}
defined as follows.  First define $\Omega(C):= \mathsf{F}_{\op{P}_{\infty}}(C)$ as underlying graded $\op{P}_\infty$-algebra.  We then endow this space with a differential.  Given a color scheme $\vec{v}$, consider the sequence
\begin{equation*}
C(v_0)
\to
\op{P}^{\uex}(\vec{v})\tensor_{in(\vec{v})} C(in(\vec{v})) 
\to
\op{P}_\infty(\vec{v})\tensor_{in(\vec{v})} C(in(\vec{v}))
\end{equation*}
The first map in this sequence is the $\op{P}^\uex$-coalgebra structure map.  The second is the twisting morphism $\op{P}^\uex\to \op{P}_\infty$ (Example $\ref{twistingmorphisms}$).  
These composites assemble to $Aut(v_0)$-equivariant maps $C(v_0)\to \mathsf{F}_{\op{P}_\infty}(C)(v_0)$, and hence to a map of $\mathbb{V}$-modules $C\to \mathsf{F}_{\op{P}_\infty}(C)$.  By the universal property of the free algebra, this map of $\mathbb{V}$-modules extends uniquely to a map of $\op{P}_\infty$-algebras.  Call this map $\partial \colon \Omega(C)\to\Omega(C)$; notice it has degree $-1$.  As above, the Koszul criterion implies:
\begin{lemma} $\partial \colon \Omega(C)\to\Omega(C)$ is square zero.
\end{lemma}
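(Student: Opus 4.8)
The plan is to imitate the proof of the preceding lemma: replace the coequalizer presentation of $\mathsf{B}(A)$ by the dual equalizer presentation of $\Omega(C)$, and then read off $\partial^2=0$ from the fact, already in hand, that $\alpha\colon\op{P}^\uex\to\op{P}_\infty$ is a twisting morphism.  First recall that, as a graded $\op{P}_\infty$-algebra, $\Omega(C)\cong(\op{P}_\infty\circ C)\oplus C$, and that $\partial$ is the unique map of $\op{P}_\infty$-algebras determined, via freeness of $\mathsf{F}_{\op{P}_\infty}$, by the degree $-1$ map of $\mathbb{V}$-modules
\begin{equation*}
\delta\colon C\xrightarrow{\eta_C}\op{P}^\uex\circ C\xrightarrow{\alpha\circ id}\op{P}_\infty\circ C\hookrightarrow\mathsf{F}_{\op{P}_\infty}(C),
\end{equation*}
$\eta_C$ being the $\op{P}^\uex$-coalgebra structure map of Definition \ref{conilcoalg}; the internal differentials $d_{\op{P}_\infty}$ and $d_C$ interact with $\partial$ in the standard way and are suppressed below.

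Next I would record the equalizer presentation dual to the one used in the bar lemma.  As there, all limits of $\mathbb{V}$-colored sequences and of $\mathbb{V}$-modules are created by the functor forgetting the $Aut(\vec{v})$-actions, and -- using Assumption \ref{assumption:finiteaut} and $\mathrm{char}\,k=0$, so that the coinvariant functors $\op{P}^\uex(\vec{v})\tensor_{in(\vec{v})}(-)$ are exact -- forming such limits is compatible with the composition product $\circ$ of Definition \ref{monoidaldef} in the relevant variables.  Concretely, $\Omega(C)$ is identified with the equalizer in $\mathbb{V}$-modules of the two maps
\begin{equation*}
(\op{P}_\infty\circ_\alpha\op{P}^\uex)\circ C\ \rightrightarrows\ (\op{P}_\infty\circ_\alpha\op{P}^\uex)\circ\op{P}^\uex\circ C
\end{equation*}
induced, respectively, by the cooperadic decomposition $\Delta_{\op{P}^\uex}\colon\op{P}^\uex\to\op{P}^\uex\circ\op{P}^\uex$ applied to the $\op{P}^\uex$-factor of the comodule $\op{P}_\infty\circ_\alpha\op{P}^\uex$, and by the coalgebra structure $\eta_C\colon C\to\op{P}^\uex\circ C$; here $\op{P}_\infty\circ_\alpha\op{P}^\uex=(\op{P}_\infty\circ\op{P}^\uex,\,d_\alpha)$ is the left Koszul complex of $\alpha$.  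Under this identification, $\partial$ is exactly the operator induced on the equalizer by $d_\alpha\circ id_C$.

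Finally, as for the bar construction, the only real input is that this left Koszul complex is a genuine dg $\mathbb{V}$-colored sequence, i.e.\ $d_\alpha^2=0$ -- which is precisely the content of $\alpha$ being a twisting morphism: it is the canonical twisting morphism of Example \ref{twistingmorphisms}, arising from the unit $\op{P}^\uex\to B(\Omega(\op{P}^\uex))=B(\op{P}_\infty)$ of bar--cobar duality (Lemma \ref{barcobaradjunction}), and, since $\op{P}$ is Koszul, the Koszul criterion (Lemma \ref{KC}) moreover makes this complex acyclic.  Because the equalizer is formed compatibly with $d_\alpha$, squaring $\partial$ yields the operator induced by $d_\alpha^2\circ id_C=0$, while the cross-terms with $d_C$ vanish since $\eta_C$ and $\Delta_{\op{P}^\uex}$ are chain maps; hence $\partial^2=0$.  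I expect no genuinely new obstacle here: the only bookkeeping -- exactness of the finite-group coinvariants, their commutation with the equalizer, and the $\text{Det}(\text{Edges})$ sign conventions built into $d_\alpha$ -- is handled verbatim as in the bar lemma and \cite{LV}; the point to be careful about is to bootstrap $d_\alpha^2=0$ in the groupoid-colored setting from the operad-level bar--cobar duality of Lemma \ref{barcobaradjunction} rather than re-deriving it by hand.
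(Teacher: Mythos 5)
Your proposal is correct and matches the paper's intended argument: the paper proves this lemma only by remarking that it follows ``as above,'' i.e.\ by dualizing the coequalizer presentation used for $\mathsf{B}(A)$, and your equalizer presentation of $\Omega(C)$ via the left Koszul complex $\op{P}_\infty\circ_\alpha\op{P}^\uex$, with $\partial$ induced by $d_\alpha\circ id_C$, is exactly that dualization. Your observation that the essential input is $d_\alpha^2=0$ (the twisting-morphism property), with acyclicity from the Koszul criterion only an aside, is consistent with --- indeed slightly sharper than --- the paper's phrasing.
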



\subsubsection{Bar-cobar resolution}

The following holds for any Koszul twisting morphism, but we will be interested in the $\alpha\colon \op{P}^\uex\to\op{P}_\infty$.


\begin{theorem}\label{barcobaralg}
	$(\Omega, \mathsf{B})$ is an adjoint pair for which the counit $\Omega \mathsf{B}(A)\to A$ is a quasi-isomorphism.
\end{theorem}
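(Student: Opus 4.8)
The plan is to follow the classical template for bar--cobar resolution of algebras over a Koszul operad, as in \cite[Section 11.3]{LV}, transposing each step to the groupoid colored setting where the only substantive changes are bookkeeping of the $\mathbb{V}$-module structure and the automorphism actions.

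First I would establish the adjunction $(\Omega,\mathsf{B})$ abstractly. The composite functors are built from free and cofree constructions: a $\op{P}_\infty$-algebra map $\Omega(C)\to A$ is, by the universal property of $\mathsf{F}_{\op{P}_\infty}$ (Lemma on free $\op{P}$-algebras), the same as a $\mathbb{V}$-module map $C\to A$ compatible with the differentials, which in turn is the same (by cofreeness of $F^c_{\op{P}^\uex}$, Lemma $\ref{cofree coalgebra}$) as a $\op{P}^\uex$-coalgebra map $C\to \mathsf{B}(A)$. The content here is that the differential conditions on both sides match up: a $\mathbb{V}$-module map $\phi\colon C\to A$ being a twisting morphism relative to $\alpha$ (i.e.\ the Maurer--Cartan equation in the convolution Lie algebra, suitably interpreted) is symmetric between the two descriptions. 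I would package this exactly as the coalgebra analog of the Remark $\ref{non-unital algebras}$ computation, checking naturality in $C$ and $A$.

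The heart of the theorem is that the counit $\epsilon_A\colon \Omega\mathsf{B}(A)\to A$ is a quasi-isomorphism. The standard strategy: filter $\Omega\mathsf{B}(A)$ by a weight/syzygy filtration coming from the weight grading on $\op{P}^\uex$ (and dually on $\op{P}_\infty=\Omega(\op{P}^\uex)$), so that the associated graded complex, color scheme by color scheme, becomes $(\op{P}_\infty\circ_\alpha\op{P}^\uex)\circ A$ with the internal differential of $A$ switched off --- i.e.\ the (right) Koszul complex of the twisting morphism $\alpha\colon\op{P}^\uex\to\op{P}_\infty$ applied to $A$. Since $\op{P}$ is Koszul, the Koszul Criterion (Lemma $\ref{KC}$) tells us $\op{P}_\infty\circ_\alpha\op{P}^\uex$ is acyclic --- concretely it is a resolution of $\op{I}$, so tensoring over $in(\vec v)$ with the $\mathbb{V}$-module $A$ and passing to coinvariants by the finite groups $\times_{i\ge 1}Aut(v_i)$ leaves only the $A$-summand in homology, because over a characteristic zero field taking coinvariants by a finite group is exact (this is where Assumption $\ref{assumption:finiteaut}$ and the characteristic zero hypothesis enter, exactly as in Lemma $\ref{eqlem}$). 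Hence $E^1$ of the spectral sequence of the filtration is $A$ concentrated in filtration degree zero, and one checks the comparison map induced by $\epsilon_A$ is the identity on $E^1$. Convergence of the filtration --- here I must check it is exhaustive and bounded below in each color scheme, which follows from reducedness of $\op{P}$ and the conilpotence of $\mathsf{B}(A)$ guaranteeing that only finitely many tree-expansions are nonzero on each element --- then yields that $\epsilon_A$ is a quasi-isomorphism.

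The main obstacle I anticipate is \emph{not} the homological algebra, which is routine once the filtration is set up, but rather correctly identifying the associated graded object as the Koszul complex $\op{P}_\infty\circ_\alpha\op{P}^\uex$ \emph{applied levelwise to} $A$ and verifying that the interchange of taking homology of $A$ with the functor $(-)\circ_{in(\vec v)}A(in(\vec v))$ is licit. This requires the K\"unneth theorem together with exactness of coinvariants by the finite automorphism groups --- the same point that made Lemma $\ref{barqi}$ work --- and care that the filtration is compatible with the groupoid $\mathbb{V}$-module structure (the automorphism actions at the colors $v_i$ and $v_0$ must be preserved by the filtration, which they are since the weight grading on $\op{P}^\uex$ is $Aut(\vec v)$-stable). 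Given this, the argument is a faithful transcription of \cite[Theorem 11.3.3]{LV}, and I would state it as such, spelling out only the points where the groupoid colors and the finiteness assumption do genuine work.
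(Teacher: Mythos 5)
Your proposal is correct and follows essentially the same route as the paper's proof: identify $\Omega\mathsf{B}(A)$ with $\op{P}_\infty\circ\op{P}^\uex\circ A$, filter by the weight of $\op{P}_\infty$ and $\op{P}^\uex$ so that $E^0$ sees only the twisting-morphism differential, invoke acyclicity of the Koszul complex from Koszulity of $\op{P}$, and use Maschke's theorem (Assumption $\ref{assumption:finiteaut}$, characteristic zero) to commute homology with the composite product over the finite automorphism groups. The only nitpick is that the acyclicity of $\op{P}_\infty\circ_\alpha\op{P}^\uex$ concerns the twisting morphism $\op{P}^\uex\to\Omega(\op{P}^\uex)$ rather than literally the one in Lemma $\ref{KC}$, but this is the same level of detail at which the paper itself sketches the argument.
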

\begin{proof} 	
	This follows as in \cite{LV} Theorem 11.3.3; we sketch the ingredients	in the case of the twisting morphism $\alpha\colon \op{P}^\uex\to\op{P}_\infty$.
	Viewed as a $\mathbb{V}$-module, $\Omega\mathsf{B}(A)$ may be identified with $\op{P}_\infty\circ\op{P}^\uex \circ A$, after Remark $\ref{arityzerormk}$.  This complex may be filtered by the total weight of $\op{P}_\infty$ and $\op{P}^\uex$.  Considering the spectral sequence associated to this filtration at the $E^0$ page only sees the differential coming from the  twisting morphism $\op{P}^{\uex}\to\op{P}_\infty$.
	
	 Since $\alpha$ is Koszul, this differential restricted to $\op{P}_\infty\circ\op{P}^\uex$ is acyclic.	
	The result then follows from the convergence of this spectral sequence along with the fact that $H_\ast(\op{P}_\infty\circ\op{P}^\uex \circ A)\cong H_\ast(\op{P}_\infty\circ\op{P}^\uex) \circ H_\ast(A)$.  This last fact follows by employing Maschke's theorem to conclude that each $k[Aut(\vec{v})]$ is semi-simple thus $k[Aut(\vec{v})]$-modules are projective, and so here we have used Assumption $\ref{assumption:finiteaut}$ implying each $Aut(\vec{v})$ is a finite group.
\end{proof}

\begin{remark}\label{cubicalrmk}  One could also prove a converse of this result which says that if the counit of the bar-cobar adjunction is always a quasi-isomorphism, then the operad $\op{P}$ is Koszul.  See \cite[Theorem 11.3.3]{LV} in the uncolored case.  This result would establish Koszulity of the groupoid colored operads encoding many different generalizations of operads (not just modular operads).  One precise statement would be that the linearization of any cubical Feynman category (cf \cite{KW}) is Koszul as a groupoid colored operad.  We save an investigation along these lines for future study.
\end{remark}

\subsubsection{Infinity morphisms}

\begin{definition}  Let $A$ and $A^\prime$ be $\op{P}_\infty$-algebras.  An $\infty$-morphism from $A$ to $A^\prime$, denoted $A \rightsquigarrow A^\prime$, is a $\op{P}^\uex$-coalgebra map $\mathsf{B}(A)\to\mathsf{B}(A^\prime)$.  
\end{definition}
From Lemma $\ref{cofree coalgebra}$, we see that an $\infty$-morphism $f\colon A\rightsquigarrow A^\prime$ has an underlying map $A\to A^\prime$.  We say that an $\infty$-morphism is an $\infty$-{\bf quasi-isomorphism} if this underlying map is a quasi-isomorphism.  In this way, the category of $\op{P}_\infty$-algebras with their strict morphism may be viewed as a non-full subcategory of the category of $\op{P}_\infty$-algebras with their $\infty$-morphisms.  The functor $\mathsf{B}$ extends tautologically to this larger category and combining Lemma $\ref{barqi}$ with Theorem $\ref{barcobaralg}$, shows that 
 $\mathsf{B}$ takes $\infty$-quasi isomorphisms to quasi-isomorphisms.

\subsubsection{Finite dimensional and reduced case.}\label{fdsec}
Consider the special case that $\op{P}$ is finite dimensional and reduced.  Let $A$ be a finite dimensional $\op{P}$-algebra (after Definition $\ref{fddef}$).

\begin{lemma}\label{fdbar} 
	Associated to the twisting morphism $\op{P}^\uex \to \op{P}$ is
	 is an isomorphism of $s^{-1}\op{P}^!$-algebras
	$\mathsf{B}(A)^\ast \cong \Omega(A^\ast)$
\end{lemma}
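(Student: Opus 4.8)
The plan is to exhibit the isomorphism $\mathsf{B}(A)^\ast \cong \Omega(A^\ast)$ by unwinding both sides into explicit $\mathbb{V}$-modules and matching the underlying graded objects together with their differentials. First I would recall from Lemma~\ref{cofree coalgebra} that, as a graded $\op{P}^\uex$-coalgebra, $\mathsf{B}(A) = \mathsf{F}^c_{\op{P}^\uex}(A) \cong (\op{P}^\uex \circ A) \oplus A$, and from Remark~\ref{non-unital algebras} (applied to the Koszul-dual side) that $\Omega(A^\ast) = \mathsf{F}_{s^{-1}\op{P}^!}(A^\ast) \cong (s^{-1}\op{P}^! \circ A^\ast) \oplus A^\ast$. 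The key input is Lemma~\ref{fdfree}: since $\op{P}$ is finite dimensional and reduced, so is $\op{P}^\uex$ (it is a sub-$\mathbb{V}$-colored-sequence of $F_c(\Sigma E)$, which is reduced and levelwise finite dimensional under these hypotheses), and hence $\mathsf{F}^c_{\op{P}^\uex}(A)^\ast \cong \mathsf{F}_{(\op{P}^\uex)^\ast}(A^\ast)$. By the definition of the quadratic dual operad, $(\op{P}^\uex)^\ast$ with its operadic desuspension is precisely $s^{-1}\op{P}^! = s^{-1}(s(\op{P}^\uex)^\ast) $ — more carefully, $\op{P}^! = s(\op{P}^\uex)^\ast$ so $s^{-1}\op{P}^! = (\op{P}^\uex)^\ast$ as operads — giving the identification of underlying graded $s^{-1}\op{P}^!$-algebras.

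\textbf{Matching the differentials.} Next I would check that the dual of the bar differential $\partial\colon \mathsf{B}(A)\to\mathsf{B}(A)$ is the cobar differential on $\Omega(A^\ast)$. The bar differential was defined as the unique coderivation extending the composite $\op{P}^\uex(\vec v)\tensor_{in(\vec v)} A(in(\vec v)) \to \op{P}_\infty(\vec v)\tensor_{in(\vec v)} A(in(\vec v))\to A(v_0)$ built from the twisting morphism and the $\op{P}_\infty$-structure map; in the finite-dimensional reduced setting the relevant part is the quadratic piece, namely the component landing in $\op{P}^\uex(\vec v)\tensor A \to E(\vec v)\tensor A$ composed with the $\op{P}$-algebra structure $E(\vec v)\tensor_{in(\vec v)} A(in(\vec v)) \to A(v_0)$. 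Dualizing this map of finite-dimensional vector spaces and using that a coderivation on a cofree coalgebra is determined by its corestriction (dually, a derivation on a free algebra is determined by its restriction to generators), one sees the transpose is exactly the derivation extending $A^\ast(v_0) \to \op{P}^\uex(\vec v)^\ast \tensor_{in(\vec v)} A^\ast(in(\vec v)) \to s^{-1}\op{P}^!(\vec v)\tensor_{in(\vec v)} A^\ast(in(\vec v))$, whose first map is the coalgebra structure on $A^\ast$ dual to the algebra structure on $A$ and whose second is the twisting morphism $\op{P}^\uex \to s^{-1}\op{P}^!$ dual to $s\op{P}^{\uex\,\ast}\hookleftarrow$ —i.e.\ the cobar differential of Section~\ref{abcsec}. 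The internal differential $d_A$ dualizes to $d_{A^\ast}$ on the nose, so $(\partial + d_A)^\ast = \partial_{\Omega} + d_{A^\ast}$ as required.

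\textbf{Where the work is.} The main obstacle I anticipate is purely bookkeeping with the operadic (de)suspension and the groupoid-colored edge/vertex determinant signs: one must verify that the canonical isomorphism $(\Lambda^\pm)^\ast \cong \Lambda^\mp$ noted in Subsection~\ref{coopsec}, together with Lemma~\ref{detfacts}, makes the passage $\op{P}^\uex = \op{Q}(\Sigma E, \Sigma^2 R)\ (\oplus\,\op{I})\rightsquigarrow (\op{P}^\uex)^\ast \rightsquigarrow s^{-1}\op{P}^! = s^{-1} s (\op{P}^\uex)^\ast$ commute with all the structure maps and that the sign conventions in the definition of the cobar differential on the dual side agree with the transposed bar differential. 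Since these signs are dictated by the same $\mathrm{Det}(\mathrm{Edges})$ mechanism used to prove $\partial^2 = 0$ in both the operadic bar and cobar constructions, the check is routine but must be carried out levelwise. Everything else — finite-dimensionality of $\op{P}^\uex$, reducedness, the Kunneth/Maschke arguments needed to commute $(-)^\ast$ past the $Aut(\vec v)$-coinvariants — is immediate from Assumption~\ref{assumption:finiteaut}, Lemma~\ref{fdfree}, and Lemma~\ref{fdfreeop}, exactly as in the uncolored case treated in \cite[Section 11]{LV}.
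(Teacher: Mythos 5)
Your proposal is correct and follows exactly the route the paper intends but leaves implicit: the paper states Lemma \ref{fdbar} without proof, relying on the dualization lemmas it has just set up (Lemmas \ref{lindualop}, \ref{fdfreeop}, \ref{fdfree}) together with the identification $s^{-1}\op{P}^!\cong(\op{P}^\uex)^\ast$ and the (co)freeness universal properties to transpose the bar differential into the cobar differential, which is precisely what you do. Your added checks (reducedness and levelwise finite dimensionality of $\op{P}^\uex$, invariants versus coinvariants via Assumption \ref{assumption:finiteaut}, and the suspension sign bookkeeping) are the right routine verifications and introduce no gap.
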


In this case we define $\mathsf{D}(A):= \mathsf{B}(A)^\ast \cong \Omega(A^\ast)$.  It is a contravariant functor $
\mathsf{D}\colon \{\op{P}\text{-algebras} \}\to \{s^{-1}\op{P}^!\text{-algebras} \}
$.
It follows from Theorem $\ref{barcobaralg}$ that there is a natural quasi-isomorphism $\mathsf{D}^2(A)\to A$.

\subsection{Homotopy transfer theorem} \label{httsec}
The purpose of this section is to show that any $\op{P}$-algebra is $\infty$-quasi-isomorphic to its homology.  
\begin{definition} A  deformation retract of a $\op{P}$-algebra $A$ onto a $\op{P}$-algebra $B$ is a family of deformation retracts:
	\begin{equation}
	\xymatrix{  \save !R(-.7) \ar@(ul,dl)_{h_v} \restore  A(v)  \ar@/_1pc/@{->>}[rr]_{\pi_v} & & \ar@/_1pc/@{->}[ll]_{\iota_v} B(v) 
	} \end{equation}
	indexed by $ob(\mathbb{V})$, such that $h_v,\iota_v,\pi_v$ are $Aut(v)$-equivariant.
\end{definition}

\begin{lemma}\label{eqlem}  Any $\mathbb{V}$-module admits its homology as a deformation retract.  
\end{lemma}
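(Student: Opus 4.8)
The plan is to prove this for a single finite group $G=\mathrm{Aut}(v)$ acting on a chain complex $A(v)$ of vector spaces over $k$, a field of characteristic $0$, and then assemble color-wise. The statement is that there is a $G$-equivariant deformation retract of $A(v)$ onto its homology $H_\ast(A(v))$. First I would reduce to the case of a single chain complex $W$ with a $G$-action: by Lemma \ref{detfacts}-type generalities this is not needed, rather the point is simply that everything happens inside the abelian category of $k[G]$-modules, which by Maschke's theorem is semisimple (this is exactly Assumption \ref{assumption:finiteaut}, that $G$ is finite, together with $\mathrm{char}(k)=0$). So the category of $G$-equivariant chain complexes is the category of chain complexes over a semisimple abelian category.

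The key steps, in order. First, forget the $G$-action and choose (non-equivariantly) a decomposition: pick a complement to $\ker(d)$ in each degree, and a complement to $\mathrm{im}(d)$ inside $\ker(d)$, giving a splitting of $A(v)$ as a direct sum of its homology (with zero differential) and a contractible complex; this yields an ordinary deformation retract data $(\iota,\pi,h)$ with $h$ a contracting homotopy on the acyclic part. This is the standard existence of a deformation retract onto homology for complexes of vector spaces. Second, average over $G$: replace $\iota,\pi$ by $\frac{1}{|G|}\sum_{g\in G} g\iota g^{-1}$ and $\frac{1}{|G|}\sum_{g\in G} g\pi g^{-1}$ — here $G$ acts on $H_\ast(A(v))$ by functoriality of homology, so these averaged maps are chain maps and $G$-equivariant, and $\pi\iota = \mathrm{id}$ still holds because each conjugate satisfied it and the identity is $G$-fixed. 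Third, one must then correct the homotopy $h$ so that $\iota\pi - \mathrm{id} = dh + hd$ for the new $\iota,\pi$; the cleanest route is to apply the standard homotopy-retract perturbation/correction lemma (e.g. the formula $h' = h\,dh$ after a preliminary correction, or equivalently averaging $h$ and then adjusting) which produces from any deformation retract data a genuine one, and then average that corrected $h$ over $G$ once more. Averaging a genuine homotopy over $G$ preserves the identity $dh+hd = \iota\pi-\mathrm{id}$ since all three other terms are already $G$-equivariant. One can also arrange the side conditions $h^2=0$, $h\iota=0$, $\pi h=0$ by the usual two-step adjustment, and these adjustments commute with averaging.

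I expect the main obstacle to be purely bookkeeping: making sure that the averaging operation, which visibly fixes $G$-equivariance and the relation $\pi\iota=\mathrm{id}$, is applied in the right order relative to the homotopy-correction step so that the homotopy relation $dh+hd=\iota\pi-\mathrm{id}$ survives. The conceptually clean way to avoid all perturbation formulas is to work directly in the semisimple category $k[G]\text{-mod}$: a bounded chain complex over any semisimple abelian category is isomorphic (as a complex) to the direct sum of its homology placed in each degree and a sum of elementary acyclic two-term complexes $(\cdots 0\to M \xrightarrow{\mathrm{id}} M\to 0\cdots)$, because $\ker(d)$ is a direct summand and $\mathrm{im}(d)$ is a direct summand of $\ker(d)$, all as $k[G]$-modules. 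This decomposition immediately gives $\iota$, $\pi$, and $h$ (zero on the homology part, the evident null-homotopy on each elementary piece), all manifestly $G$-equivariant, with all side conditions automatic. I would present this latter argument as the proof, remarking that it is the equivariant analogue of the fact that chain complexes of vector spaces split, with Maschke's theorem supplying semisimplicity; the color-wise assembly over $ob(\mathbb{V})$ is then immediate since $\mathbb{V}$ is a groupoid and the construction at $v$ is canonical once the splitting is chosen.
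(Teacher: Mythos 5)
Your final argument (the one you say you would present) is correct and is essentially the paper's own proof: the paper likewise works object-wise in $k[\mathrm{Aut}(v)]$-modules, using the finite-group invariant-complement fact \cite[Proposition 1.5]{FH} to split off the cycles and then the boundaries, obtaining an equivariant decomposition $A_n(v)\cong H_n\oplus B_n\oplus C_n$ with $d\colon C_n\stackrel{\cong}\to B_{n-1}$, from which $\iota$, $\pi$, $h$ are read off exactly as in your elementary-acyclic-summand description. The averaging-plus-perturbation detour in your first paragraph is unnecessary (and its justification of $\pi\iota=\mathrm{id}$ glosses over the cross terms), but since you discard it in favor of the semisimple splitting, the proposal matches the paper.
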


\begin{proof}
	From \cite[Proposition 1.5]{FH} we recall that given a finite group $G$, a representation $Y$ and a $G$-invariant subspace $X\subset Y$, there is a $G$-invariant subspace $Z\subset Y$ with $Y = X\oplus Z$.  We call this subspace a $G$-complement of $X$ in $Y$.  Notice this result requires our characteristic $0$ assumption.
	
Let $A$ be a $\mathbb{V}$-module and fix an object $v\in\mathbb{V}$.  Then $(A(v),d)$ is a chain complex with an action of a group $G:=Aut(v)$ which is finite by Assumption $\ref{assumption:finiteaut}$.  The cycles in each degree $Z_n\subset A_n(v)$ are a $G$-invariant subspace, so we may find a $G$-complement $Z_n\oplus C_n = A_n$.  Next the boundaries $B_n$ form a $G$-invariant subspace of the cycles $B_n\subset Z_n$.  Thus we may choose a $G$-complement $H_n\oplus B_n = Z_n$.  All together this gives us a $G$-invariant decomposition
	\begin{equation*}
	A_n(v)\cong H_n\oplus B_n\oplus C_n
	\end{equation*}
	in each degree.
	
	Observe that the map sending $a\in H_n$ to $[a]\in H_n(A(v))$ specifies an isomorphism 
	$H_n\cong H_n(A(v))$.  Indeed any cycle $z\in Z_n$ may be written $z=(a,b,0)$ which is homologous to $(a,0,0)$, whereas if $(a,b,0)\sim (a^\prime,b^\prime, 0)$ then  $a=a^\prime$. Therefore we may define $\iota_v$ and $\pi_v$ by inclusion and projection respectively.  That is, $\pi_v(a,b,c)=[(a,0,0)]$ and $\iota_v([a,b,0]) = (a,0,0)$.
	Since the decomposition is $G$-invariant, the maps $\pi_v$ and $\iota_v$ are $G$-equivariant.
	
	Relative to this decomposition, the differential $d_n\colon A_n(v)\to A_{n-1}(v)$ takes the form $d(a,b,c)= (0,dc,0)$.  Restricting $d_n$ to $C_n$ specifies a $G$-equivariant isomorphism $C_n\stackrel{\cong}\to B_{n-1}$, and we define $d^{-1}_n\colon B_{n-1}\to C_n$ to be its inverse.  With this we define the homotopy $h_v\colon A_n(v)\to A_{n+1}(v)$ by $h_v(a,b,c)=(0,0,d^{-1}(b))$.  Since the decomposition is $G$-invariant, $h_v$ is $G$-equivariant.  
	
	We then calculate $dh_v(a,b,c)=(0,b,0)$ and $h_vd(a,b,c)=(0,0,c)$, whereas
	$\iota_v\circ \pi_v(a,b,c)=(a,0,0)$. Thus $dh_v+h_vd=id-\iota_v\circ \pi_v$, and so taken over all $v$, the maps $h,\pi,\iota$ form a deformation retract as desired.\end{proof}

Using Lemma $\ref{eqlem}$ we now record the homotopy transfer theorem in this context.  

\begin{theorem} \label{htt} Let $\op{P}$ be a Koszul operad and let $A$ be a $\op{P}_\infty$-algebra.  There is a $\op{P}_\infty$ structure on $H_\ast(A)$ extending the induced $\op{P}$-algebra structure, for which  $H_\ast(A)$ and $A$ are $\infty$-quasi-isomorphic.
\end{theorem}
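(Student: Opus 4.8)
The plan is to follow the proof of the homotopy transfer theorem for algebras over a Koszul operad, \cite{LV} (Chapters 10--11), replacing the homological perturbation lemma over the ground field by its $\mathbb{V}$-module version. The starting point is Lemma \ref{eqlem}, which furnishes an equivariant deformation retract of the underlying $\mathbb{V}$-module of $A$ onto $H_\ast(A)$ with $dh+hd=id-\iota\pi$; the homotopy $h$ constructed there already satisfies the side conditions $h^2=0$, $\pi h=0$ and $h\iota=0$, so $(\iota,\pi,h)$ is a contraction in the sense required by the perturbation lemma.

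The first step is to apply the cofree $\op{P}^\uex$-coalgebra functor $\mathsf{F}^c_{\op{P}^\uex}=\op{P}^\uex\circ(-)\oplus(-)$ of Lemma \ref{cofree coalgebra} to this contraction. Since $\mathsf{F}^c_{\op{P}^\uex}$ is assembled from the bifunctors $-\tensor_{in(\vec{v})}-$ and from passage to $Aut(\vec{v})$-coinvariants with each $Aut(\vec{v})$ finite, Maschke's theorem together with the K\"unneth theorem---the same argument as in the proof of Lemma \ref{barqi}---shows that $\mathsf{F}^c_{\op{P}^\uex}$ sends $(\iota,\pi,h)$ to a contraction of the graded $\mathbb{V}$-colored sequence $\mathsf{F}^c_{\op{P}^\uex}(A)$ onto $\mathsf{F}^c_{\op{P}^\uex}(H_\ast(A))$, relative to the differentials induced by $d_A$ and by $d_{H_\ast(A)}=0$, and this contraction is moreover compatible with the quasi-cofree $\op{P}^\uex$-coalgebra structures.

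The second step is the perturbation. The bar differential of $\mathsf{B}(A)$ decomposes as the sum of the differential $d_0$ induced by $d_A$ and a part $\partial$ coming from the twisting morphism $\op{P}^\uex\to\op{P}_\infty$; filtering by the weight of $\op{P}^\uex$, conilpotence of $\op{P}^\uex$ guarantees that $\partial$ strictly lowers the weight, so $h'\partial$ is locally nilpotent. The homological perturbation lemma then produces a square-zero differential $\partial_\infty$ on $\mathsf{F}^c_{\op{P}^\uex}(H_\ast(A))$ together with a contraction $\mathsf{B}(A)\rightleftarrows(\mathsf{F}^c_{\op{P}^\uex}(H_\ast(A)),\partial_\infty)$; run inside the category of quasi-cofree $\op{P}^\uex$-coalgebras (using the compatibility from the first step), it returns a coderivation $\partial_\infty$ and coalgebra morphisms. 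By the correspondence between square-zero coderivations of $\mathsf{F}^c_{\op{P}^\uex}(H_\ast(A))$ extending its internal differential and $\Omega(\op{P}^\uex)=\op{P}_\infty$-algebra structures on $H_\ast(A)$ (compare the construction of $\mathsf{B}$ above and \cite{LV}), $\partial_\infty$ is exactly the bar differential of a $\op{P}_\infty$-structure on $H_\ast(A)$, i.e.\ $(\mathsf{F}^c_{\op{P}^\uex}(H_\ast(A)),\partial_\infty)=\mathsf{B}(H_\ast(A))$. The accompanying coalgebra quasi-isomorphism $\mathsf{B}(A)\to\mathsf{B}(H_\ast(A))$ is then, by definition, an $\infty$-morphism $A\rightsquigarrow H_\ast(A)$ whose underlying map of $\mathbb{V}$-modules is $\pi$, hence an $\infty$-quasi-isomorphism; the ``extending'' clause follows by observing that the lowest weight term of the explicit formula $\partial_\infty=\pi'\partial\iota'+\pi'\partial h'\partial\iota'+\cdots$ recovers the $\op{P}$-algebra structure induced on $H_\ast(A)$ by that of $A$, the remaining terms raising the weight.

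I expect the main obstacle to be the first step: checking carefully that the cofree $\op{P}^\uex$-coalgebra functor transports an equivariant $\mathbb{V}$-module contraction to a contraction of quasi-cofree $\op{P}^\uex$-coalgebras, with enough compatibility that the perturbation lemma can be run inside the coalgebra category and returns a coderivation rather than merely a chain map. This is precisely where Assumption \ref{assumption:finiteaut} enters, ensuring that each $k[Aut(\vec{v})]$ is semisimple so that coinvariants are exact and commute with the relevant tensor products; granting this, the rest is a routine transcription of \cite{LV}.
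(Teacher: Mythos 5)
Your proposal is correct in outline, but it takes a genuinely different route from the paper. The paper's proof follows \cite{LV} Chapter 10.3: it uses the equivariant deformation retract of Lemma \ref{eqlem} to write down the explicit tree-summation formulas, producing a map of dg cooperads $BEnd_A\to BEnd_{H_\ast(A)}$ (internal edges labeled by $h$, leaves by $\iota$, root by $\pi$), then transfers the structure by composing $\op{P}^\uex\to BEnd_A\to BEnd_{H_\ast(A)}$, and extends $\iota$ to an $\infty$-morphism by the same flow chart with $h$ at the root; the only genuinely new point in the groupoid-colored setting is that equivariance of $(\iota,\pi,h)$ is needed for these formulas to be well defined, since tree labelings are only defined up to moving automorphisms along edges. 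You instead run homological perturbation on the bar construction, in the spirit of \cite{Berglund}: transport the contraction through $\mathsf{F}^c_{\op{P}^\uex}$, perturb by the twisting-morphism part of the bar differential, and read off both the transferred $\op{P}_\infty$-structure and the $\infty$-quasi-isomorphism (with underlying map $\pi$ rather than $\iota$, which is equally good) from the output of the perturbation lemma. Your route buys a cleaner package — structure and $\infty$-morphism arrive together, with no explicit sign/tree bookkeeping — at the cost of exactly the two points you flag: one must check that the tensor-product homotopies used to build the contraction on $\op{P}^\uex\circ A$ can be chosen (e.g.\ symmetrized, which is harmless in characteristic $0$ with finite $Aut(\vec v)$ by Assumption \ref{assumption:finiteaut}) so as to descend to the $S_n$- and $Aut$-coinvariants, and that the perturbed differential is a coderivation and the perturbed maps are coalgebra maps, which requires the side conditions $h^2=0$, $\pi h=0$, $h\iota=0$ (which the retract of Lemma \ref{eqlem} does satisfy) or an equivalent argument. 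These verifications play the same role in your argument that equivariance of the retract plays in the paper's, so neither approach is strictly easier; yours is closer to \cite{Berglund}, the paper's to \cite{LV}.
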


\begin{proof}  
	
		This result is proven in the uncolored case via explicit combinatorial formulas involving trees labeled by deformation retract data.  These formulas are equally valid for $\mathbb{V}$-colored trees.  The one difference worth emphasizing is that in the groupoid colored context, elements in the free operad are not represented uniquely by colored trees, since we may move automorphisms along internal edges (Formula $\ref{AT}$).  Therefore, for such formulas to be well defined it is essential that the maps making up the deformation retract are equivariant.  Lemma $\ref{eqlem}$ ensures this is the case.  With this in mind we simply sketch the proof, following \cite{LV} in the uncolored case.

	First use Lemma $\ref{eqlem}$ to fix a deformation retract $(\iota,\pi,h)$ between $A$ and $H_\ast(A)$.
	We then use this deformation retract to construct a map of dg cooperads $BEnd_{A}\to BEnd_{H(A)}$.  It is induced by a map of $\mathbb{V}$-colored sequences
$	F_c(\Sigma End_{A})\to \Sigma End_{H(A)}.$
	The map may be described diagrammatically following \cite{LV} p.378.  
	Starting with a $\mathbb{V}$-colored tree whose vertices of type $\vec{v}$ are labeled by functions $A(in(\vec{v})) \to A(out(\vec{v}))$, we label the (internal) edges of the tree by $h$, the leaves by $\iota$ and the root by $\pi$.  This labeling may then be read as a flow chart to construct a linear map  $H_\ast(A)(\text{leaves}(T)) \to H_\ast(A)(\text{root}(T))$.  Degrees are preserved since the internal edges contribute degree $+1$, while the vertices contribute degree $-1$ resulting in the degree $-1$ shift in the target.  The fact that $\pi$ and $\iota$ are equivariant ensures that this is a map of $\mathbb{V}$-colored sequences.  It remains to verify that the induced map respects the differential in the bar construction.  This follows, as in Proposition 10.3.2 of \cite{LV}, from the deformation retract equation.

	Now from Proposition $\ref{barcobaradjunction}$ we know that a $\op{P}_\infty$ algebra structure may be recast as a cooperad map $\op{P}^\uex\to BEnd_{A}$.  Therefore if $A$ is a $\op{P}_\infty$ algebra we may compose with $BEnd_{A}\to BEnd_{H(A)}$ to endow $H_\ast(A)$ with the structure of a $\op{P}_\infty$ algebra.  It remains to observe that $\iota$ may be extended to an $\infty$-morphism compatible with this transferred structure.  Such an $\infty$-morphism is given by maps
	\begin{equation*}
	\op{P}^\uex(\vec{v})\to  Hom(\tensor_{i=1}^n H_\ast(A(v_i)), A(v_0)).
	\end{equation*}
	Following \cite{LV} Theorem 10.3.6, these are defined by decomposing $\op{P}^\uex$ and using the same flow chart formula as above, except we label the root by $h$ in place of $\pi$.  We remark it is also possible to extend $\pi$ to an $\infty$-quasi-isomorphism, see \cite{LV} Proposition 10.3.9. 
\end{proof}

\section{Weak modular operads.} \label{secmodops}
In this section we will
\begin{itemize}
	\item  Define the quadratic operad encoding modular operads, call it $\M$.
	\item  Interpret $\M$ and its quadratic presentation in terms of (nested) graphs. 
	\item  Calculate the quadratic dual of $\M$, specifically $\M^!=s\mathbb{M}_\mathfrak{K}$.
	\item  Prove $\Omega(\mathbb{M}^\ast)\stackrel{\sim}\to\mathbb{M}_\mathfrak{K}$ is  a quasi-isomorphism and conclude $\M$ is Koszul.
	\item Discuss the implicit role of graph associahedra in our proof.
	\item  Introduce the category of weak modular operads as the category of $\Omega(\mathbb{M}_\mathfrak{K}^\ast)$-algebras.%
\end{itemize}
\subsection{Modular Operads}  Modular operads were introduced in \cite{GeK2}.  They are generalizations of operads allowing composition across all connected graphs -- not just rooted trees.  In this subsection we will present the groupoid colored operad encoding modular operads in sets (denoted $M$) and the groupoid colored operad encoding modular operads in dg vector spaces, (denoted $\mathbb{M}$).   The latter will be presented quadratically as $\mathbb{M}=F(\hat{E})/\langle S \rangle$. 

To this end we now specialize to a particular groupoid $\mathbb{V}$ as follows.  Its objects are pairs of non-negative integers $(g,n)$ such that $n+2g-3\geq 0$.  Its only morphisms are automorphisms and $Aut((g,n))=S_n$.  We fix this $\mathbb{V}$ for the remainder of the paper.

\subsubsection{Modular generators}\label{modulargenerators}

Define the $\mathbb{V}$-colored sequence $E$ in sets as follows.  
First, we let
\begin{equation*}
E((g,n);(g+1,n-2)):= \{ \xi_{i,j} :\{i\neq j\}\subset \{1\cdc n\} \}  \times S_{n-2}.
\end{equation*}
The right $S_{n-2}$-action is given by multiplication on the right hand factor and the left $S_n$-action is given 
by $\sigma  \xi_{i,j}=\xi_{\sigma^{-1}(i),\sigma^{-1}(j)} \sigma^\prime$,
where $\sigma\in S_n$ and $\sigma^\prime$ is the composite
\begin{equation}\label{selfgluings}
\{1\cdc n-2 \} \to \{1\cdc n\} \setminus \{i,j\} \stackrel{\sigma}\to  \{1,\cdc n\} \setminus \{\sigma(i),\sigma(j)\} \to \{1\cdc n-2 \}
\end{equation}
where the first and last arrows are the unique order preserving bijections.

Second we let
\begin{equation}\label{nonselfgluings}
E((g_1,n),(g_2,m);(g_1+g_2,n+m-2)):=  \{ \xycirc{i}{j} :1\leq i \leq n, 1 \leq j \leq m \} \times S_{n+m-2}
\end{equation}
Given $i,j$ we define a total order on the disjoint union  $(\{1\cdc n\} \setminus \{i\}) \sqcup (\{1\cdc m\} \setminus \{j\}) $ by
\begin{equation}\label{relabelingorder}
\{1 \cdc i-1\}^l < \{j+1\cdc m\}^r < \{1\cdc j-1\}^r < \{i+1\cdc n\}^l
\end{equation}

Here $l$ and $r$ indicate that the subset is identified with either the left hand or right hand term in the disjoint union  $(\{1\cdc n\} \setminus \{i\}) \sqcup (\{1\cdc m\} \setminus \{j\}) $.  The four subsets in Formula $\ref{relabelingorder}$ are each taken to be totally ordered as usual.

Note this total order is not symmetric, in the sense that switching the order of $n$ and $m$ in its construction gives a different total order to this set.  However these two orderings have the same underlying cyclic order and are related by $t^{m-j+i-1}$, where $t$ is the generator of the cyclic group of order $n+m-2$.

The right $S_{n+m-2}$ action is given by multiplication.  The left $S_n\times S_m$ action is defined to be $(\sigma_1,\sigma_2)  \xycirc{i}{j} = \xycirc{\sigma^{-1}_1(i)}{\sigma^{-1}_2(j)} \sigma^\prime $ where $\sigma^\prime \in S_{n+m-2}$ is defined by
\begin{align*}
\{1\cdc n+m-2\} \to (\{1\cdc n\} \setminus \{i\}) \sqcup (\{1\cdc m\} \setminus \{j\})  \stackrel{\sigma_1,\sigma_2}\to 
\\
(\{1,\cdc n\} \setminus \{\sigma_1(i)\})  \sqcup (\{1\cdc m\} \setminus \{\sigma_2(j)\})  \to \{1\cdc n+m-2 \} 
\end{align*}
where the first map is the unique order preserving bijection with respect to the order $i,j$ and the last map is the unique order preserving bijection with respect to the order $\sigma_1(i),\sigma_2(j)$.

The $S_2$ action 
\begin{equation*}
E((g_1,n_1),(g_2,n_2);(g_1+g_2,n_1+n_2-2))\to E((g_2,n_2),(g_1,n_1);(g_1+g_2,n_1+n_2-2))
\end{equation*}
 is then defined by $\xycirc{i}{j} \times id\mapsto  \xycirc{j}{i} \times t^{m-j+i-1}$.  %
Finally we define $E(else)=\emptyset$.  

\subsubsection{Modular relations}\label{sec:relations}
Define $\hat{E}$ to be the linear $\mathbb{V}$-colored sequence given by the color-wise span of $E$.  We now define the elementary relations, denoted $S\subset F(\hat{E})$.  These relations come in four families and are depicted in Figures $\ref{fig:rel1}$ and $\ref{fig:rel2}$.
Here the notation $i|j$ labeling a vertex corresponds to  $\xycirc{i}{j}$, and the notation and $i,j$ corresponds to $\xi_{i,j}$ above.  By applying permutations to these relations (and moving automorphisms along edges) we may use these relations to produce relations of the form
  \begin{equation}\label{rel1}
   \xi_{l,k} \ \xycirc{i}{j} = \xi_{i^\prime, j^\prime} \ \xycirc{l^\prime}{k^\prime},
  \ \ \ \ \ \ \ \ \ \ 
   \xi_{l,k} \ \xi_{i,j} = \xi_{i^\prime, j^\prime} \ \xi_{l^\prime,k^\prime}
  \end{equation}
  and
   \begin{equation}\label{rel2} \xycirc{i}{j} \ \xi_{l,k}=\xi_{l^\prime, k^\prime} \ \xycirc{i^\prime}{j^\prime},
   \ \ \ \ \ \ \ \ \ \ 
   \xycirc{i}{j} \ \xycirc{l}{k}=\xycirc{l^\prime}{k^\prime}, \ \xycirc{i^\prime}{j^\prime}   
   \end{equation}
 where the $\prime$ notation indicates a relabeling which may be deciphered from the commutation relations of subsection $\ref{modulargenerators}$.  Strictly speaking, the notation in Formulas $\ref{rel1}$ and $\ref{rel2}$ suppresses the color scheme, which the $\prime$-relabeling depends on.  Thus it's best to view these Formulas as four families of relations indexed over all possible color schemes, as the tree pictures indicate.

\begin{figure}
		\includegraphics[scale=.65]{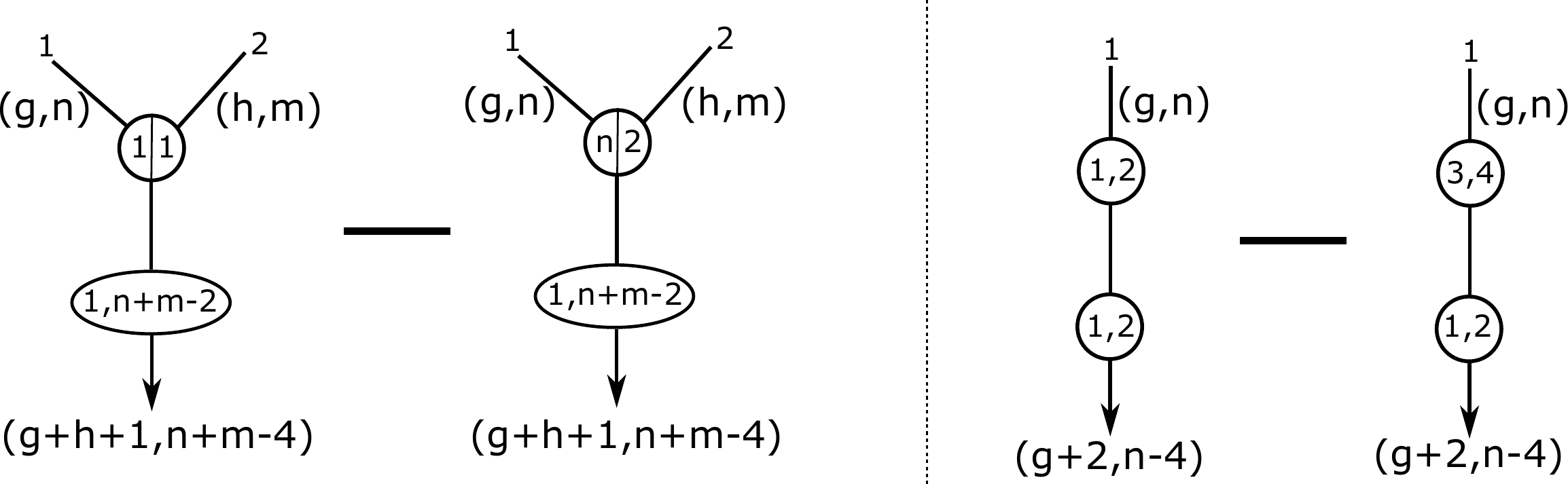} 
		\caption{Elements of $S$.  Left: the relation $(\xi_{1,n+m-2})({\protect\xycirc{1}{1}}) = (\xi_{1, n+m-2})( {\protect\xycirc{n}{2}})$.
		Right: the relation $(\xi_{1,2})(\xi_{1,2}) = (\xi_{1,2})(\xi_{3,4})$.}
				\label{fig:rel1}	
\end{figure}

\begin{figure}
	\includegraphics[scale=.65]{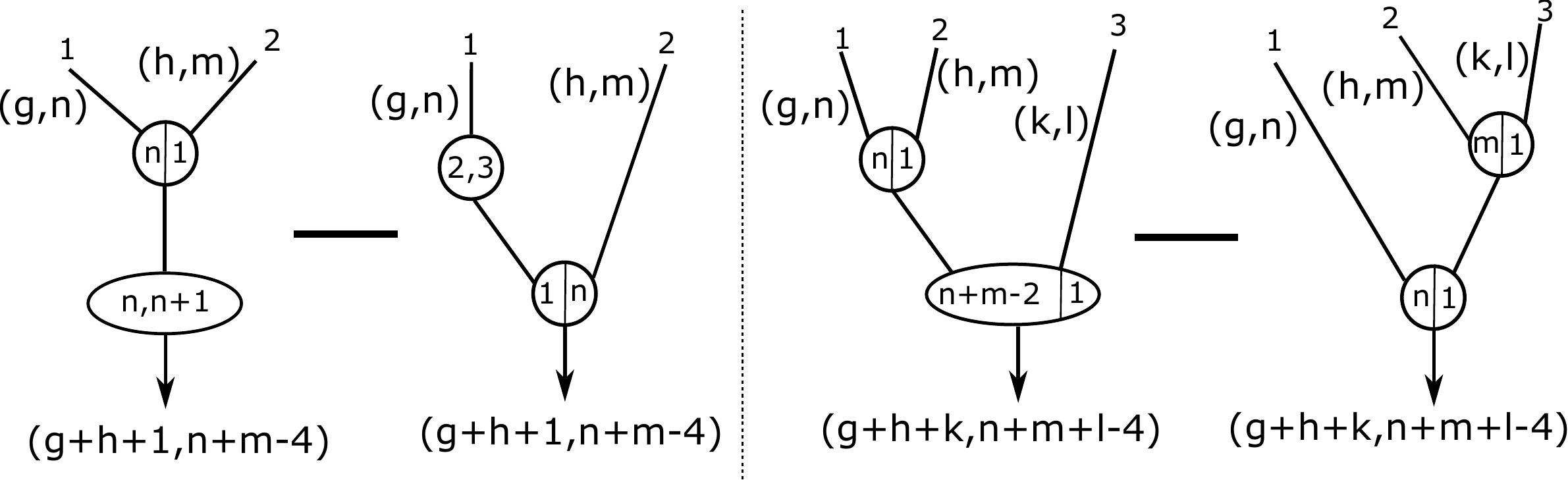}	
		\caption{Elements of $S$.  Left: the relation $(\xi_{n,n+1}) ({\protect\xycirc{n}{1}}) = (\protect\xycirc{1}{n})(\xi_{2,3})$.  Right: the relation $(\protect\xycirc{n+m-2}{1})( \protect\xycirc{n}{1})=(\protect\xycirc{n}{1})(\protect\xycirc{m}{1})$.}
						\label{fig:rel2}
\end{figure}
\subsubsection{Modular Operads}\label{modopssec}

Define $\M:=F(\hat{E})/\langle S \rangle$.  The category of algebras over $\mathbb{M}$ is defined to be the category of {\bf dg modular operads}.  We remark that one may easily verify this agrees with the usual definition by a direct comparison with \cite[Theorem 3.7]{GeK2}.

 Since the span functor is a symmetric monoidal left adjoint, there is a map of sets $F(E)(\vec{v})\to F(\hat{E})(\vec{v})$ for each color scheme $\vec{v}$ which is compatible with the operad structure.  Here we use the notation $F$ for the free operad in both sets and in vector spaces, with the convention that the context is fixed by the input. 

Define an equivalence relation on each set $F(E)(\vec{v})$ by saying two elements are equivalent if they map to the same vector in the composite $F(E)(\vec{v})\to F(\hat{E})(\vec{v})\twoheadrightarrow  \mathbb{M}$.  If we denote the equivalence classes by $M(\vec{v})$, then $M$ inherits the structure of a $\mathbb{V}$-colored operad in sets. We define the category of algebras over $M$ to be the category of {\bf modular operads in sets}. 

There is an inclusion map of sets $M(\vec{v})\hookrightarrow\mathbb{M}(\vec{v})$.
Vectors in the image of this inclusion are called {\bf homogeneous}.   More generally, if $T$ is a $\vec{v}$-tree, vectors in the inclusion $M(T)\hookrightarrow \mathbb{M}(T)$ are called homogeneous.
Homogeneous vectors form bases in their respective vector spaces.  Since each $\mathbb{M}(\vec{v})$ is a finite dimensional vector space with a given basis, it is canonically isomorphic to its linear dual $\mathbb{M}(\vec{v})^\ast$.

\subsubsection{The operad $\mathbb{M}_\mathfrak{K}$.}

\begin{definition}  We define the following quadratic $\mathbb{V}$-colored operad in the category of dg vector spaces: $\mathbb{M}_\mathfrak{K}:= F(\Sigma^{-1} \hat{E})/\langle \Sigma^{-2} S\rangle$.
\end{definition}

\begin{lemma}\label{duallem} There are isomorphisms of operads $\M^!\cong s \mathbb{M}_\mathfrak{K}$ and cooperads $\mathbb{M}^\ast = (\mathbb{M}_\mathfrak{K})^\uex$.
	
\end{lemma}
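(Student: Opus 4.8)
The plan is to verify the two isomorphisms asserted in Lemma \ref{duallem} by unpacking the definitions of quadratic duality from Subsection \ref{KDsec} and comparing them with the explicit quadratic presentation $\M = F(\hat E)/\langle S\rangle$ given in Subsection \ref{modopssec}. Recall that $\M$ is a unital quadratic operad with generating $\mathbb{V}$-colored sequence $\hat E$ (concentrated in weight $1$) and relations $S\subset F^1(\hat E)$. By definition, the quadratic dual unital conilpotent cooperad is $\M^\uex = \op{Q}(\Sigma\hat E, \Sigma^2 S)\oplus\op{I}$, and the quadratic dual operad is $\M^! = s(\M^\uex)^\ast$. So the content of the lemma is really a bookkeeping statement about where the suspensions land.

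First I would establish the cooperad statement $\mathbb{M}^\ast = (\mathbb{M}_\mathfrak{K})^\uex$. The key input is that $\mathbb{M}$ is finite dimensional and reduced (each $\mathbb{M}(\vec v)$ is finite dimensional with the homogeneous basis $M(\vec v)$, and reducedness follows because a modular graph of fixed type $(g,n)$ has a bounded number of edges and vertices), so by Lemma \ref{lindualop} the linear dual $\mathbb{M}^\ast$ is a conilpotent cooperad. Now $\mathbb{M}_\mathfrak{K} = F(\Sigma^{-1}\hat E)/\langle\Sigma^{-2}S\rangle$ is by construction a quadratic operad with generators $\Sigma^{-1}\hat E$ and relations $\Sigma^{-2}S$; hence its quadratic dual cooperad is $(\mathbb{M}_\mathfrak{K})^\uex = \op{Q}(\Sigma\Sigma^{-1}\hat E,\ \Sigma^2\Sigma^{-2}S)\oplus\op{I} = \op{Q}(\hat E, S)\oplus\op{I}$, where the suspensions cancel. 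So it remains to identify $\op{Q}(\hat E,S)\oplus\op{I}$ with $\mathbb{M}^\ast$. This is the standard fact that for a finite-dimensional reduced quadratic operad $\op{P}=F(E)/\langle R\rangle$ one has $\op{P}^\ast \cong \op{Q}(E,R)$ as conilpotent cooperads (after adjoining the unit): the subcooperad $\op{Q}(E,R)\subset F_c(E)$ is carved out by the condition that the composite $\op{Q}\hookrightarrow F_c(E)=F(E)\twoheadrightarrow F^1(E)/R$ vanish, and dualizing the short exact sequence defining $\op{P}$ in each weight identifies this with the dual of $\op{P}$. Since $\mathbb{M}$ has trivial differential in this step, there is nothing to check about signs from $d$. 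I would carry this out by examining weights $0$, $1$, and $2$ explicitly (weight $0$ gives $\op{I}$, weight $1$ gives $\hat E^\ast\cong\hat E$ using the chosen bases and Assumption \ref{assumption:finiteaut} to identify invariants with coinvariants, weight $2$ gives the orthogonal complement of $S$, which is precisely the span of homogeneous weight-$2$ vectors surviving in $\mathbb{M}$), and then noting that in higher weights both sides are determined by the cooperad structure / the operad structure respectively, dually.

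Second, with the cooperad statement in hand, the operad statement $\M^!\cong s\mathbb{M}_\mathfrak{K}$ is almost immediate: by definition $\M^! = s(\M^\uex)^\ast$, and since $\M^\uex = \op{Q}(\Sigma\hat E,\Sigma^2 S)\oplus\op{I}$, a parallel suspension-cancellation computation (or simply dualizing the identity $\mathbb{M}^\ast = (\mathbb{M}_\mathfrak{K})^\uex$ just proved, using that $\mathbb{M}_\mathfrak{K}$ is finite dimensional and reduced so $((\mathbb{M}_\mathfrak{K})^\uex)^\ast \cong \mathbb{M}_\mathfrak{K}$) gives $(\M^\uex)^\ast \cong \mathbb{M}_\mathfrak{K}$, hence $\M^! = s(\M^\uex)^\ast \cong s\mathbb{M}_\mathfrak{K}$. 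One should double-check that Lemma \ref{suspfreelemma} (suspension commutes with $F$ and $F_c$) is what licenses moving the $\Sigma^{\pm}$ in and out of the free (co)operad constructions, and that the operadic suspension functor $s = \Lambda\otimes-$ interacts correctly with linear duality via the canonical isomorphism $(\Lambda^\pm)^\ast\cong\Lambda^\mp$ noted in Subsection \ref{KDsec}; these are exactly the places a stray sign or degree shift could creep in.

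The main obstacle I anticipate is not conceptual but combinatorial: one must check that the relation space $S$ (the four families in Figures \ref{fig:rel1} and \ref{fig:rel2}, spanning a subspace of $F^1(\hat E)$) is genuinely "self-dual" in the appropriate sense, i.e. that the orthogonal complement $S^\perp\subset F^1(\hat E)^\ast \cong F^1(\Sigma^{-1}\hat E)\otimes(\text{sign})$ under the chosen bases is again cut out by (a suspension of) $S$, so that $\op{Q}(\hat E,S)$ in weight $2$ really is $\mathbb{M}^\ast$ in weight $2$ and not something smaller. Concretely this amounts to verifying that for each color scheme, the homogeneous weight-$2$ vectors of $\mathbb{M}$ together with the relations $S$ account for all of $F^2(\hat E)$, i.e. a dimension count of two-vertex modular graphs versus two-level $\hat E$-trees, with the group actions (the $S_n$'s and the edge-$\operatorname{Aut}$ actions) tracked carefully using Formula \ref{composition}. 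This is the point where the explicit description of $E$ in Subsection \ref{modulargenerators} — the $\xi_{i,j}$ and $\xycirc{i}{j}$ generators with their relabeling conventions — must be used in earnest; everything else is formal manipulation of the definitions.
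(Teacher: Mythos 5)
Your proposal is correct and follows essentially the same route as the paper: the paper likewise invokes finite-dimensionality and reducedness (Lemmas \ref{lindualop} and \ref{fdfreeop}), cancels the suspensions, and identifies $\hat{E}\cong\hat{E}^\ast$ and $S\cong S^\ast$ via the distinguished homogeneous bases, the only cosmetic difference being that it proves the operad statement first (showing $\M_\mathfrak{K}\cong(\M^\uex)^\ast$ by dualizing the maximal-subcooperad description of $\M^\uex$) and then notes the cooperad statement is a direct consequence, whereas you argue in the reverse order. The one imprecision is your quoted ``standard fact,'' which should read $\op{P}^\ast\cong\op{Q}(E^\ast,R^\perp)$ with the annihilator of $R$; the self-duality of $S$ that you correctly flag in your final paragraph is exactly the point the paper dispatches with the basis identification $S\cong S^\ast$.
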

\begin{proof} 
	We will use the fact that both $\M$ and $\M_\mathfrak{K}$ are finite dimensional and reduced.  In particular $\M^\ast$ is a cooperad in this case (Lemma \ref{lindualop}), and the second statement is a direct consequence of the first, which we now consider.
	
	By definition $\M^!=s(\M^\uex)^\ast $, so it is enough to show that $\M_\mathfrak{K}\cong (\M^\uex)^\ast$.  Now $\M^\uex\subset F_c(\Sigma\hat{E})$ is the maximal cooperad which vanishes on the projection $F_c(\Sigma \hat{E})  \to F^1(\Sigma \hat{E})/\Sigma^2 S$.  Therefore the linear dual $(\M^\uex)^\ast$ is a quotient of $F_c(\Sigma \hat{E})^\ast$ by the ideal generated by $(\Sigma^2 S)^\ast\cong \Sigma^{-2} S^\ast$,  using Lemma $\ref{fdfreeop}$ to identify $F_c(\Sigma \hat{E})^\ast\cong F(\Sigma^{-1}\hat{E}^\ast)$.  We may then use the distinguished basis of each $\hat{E}(\vec{v})$, namely $E(\vec{v})$, to identify $\hat{E}\cong \hat{E}^\ast$, and likewise $S\cong S^\ast$.  Under these identifications we find $(\M^\uex)^\ast$ is isomorphic to a quotient of $F(\Sigma^{-1}\hat{E})$ by the ideal generated by $\Sigma^{-2} S$.
\end{proof}

Getzler and Kapranov defined in \cite{GeK2} the notion of $\mathfrak{K}$-modular operads which were needed to define the algebraic bar construction of a modular operad, which they called the Feynman transform.  The introduction of $\mathbb{M}_\mathfrak{K}$ allows the following definition.

\begin{definition}  Algebras over $\mathbb{M}_\mathfrak{K}$ are called $\mathfrak{K}$-modular operads.
\end{definition} 

That this definition coincides with the original definition of \cite{GeK2} can be seen by a direct comparison with the generators and relations presentation of $\mathfrak{K}$-modular operads in \cite{KWZ} and \cite{Marklodd}.  See also Remark $\ref{graphoperations}$.

We note that an incarnation of Lemma $\ref{duallem}$ has been proven independently by Batanin and Markl (Theorem 12.10 of \cite{BatMar}) as part of a broader study of Koszul duality for the authors' notion of operadic categories.  

\subsection{Graphs and nested graphs.} \label{graphssec2}

We now revisit our definition of an abstract graph (Definition $\ref{graphdef}$) to define the class of graphs which may be used to describe modular operads.  If $X$ is a set, an {\bf $X$-labeled} graph is a graph along with a bijection between $X$ and the set of legs.  A {\bf genus labeled} graph is a graph along with a function from $V$ to the set of non-negative integers.  A genus labeled graph is {\bf stable} if each vertex $v$ has $||v||+2g-3\geq 0$, where $||v||$ is the valence and $g$ is the genus label of $v$.

\begin{definition} \label{modulargraphdef}
	A {\bf modular $X$-graph} is an abstract graph which is
	$X$ labeled, genus labeled, stable and connected and which has at least one edge.  A {\bf modular graph} is a modular $X$-graph for some $X=\{1\cdc n\}$.
\end{definition}

\begin{remark}\label{noedgesremark}  The fact that we defined a modular graph to have at least one edge reflects the fact that $M$ and $\M$ are by definition non-unital operads.  We could consider their unital analogs and allow modular graphs to have no edges, but since we endeavor to take the bar construction of $\M$, it is convenient to have already disregarded these elements and to work in the non-unital framework.  Although it is not technically a modular graph by our definition, the corolla with genus label $g$ and $n$ adjacent flags plays a role below, and we denote it by $\ast_{g,n}$.
\end{remark}

The {\bf internal genus} of a graph is the rank of the first homology group of its associated CW complex.  The {\bf total genus} of a modular graph is the sum of the internal genus and the genus labels of all vertices.  We say a modular graph is of type $(g,n)$ if it has $n$ legs and total genus $g$.  We also say a vertex is of type $(g,n)$ if it has valence $n$ and genus label $g$. 

\subsubsection{Gluing operations for modular graphs.}\label{gluingsec}
Let $\gamma$ be a modular graph with $n$ legs.  For $1\leq i \neq j\leq n$ we define $\xi_{ij}(\gamma)$ to be the modular graph given by gluing together legs $i$ and $j$, thus forming a new edge, and relabeling the legs in the unique way such that the total order of the remaining $n-2$ legs is preserved.  We also define $\sigma \xi_{ij}(\gamma)$ to be the result of permuting the leg labels of $\xi_{ij}(\gamma)$ by $\sigma\in S_{n-2}$.  

Let $\gamma$ and $\gamma^\prime$ be two modular graphs with $n$ and $m$ legs respectively.  For $1\leq i \leq n$ and $1\leq j \leq m$ we define $\gamma \xycirc{i}{j}\gamma^\prime$ to be the modular graph formed by attaching leg $i$ of $\gamma$ to leg $j$ of $\gamma^\prime$, thus forming a new edge, and relabeling the remaining $n+m-2$ legs using the total order given in Formula $\ref{relabelingorder}$. The symmetric group $S_{n+m-2}$ acts on such a graph by permuting the leg labels and we also define $\sigma(\gamma \xycirc{i}{j}\gamma^\prime)$ to be the resulting modular graph.

\subsubsection{Nested graphs}\label{nestsec}

Let $\gamma$ be a graph.  Any subset $A\subset Edges(\gamma)$ specifies a subgraph of $\gamma$ whose edges are $A$ and whose vertices are all those vertices adjacent to an edge in $A$.  We call this graph the {\bf closure of} $A$.

\begin{definition}\label{nestdef}  A {\bf nest} $N$ of a (modular) graph $\gamma$ is a proper, non-empty subset of $Edges(\gamma)$ whose closure is a connected subgraph.
\end{definition} 

	We say two nests are {\bf compatible} if one is contained in the other (ie they are nested) or if their closures are disjoint (so share no edges and share no vertices).  A collection of compatible nests is called a {\bf nesting}.  Every nesting is a poset by containment.  A {\bf nested graph} is a graph along with a choice of nesting, see Figure $\ref{layers}$.  The set of all nestings of a graph is itself a poset by adding or removing nests.  A maximal element in this poset is said to be {\bf fully nested}.  By convention we allow the empty nesting consisting of no nests.  We typically denote nests by $N_i$ and a nesting by $\mathfrak{N}=\{N_1\cdc N_r\}$.  The number of nests in a nesting is denoted $|\mathfrak{N}|$.

If $(\gamma,\mathfrak{N})$ is a nested graph with $N\in \mathfrak{N}$, we define the {\bf $N$-layer} of $\gamma$ to be the graph formed by taking the closure of $N$ along with all adjacent flags and then, for each immediate predecessor of $N$, collapsing its vertices and edges to a single new vertex, keeping track of the genus of the collapse. 
We also refer to the graph outside of all nests, collapsing maximal nests to vertices, as a layer of $(\gamma,\mathfrak{N})$.  In particular the number of layers is always one greater than the number of nests, see Figure $\ref{layers}$.  Note that layers are technically not modular graphs because they do not have a leaf labeling.

\begin{figure}
	\includegraphics[scale=1.1]{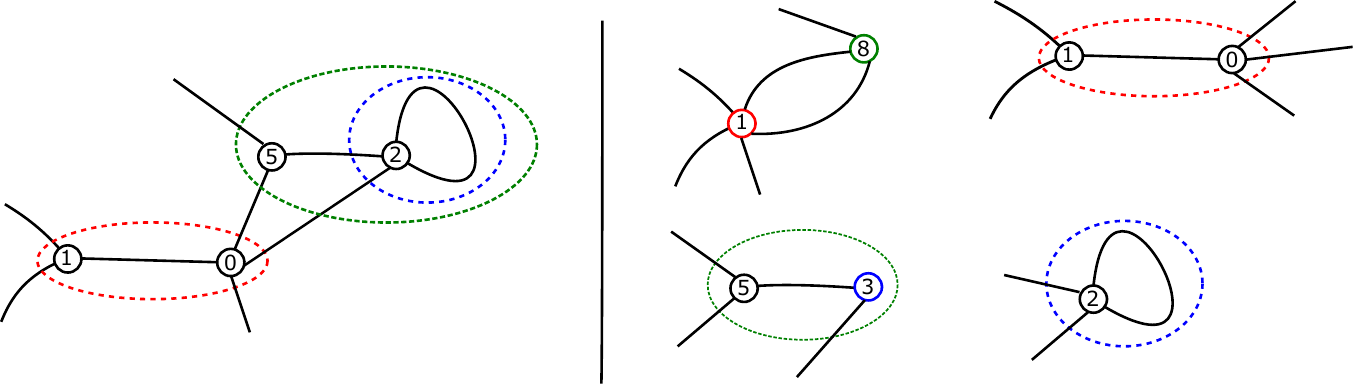} 
	\caption{A nested graph on the left.  This nesting has three nests, each consisting of the set of edges inside a dashed oval.  The four corresponding layer graphs are on the right.  Here the leg labels on the left hand side are suppressed; the genus labels of vertices are shown.}
	\label{layers}
\end{figure}

We now record some basic properties of nestings for future use.
\begin{lemma}\label{eelem}  Let $(\gamma, \mathfrak{N})$ be a nested graph and let $e$ be an edge of $\gamma$.  The subset of the poset $\mathfrak{N}$ consisting of those nests containing $e$ is totally ordered. 
\end{lemma}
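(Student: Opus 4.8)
The plan is to reduce the claim directly to the definition of compatibility of nests (Subsection \ref{nestsec}). Recall that a nesting $\mathfrak{N}$ consists of pairwise compatible nests, where two nests are compatible precisely when one is contained in the other \emph{or} their closures are disjoint subgraphs, i.e.\ share no edges and no vertices. Fixing the edge $e$, it therefore suffices to show that any two nests $N_1, N_2 \in \mathfrak{N}$ which both contain $e$ cannot have disjoint closures; compatibility then forces one of $N_1, N_2$ to contain the other.

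The key observation is the following: if a nest $N$ contains the edge $e$, then the closure of $N$ contains $e$ among its edges (this is immediate, since the closure of a set $A$ of edges has edge set exactly $A$) and also contains the two vertices of $\gamma$ adjacent to $e$ (these are among the vertices adjacent to an edge of $N$, which by definition are precisely the vertices of the closure). Consequently, the closures of $N_1$ and $N_2$ share the edge $e$, so they are not disjoint. By the dichotomy in the definition of compatible nests, we conclude that $N_1 \subseteq N_2$ or $N_2 \subseteq N_1$.

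Since this reasoning applies to every pair of nests in $\mathfrak{N}$ containing $e$, any two such nests are comparable in the containment partial order on $\mathfrak{N}$, which is exactly the assertion that the subposet of nests containing $e$ is totally ordered.

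I do not anticipate a genuine obstacle here: the argument is a one-line unwinding of definitions. The only point to state carefully is that the closure of a nest containing $e$ contains both endpoints of $e$, which follows at once from the definition of closure given at the start of Subsection \ref{nestsec}. Everything else is formal.
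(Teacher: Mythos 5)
Your proof is correct and is essentially the paper's own argument: any two nests containing $e$ have closures sharing the edge $e$, hence are not disjoint, so compatibility forces them to be nested and the subposet is totally ordered. No issues.
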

\begin{proof}
	Since any two nests in this subset share an edge, they are not disjoint.  Thus any two nests in this subset are comparable, hence the claim.
\end{proof}

Consequently, we may define a function $min_\mathfrak{N}\colon Edges(\gamma)\to \mathfrak{N}\cup \ast$ which sends an edge to $\ast$ if it is contained in no nest, and otherwise sends an edge to the smallest nest it is contained in. 

\begin{lemma}\label{nestssurjective}
	The function $min_\mathfrak{N}$ is surjective. 
\end{lemma}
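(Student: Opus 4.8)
The plan is to show that every nest $N\in\mathfrak{N}$ is hit by $\min_\mathfrak{N}$, i.e.\ that there exists an edge $e\in N$ for which $N$ is the smallest nest containing $e$. First I would observe that the nests of $\mathfrak{N}$ strictly contained in $N$ form a sub-poset; call it $\mathfrak{N}_{<N}$. Since $N$ is nonempty, it suffices to exhibit an edge of $N$ that lies in no member of $\mathfrak{N}_{<N}$: for such an edge $e$, every nest in $\mathfrak{N}$ containing $e$ must be comparable to $N$ (by Lemma~\ref{eelem}, applied to $e\in N$, the nests containing $e$ are totally ordered, and $N$ is among them), and it cannot be strictly smaller than $N$ by the choice of $e$, so $N=\min_\mathfrak{N}(e)$.

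So the heart of the matter is the purely combinatorial claim: if $N$ is a nest of $\gamma$ and $N_1,\dots,N_k$ are the maximal elements of $\mathfrak{N}_{<N}$, then $N\setminus(N_1\cup\dots\cup N_k)\neq\emptyset$. I would argue this by a connectedness/counting argument on the closure of $N$. The key point is compatibility: the maximal sub-nests $N_1,\dots,N_k$ are pairwise compatible, and since none contains another they are pairwise \emph{edge-disjoint} (their closures share no edges, indeed no vertices). Hence $N_1\cup\dots\cup N_k$ is a disjoint union inside the edge set of the closure $\overline{N}$, which is connected. If this union exhausted $N$, then $\overline{N}$ would be a connected graph whose edge set is partitioned into $k$ blocks, each of which is itself the edge set of a connected subgraph with pairwise disjoint vertex sets — but a connected graph with at least one edge cannot have its vertex set partitioned into $k\ge 2$ nonempty pieces with no edge running between them, and the $k=1$ case would force $N=N_1$, contradicting $N_1\subsetneq N$. (When $k=0$ the claim is immediate since $N\neq\emptyset$.) Therefore some edge of $N$ avoids all the $N_i$, as required.

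I expect the main obstacle to be bookkeeping rather than conceptual: one must be careful that ``maximal sub-nests are edge-disjoint'' genuinely follows from the definition of compatibility (nested-or-closure-disjoint) together with maximality, and that the vertex sets of the $\overline{N_i}$ are disjoint inside $\overline{N}$, so that an edge of $N$ lying in no $N_i$ really has $N$ as its minimal containing nest and not some incomparable nest — but Lemma~\ref{eelem} rules out incomparable nests through a common edge, which closes the gap. No deep input is needed; the argument is a short connectedness observation once the edge-disjointness of the maximal sub-nests is in hand.
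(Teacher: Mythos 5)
Your argument for the nests themselves is sound and is essentially the paper's: for a nest $N$, the maximal elements of the sub-poset of nests strictly contained in $N$ are pairwise incomparable, hence (by compatibility) have disjoint closures, so if they covered all of $N$ they would partition the connected closure of $N$ into vertex-disjoint pieces with no edge between them --- impossible for $k\geq 2$, while $k=1$ contradicts strict containment. An edge of $N$ avoiding all of them then has $N$ as its minimal containing nest, with Lemma~\ref{eelem} disposing of incomparable nests through that edge, exactly as you say.

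However, there is a genuine gap: the codomain of $min_{\mathfrak{N}}$ is $\mathfrak{N}\cup\ast$, not $\mathfrak{N}$, and your plan never shows that $\ast$ is hit, i.e.\ that some edge of $\gamma$ lies in \emph{no} nest at all. This is not a throwaway case --- it is what gives the strict inequality $|\mathfrak{N}|\leq |Edges(\gamma)|-1$ in Corollary~\ref{edgesvsnests}, which the Koszulity argument relies on. The fix is the same partition argument applied one level up: if every edge of $\gamma$ lay in some nest, the maximal nests of $\mathfrak{N}$ (pairwise closure-disjoint, as above) would partition $\gamma$; this is impossible when there are at least two of them since $\gamma$ is connected, and impossible when there is exactly one because a nest is by definition a \emph{proper} subset of $Edges(\gamma)$. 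Note that properness of nests is essential here and does not appear in your write-up, whereas in the nest case the role of properness was played by strict containment in $N$. With this extra paragraph your proof matches the paper's.
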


\begin{proof}
	First, fix $N_j\in \mathfrak{N}$ and let $\{N_i\colon i\in I\} \subset \mathfrak{N}$ be those nests which are contained in but not equal to $N_j$.  There must be an edge which is contained in $N_j$ but not in any of the $N_i$, since otherwise the maximal elements of $\{N_i\colon i\in I\}$ would partition the closure of $N_j$, implying the closure of $N_j$ is disconnected. By definition such an edge is sent to $N_j$ by the function $min_\mathfrak{N}$. 
	
	 Likewise, there must be an edge of $\gamma$ which is contained in no nest, lest the maximal nests in $\mathfrak{N}$ partition $\gamma$, which is not possible since the modular graph $\gamma$ is connected by assumption.  By definition such an edge is sent to $\ast$ by the function $min_\mathfrak{N}$.  Thus $min_\mathfrak{N}$ is surjective.
\end{proof}

\begin{lemma} A nested graph $(\gamma, \mathfrak{N})$ is fully nested if and only if $min_\mathfrak{N}$ is a bijection.
\end{lemma}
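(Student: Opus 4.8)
The statement to prove is: a nested graph $(\gamma,\mathfrak{N})$ is fully nested if and only if $min_\mathfrak{N}$ is a bijection. Since $min_\mathfrak{N}$ is always surjective (Lemma $\ref{nestssurjective}$), the content is that $\mathfrak{N}$ is maximal if and only if $min_\mathfrak{N}$ is injective, i.e.\ distinct edges lie in distinct smallest nests. The plan is to prove the contrapositive in both directions, pivoting on the following observation: if $e\neq e'$ have $min_\mathfrak{N}(e)=min_\mathfrak{N}(e')=N$ (with $N$ possibly $\ast$), then there is a nest strictly refining $\mathfrak{N}$ obtained by separating $e$ from $e'$ inside $N$; conversely, if $\mathfrak{N}$ fails to be maximal, the additional nest witnesses a failure of injectivity.

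\textbf{Step 1 (non-injective $\Rightarrow$ not fully nested).} Suppose $min_\mathfrak{N}(e)=min_\mathfrak{N}(e')=N$ for distinct edges $e,e'$. Consider the layer graph $L$ of $(\gamma,\mathfrak{N})$ associated to $N$ (or the outermost layer if $N=\ast$): by construction $e$ and $e'$ are both edges of $L$, and $L$ is connected. I would produce a proper nonempty connected subgraph of $L$ (equivalently, a proper nonempty connected edge-subset of the closure of $N$ containing $e$ but not $e'$, or one forming a genuinely smaller connected piece): for instance take a connected subgraph of $L$ that is a spanning tree with one edge removed, or more simply observe that since $L$ has at least two edges and is connected, it admits a proper nonempty connected subgraph containing $e$. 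One must then check this new edge-set, viewed back in $\gamma$, is a nest (proper, nonempty, connected closure) and is compatible with every nest of $\mathfrak{N}$ — compatibility with nests properly contained in $N$ is automatic since the new nest contains $N$'s sub-nest structure appropriately, compatibility with $N$ and its ancestors holds by containment, and compatibility with nests disjoint from $N$ holds since they remain disjoint. Adding it to $\mathfrak{N}$ strictly enlarges the nesting, so $(\gamma,\mathfrak{N})$ is not fully nested.

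\textbf{Step 2 (not fully nested $\Rightarrow$ non-injective).} Suppose $\mathfrak{N}\subsetneq \mathfrak{N}'$ for some nesting $\mathfrak{N}'$, and let $N'\in\mathfrak{N}'\setminus\mathfrak{N}$. Applying Lemma $\ref{nestssurjective}$'s argument inside $N'$ relative to the sub-nesting $\mathfrak{N}\cap\{\text{nests}\subsetneq N'\}$: there is an edge $e$ of $N'$ not contained in any nest of $\mathfrak{N}$ strictly inside $N'$. Now locate $min_\mathfrak{N}(e)$: it is either $\ast$ or some nest $M\in\mathfrak{N}$ with $M\supsetneq$ (the relevant structure) — in any case $M$ is strictly larger than $N'$ or incomparable in a way forcing $N'\subsetneq M$ or $N'$ outside all of $\mathfrak{N}$. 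Since $N'$ has a connected closure with at least one edge and $N'\notin\mathfrak{N}$, the layer of $\mathfrak{N}$ containing $e$ contains strictly more edges than $e$ alone can account for as smallest-nest representatives — more carefully, I would count: the number of nests in $\mathfrak{N}$ is strictly less than in $\mathfrak{N}'$, and if $min_\mathfrak{N}$ were a bijection then $|Edges(\gamma)| = |\mathfrak{N}| + 1$ (the $+1$ for the $\ast$ fiber), while the analogous identity fails for $\mathfrak{N}'$ unless it too is a bijection. The cleanest route here is actually the counting identity: $min_\mathfrak{N}$ is a bijection iff $|Edges(\gamma)| = |\mathfrak{N}|+1$, combined with a separate lemma that a maximal nesting has exactly $|Edges(\gamma)|-1$ nests — but since that lemma may not be available, I would instead argue directly that any nesting strictly larger than one with bijective $min$ is impossible because there would be no edge available to serve as the new smallest-nest representative, contradicting surjectivity of $min_{\mathfrak{N}'}$ restricted appropriately.

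\textbf{Main obstacle.} The delicate point is Step 2: showing that bijectivity of $min_\mathfrak{N}$ genuinely obstructs any enlargement. The risk is a circular or hand-wavy counting argument. I expect the honest proof to run: if $min_\mathfrak{N}$ is a bijection and $N'$ is a new compatible nest, apply the surjectivity argument of Lemma $\ref{nestssurjective}$ to $\mathfrak{N}'=\mathfrak{N}\cup\{N'\}$ to find, for each nest, a private edge — this over-determines the edge set since $\mathfrak{N}'$ has one more nest than $\mathfrak{N}$ but the same edges, and the $\ast$-fiber cannot have shrunk to empty (there is always an uncollapsed edge by connectedness of $\gamma$), giving $|Edges(\gamma)|\ge |\mathfrak{N}'|+1 = |\mathfrak{N}|+2 > |Edges(\gamma)|$, a contradiction. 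So the key step is extracting this strict inequality cleanly from the surjectivity lemma applied to $\mathfrak{N}'$; everything else (Step 1, and the verification that the new edge-set is a genuine compatible nest) is routine graph combinatorics.
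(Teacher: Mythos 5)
Your plan is correct and is essentially the paper's own proof: for the direction ``non-injective $\Rightarrow$ not fully nested'' the paper takes exactly the connected component of $N\setminus e'$ containing $e$ (your layer-graph piece, suitably re-inflated) as the new compatible nest, using minimality of $N$ to check compatibility; and for the converse it uses precisely the counting you settle on in your final paragraph, namely applying the surjectivity lemma (Lemma \ref{nestssurjective}) to the enlarged nesting $\mathfrak{N}'$ to get $|Edges(\gamma)|\geq|\mathfrak{N}'|+1>|\mathfrak{N}|+1$, so that $min_\mathfrak{N}$ cannot be injective. The only cosmetic differences are that the paper works with edge subsets of $\gamma$ directly rather than through the layer graph, and phrases both directions by contraposition rather than contradiction.
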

\begin{proof} 
	
	First, if this function is not injective we may choose distinct edges $e_1$ and $e_2$ with $min_\mathfrak{N}(e_1)=min_\mathfrak{N}(e_2)$.  Let us first assume that $min_\mathfrak{N}(e_1)=min_\mathfrak{N}(e_2)\neq \ast$, and so $min_\mathfrak{N}(e_1)=min_\mathfrak{N}(e_2)=:N_j\in\mathfrak{N}$. 
The closure of $N_j\setminus e_2$ is a (possibly disconnected) subgraph of the closure of $N_j$.  Let $N_0$ be the set of edges in the connected component of $N_j\setminus e_2$ that contains $e_1$.  We claim $N_0\cup\mathfrak{N}$ is a nesting of $\gamma$.

Indeed, fix $N_i\in\mathfrak{N}$ for which $N_i\cap N_0$ is not empty.  Then $N_j\cap N_i$ is not empty, and so these two nests are comparable (aka nested).  If $N_j\subseteq N_i$ then $N_0\subset N_i$, whence $N_0$ and $N_i$ are compatible.  Otherwise $N_i\subset N_j$ is a proper subset, and since $N_j$ is the smallest nest containing both $e_1$ and $e_2$ we know that neither is in $N_i$.  It follows that $N_i$ is a subset of a connected component of $N_j\setminus e_2$.   Since $N_i\cap N_0$ is not empty it is a subset of the component containing $e_1$, which is $N_0$.  We have thus shown that $N_i$ and $N_0$ are either nested or disjoint.  Thus $N_0\cup\mathfrak{N}$ is a nesting of $\gamma$, and so $\mathfrak{N}$ is not maximal.  

The case where  $min_\mathfrak{N}(e_1)=min_\mathfrak{N}(e_2)= \ast$ follows similarly, replacing the nest $N_j$ in the above argument with the entire graph $\gamma$ (which is technically not a nest by our definition).  We thus conclude that if the function $min_\mathfrak{N}$ is not injective, then $(\gamma, \mathfrak{N})$ is not fully nested nested.  Combining this with Lemma $\ref{nestssurjective}$ proves $(\Rightarrow)$ by contraposition.

Conversely, if $\mathfrak{N}$ is not fully nested then there is a nesting $\mathfrak{N}^\prime$ which contains it.  Applying Lemma $\ref{nestssurjective}$ to $\mathfrak{N}^\prime$ we know that $|Edges(\gamma)|\geq  |\mathfrak{N}^\prime|+1 > |\mathfrak{N}|+1$ and so $min_\mathfrak{N}\colon Edges(\gamma)\to \mathfrak{N}\cup\ast$ can't be injective.  This proves $(\Leftarrow)$ by contraposition.  
\end{proof}

\begin{corollary}\label{edgesvsnests}  Let $\gamma$ be a modular graph.
\begin{enumerate}
	\item Any two full nestings of $\gamma$ have the same number of nests (namely $|Edges(\gamma)-1|$).
	\item A nested graph is fully nested if and only each of its layers has one edge.
	\item  Any nesting of $\gamma$ satisfies $0 \leq  |\mathfrak{N}|\leq Edges(\gamma)-1$.
	\item  In a fully nested graph $(\gamma,\mathfrak{N})$, the function $min_\mathfrak{N}$ induces a canonical isomorphism
	 \begin{equation} \label{ne}
Det(Edges(\gamma))\cong	\Sigma Det(\mathfrak{N}).
	\end{equation}
\end{enumerate}

\end{corollary}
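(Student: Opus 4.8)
The plan is to derive all four parts from the two facts just established: that $min_\mathfrak{N}\colon Edges(\gamma)\to\mathfrak{N}\cup\ast$ is always surjective (Lemma \ref{nestssurjective}), and that it is injective exactly when $(\gamma,\mathfrak{N})$ is fully nested (the preceding lemma). First I would dispatch (3) and (1) together: surjectivity forces $|Edges(\gamma)|\ge|\mathfrak{N}|+1$, hence $|\mathfrak{N}|\le |Edges(\gamma)|-1$, while $|\mathfrak{N}|\ge 0$ is the empty-nesting convention and $|Edges(\gamma)|-1\ge 0$ because a modular graph has at least one edge. When $(\gamma,\mathfrak{N})$ is moreover fully nested the surjection is a bijection, so $|\mathfrak{N}|=|Edges(\gamma)|-1$ regardless of which maximal nesting was chosen; this is (1).

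For (2) the one genuine computation is to recognize the edge set of each layer as a fiber of $min_\mathfrak{N}$. Unwinding the definition of the $N$-layer, collapsing the immediate predecessors of $N$ to vertices deletes precisely the edges of $N$ that lie in one of those predecessors; and an edge $e\in N$ lies in an immediate predecessor of $N$ if and only if it lies in some nest properly contained in $N$, since any such smaller nest sits inside a maximal one, which is by definition an immediate predecessor. Therefore the edges of the $N$-layer are exactly $min_\mathfrak{N}^{-1}(N)$, and likewise the edges of the outermost layer are exactly $min_\mathfrak{N}^{-1}(\ast)$. So the layers of $(\gamma,\mathfrak{N})$ are indexed by $\mathfrak{N}\cup\ast$, each carries at least one edge by Lemma \ref{nestssurjective}, and every layer carries exactly one edge if and only if $min_\mathfrak{N}$ is injective, which by the preceding lemma happens exactly when $(\gamma,\mathfrak{N})$ is fully nested.

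Finally, for (4), in the fully nested case $min_\mathfrak{N}$ is a bijection $Edges(\gamma)\xrightarrow{\ \sim\ }\mathfrak{N}\sqcup\{\ast\}$, so it induces $span\{Edges(\gamma)\}\cong span\{\mathfrak{N}\}\oplus span\{\ast\}$, and Lemma \ref{detfacts} then yields $Det(Edges(\gamma))\cong Det(\mathfrak{N})\otimes Det(\{\ast\})$. Since $Det(\{\ast\})$ is canonically the one-dimensional space $\Sigma k$ concentrated in degree $1$, the right-hand side is $\Sigma Det(\mathfrak{N})$, and the degrees are consistent by (1). The main obstacle is the bookkeeping in (2): one must check carefully that the maximal nests strictly below $N$ together account for every edge of $N$ refined by a smaller nest, so that exactly the edges with $min_\mathfrak{N}(e)=N$ survive in the $N$-layer. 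Everything else is a formal consequence of the two lemmas on $min_\mathfrak{N}$ together with Lemma \ref{detfacts}.
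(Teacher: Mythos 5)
Your proposal is correct and follows exactly the route the paper intends: the corollary is left unproved as an immediate consequence of Lemma \ref{nestssurjective}, the bijectivity criterion for full nestings, and Lemma \ref{detfacts}, which is precisely how you derive all four parts. Your identification of the edges of the $N$-layer with the fiber $min_\mathfrak{N}^{-1}(N)$ (and the outermost layer with $min_\mathfrak{N}^{-1}(\ast)$) is the right way to make part (2) explicit, and your degree bookkeeping for (4) via $Det(\{\ast\})\cong\Sigma k$ is correct.
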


\subsubsection{Correspondence between $F(M)$ and nested graphs.}\label{treegraphcorr}

In this subsection we will give a construction which  associates a nested graph to each element of the free operad $F(M)$.  We emphasize $M$ is a $\mathbb{V}$-colored operad in sets, as is $F(M)$ here.  We first consider the subsets $F(E)\subset F(M)$.


An element in $F(E)$ is represented (non-uniquely) by a tree, all of whose vertices have arity 1 or 2 and whose vertices carry labels of the form $(\xi_{ij},\sigma)$ or $(\xycirc{i}{j},\sigma)$ (Formulas $\ref{selfgluings}$ and $\ref{nonselfgluings}$.).  The leaves of the tree are colored by pairs $(g_i,n_i)$.

To such an element we form a modular graph by reading the tree as a flowchart, starting by labeling the leaves $(g_i,n_i)$ by corollas $\ast_{g_i,n_i}$ and performing the graph operation (see Subsection $\ref{gluingsec}$) corresponding to the label at each vertex (under the convention that the label $(\xi_{ij}, \sigma)$ corresponds to the operation $\sigma^{-1}\xi_{ij}(-)$, and likewise for $\xycirc{i}{j}$.    This construction is clearly independent of the choice of representative of an element in the free operad, since relabelling the legs by $\sigma^{-1}$ followed by $\sigma$ at any stage in the flow chart does nothing.

The resulting modular graph has edges which are in bijective correspondence with the vertices of the tree we started with and it has vertices which are in bijective correspondence with the leaves of the tree we started with.  This graph is also canonically nested, as follows.  To each edge $e$ in the tree $T$ we associate a nest $N_e$ on $\gamma$.  This nest contains those edges of $\gamma$ associated to those vertices of $T$ which lie above $e$.  The collection of such nests forms a nesting on $\gamma$ which is full by Corollary $\ref{edgesvsnests}$, since the number of nests is one less than the number of graph edges.  See Figure $\ref{corr1}$.

The following table summarizes this construction.  We emphasize that the graph $\gamma$ depends on the labels of the tree, even if some of its characteristics depend only on the tree itself.

			\begin{equation}\label{table}
\text{\begin{tabular}{c|c}
 $E(T)\subset F(E)(\vec{v})$ 	& associated nested modular graph $(\gamma,\mathfrak{N})$ of type $\vec{v}$ \\ \hline
  leaves of $T$  &		vertices of $\gamma$   \\ 
vertices of $T$ of arity $1$	& edges of $\gamma$ adjacent to $1$ vertex  	   \\
vertices of $T$ of arity $2$	& edges of $\gamma$ adjacent to $2$ vertices  	   \\
edges of $T$  &	 nests in $\mathfrak{N}$ \\
$E-$labels of $T$  &	layers of $(\gamma, \mathfrak{N})$
	\end{tabular} }
\end{equation}

\begin{figure}

	\includegraphics[scale=1.6]{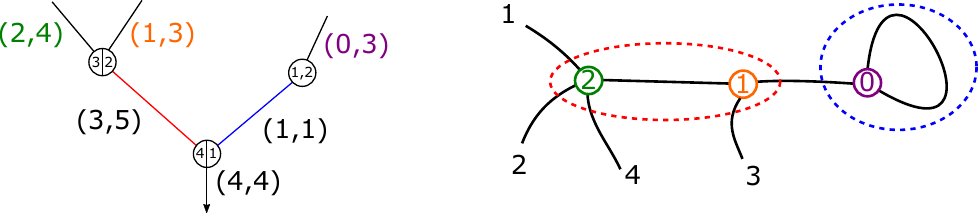} 
	\caption{An element in $E(T)$ on the left corresponds to a fully nested modular graph on the right.  Note here we have suppressed the leaf labeling of the tree on the left hand side.  On the right the dotted lines indicate nests, the numbers inside vertices represent genus labels.
	}
		\label{corr1}
\end{figure}

\begin{remark} \label{bijectiveremark}
 This construction is equivariant with respect to the $in(\vec{v})$ and $S_{|\vec{v}|}$ actions and gives a bijective correspondence between the coinvariants of $F(E)(\vec{v})$ with respect to these actions and nested modular graphs of the type $\vec{v}$ .  
\end{remark}

We may thus informally consider the elements of $F(E)$ as ways to assemble a modular graph.  The following Lemma says that in the quotient $M$ the order in which the edges are glued is forgotten.

\begin{lemma}\label{felem}  Under this construction, two elements in $F(E)$ representing the same element in the quotient $M$ have the same underlying modular graph.  
\end{lemma}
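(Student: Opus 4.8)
The plan is to show that the elementary relations $S$ generating the kernel $\langle S\rangle$ never alter the underlying modular graph of an element of $F(E)$, and then to promote this to arbitrary elements of the equivalence relation defining $M$ by an induction on the tree structure. First I would record the key observation implicit in the table $\ref{table}$: by Remark $\ref{bijectiveremark}$, the $in(\vec v)$- and $S_{|\vec v|}$-coinvariants of $F(E)(\vec v)$ are in bijection with nested modular graphs of type $\vec v$, and the assignment from the previous subsection sends an element of $E(T)$ to $(\gamma,\mathfrak N)$ where $\gamma$ is obtained by reading $T$ as a flowchart of gluing operations. Since the graph operations $\xi_{ij}(-)$ and $\gamma\xycirc{i}{j}\gamma'$ of Subsection $\ref{gluingsec}$ are, up to the bookkeeping relabelings, just ``glue two legs to form an edge,'' the underlying graph depends only on \emph{which} flags get glued to which, not on the order of these gluings nor on the nesting.

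The core step is to check this for each of the four families of relations in $S$ depicted in Figures $\ref{fig:rel1}$ and $\ref{fig:rel2}$, i.e.\ the relations of the forms $\ref{rel1}$ and $\ref{rel2}$. Each such relation is an equality between two height-two trees, each with two vertices labeled by generators in $E$. I would run the flowchart construction on both sides of each relation and verify that the resulting modular graphs coincide (the nestings will differ, but that is irrelevant for the statement, which only concerns the underlying graph). Concretely: a relation like $\xi_{l,k}\,\xycirc{i}{j} = \xi_{i',j'}\,\xycirc{l',k'}$ says that performing a non-self-gluing and then a self-gluing yields the same graph as performing them in the other order with the induced relabeling — which is exactly the content of Lemma $\ref{commutativesub}$-style commutativity of substitution together with the fact that the relabeling orders in Formulas $\ref{selfgluings}$, $\ref{nonselfgluings}$, $\ref{relabelingorder}$ were \emph{defined} precisely to make the two legs being glued correspond correctly. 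So each of the four checks reduces to: identify the pair of flags glued at each vertex on each side, observe they are the same pair (after tracing through the $\prime$-relabeling), and conclude the graphs are equal. This is the step I expect to be the main obstacle, not because it is deep but because it is bookkeeping-heavy: one must carefully match the leg-relabeling conventions on the two sides, and getting the cyclic-order subtlety (noted after Formula $\ref{relabelingorder}$, the $t^{m-j+i-1}$ twist) right is where a careless argument would slip.

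Having established the claim for the generating relations, I would extend it to the ideal $\langle S\rangle$ and hence to the equivalence relation defining $M$. Two elements of $F(E)$ represent the same element of $M$ iff they are connected by a finite chain of moves, each of which applies one of the relations $\ref{rel1}$–$\ref{rel2}$ inside a larger tree, together with the equivariance moves (applying a permutation and its inverse, or sliding an automorphism along an internal edge) which by construction do not change the underlying graph. An application of a relation inside a larger tree $S\circ_w(\text{relation})$ changes a height-two subtree sitting above a vertex $w$; by the flowchart description, the effect on the output graph is to change only the subgraph assembled by that subtree, and by the generating-case analysis that subgraph is unchanged. Since the rest of the flowchart is applied identically before and after, the total output graph is unchanged. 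Chaining finitely many such moves proves that any two elements of $F(E)$ with the same image in $M$ have the same underlying modular graph, which is the assertion of the Lemma. Finally I would remark that this also shows the underlying graph descends to a well-defined function $M(\vec v)\to\{\text{modular graphs of type }\vec v\}$, which is the form in which the result is used subsequently.
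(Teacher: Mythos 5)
Your argument is correct and follows essentially the same route as the paper: the paper's proof consists precisely of the observation that both sides of each relation in $S$ have the same underlying modular graph (the relations merely identify the two possible nestings on a two-edge graph), with the propagation to the full equivalence relation on $F(E)$ left implicit. Your write-up simply spells out the bookkeeping of the $\prime$-relabelings and the extension along the ideal $\langle S\rangle$, which is consistent with, but not different in substance from, the paper's proof.
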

\begin{proof}
 It's enough to observe that the underlying graphs in the relations $S$ are the same.  In particular the relations in $S$ identify the two possible nestings on graphs with two edges. 
\end{proof}

\begin{remark}\label{graphoperations}
One familiar consequence of Lemma $\ref{felem}$ is that modular operads have operations indexed by modular graphs.  
Similarly an algebra over the operad $\mathbb{M}_\mathfrak{K}$ has an operation for each $\Sigma^{-1}E$ labeled tree.  Under the isomorphism 
\begin{equation*}
(\Sigma^{-1} E)(T)   \cong    Det^{-1} (Vert(T)) \tensor  E(T),
\end{equation*}
 and the correspondence of Table $\ref{table}$, we see algebras over $\mathbb{M}_\mathfrak{K}$ have operations indexed by modular graphs having a mod 2 order on their set of edges, as expected.
 
\end{remark}

We may generalize the above construction, replacing $F(E)$ with $F(M)$.  An element in $F(M)$ may be represented by a tree, all of whose vertices are labeled by elements of $M$ of the appropriate color scheme.  These vertex labels in turn each specify (by the above construction) a way to assemble a modular graph, and we may still read such a tree as a flow chart.

The resulting modular graph has vertices which are in bijective correspondence with the leaves of the tree we started with.  Each edge of the graph is associated to a vertex in the tree, but this need no longer be a bijective correspondence.  This modular graph still has a canonical nesting with nests corresponding to the edges of the tree.  In this case a nest associated to a given tree edge $e$ includes those graph edges associated to the tree vertices above $e$.  See Figure $\ref{corr2}$.

In summary, the result of this construction is a nested modular graph with the following correspondences:

			\begin{equation}
\text{\begin{tabular}{c|c}
	$M(T)\subset F(M)(\vec{v})$ 	& associated nested modular graph $(\gamma,\mathfrak{N})$ of type $\vec{v}$ \\ \hline
	leaves of $T$  &		vertices of $\gamma$   \\ 
	edges of $T$  &	 nests in $\mathfrak{N}$ \\
	vertex labels of $T$  &	layers of $(\gamma, \mathfrak{N})$
	\end{tabular} }
\end{equation}

\begin{figure}
	\includegraphics[scale=1.4]{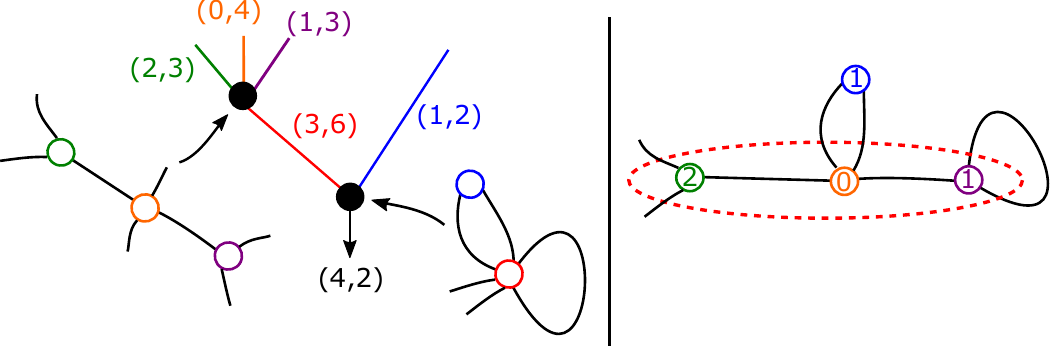} 
	\caption{An element in $M(T)$ on the left specifies a nested modular graph on the right.   As above, the dotted oval indicates a nest, the numbers inside vertices represent genus labels.  We have suppressed the leaf labeling on both sides.
	}
\label{corr2}
\end{figure}

\subsection{$\M$ is Koszul.} We now turn to the main technical result of this paper: that $\M$ is Koszul as a groupoid colored operad.  We give a novel and direct proof of this fact which emphasizes the combinatorics of a certain fiber.  

\begin{theorem}\label{koszulthm}  The $\mathbb{V}$-colored operad $\M$ is Koszul.
	\end{theorem}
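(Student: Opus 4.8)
To prove that $\M$ is Koszul, by Definition~$\ref{Koszuldef}$ it suffices to show that the natural cooperad map $\zeta_{\M^\uex}\colon \M^\uex\to B(\M)$ is a quasi-isomorphism. By Lemma~$\ref{duallem}$ we have $\M^\uex = \M^\ast = (\M_\mathfrak{K})^\uex$ after operadic desuspension, so equivalently we must show that the cobar construction $\Omega(\M^\ast)\to \M_\mathfrak{K}$ is a levelwise quasi-isomorphism (using that the underlying $\mathbb{V}$-colored sequences are finite dimensional and reduced, so that dualization behaves well, Lemmas~$\ref{lindualop}$--$\ref{fdfreeop}$). So the concrete goal is: fix a color scheme $\vec{v}=(g,n)$ and show that the complex $\Omega(\M^\ast)(g,n)$, which as a graded vector space is $F(\Sigma^{-1}\M^\ast)(g,n)$, has homology concentrated where it must be to match $\M_\mathfrak{K}(g,n)$.

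\textbf{First step: identify the complex combinatorially.} Using the correspondence of Subsection~$\ref{treegraphcorr}$ (Table~$\ref{table}$ and Remark~$\ref{bijectiveremark}$), a basis of $\Omega(\M^\ast)(g,n)$ is given by \emph{nested modular graphs} of type $(g,n)$: the underlying modular graph $\gamma$ records an element of $\M_\mathfrak{K}(g,n)$, while the nesting $\mathfrak{N}$ records how $\gamma$ is built up as an iterated composite in the cobar construction. The cobar differential $\partial$ on such a generator is the sum over ways of refining the nesting by one nest (dually to the cooperadic expansion in $\M^\ast$), with signs governed by the $\mathrm{Det}$ of the set of nests, as in the isomorphism~$\ref{ne}$. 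Thus, after fixing the underlying graph $\gamma$, the differential decomposes as a direct sum over $\gamma$ of a \emph{fiber complex}: the (augmented, suspended) chain complex of the poset of nestings of $\gamma$, ordered by refinement, running from the empty nesting up to the fully nested graphs (which have exactly $|Edges(\gamma)|-1$ nests, by Corollary~$\ref{edgesvsnests}$). The claim $\Omega(\M^\ast)\xrightarrow{\sim}\M_\mathfrak{K}$ then reduces to: \emph{for each modular graph $\gamma$ with at least two edges, the poset of nestings of $\gamma$ (suitably reduced) has vanishing reduced homology}; the graphs with a single edge contribute exactly the generators $\Sigma^{-1}\hat E$ of $\M_\mathfrak{K}$, giving the quasi-isomorphism onto the quadratic part and hence, by Lemma~$\ref{KC}$ (the Koszul criterion), onto all of $\M_\mathfrak{K}$.

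\textbf{Second step: the fiber is contractible.} This is the heart of the argument, and I expect it to be the main obstacle. The poset of nestings of a fixed graph $\gamma$ is closely related to the face poset of the \emph{graph associahedron} of $\gamma$ (in the sense of Carr--Devadoss), and the needed statement is essentially that this polytope, or the relevant order complex, is contractible --- indeed collapsible. I would prove acyclicity directly by an explicit discrete homotopy: fix an edge $e_0$ of $\gamma$ (say the one that is minimal in some chosen total order), and build a contracting homotopy on the fiber complex by ``adding the smallest possible nest containing $e_0$'' --- more precisely, I would show that coning off along the operation $\mathfrak{N}\mapsto \mathfrak{N}\cup\{N\}$, where $N$ is a canonically determined nest built from $e_0$, is well-defined (compatibility of $N$ with every nest of $\mathfrak{N}$ follows from an argument parallel to the connected-component analysis in the proof that $min_\mathfrak{N}$ is a bijection), decreases or is compatible with the signs, and satisfies $\partial h + h\partial = \mathrm{id} - (\text{projection to the piece with no admissible move})$. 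The bookkeeping of which nestings admit the move, and patching the homotopy across the boundary cases (when $e_0$ is already its own minimal nest, or when adding $N$ would not be proper), is where the combinatorial care is needed. An alternative, if the direct homotopy proves unwieldy, is to invoke the known collapsibility/shellability of graph associahedra and carefully match the cellular chain complex of the graph associahedron of $\gamma$ with our fiber complex.

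\textbf{Assembling.} Once the fiber complex is shown to be acyclic for every $\gamma$ with $\geq 2$ edges, summing over all $\gamma$ of type $(g,n)$ shows $H_\ast(\Omega(\M^\ast))(g,n)$ is spanned by single-edge graphs, i.e.\ equals $\M_\mathfrak{K}(g,n)$ in the relevant degree, and the induced map is the identity on generators. Since $\M_\mathfrak{K}$ is generated in weight $1$ with relations in weight $2$ (it is quadratic by construction), the Koszul criterion (Lemma~$\ref{KC}$), together with the comparison of Koszul complexes, upgrades this to the statement that $\zeta_{\M^\uex}$ is a quasi-isomorphism, i.e.\ $\M$ is Koszul. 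I would also remark that the Det-sign bookkeeping throughout is exactly the one forced by identifying edges with the vertices above them in a rooted tree (as in the proof that $\partial^2=0$ for the bar construction), so no new sign subtleties arise beyond those already handled in Subsection~$\ref{abcsec}$.
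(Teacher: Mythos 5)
Your overall architecture matches the paper's: reduce via Lemma~$\ref{duallem}$ to showing $\Omega(\M^\ast)\to\M_\mathfrak{K}$ is a levelwise quasi-isomorphism, split the cobar complex over the underlying modular graphs using the dictionary of Subsection~$\ref{treegraphcorr}$, and compute each fiber (the complex of nestings of a fixed $\gamma$, i.e.\ cellular chains on a graph associahedron) by an explicit combinatorial homotopy. But there is a genuine error in what you ask of the fiber and in how you assemble. The fiber $C_\ast(\gamma)$ for a graph with $|\gamma|\geq 2$ edges is \emph{not} acyclic: its homology is one-dimensional, concentrated in the bottom (fully nested) degree $-|\gamma|$. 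This is the chain-level meaning of ``the graph associahedron is contractible'' --- homology of a point, not zero homology --- and that surviving class is indispensable: the boundaries in degree $-|\gamma|$ are exactly the relations defining $\M_\mathfrak{K}$, so the one line of homology in $C_\ast(\gamma)$ maps isomorphically onto the line of $\M_\mathfrak{K}(\vec{v})$ spanned by the class of $\gamma$ (with a mod $2$ edge order). Consequently your assembling step --- ``$H_\ast(\Omega(\M^\ast))(g,n)$ is spanned by single-edge graphs, i.e.\ equals $\M_\mathfrak{K}(g,n)$'' --- is false on both counts: $\M_\mathfrak{K}(g,n)$ is spanned by classes of \emph{all} modular graphs of type $(g,n)$, not only one-edge graphs, and if your fibers really had vanishing homology in every degree the map to $\M_\mathfrak{K}$ would fail to be surjective on homology. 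The appeal to Lemma~$\ref{KC}$ cannot repair this: that lemma trades Koszulity for acyclicity of the Koszul complex $\M\circ_\alpha\M^\uex$, which you never establish, and there is no mechanism by which a map that is a quasi-isomorphism only ``onto the quadratic part'' gets upgraded to one onto all of $\M_\mathfrak{K}$.

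The fix is to prove, for every $\gamma$, that $H(C_\ast(\gamma))\cong\Sigma^{-|\gamma|}k$, and to identify the bottom-degree cycles modulo boundaries with $\M_\mathfrak{K}$; this is exactly what the paper does, by induction on the number of edges: one chooses an edge $e$ whose removal leaves (essentially) a connected graph and constructs an explicit deformation retract of $C_\ast(\gamma)$ onto $\Sigma^{-1}C_\ast(\gamma\setminus e)$, with homotopy given by removing $e$ from the nests containing it one at a time. Your proposed discrete homotopy --- pairing $\mathfrak{N}$ with $\mathfrak{N}\cup\{N\}$ for a canonical nest $N$ built from a fixed edge --- is of the same flavor and plausibly workable, provided the critical cells are correctly identified as sitting in the fully nested degree (one per graph, not none); note also that the homotopy operator itself must raise degree (remove a nest), since the cobar differential adds one. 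The case analysis you defer (admissibility of the move, disconnection phenomena, signs from the mod $2$ orders) is where essentially all of the content of the theorem lies, and it is precisely what the paper's five-case verification of $d\pi=\pi d$ and $dH+Hd=\mathrm{id}-\iota\pi$ carries out.
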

\begin{proof}  
From Lemma $\ref{duallem}$ it is enough to show that, for each color scheme $\vec{v}$, the natural map $\Omega(\mathbb{M}^\ast)(\vec{v}) \to  \mathbb{M}_\mathfrak{K}(\vec{v})$ is a quasi-isomorphism.
By definition, as graded vector spaces we have:
\begin{equation*}
\Omega(\mathbb{M}^\ast)(\vec{v}) = \ds\bigoplus_{\vec{v}-\text{trees } T}  (\Sigma^{-1}\mathbb{M}^\ast)(T) \cong \ds\bigoplus_{\vec{v}-\text{trees } T} \Sigma^{-1}(Det^{-1}(Edges(T)))\tensor \mathbb{M}^\ast(T),
\end{equation*}
and the differential in $\Omega(\mathbb{M}^\ast)(\vec{v})$ is given by expanding trees using the dual of the operadic structure maps for $\M$.

Given a $\vec{v}$-tree $T$, the vector space $\mathbb{M}(T)$ has a canonical basis of homogeneous vectors (see Subsection $\ref{modopssec}$) and this basis may be used to identify $\mathbb{M}^\ast(T)\cong \mathbb{M}(T)$.  This produces a splitting of the complex $\Omega(\mathbb{M}^\ast)(\vec{v})$ over the set $M(\vec{v})$.  Indeed, we may write a vector in $\Sigma^{-1}(Det^{-1}(Edges(T)))\tensor \mathbb{M}(T)$ uniquely as a sum of pure tensors which are homogeneous on the right, and then apply the operadic structure map $M(T)\to M(\vec{v})$ to the homogeneous vector in each term.  Those vectors which hit a given $\gamma \in M(\vec{v})$ form a subcomplex, which we denote by $C_\ast(\gamma)$, and these subcomplexes give a direct sum decomposition:
\begin{equation*}
\Omega(\M^\ast)(\vec{v})\cong  \bigoplus_{\gamma\in M(\vec{v})} C_\ast(\gamma).
\end{equation*}

As a graded vector space, $C_\ast(\gamma)$ splits into 1-dimensional subspaces indexed by those elements of $M(T)$ which contract to $\gamma$.  Fixing a mod 2 order (see Definition $\ref{det}$) on the set of edges of $T$ gives a vector in such a subspace.  Under the correspondence given above (Subsection $\ref{treegraphcorr}$), this is the same data as a nesting $\mathfrak{N}$ on the graph $\gamma$ along with a mod 2 order on the set of nests.  The differential is given by ways to add a nest to a given ordered nesting on $\gamma$, with the convention that the new nest goes in the last position.  The degree of such a vector is $-1-|\mathfrak{N}|$.

Applying Corollary $\ref{edgesvsnests}$, we see that $C_\ast(\gamma)$ is concentrated between degrees $-1$ and $-|\gamma|$ (the number of edges of $\gamma$), inclusive. A chain in minimal degree $C_{-|\gamma|}(\gamma)$ is a linear combination of elements, each of which is contained in some $E(T)\subset M(T)$ along with an order on the set of edges of $T$.   Under the identifications
\begin{equation*}
\Sigma^{-1}Det^{-1} (Edges(T)) \tensor E(T) \cong  Det^{-1} (Vert(T)) \tensor  E(T) \cong
(\Sigma^{-1} E)(T),
\end{equation*}
a chain in minimal degree determines an element of $\M_\mathfrak{K}$.  The relations in $\M_\mathfrak{K}$ are precisely the boundaries in $C_{-|\gamma|}(\gamma)$, so to prove the theorem it suffices to prove that $H(C_\ast(\gamma))= \Sigma^{-|\gamma|} k$.

 We will do more, namely we will construct deformation retracts between the chain complexes $C_\ast(\gamma)$ and $\Sigma^{-|\gamma|} k$.  This will be done by induction on the number of edges of $\gamma$.  For the base step of the induction we observe that the complex $C_\ast(\gamma)\cong \Sigma^{-1} k$ if $\gamma$ has one edge. 
So now suppose that $\gamma$ has more than one edge and choose an edge $e$ such that removing $e$ either does not disconnect the graph, or disconnects the graph into two components, one of which is a lone vertex.  (Clearly such an $e$ always exists).  We define $\gamma\setminus e$ to be the graph formed by removing $e$ from $\gamma$ -- in the case that this disconnects the graph, we take it to mean the non-trivial component.  In particular the graph $\gamma\setminus e$ has one less edge then $\gamma$, so for the induction step it suffices to show that the complexes $C_\ast(\gamma)$ and  $\Sigma^{-1} C_\ast(\gamma\setminus e)$ are chain homotopic.

To this end, the remainder of the proof will be dedicated to constructing a deformation retract:
\begin{equation}\label{def}  
\xymatrix{   \Sigma^{-1} C_\ast(\gamma\setminus e) \  \  \ar@/_1pc/@{^{(}->}[rr]_\iota & & \ar@/_1pc/@{->>}[ll]_\pi C_\ast(\gamma) \save !R(.8) \ar@(ur,dr)^H \restore 
} \end{equation}

{\bf Notation:} We use the following notation for dealing with nested graphs.  A nest will be denoted capital $N$.  Let $N_e$ be the nest on $\gamma$ which contains only the fixed edge $e$.  Let $N_{\text{max}}$ be the nest on $\gamma$ which contains all edges except $e$.

A chain, or equivalently a mod 2 ordered nesting, will often be denoted simply by $\mathfrak{N}$, leaving the mod 2 order implicit.  If needed, we use a subscript $\mathfrak{N}_\gamma$ to denote the graph on which the nesting is given.  To a nesting $\mathfrak{N}_{\gamma \setminus e}$ there is a nesting $\mathfrak{N}_\gamma$, given by taking just the same nests.  There is also a nesting $\mathfrak{N}_\gamma\cup N_{max}$ with the induced order (so $N_{max}$ placed in the last position).

 {\bf Intuition:}  The idea behind the construction of the deformation retract in Formula $\ref{def}$ will be to realize $C_\ast(\gamma)$ as a ``cylinder'' whose top is given by those nestings containing $N_e$ and whose bottom is given  by those nestings containing $N_{\text{max}}$.  Given the technical nature of the construction, the reader is encouraged to work through the examples pictured in Figure $\ref{cyclo}$ concurrent to reading the general construction, which we now give.

{\bf Definition of $\iota$}:  To a nesting of the graph $\gamma\setminus e$ we get a nesting of $\gamma$ by keeping all the nests we started with and adding the nest containing all the edges of $\gamma$ except $e$ in the last position.  In the notation above:
\begin{equation*}
\iota(\mathfrak{N}_{\gamma \setminus e}) := \mathfrak{N}_\gamma\cup N_{\text{max}}
\end{equation*}

Observe that because $\iota$ adds one nest to a graph with one additional edge, it is a degree $0$ map. To see that $\iota$ commutes with the differential observe if I first apply $\iota$, then sum over all ways to add a nest, none of these nests can contain the edge $e$, because $e$ is already outside a nest of maximal size.  Each of these nests then could be added to the nesting on $\gamma\setminus e$ before applying $\iota$, which is in turn the definition of $d$ before $\iota$.

{\bf Definition of $\pi$}:  Informal definition:  remove the fixed edge $e$ from all nests.

More precisely: starting from a mod 2 ordered nesting $\mathfrak{N}=\{N_1\cdc N_r\}$ we consider the graphs $N_i-e$.  If $e\notin N_i$ then $N_i-e:=N_i$.  Else, $N_i-e$ is defined to be the graph formed by removing the two flags of $e$ from the subgraph $N_i$.  We then use the list of graphs $\{N_i-e\}$ to form a nesting by the following procedure.  If $N_\text{max}-e=N_\text{max}$ or $N_e-e=\emptyset$ appears on the list discard it.  If $N_i-e=N_j-e$ (which happens if $N_i\cup e= N_j$ or vice versa), identify these (identical) sets in the list.  Finally it may be the case that some $N_i-e$ is disconnected in to two components, each with a non-zero number of edges.  In this case, we split this entry into its two connected components which are added to the list.  If one or both of these components already appears in the list it is discarded/identified.  Modifying the list $\{N_i-e\}$ according to these rules yields a nesting on $\mathfrak{N}$.  If this nesting has $r-1$ nests we define it to be $\pi(\mathfrak{N})$ (up to sign/orderings which are fixed below).  Otherwise we define $\pi(\mathfrak{N})=0$.  Observe that since $\pi$ always has one fewer nest in the target than in the source, it has degree $0$. 

To fix signs/orders we first characterize the nestings $\mathfrak{N}$ for which $\pi(\mathfrak{N})\neq 0$.  For a fixed $\mathfrak{N}$ the following are mutually exclusive:
\begin{enumerate}
	\item $N_\text{max}\in \mathfrak{N}$,
	\item $N_e\in \mathfrak{N}$,
	\item $N_i= N_j\cup e$ for some $i,j$,
	\item $N_i = N_j\cup e \cup N_l$ for some $i,j,l$ with $N_j\cap N_j=\emptyset$.
\end{enumerate}  
Moreover if (3) or (4) happens it does so for unique indicies.
Then $\pi(\mathfrak{N})\neq 0$ if and only if both one of (1-4) happens and each (other) time $N-e$ is disconnected then exactly one of the two components already appears as a nest (``other'' means excluding (4) above, which is the case where both components are nests).%

We now assume $\pi(\mathfrak{N})\neq 0$ and fix the mod 2 ordering on $\pi(\mathfrak{N})$ as follows.  We first permute the elements of $\mathfrak{N}$ so that in case (1) $N_{max}$ is in the last position, in case (2) $N_{e}$ is in the penultimate position, in case (3) $N_i$ is in the penultimate position, in case (4) $N_i$ is in the penultimate position. Then $\pi(\mathfrak{N})$ carries the order induced by removing the last entry in case (1) and the penultimate entry in cases (2,3,4).  Note that if there exists some other nest $N\in\mathfrak{N}$ with $N-e$ disconnected, then (as above) exactly one of these components already appears on the list. The induced order on the terms in $\pi(\mathfrak{N})$ is given by replacing $N$ with the new component, while keeping the redundant component in its original position. 
Finally, we note that in the special case that $\mathfrak{N}=\{ N_e \} $, there is no penultimate position, but we simply declare $\pi(\mathfrak{N})$ to be the negative of the empty nesting.
Observe that with these sign conventions, $\pi\circ \iota$ is the identity.

{\bf Proof that $\pi$ is a chain map}: Consider the cases $1, 2a, 2b, 3a, 3b, 4a, 4b$ which correspond to (1-4) above, but where ``$a$'' means $\pi(\mathfrak{N})\neq 0$ (so every time $N-e$ is disconnected (if any), at least one of the components was already a nest) and case ``$b$'' means ``not $a$'', and so in cases $2b,3b,4b$, $\pi(\mathfrak{N})=0$.  Observe that in case (1), $e$ is contained in no nest so there is only one case here.  Since the terms in $d(\mathfrak{N})$ add a nest, if (1-4) applies to $\mathfrak{N}$ then it also applies to every term in $d(\mathfrak{N})$.  We use this fact to show that $\pi d=d\pi$ by looking at the possible cases.

{\bf Case 1:} If $\mathfrak{N}$ contains $N_{\text{max}}$, then applying $d$ before $\pi$ we add nests, but they can't contain $e$ without crossing $N_{\text{max}}$.  These are the same nests we find if we apply $\pi$ before $d$.

{\bf Case 2a:}   The terms in $d(\mathfrak{N})$ land in case $(2a)$ or case $(2b)$.  Since $\pi$ of the terms in $(2b)$ is zero, it suffices to show that $\pi$ of the terms of type $(2a)$ is $d\pi(\mathfrak{N})$. The terms in $d(\mathfrak{N})$ of type $(2a)$ are formed by adding a nest which either contains $e$ or doesn't contain $e$.  Those terms which don't contain $e$ correspond to adding nests in $\gamma\setminus e$, and so correspond to terms in $d\pi(\mathfrak{N})$.  On the other hand, consider those terms in $d(\mathfrak{N})$ of type $(2a)$ formed by adding a nest $N$ which does contain $e$.  If $N-e$ is connected, it specifies a nest on $\gamma\setminus e$ which is not an element of $\pi(\mathfrak{N})$.  If $N-e$ is disconnected, then the type (2a) assumption implies exactly one of the components is not an element of $\pi(\mathfrak{N})$.  In either case adding this new nest gives the corresponding term in $d\pi(\mathfrak{N})$.

{\bf Case 2b:}  The terms in $d(\mathfrak{N})$ land in case $(2b)$ unless the added nest is the union of $e$ with one of the components of a nest in $\mathfrak{N}$ which was disconnected by removing $e$, (then we land in case (2a)).  These terms come in pairs, one for each such component, which cancel after applying $\pi$ (see Figure $\ref{pi1}$.)  To see that they carry opposite sign note that the differential places the nest corresponding to each component in the last position, while $\pi$ of each of these replaces the disconnected nest with the other component.  To relate these two terms requires transposing the last term with the position of the disconnected term.  On the other hand $d\pi(\mathfrak{N})=0$, whence this case.
	
	\begin{figure}
		\includegraphics[scale=1.0]{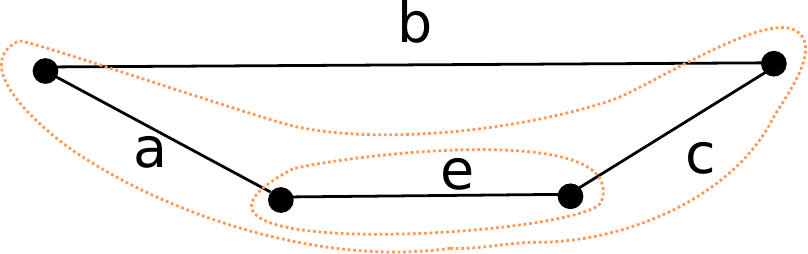} 
		\caption{A minimal example of case $2b$.  The graph carries nesting $\mathfrak{N}= \{ ace,e \}$.  Here $\pi(\mathfrak{N})=0$ and the terms of $ d(\mathfrak{N})$ which don't vanish via $\pi$ are $\{ ace,e,ae \}$ and $\{ ace,e,ce \}$.  Applying $\pi$ to the former gives $\{c,a\}$ and $\pi$ of the latter is $\{a,c\}$, hence $\pi d(\mathfrak{N})=0$, and in particular $\pi d(\mathfrak{N})=d\pi (\mathfrak{N})$ in this case.}
		\label{pi1}
	\end{figure}
	
{\bf Cases 3,4}: This works the same as case $(2)$, except the role of $N_e$ is played by the smallest nest containing $e$ (denoted $N_i$ above).  In particular, $\mathfrak{N}$ being of case 3 or 4 is a property closed under $d$, and added nests in $d(\mathfrak{N})$ can't contain $e$ without also containing all of $N_i$.  

{\bf Case 5}:   Suppose none of the above cases occur. Then $d\pi(\mathfrak{N})=0$ and we will show $\pi d(\mathfrak{N})=0$ as well.  The only possible terms in $d(\mathfrak{N})$ which are not annihilated by $\pi$ are those which are of type (1-4) above.  Case (1) is possible only if no nest of $\mathfrak{N}$ contains $e$.  In this case the terms in $d(\mathfrak{N})$ which don't vanish under $\pi$ correspond to adding $N_{max}$ and adding the smallest valid nest containing $e$. (The fact that there is a smallest valid nest containing $e$ is ensured since $\mathfrak{N}$ is not of type $(1)$.)  The term which adds and removes $N_{max}$ will occur with a $+$ sign, while the term which adds the smallest valid nest containing $e$ is of type $(2)$,$(3)$ or $(4)$, and so must be put in the penultimate position to apply $\pi$, thus occurring with opposite sign, or occurs with opposite sign by convention if $\mathfrak{N}$ was the empty nesting.

We now suppose there is a nest in $\mathfrak{N}$ which contains $e$, and we let $N_i$ be the smallest such nest.  The terms in $d(\mathfrak{N})$ which are not immediately seen to vanish under $\pi$ are those which add the smallest valid nest containing $e$ and which adds a connected component of the layer graph of $N_i-e$.  Recall here that the layer graph inside $N_i$ is the graph formed by collapsing its subnests to vertices.  The fact that $N_i$ is the smallest nest containing $e$ means that $e$ is an edge in this layer graph.  If the layer graph of $N_i$ is disconnected in to two nontrivial components when removing $e$, then adding the smallest valid nest containing $e$ lands in case (2b),(3b) or (4b) since $N_i-e$ is disconnected with neither component appearing in $d(\mathfrak{N})$.  On the other hand a term in $d(\mathfrak{N})$ which adds a component of the layer graph will be of type (5), since there are two nontrivial connected components.  So in this case $\pi$ of each term in the differential vanishes, and in particular $\pi d(\mathfrak{N})=0$.

Let us now consider the case that the layer graph of $N_i$ is not disconnected by removing $e$. Then there are two terms in $d(\mathfrak{N})$ which are sent to something non-zero by $\pi$, they corresponds to adding the smallest sub-nest in $N_i$ containing $e$ and adding the nest corresponding to the layer graph of $N_i-e$.  The fact that $\mathfrak{N}$ is not of type $(2-4)$ ensures these are valid new nests.  Taking $\pi$ of these two differential terms removes the just added nests. These two nestings have the same list of nests and it remains to verify that they have opposite sign.  Assume without loss of generality that $N_i$ was in the final position of $\mathfrak{N}$.  In the former term the sign is determined by transposing the new nest into the penultimate position before applying $\pi$.  In the latter term the sign is determined by applying $\pi$ without any permutation, hence these terms have opposite signs, and so $\pi d(\mathfrak{N})=0$ in this case as well.  See Figure $\ref{pi2}$ for an example.

	\begin{figure}
		\includegraphics[scale=1.0]{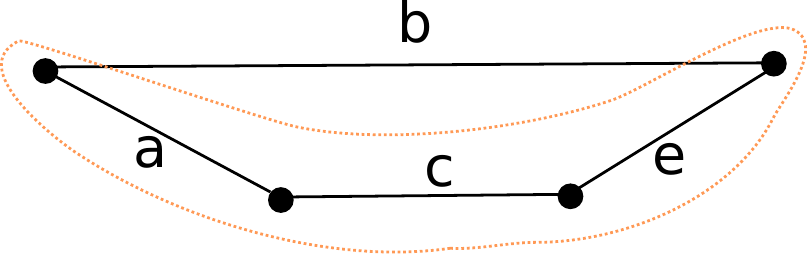} 
		\caption{An example of case (5).  The graph carries nesting $\mathfrak{N}= \{ ace \}$.  In this case $\pi(\mathfrak{N})=0$ and the terms of $ d(\mathfrak{N})$ which don't vanish via $\pi$ are $\{ ace,e \}$ and $\{ ace,ac \}$.  Applying $\pi$ to the former term gives $-\{ac\}$ and $\pi$ of the latter is $\{ac\}$, hence $\pi d(\mathfrak{N})=0$, and in particular $\pi d(\mathfrak{N})=d\pi (\mathfrak{N})$ in this case.
		} \label{pi2}
	\end{figure}

This completes our verification that $d\pi=\pi d$.

{\bf Definition of H:}  Informal description:  remove $e$ one nest at a time.  

Precise definition: we define the chain homotopy $H\colon C_\ast(\gamma)\to C_{\ast-1}(\gamma)$ as follows. Fix a nesting of $\gamma$, denoted $\mathfrak{N}=\{N_1\cdc N_r\}$.   If $e$ is not contained in any nest of $\mathfrak{N}$ then $H(\mathfrak{N})=0$.  Else we assume that $N_r<N_{r-1}<... < N_{r-q}$ are the nests containing $e$ (after Lemma $\ref{eelem}$), and in particular $N_r$ is the smallest nest containing $e$.  In this case we define $H$ in two steps.  The first step is to remove $e$ from $N_r$.  We may view this is a nest (or a pair of nests in the disconnected case) on $\gamma\setminus e$ (as above) and hence a nesting on $\gamma$.  We say $H(\mathfrak{N})=0$ unless this reduces the number of nests by 1, in which case we call this new nesting the leading term of $H(\mathfrak{N})$.  It has nests $N_{r-1}<...< N_{r-q}$ which contain $e$.  We then proceed to remove $e$ from each of these nests one at a time in the nested order to form a new nesting at each stage.  We continue until we reach the end, or until we get a nesting of the wrong degree (which can only happen due to disconnectivity since $N_i\setminus e$ and $N_{i-1}$ are separated by at least two edges).  $H(\mathfrak{N})$ is defined to be the sum of these nestings.  In particular $H(\mathfrak{N})$ is a sum of at most $q+1$ nestings.

Regarding signs:  $H(\mathfrak{N})$ is nonzero only if removing $e$ from the smallest nest containing it reduces the number of nests by $1$.  This happens if and only if one of the mutually exclusive cases (2),(3),(4) above occur.  In these cases we follow the opposite reordering conventions as we did above when we defined $\pi$ -- namely we assume the smallest nest containing $e$ is in the final position before removing/identifying it.

{\bf Proof that $H$ is a chain homotopy}.
It remains to verify that $dH+Hd = id-\iota\pi$.  We will do this by analyzing the possible cases for $\mathfrak{N}$ as considered above.

{\bf Case 1:} If $N_{\text{max}}\in \mathfrak{N}$ then $(id-\iota\pi)(\mathfrak{N})=0$, while $e$ is contained in no nest of $\mathfrak{N}$ and no nest of $d(\mathfrak{N})$.  Hence $dH+Hd(\mathfrak{N})=0$ as well.

{\bf Case 2a:}	This means $N_e=N_r$, and each time $N_i-e$ is disconnected, exactly one if its components appears in $\mathfrak{N}$.  Terms in $Hd$ are given by first adding a nest, call it $N_d$, and then applying $H$ which removes $e$ one nest at a time.  Under case (2a), these terms appear in $dH$ by adding either $N_d$ or (a connected component of) $N_d-e$ to each nesting appearing in $H(\mathfrak{N})$.  Here there is the possibility that $N_d-e$ is disconnected in to two components, neither of which belong to the given nesting, but $H$ applied to such a term will vanish.
Thus, the terms of $Hd$ $\subset$ the terms in $dH$ (up to sign) in this case.  
	
The signs in the above correspondence are opposite because $dH$ removes the smallest nest containing $e$ in the last position, while $Hd$ does the same thing from the penultimate position.  In particular $dH+Hd$ is the sum of terms in $dH$ which do not appear in $Hd$.  Let us describe these.	There are (at most) two terms for each such $N_i$ containing $e$.  The $i^{th}$ term (reindexing to count right to left) in $H(\mathfrak{N})$ has nests $N_i-e$ and $N_{i+1}$.  Applying $d$, the two distinguished terms are adding $N_i$ or adding $N_{i+1}- e$.   These terms cancel in pairs, except for the first and last ones which are $N_\text{max}$ and $N_e$ respectively (interpreting the last stage as adding $N_\text{max}$).  Adding back $N_e$ gives $id$ and adding in $N_\text{max}$ after having removed all the $e$ terms gives $-\iota\pi$.  The minus sign occurs because $\pi$ must transpose $N_e$ into the penultimate position.  See Figure $\ref{pi5}$ for an example of this.

\begin{figure}
	\includegraphics[scale=1.0]{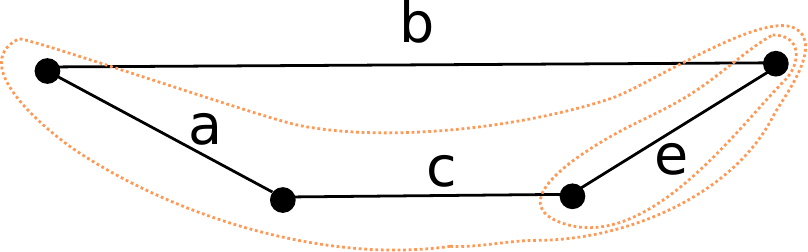} 
	\caption{ The graph carries nesting $\mathfrak{N}= \{ ace,e \}$.  We compute 
	$dH(\mathfrak{N}) = d(\{ ace \}+\{ac\}) = \{ ace,a \} + \{ ace,c \} + \{ ace,e \}+ \{ ace,ac \}+ \{ ace,ce \} + \{ ac,a \} + \{ ac,c \} + \{ ac,abc \}+ \{ ac,ace \}$, 
	and we may cancel $\{ ace,ac \}$ with $\{ ac,ace \}$.  Then we compute
	$Hd(\mathfrak{N}) = H(\{ ace,e,a \}+\{ ace,e,ce \}) = -\{ace,a\}-\{ac,a\} - \{ace,ce\}-\{ace,c\}-\{ac,c\}
	$
	and 
	$\iota\pi(\mathfrak{N}) =\iota(-\{ac\})=-\{ac,abc\} $, from which we find $dH+Hd=id-\iota\pi$.}
	\label{pi5}
\end{figure}

{\bf Case 2b:}
Now suppose that $\mathfrak{N}$ is of type $(2b)$, and so $\pi(\mathfrak{N})=0$.  This happens due to disconnectivity of some $N_{r-j}-e$ such that neither component appears for some $j\geq 1$.  In this case, the terms of $Hd$ which do not appear in $dH$ correspond to adding one or the other component unioned with $e$ via $d$.  Applying $H$ identifies these terms with opposite sign, corresponding to permuting the components.  Thus $dH+Hd$ may again be described as the terms in $dH$ which do not appear in $Hd$.  $H$ consists of $j$ nestings and there are $2j-1$ terms appearing in $dH$ but not in $Hd$.  They correspond to adding nests $N_{r-i+1}$ and $N_{r-i}-e$ to the $i^{th}$ term of $H(\mathfrak{N})$ for $1\leq i \leq j-1$, and $N_{r-j+1}$ to the $j^{th}$ term.  These terms cancel in pairs, except for the first term which was the identity.

{\bf Cases 3,4:}	These cases again follow similarly to case 2, except the role of $N_e$ is played by the smallest nest containing $e$.  See Figure $\ref{pi4}$ for an example of this.

\begin{figure}
	\includegraphics[scale=1.0]{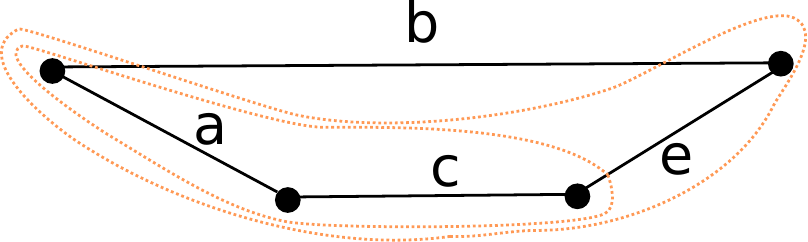} 
	\caption{ The graph carries nesting $\mathfrak{N}= \{ac, ace \}$.  We compute $dH(\mathfrak{N}) = d(\{ ac \}) = \{ ac,a \} + \{ ac,c \} + \{ ac,abc \}+ \{ ac,ace \}$ and $Hd(\mathfrak{N}) = H(\{ ac,ace,a \}+\{ac,ace,c\}) = -\{ ac,a \}-\{ ac,c \}$ and $\iota\pi(\mathfrak{N}) =\iota(-\{ac\})=-\{ac,abc\} $, from which we find $dH+Hd=id-\iota\pi$.}
	\label{pi4}
\end{figure}
	
{\bf Case 5}:  We now suppose cases (1-4) do not occur.
If $e$ is not contained in any nest, then $\pi(\mathfrak{N})=0$ and $H(\mathfrak{N})=0$, so it remains to analyze $Hd(\mathfrak{N})$.  The only non zero terms in $Hd(\mathfrak{N})$ are given by adding $e$ and then taking away $e$.  That is, there is a unique smallest nest which adds $e$ appearing in the differential, and $H$ of this term gives the identity back (with correct sign by convention). The other terms in the differential are annihilated by $H$, and so $ Hd(\mathfrak{N})= \mathfrak{N}$, which in this case implies $Hd+dH=id-\iota\pi$.	
	
So now suppose $e$ is contained in some $N_r<...<N_{r-q}$.  Having excluded cases (2-4), we must have one of the following mutually exclusive situations:
\begin{enumerate}
	\item[i.] $N_r-e$ is a valid new nest.  In otherwords replacing $N_r$ with $N_r-e$ in $\mathfrak{N}$ yields a valid nesting with the same number of nests.
	\item[ii.] $N_r-e$ is disconnected and one component is a nest in $\mathfrak{N}$.
	\item[iii.] $N_r-e$ is disconnected and neither component is a nest in $\mathfrak{N}$.
\end{enumerate}

Observe that the case that $N_r-e$ is disconnected and both nests appear is excluded by our assumption that $H(\mathfrak{N})=0$.  Notice that in each of these cases $\pi(\mathfrak{N})=0$ (we have already excluded 1,2 and the above clearly imply not 3,4).  So it remains to show that $Hd(\mathfrak{N})=\mathfrak{N}$.

The terms in $d(\mathfrak{N})$ may be subdivided into those which are annihilated by $H$ and those which are not.  Differential terms which add a nest outside of $N_r$ are annihilated, as are those which result in $e$ appearing in a layer of size 2 or greater.  The terms which are not annihilated by $H$ are, in the three cases i) add $N_e$ and the complement of $e$ in $N_r$, ii) add the union of $e$ with the appearing component and the complement of the appearing component in $N_r$, iii) add $N_e$.

Let us first analyze case i. The leading term of $H(\mathfrak{N}\cup N_e)$ is $\mathfrak{N}$.  The terms $H(\mathfrak{N}\cup N_r-e)$ are precisely the non-leading terms of $H(\mathfrak{N}\cup N_e)$.  In particular these replace (resp.\ identify) $N_r$ with $N_r-e$ (and subsequently for all nests containing $e$).  In the former $N_r$ appeared in the last position and in the latter it appeared in the penultimate position, and thus occur with opposite sign.  So in this case $Hd(\mathfrak{N})=\mathfrak{N}$.  See Figure $\ref{pi3}$ for an example.

	\begin{figure}
		\includegraphics[scale=1.0]{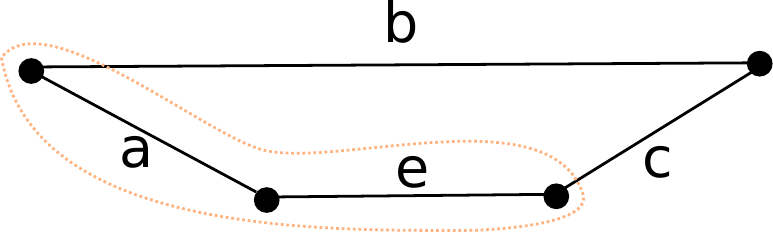} 
		\caption{ The graph carries nesting $\mathfrak{N}= \{ ae \}$.  Then $H(\mathfrak{N})=0$ for degree reasons according to subcase i, since $N_r-e = a$ is a valid nest.  In this case $\pi(\mathfrak{N})=0$ as well and the terms of $d(\mathfrak{N})$ which don't vanish under $H$ are $\{ea,a\}$ and $\{ea,e\}$.  Applying $H$ to the former yields $-\{a\}$.  Applying $H$ to the latter yields $\{ea\}+\{a\}$.  Thus $Hd(\mathfrak{N})=\mathfrak{N}$ in this case.
		}\label{pi3}
	\end{figure}

Case ii follows similarly, replacing $N_e$ with the smallest nest containing it and replacing $N_r-e$ with the complement of the appearing component.

In case iii, the only term which survives is $H(\mathfrak{N}\cup N_e)$.  This has leading term $\mathfrak{N}$.  Putative subsequent terms would remove $e$ from $N_r$ and hence would be disconnected with neither component appearing and so be of wrong degree.  Hence the leading term is the only term in this case and $Hd(\mathfrak{N})=\mathfrak{N}$.

Thus in all cases $dH+Hd=id-\iota\pi$, completing the proof of the theorem. \end{proof}

\begin{corollary}
	The natural map $\Omega((\mathbb{M}_\mathfrak{K})^\ast)\to \mathbb{M}$ is a quasi isomorphism.
\end{corollary}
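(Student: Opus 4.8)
The plan is to deduce this directly from Theorem \ref{koszulthm} by re-expressing the cobar side in terms of $\M$ rather than $\M_\mathfrak{K}$. The first step is to identify the linear dual cooperad $(\M_\mathfrak{K})^\ast$ with the quadratic dual cooperad $\M^\uex$. Since $\M$ and $\M_\mathfrak{K}$ are both finite-dimensional and reduced (as recorded in the proof of Lemma \ref{duallem}), $(\M_\mathfrak{K})^\ast$ is a conilpotent cooperad by Lemma \ref{lindualop}. Unwinding the definition $\M^! = s(\M^\uex)^\ast$ and comparing it with the identification $\M^! \cong s\M_\mathfrak{K}$ of Lemma \ref{duallem} gives $(\M^\uex)^\ast \cong \M_\mathfrak{K}$; dualizing once more (legitimate because $\M^\uex$ is finite-dimensional and reduced) produces an isomorphism of conilpotent cooperads $(\M_\mathfrak{K})^\ast \cong \M^\uex$. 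Equivalently, one may rerun the dualization argument of the proof of Lemma \ref{duallem} verbatim: using the distinguished bases $E(\vec{v})$ to identify $\hat E \cong \hat E^\ast$ and $S \cong S^\ast$, one sees that $(\M_\mathfrak{K})^\ast = (F(\Sigma^{-1}\hat E)/\langle \Sigma^{-2} S\rangle)^\ast$ is the maximal sub-cooperad of $F_c(\Sigma\hat E)$ annihilated by the projection to $F^1(\Sigma\hat E)/\Sigma^2 S$, which is by definition $\op{Q}(\Sigma\hat E, \Sigma^2 S) = \M^\uex$.

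With this identification in hand, I would observe that the natural map $\Omega((\M_\mathfrak{K})^\ast)\to\M$ becomes the canonical map $\Omega(\M^\uex)\to\Omega(B(\M))\to\M$ obtained by applying $\Omega$ to $\zeta_{\M^\uex}\colon\M^\uex\to B(\M)$ and composing with the counit of the bar-cobar adjunction (Lemma \ref{barcobaradjunction}). By Theorem \ref{koszulthm} the operad $\M$ is Koszul, which, by Definition \ref{Koszuldef} together with the discussion immediately following it, is precisely the assertion that this canonical map is a quasi-isomorphism. Transporting back along the isomorphism $(\M_\mathfrak{K})^\ast\cong\M^\uex$ then gives the claim.

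The only step that is not entirely formal is the bookkeeping in the first paragraph: one must confirm that the isomorphism $(\M_\mathfrak{K})^\ast\cong\M^\uex$ is an isomorphism of cooperads, not merely of $\mathbb{V}$-colored sequences, and that it intertwines the two candidate ``natural'' structure maps to $\M$ (equivalently, that under it the canonical twisting morphism $\M^\uex\to\M$ corresponds to the map sending the weight-one part $\Sigma\hat E\cong(\Sigma^{-1}\hat E)^\ast$ to $\hat E\subset\M$ and all higher weights to zero). Both checks follow immediately from the explicit descriptions of quadratic duality in Subsection \ref{KDsec} and the computation in the proof of Lemma \ref{duallem}, so no genuinely new content is required; the corollary is Theorem \ref{koszulthm} viewed through the lens of $\M_\mathfrak{K}$.
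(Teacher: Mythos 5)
Your proposal is correct and is essentially the argument the paper intends: the corollary is stated as an immediate consequence of Theorem \ref{koszulthm}, obtained by identifying $(\M_\mathfrak{K})^\ast$ with $\M^\uex$ exactly as in (the dual of) Lemma \ref{duallem} and then invoking Definition \ref{Koszuldef} together with the bar-cobar adjunction of Lemma \ref{barcobaradjunction}. Your extra bookkeeping (checking the identification is one of cooperads and intertwines the twisting morphisms to $\M$, with the weight-one generators $\Sigma^{-1}(\Sigma^{-1}\hat E)^\ast\cong\hat E$ mapping to $\hat E\subset\M$) is exactly the verification the paper leaves implicit.
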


The following immediate corollary of Theorem $\ref{koszulthm}$ comes by restricting attention to only those graphs of genus $0$:

\begin{corollary}\label{cycopscor}  The groupoid colored operad encoding (cyclic) operads is Koszul.
\end{corollary}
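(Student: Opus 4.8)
The plan is to obtain Corollary \ref{cycopscor} from Theorem \ref{koszulthm} by restricting to the genus-zero part of $\M$. Let $\mathbb{V}_0\subset\mathbb{V}$ be the full subgroupoid on the objects $(0,n)$ with $n\ge 3$, so that $\mathrm{Aut}((0,n))=S_n$. I would first restrict $\M$ along the inclusion $\mathbb{V}_0\hookrightarrow\mathbb{V}$. Because the genus of a color scheme is the sum of the genera of its entries, an all-genus-zero color scheme has a genus-zero output and only arises from $\mathbb{V}_0$-colored trees whose vertices and internal edges are all genus-zero colored; hence restriction carries $F(\hat{E})$ to $F(\hat{E}_0)$ and the ideal $\langle S \rangle$ to $\langle S_0 \rangle$, where $\hat{E}_0$ and $S_0$ denote the summands of $\hat{E}$ and $S$ supported on all-genus-zero color schemes. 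Concretely, $\hat{E}_0$ is spanned by the non-self-gluings $\xycirc{i}{j}$ between genus-zero colors -- the self-gluings $\xi_{i,j}$ raise genus and do not survive -- and $S_0$ is the single associativity family $\xycirc{i}{j}\,\xycirc{l}{k}=\xycirc{l^\prime}{k^\prime}\,\xycirc{i^\prime}{j^\prime}$ among them (each of the other three families in Figures \ref{fig:rel1}--\ref{fig:rel2} involves a $\xi$). I would then identify the resulting quadratic operad $\M_0:=F(\hat{E}_0)/\langle S_0 \rangle$ with the groupoid colored operad encoding (stable) cyclic operads, by matching this presentation with the Feynman category for cyclic operads in \cite{KW}, along the same lines as the comparison with \cite[Theorem 3.7]{GeK2} carried out for modular operads.

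Next I would note that the relevant constructions -- the free operad $F$, the cofree cooperad $F_c$, generation of ideals, linear duality, and (de)suspension -- all commute with restriction to a full subgroupoid of colors, so by Lemma \ref{duallem} the same restriction yields $\M_0^{!}\cong s((\mathbb{M}_\mathfrak{K})_0)$ and $\M_0^\ast\cong((\mathbb{M}_\mathfrak{K})_0)^\uex$, with $(\mathbb{M}_\mathfrak{K})_0:=F(\Sigma^{-1}\hat{E}_0)/\langle \Sigma^{-2}S_0 \rangle$. The proof of Theorem \ref{koszulthm} shows that $\Omega(\M^\ast)(\vec{v})\to\mathbb{M}_\mathfrak{K}(\vec{v})$ is a quasi-isomorphism for every color scheme $\vec{v}$. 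For a $\mathbb{V}_0$-color scheme $\vec{v}$, every $\vec{v}$-tree, every vertex type, and every (dualized) operadic structure map of $\M$ entering the left-hand side already lives over genus-zero colors, so $\Omega(\M^\ast)(\vec{v})=\Omega(\M_0^\ast)(\vec{v})$ and $\mathbb{M}_\mathfrak{K}(\vec{v})=(\mathbb{M}_\mathfrak{K})_0(\vec{v})$; equivalently, in the decomposition $\Omega(\M^\ast)(\vec{v})\cong\bigoplus_{\gamma\in M(\vec{v})}C_\ast(\gamma)$ every $\gamma$ of type $\vec{v}$ is a genus-zero modular graph (a tree with genus-labeled vertices) and the deformation retracts $C_\ast(\gamma)\simeq\Sigma^{-|\gamma|}k$ built in that proof never leave the genus-zero world. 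Thus $\Omega(\M_0^\ast)(\vec{v})\to(\mathbb{M}_\mathfrak{K})_0(\vec{v})$ is a quasi-isomorphism for all $\vec{v}$, and the same duality and bookkeeping that concludes ``$\M$ is Koszul'' from this statement in the proof of Theorem \ref{koszulthm} now gives that $\M_0$ is Koszul.

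I expect the only genuinely substantive step to be the identification in the first paragraph: verifying that the genus-zero restriction of the quadratic presentation $(\hat{E},S)$ of $\M$ is precisely a quadratic presentation of the operad encoding cyclic operads. The delicate point is that no relation among the non-self-gluings should be ``missed'' by being a consequence only of relations passing through genus $\ge 1$; this is exactly what the additivity of genus rules out, since it confines every computation over an all-genus-zero color scheme to genus-zero data, so $\M_0$ is genuinely presented by $(\hat{E}_0,S_0)$ rather than merely a further quotient. Once this is settled, the remainder -- that restriction to a full subgroupoid of colors is exact and commutes with the operadic and duality constructions -- is routine, and Corollary \ref{cycopscor} follows immediately from Theorem \ref{koszulthm}.
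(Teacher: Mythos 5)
Your proposal is correct and is essentially the paper's own argument: the paper obtains Corollary \ref{cycopscor} as an immediate consequence of Theorem \ref{koszulthm} by restricting to genus-zero graphs, which is exactly your restriction along $\mathbb{V}_0\hookrightarrow\mathbb{V}$ together with the observation that the fibers $C_\ast(\gamma)$ and their deformation retracts never leave the genus-zero world. You simply spell out the details (the genus-additivity argument showing the restricted operad is presented by $(\hat{E}_0,S_0)$, and compatibility of restriction with duality) that the paper leaves implicit.
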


This corollary gives us a definition of weak operads and cyclic operads and allows us to implement homotopy transfer theory for them.  This result should be of independent interest.

\subsection{Relationship to Graph Associahedra}

Our proof of Theorem $\ref{koszulthm}$ was guided by the fact that the poset of nestings of a graph is the face poset of a convex polytope.  In this section we provide a proof of this fact.  The results of this subsection have been informed by many conversations with E. J{\"o}nson and S. Karlsson  during their participation in the 2018 RAYS Summer Research Symposium.  We refer to \cite{EJ} for a preliminary write up of some related results.

In this section we let $\gamma$ denote a graph with no legs and genus labels.  Equivalently we could consider $\gamma$ to be a modular graph whose legs and genus labels are disregarded.  A standard construction  (see eg \cite[Chapter 8]{Graphtheory}) associates a graph $L(\gamma)$ (the line graph of $\gamma$) to $\gamma$ as follows.  The vertices of $L(\gamma)$ are by definition the edges of $\gamma$.  Two such vertices in $L(\gamma)$ are joined by an edge if their corresponding edges in $\gamma$ shared a vertex.  We remark that while $\gamma$ may have loops and parallel edges, the line graph $L(\gamma)$ will not.

A graph associahedron \cite{CDev} is a convex polytope associated to a graph via a poset structure on certain subsets of the power set of the vertices of the graph.  We let $K_\gamma$ denote the graph associahedron associated to $\gamma$.
	
\begin{lemma}\label{polyhedra}  The poset of nestings on a graph $\gamma$ is isomorphic to the face poset of the graph associahedron $K_{L(\gamma)}$.
\end{lemma}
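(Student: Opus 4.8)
The plan is to reduce the statement to the Carr--Devadoss description of the face poset of a graph associahedron \cite{CDev} by identifying nestings of $\gamma$ with \emph{tubings} of the line graph $L(\gamma)$. Recall that a tube of a simple graph $G$ is a proper nonempty set of vertices whose induced subgraph is connected, and that a tubing is a set of pairwise compatible tubes, where two tubes are compatible when one contains the other or when they are vertex-disjoint with no edge of $G$ joining them. Since $V(L(\gamma)) = Edges(\gamma)$ and, for a subset $A \subseteq Edges(\gamma)$, the subgraph $L(\gamma)[A]$ induced on the vertex set $A$ again has vertex set exactly $A$, the first step is to observe that the identity map on subsets of $Edges(\gamma)$ sends nests of $\gamma$ bijectively onto tubes of $L(\gamma)$. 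Here it is relevant, as already noted in the text, that $L(\gamma)$ is a simple graph, so the Carr--Devadoss formalism applies.

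The one place in that first step needing a short argument is the equivalence: the closure of $A$ in $\gamma$ (Subsection \ref{nestsec}) is connected if and only if $L(\gamma)[A]$ is connected. This holds because a walk in the closure of $A$ from an edge $e$ to an edge $e'$ is an alternating sequence of edges of $A$ and vertices of $\gamma$ in which consecutive edges share the interleaving vertex, which is exactly the data of a walk from $e$ to $e'$ in $L(\gamma)[A]$; hence the two graphs have the same connected components. Combining this with Definition \ref{nestdef} (a nest is a proper nonempty $A \subseteq Edges(\gamma)$ with connected closure) and the definition of a tube yields the claimed bijection between nests of $\gamma$ and tubes of $L(\gamma)$.

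Next I would match the two compatibility relations under this bijection. The nested case corresponds verbatim. For the disjoint case, two tubes $N_1, N_2$ of $L(\gamma)$ are compatible precisely when $N_1 \cap N_2 = \emptyset$ as edge sets and no edge of $L(\gamma)$ joins them; the latter says no edge of $N_1$ and edge of $N_2$ share a vertex of $\gamma$, i.e.\ the closures of $N_1$ and $N_2$ have no common vertex. Together these two conditions are exactly ``the closures of $N_1$ and $N_2$ share no edge and no vertex'', which is the definition of compatibility of nests (Subsection \ref{nestsec}). Consequently the bijection on tubes carries pairwise-compatible families to pairwise-compatible families in both directions, so it restricts to a bijection between the nestings of $\gamma$ and the tubings of $L(\gamma)$, and this bijection is an isomorphism of posets when each side is ordered by inclusion of its underlying collection of nests, resp.\ tubes.

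To finish, invoke \cite{CDev}: the graph associahedron $K_{L(\gamma)}$ is a convex polytope whose face poset is the poset of tubings of $L(\gamma)$, normalized so that the facets correspond to one-tube tubings, the vertices to the maximal tubings, and the whole polytope to the empty tubing. Composing with the poset isomorphism of the previous paragraph identifies the poset of nestings of $\gamma$ with the face poset of $K_{L(\gamma)}$, proving the lemma. There is no serious obstacle in this argument — it is a matching of definitions — so the only things to be careful about are the connectivity translation above and stating the orientation convention for ``face poset'' consistently with \cite{CDev} (under which a fully nested graph, having the most nests, corresponds to a vertex of $K_{L(\gamma)}$).
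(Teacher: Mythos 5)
Your proposal is correct and follows essentially the same route as the paper's proof: identify nests of $\gamma$ with tubes of $L(\gamma)$ (using that connectivity of the closure matches connectivity of the induced subgraph), check that the two compatibility relations agree, and then invoke the Carr--Devadoss description of the face poset of $K_{L(\gamma)}$. The only cosmetic difference is that you phrase disjoint-tube compatibility via non-adjacency in $L(\gamma)$ while the paper phrases it via the union not being a tube (noting the clarification in \cite{DF} about the union being the whole vertex set), and these are equivalent.
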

\begin{proof}
	
	A nest $N$ on $\gamma$ (Definition $\ref{nestdef}$) is a proper subset of the edges of $\gamma$ and hence a proper subset of the vertices of $L(\gamma)$.  The fact that the closure of $N$ is connected, implies that the induced graph (the vertices $N$ and all edges between them) is a connected subgraph of $L(\gamma)$.  Thus a nest $N$ on $\gamma$ specifies a ``tube'' on $L(\gamma)$ in the parlance of \cite{CDev}; by definition a proper subset of the vertices such that the induced graph is connected.  Given such a nest $N$ on $\gamma$ we denote its associated tube on $L(\gamma)$ by $\hat{N}$.
	
	Recall (Subsection $\ref{nestsec}$) that two nests on $\gamma$ are compatible if one is a subset of the other or if their closures are disjoint.  We consider the implication on the associated tubes.  Let $N_1$ and $N_2$ be two nests on $\gamma$.  Clearly $N_1\subset N_2$ if and only if $\hat{N_1}\subset \hat{N_2}$.  Likewise $N_1$ and $N_2$ are disjoint if and only if $\hat{N}_1$ and $\hat{N}_2$ are disjoint.  If $N_1$ and $N_2$ are disjoint but their closures are not, then their closures share a vertex.  Thus the union of the tubes $\hat{N_1}$ and $\hat{N_2}$ has an induced graph which is connected.  Thus $\hat{N_1}\cup \hat{N_2}$  is itself a tube or the entire vertex set.  Conversely if $\hat{N_1}\cup \hat{N_2}$  is itself a tube or the entire vertex set, then the closures of $N_1$ and $N_2$ share a vertex.
	
	By direct comparison we conclude that two nests on $\gamma$ are compatible if and only if their associated tubes are compatible on $L(\gamma)$ in the sense of \cite[Definition 2.2]{CDev}.  It follows that a nesting on $\gamma$ is equivalent to a tubing on $L(\gamma)$.  Since the poset structures of both nestings and tubings are by inclusion, the claim follows.  (When comparing the definitions of nestings and tubings, one should note that \cite[Definition 2.2 (3)]{CDev} implicitly allows the union to be the entire vertex set.  This point is clarified in \cite{DF} (see the Remark after Defintion 1)).
	\end{proof}

\begin{figure}
	\includegraphics[scale=.6]{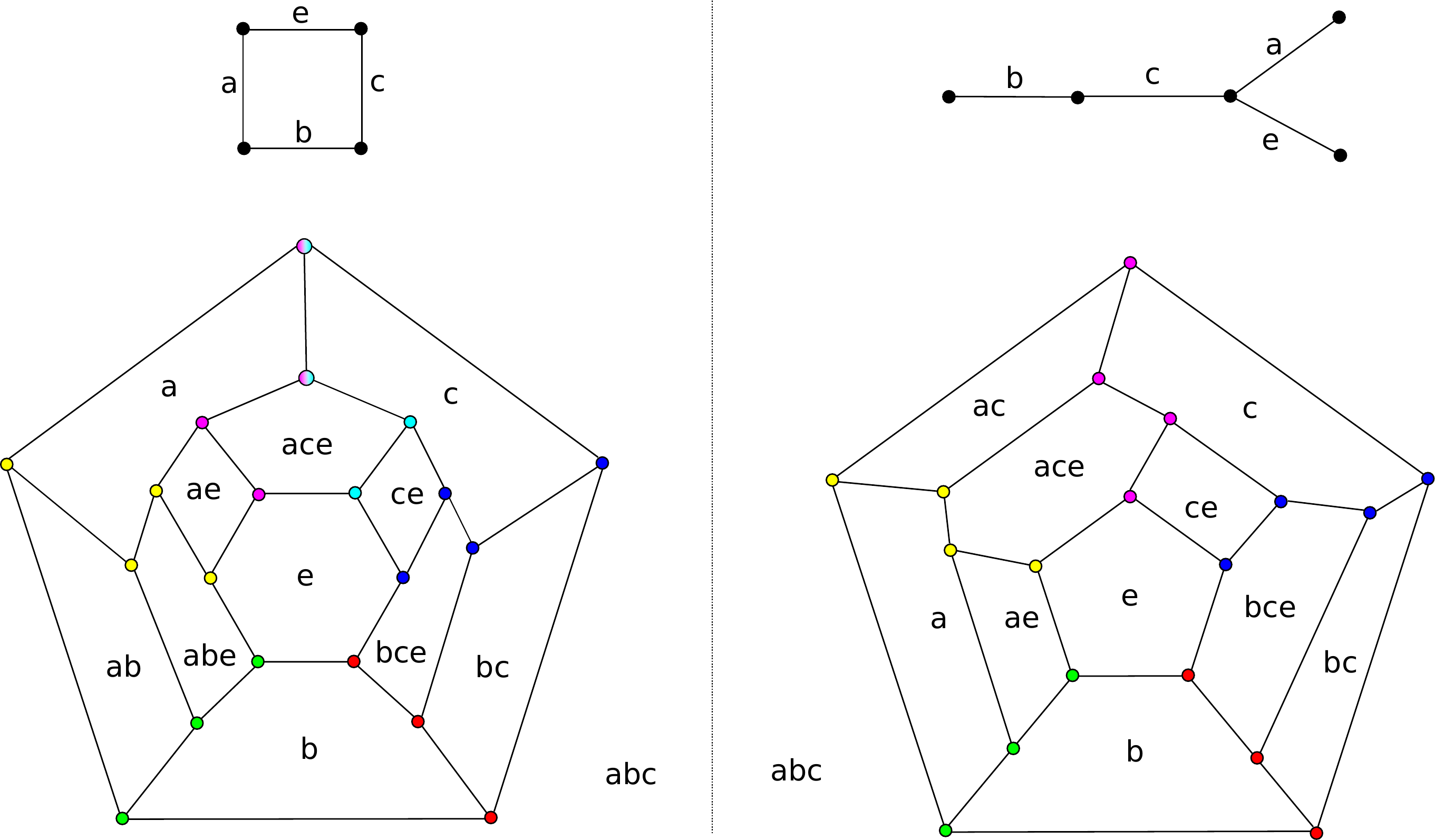} 
	\caption{For $\gamma$, either of the $4$ edged graphs above,
		the poset of nestings is the face poset of a polyhedron.
		Here we depict the polyhedra in top down/annulus view -- the three cells are not depicted.  Only the codimension 1 cells are labeled.  The colors indicate the flow of a contraction in which the inside/top cells (labeled by $e$) are contracted to the outside/bottom solid pentagons labeled by $abc$. This contraction encodes the maps $H,\iota, \pi$ constructed in the proof of Theorem $\ref{koszulthm}$, up to signs and degrees.} 
	\label{cyclo}
\end{figure}

Graph associahedra have long appeared in the study of algebras over operads both classically  \cite{Stasheff} and in the Renaissance era \cite{MarklSimp}. The appearance of graph associahedra in the context of homotopy operads dates at least to \cite{Brinkmeier}.  More recent related results may also be found in \cite{MarklPerm}, \cite{Ob}.  As a consequence of Lemma $\ref{polyhedra}$ we see that the familiar families of associahedra, cyclohedra and permutohedra arise as the fiber for graphs which are respectively rows of vertices, polygons, and bouquets of loops meeting at a common vertex.  See Figure $\ref{cyclo}$. This connection to the well-studied line graph construction raises myriad interesting questions.  For example, the question of when two graphs have the same fiber may be partially addressed via \cite[Theorem 1]{Whitney}.

\begin{remark}  Adding directed structures to the graph would not change the poset of nestings.  Thus, the above arguments could be adapted to give a proof of Koszulity for generalizations of operads based on directed graphs.  Results along these lines have recently been obtained in the preprint \cite{MB2}. 
\end{remark}

\begin{remark}  It would be possible to turn Lemma $\ref{polyhedra}$ in to a proof of Theorem $\ref{koszulthm}$ by identifying the fibers of $\Omega((\mathbb{M}_\mathfrak{K})^\ast)\to \mathbb{M}$ with the cellular chain complexes on the given polytopes.  However, this would require significant book keeping of sign/orientation data at each step, so we chose to instead prove $\Omega(\mathbb{M}^\ast)\to \mathbb{M}_\mathfrak{K}$ is a quasi-isomorphism, which has the advantage that signs may be encoded combinatorially via nest orders.
\end{remark}

\subsection{Weak Modular Operads} \label{wmosec}

\begin{definition}\label{wmodef}  A {\bf weak modular operad} is an algebra over $\M_\infty:=\Omega((\M_\mathfrak{K})^\ast)$.	
\end{definition}


The remainder of this section will be devoted to carefully unwrapping this definition. 

\subsubsection{Generating operations}

Let $A$ be a $\mathbb{V}$-module.  We may extend $A(g,-)$ to be valued in any finite set via left Kan extension (see \cite[Section 1.1]{GK}).  Explicitly for a set $X$ with $n$ elements we define
\begin{equation}\label{LKan}
A(g,X):= \left[ \bigoplus_{ \text{Bij}(S_n,X)} A(g,n)\right]_{S_n}  
\end{equation}
where Bij means bijections.  

If $\gamma$ is a modular graph with a vertex $v$ of genus label $g$, we define $A(v):= A(g, \text{fl}(v))$.  We then define $A(\gamma):=\tensor_{v\in \text{Vert}(\gamma) } A(v)$.  In addition, for a fixed vertex $v_0\in\gamma$, we also define $A(\gamma\setminus v_0) :=\tensor_{v\in \text{Vert}(\gamma)\setminus \{v_0\} } A(v)$.

Now suppose $A$ is a weak modular operad.  Then we have a morphism of operads
$$\Omega((\mathbb{M}_\mathfrak{K})^\ast) \to End_A$$
which is induced by a morphism of $\mathbb{V}$-colored sequences $\Sigma^{-1}(\mathbb{M}_\mathfrak{K})^\ast \to End_A$.  By definition, such a morphism is a linear map 
\begin{equation}\label{mk}
\Sigma^{-1}(\mathbb{M}_\mathfrak{K}(\vec{v}))^\ast\tensor A(in(\vec{v})) \to A(g,n)
\end{equation}
for each color scheme $\vec{v}=((g_1,n_1)\cdc (g_m,n_m);(g,n))$  which is $S_{m}$ and $\times_{i\geq 1} Aut(v_i)$ invariant.

A homogeneous element in the source of Formula $\ref{mk}$ specifies a modular graph $\gamma$ along with a mod 2 order on its set of edges, whose vertices are ordered and labeled by elements of $A$.  Taking invariants of the $S_{m}$ action has the effect of simply discarding the vertex order of the graph.  Taking invariants with respect to the $\times_{i\geq 1} Aut(v_i)$ action ensures that permuting labels of flags which are then glued together to form edges has no effect.  Define $Aut(\gamma)$ to be the group of flag permutations which preserve leaf labels and vertex adjacencies of $\gamma$.  The linear map in Formula $\ref{mk}$ lifts to a map from
\begin{equation}
[\Sigma^{-1}(\mathbb{M}_\mathfrak{K}(\vec{v}))^\ast\tensor_{in(\vec{v})} A(in(\vec{v}))]_{S_{m}}\cong  \bigoplus_{\substack{\text{modular graphs }\gamma \\ \text{w/ vert.\ of type }in(\vec{v})}}(\Sigma^{-1}\mathfrak{K}(\gamma)^\ast \tensor A(\gamma))_{Aut(\gamma)}
\end{equation}
where we define
\begin{equation}\label{k}
\mathfrak{K}(\gamma):= Det^{-1}(Edges(\gamma)).
\end{equation}

Thus, a weak modular operad specifies an $Aut(\gamma)$-invariant, degree $0$ map 
\begin{equation*} \Sigma^{-1}\mathfrak{K}(\gamma)^\ast \tensor A(\gamma)\stackrel{\mu_\gamma}\to A(g,n)
\end{equation*}
for each modular graph $\gamma$. We call these {\bf generating operations} for the weak modular operad structure on $A$.  We remark that for any finite set $X$, such generating operations may be extended ala Formula $\ref{LKan}$ to maps $\Sigma^{-1}\mathfrak{K}(\gamma)^\ast \tensor A(\gamma)\stackrel{\mu_\gamma}\to A(g,X)$
for each modular $X$-graph $\gamma$ (Definition \ref{modulargraphdef}).

\subsubsection{Composition of operations}  Let $v$ be a vertex of genus $g$ in a modular graph $\gamma$ and let $\gamma^\prime$ be a modular fl$(v)$-graph of total genus $g$.  We define the {\bf vertex substitution} $\gamma\circ_v\gamma^\prime$ to be the modular graph formed by substituting $\gamma^\prime$ in to vertex $v$ as in Subsection $\ref{treesub}$.  In particular the flags at $v$ are identified with the legs of $\gamma^\prime$ using the leg labeling, and the vertices of $\gamma\circ_v\gamma^\prime$ are (Vert$(\gamma)\setminus v)\sqcup$Vert$(\gamma^\prime)$, with the inherited genus labeling.

There is a canonical isomorphism $Edges(\gamma)\sqcup Edges(\gamma^\prime)\cong Edges(\gamma\circ_v\gamma^\prime)$.  Thus the wedge product $\mathfrak{K}(\gamma)\tensor\mathfrak{K}(\gamma^\prime)\longrightarrow \mathfrak{K}(\gamma\circ_v\gamma^\prime)$ is an isomorphism. 
 This induces a degree $-1$ isomorphism via the composite:
 \begin{equation}\label{Ksigns}
 \Sigma\mathfrak{K}(\gamma)\tensor\Sigma\mathfrak{K}(\gamma^\prime)\stackrel{(-1)^{|\gamma|}}\longrightarrow \Sigma^2\mathfrak{K}(\gamma)\tensor\mathfrak{K}(\gamma^\prime)\stackrel{\Sigma^2 -\wedge -}\longrightarrow \Sigma^{2}\mathfrak{K}(\gamma\circ_v\gamma^\prime)\cong \Sigma\mathfrak{K}(\gamma\circ_v\gamma^\prime).
 \end{equation}
We denote this composite by $\epsilon=\epsilon(\gamma,v,\gamma^\prime)$.

Generating operations $\mu_\gamma$ for a weak modular structure on $A$ may be
composed in the following sense. Let $\gamma$ be a modular graph of type $(g,n)$.  If $v$ is a vertex of $\gamma$ of genus $h$ and $\gamma^\prime$ is a graph of type $(h,\text{fl}(v))$, we define $\mu_\gamma\circ_v\mu_{\gamma^\prime}$ to be the following composition:
\begin{align}\label{comps}
\Sigma^{-1}\mathfrak{K}(\gamma\circ_v\gamma^\prime)^\ast \tensor A(\gamma\circ_v\gamma^\prime)  \stackrel{\epsilon^\ast \tensor -}\longrightarrow & 
\Sigma^{-1}\mathfrak{K}(\gamma)^\ast\tensor \Sigma^{-1}\mathfrak{K}(\gamma^\prime)^\ast \tensor  A(\gamma^\prime) \tensor
A(\gamma\setminus v)
\\ \nonumber
 \stackrel{-\tensor \mu_{\gamma^\prime} \tensor -}\longrightarrow & \Sigma^{-1}\mathfrak{K}(\gamma)^\ast  \tensor A(\gamma) \stackrel{\mu_\gamma} \to A(g,n).
\end{align}

We stress here that while $\mu_\gamma\circ_v\mu_{\gamma^\prime}$ is defined precisely when $\gamma\circ_v\gamma^\prime$ is defined, it is not the case that 
$\mu_\gamma\circ_v\mu_{\gamma^\prime}$ and $\mu_{\gamma\circ_v\gamma^\prime}$
are equal; they don't even have the same degree. The latter is a generating operation, the former is not.

\subsubsection{Differential Constraint}

Fix a modular graph $\gamma$ with a nest $N$ (Definition $\ref{nestdef}$).
Define leg$(N)$ to be the set of flags of $\gamma$ which are adjacent to a vertex in the closure of $N$, but are not part of an edge of $N$.  We define the {\bf flag closure} of $N$ to be the modular leg$(N)$-graph formed by attaching these flags to the closure of $N$ as specified by $a_\gamma$, the adjacency map in $\gamma$.  We denote the flag closure of $N$ by $\hat{N}$.

Given a nest $N$ on $\gamma$, we may also form $\gamma/N$, defined to be the modular graph formed by collapsing the edges of $N$ to a common vertex, which we also call $N$.  For any modular graph $\gamma$ and any nest $N$, the vertex substitution $\gamma/N\circ_N \hat{N}$ is well defined and hence the composition $\mu_{\gamma/N}\circ_N \mu_{\hat{N}}$ is well defined for any weak modular operad.  Conversely, any vertex substitution is of this form. 

Denote the total genus (Subsection $\ref{graphssec2}$)  of a modular graph $\gamma$ by $g(\gamma)$.  We may now give a more combinatorial characterization of a weak modular operad. 

\begin{proposition}\label{wmo}  A weak modular operad structure on a $\mathbb{V}$-module $A$ is equivalent to a collection of $Aut(\gamma)$-invariant, degree $0$ maps
	\begin{equation}\label{genops} \Sigma^{-1}\mathfrak{K}(\gamma)^\ast \tensor A(\gamma)\stackrel{\mu_\gamma}\to A(g(\gamma),|leg(\gamma)|),
	\end{equation}
	one for each modular graph $\gamma$, which are $S_{|leg(\gamma)|}$-equivariant and which satisfy the condition
\begin{equation}\label{dconstraint}
d(\mu_\gamma) = \ds\sum_{\substack{ \text{non empty nests }  \\ N  \text{ on } \gamma} } \mu_{\gamma/N}\circ_N \mu_{\hat{N}}.
\end{equation}
\end{proposition}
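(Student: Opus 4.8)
The plan is to unwind the definition of a weak modular operad (Definition~\ref{wmodef}) as an algebra over $\M_\infty = \Omega((\M_\mathfrak{K})^\ast)$, pushing the already-established dictionary between the free operad on $\Sigma^{-1}(\M_\mathfrak{K})^\ast$ and nested modular graphs all the way through the differential. The starting point is that an $\M_\infty$-algebra structure on $A$ is, by Lemma~\ref{barcobaradjunction} and the universal property of the free operad, exactly a map of $\mathbb{V}$-colored sequences $\Sigma^{-1}(\M_\mathfrak{K})^\ast \to End_A$ commuting with the cobar differential, i.e.\ a degree $0$ map $\Sigma^{-1}(\M_\mathfrak{K}(\vec v))^\ast \tensor A(in(\vec v)) \to A(g,n)$ together with a compatibility condition. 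The unpacking of the underlying graded data into the generating operations $\mu_\gamma \colon \Sigma^{-1}\mathfrak{K}(\gamma)^\ast \tensor A(\gamma) \to A(g(\gamma),|leg(\gamma)|)$, including the identification of the $S_m$- and $\times_i Aut(v_i)$-invariants with a sum over modular graphs weighted by $Aut(\gamma)$-coinvariants, is exactly what was done in the ``Generating operations'' subsection, so I would cite that verbatim; what remains is purely the identification of the differential constraint.

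The key computation is to identify, under the graph dictionary, the cobar differential on $F(\Sigma^{-1}(\M_\mathfrak{K})^\ast)$ evaluated on a generator with the right-hand side of \eqref{dconstraint}. Recall the cobar differential is the unique derivation extending the composite $\Sigma^{-1}\op{Q} \to \Sigma^{-2}\op{Q} \to \Sigma^{-2}F^1_c(\op{Q}) \cong F^1(\Sigma^{-1}\op{Q})$ with $\op{Q} = (\M_\mathfrak{K})^\ast = \M^\ast$. Dualizing the operadic structure maps of $\M$ means precisely decomposing a modular graph into a two-level nested graph, i.e.\ writing $\gamma$ as a vertex substitution $\gamma' \circ_v \gamma''$, and by Lemma~\ref{felem} and the correspondence of Subsection~\ref{treegraphcorr} every such decomposition into two modular pieces is of the form $\gamma/N \circ_N \hat N$ for a unique non-empty nest $N$ on $\gamma$ (and conversely). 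So the sum over terms of the cobar differential becomes exactly the sum over non-empty nests $N$ on $\gamma$, with the $N$-layer giving a factor $\mu_{\gamma/N}$ and the flag closure $\hat N$ giving the factor $\mu_{\hat N}$, composed in the sense of \eqref{comps}. The only genuine content left is the sign/degree bookkeeping: one must check that the determinant-line isomorphism $\mathfrak{K}(\gamma) \cong \mathfrak{K}(\gamma/N)\tensor\mathfrak{K}(\hat N)$ induced by $Edges(\gamma) \cong Edges(\gamma/N)\sqcup Edges(\hat N)$, together with the suspension shifts, produces precisely the map $\epsilon = \epsilon(\gamma/N, N, \hat N)$ of \eqref{Ksigns}, so that the operadic composition $\mu_{\gamma/N}\circ_N \mu_{\hat N}$ defined in \eqref{comps} coincides on the nose with the term contributed by the cobar differential. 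This is where the factor $(-1)^{|\gamma/N|}$ in \eqref{Ksigns} comes from, and it has to be matched against the Koszul sign produced when the derivation on the free operad passes the differential past the other tensor factors $A(\gamma\setminus v)$.

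Finally I would argue the equivalence in both directions: given an $\M_\infty$-algebra, the generating operations read off from the underlying $\mathbb{V}$-colored sequence map automatically satisfy \eqref{dconstraint} because the cobar differential squares to zero and the map is a dg map; conversely, given a family $\{\mu_\gamma\}$ satisfying \eqref{dconstraint}, the corresponding map $\Sigma^{-1}(\M_\mathfrak{K})^\ast \to End_A$ extends uniquely to an operad map $F(\Sigma^{-1}(\M_\mathfrak{K})^\ast) \to End_A$ and condition \eqref{dconstraint} is exactly the statement that this extension commutes with the cobar differential (it suffices to check compatibility on generators, since both sides are derivations/operad maps). The main obstacle, and the only place requiring real care rather than bookkeeping of the dictionary, is verifying that the sign in \eqref{Ksigns} — i.e.\ the interaction between the shift $\Sigma^{-1}$ on each generator, the determinant line $\mathfrak{K}(\gamma) = Det^{-1}(Edges(\gamma))$, and the Koszul rule for the derivation — reproduces the cobar sign; I would handle this by choosing, for a fixed nest $N$, an ordering of $Edges(\gamma)$ in which the edges of $\hat N$ come last, computing both the cobar differential term and the composite $\mu_{\gamma/N}\circ_N \mu_{\hat N}$ in that basis, and checking the powers of $-1$ agree, then invoking $Aut(\gamma)$-equivariance to conclude the identity is basis-independent.
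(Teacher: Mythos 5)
Your proposal is correct and follows essentially the same route as the paper: cite the ``generating operations'' unpacking for the underlying data, reduce the dg condition to the cobar differential on generators homogeneous over modular graphs, identify its terms with non-empty nests $N$ via the tree/nested-graph dictionary, and absorb all signs into the isomorphism $\epsilon(\gamma/N,N,\hat N)$ of Formula (\ref{Ksigns}), exactly as in the paper's comparison with Formula (\ref{comps}). One small correction: the cooperad being cobar'd is $(\M_\mathfrak{K})^\ast\cong \M^\uex$, not $\M^\ast$ (which is $(\M_\mathfrak{K})^\uex$), so the structure maps you dualize are those of $\M_\mathfrak{K}$ --- graph insertion with the mod $2$ edge-order convention --- which is precisely where the $\epsilon$-sign you propose to check originates; this slip does not affect the rest of your argument.
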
 
\begin{proof}  
	
	From the above analysis it follows that an algebra over   $F(\Sigma^{-1}((\mathbb{M}_{\mathfrak{K}})^\ast)$ is equivalent to the family of operations in Formula $\ref{genops}$.  It remains to verify that such an algebra structure is compatible with the differential in the cobar construction if and only if Formula $\ref{dconstraint}$ is satisfied.
	
	The differential on $F(\Sigma^{-1}((\mathbb{M}_{\mathfrak{K}})^\ast)(\vec{v})$ is
	determined on the image of the inclusion
	 $$\Sigma^{-1}(\mathbb{M}_{\mathfrak{K}}(\vec{v}))^\ast \hookrightarrow F(\Sigma^{-1}((\mathbb{M}_{\mathfrak{K}})^\ast))(\vec{v})$$
	This map is in turn determined by its image on elements which are homogeneous with respect to the direct sum decomposition over the set $M(\vec{v})$.  Thus it is enough to consider the cobar differential on a summand $\Sigma^{-1}\mathfrak{K}(\gamma)^\ast$ for a modular graph $\gamma$ of type $\vec{v}$.  
	
The operad structure of $\mathbb{M}_\mathfrak{K}$ is given by graph insertion, with the convention that the target of a composition has the mod 2 edge order given by $out\wedge in$.  Thus, $\mathfrak{K}(\gamma)$ is in the image of operadic structure maps corresponding to non-empty nests $N$ on $\gamma$.  In the notation of Formula $\ref{Ksigns}$, each such operadic structure map induces the map 
\begin{equation*}
\epsilon(\gamma/N,N,\hat{N})\colon \Sigma\mathfrak{K}(\gamma/N)\tensor\Sigma\mathfrak{K}(\hat{N})\longrightarrow \Sigma\mathfrak{K}(\gamma)
\end{equation*}
on the suspension $\Sigma\mathfrak{K}(\gamma)$.

Therefore the cobar differential restricted to $\Sigma^{-1}\mathfrak{K}(\gamma)^\ast$ can be written as $
\sum_N \epsilon(\gamma/N,N,\hat{N})^\ast$, where the sum is taken over non-empty nests on $\gamma$.  The result then follows by direct comparison with the definition of $\mu_\gamma$ (Formula $\ref{comps}$).\end{proof}

The reason no signs appear in Formula $\ref{dconstraint}$ is because they have been encoded in the isomorphism in Formula $\ref{Ksigns}$.  We now give an even more down-to-earth presentation of weak modular operads by making these signs explicit.

Let $|\gamma|$ denote the number of edges in a modular graph $\gamma$.  An {\bf edge ordered modular graph} is a modular graph $\gamma$ along with a bijection $\psi\colon  Edges(\gamma) \to \{1\cdc |\gamma|\}$.  We denote such an edge ordered modular graph by $(\gamma,\psi)$

Given an edge ordered modular graph $(\gamma, \psi)$ and a nest $N$ on $\gamma$, we consider both $N$ and $N^c :=Edges(\gamma)\setminus N$ with the total orders inherited from $\psi$.  We then define $\kappa=\kappa(\gamma, N)\in S_{|\gamma|}$ to be the unique $(|N^c|,|N|)$-shuffle for which $\kappa(\{1\cdc |N^c|\})=\psi(N^c)$.  In other words, $\kappa$ permutes the induced edge order on the vertex substitution $\gamma/N\circ_N\hat{N}$ (under the convention ``outside before inside'') to the edge order given by $\psi$.

 Any automorphism $\phi$ of a modular graph $\gamma$ determines a permutation of its edges which we denote by $\tilde{\phi}$. We also write $sgn(\sigma)\in \{-1,+1\}$ for the sign of a permutation $\sigma$.  With this notation Proposition $\ref{wmo}$ directly translates to:

\begin{corollary}\label{wmocor}  A weak modular operad structure on a $\mathbb{V}$-module $A$ is equivalent to a collection of degree $|\gamma|-1$ maps,
	\begin{equation}\label{genops2}  A(\gamma)\stackrel{\mu(\gamma,\psi)}\longrightarrow A(g(\gamma),|leg(\gamma)|),
	\end{equation}
	one for each edge ordered modular graph $(\gamma,\psi)$, 
which are $S_{|leg(\gamma)|}$-equivariant and which satisfy

	\begin{itemize}
		
		\item $\mu(\gamma,\psi\tau)=sgn(\tau)\mu(\gamma,\psi)$ for $\tau\in S_{Edges(\gamma)}$,
		
		
		\item  $\mu(\gamma, \psi)\circ A(\phi) =\mu(\gamma, \tilde{\phi}\psi)$ for $\phi \in Aut(\gamma)$,
		
		\item 	$
		d(\mu(\gamma,\psi)) = \ds\sum_{\substack{ \text{non empty nests }  \\ N  \text{ on } \gamma} } (-1)^{|\gamma/N|} sgn(\kappa(\gamma, N)) \left(\mu(\gamma/N, \psi/N)\circ_N \mu(\hat{N}, \psi_{|N})  \right)$.
	\end{itemize} 

\end{corollary}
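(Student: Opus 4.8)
The plan is to make explicit, via chosen basis vectors, the identification between the description of a weak modular operad in Proposition~\ref{wmo} and the one asserted here. For a modular graph $\gamma$ the line $\Sigma^{-1}\mathfrak{K}(\gamma)^\ast$ is, by Definition~\ref{det}, one-dimensional, concentrated in degree $|\gamma|-1$, and carries the alternating action of $S_{Edges(\gamma)}$; an edge ordering $\psi$ of $\gamma$ singles out a basis vector $\delta_\psi$ of this line. Setting $\mu(\gamma,\psi):=\mu_\gamma(\delta_\psi\otimes -)$ then produces a map $A(\gamma)\to A(g(\gamma),|leg(\gamma)|)$ of degree $|\gamma|-1$, and conversely a family of such maps obeying the first two bullets reassembles into a well-defined map $\mu_\gamma$ out of $\Sigma^{-1}\mathfrak{K}(\gamma)^\ast\otimes A(\gamma)$. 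This gives a bijection between the two packages of data, so it remains to match the three conditions.

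First I would check the two equivariance bullets, which are immediate. The alternating $S_{Edges(\gamma)}$-action gives $\delta_{\psi\tau}=sgn(\tau)\,\delta_\psi$, whence $\mu(\gamma,\psi\tau)=sgn(\tau)\,\mu(\gamma,\psi)$; and the $S_{|leg(\gamma)|}$-equivariance of $\mu_\gamma$ passes to $\mu(\gamma,\psi)$ unchanged. For the second bullet, an automorphism $\phi\in Aut(\gamma)$ acts on $A(\gamma)$ by $A(\phi)$ and on $\Sigma^{-1}\mathfrak{K}(\gamma)^\ast$ through the induced edge permutation $\tilde\phi$, with $\phi$ sending $\delta_\psi$ to $\delta_{\tilde\phi\psi}$; feeding this into the $Aut(\gamma)$-invariance of $\mu_\gamma$ yields $\mu(\gamma,\psi)\circ A(\phi)=\mu(\gamma,\tilde\phi\psi)$.

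The remaining work is the differential bullet, obtained by evaluating Formula~\ref{dconstraint} on $\delta_\psi$. For a fixed non-empty nest $N$, the term $\mu_{\gamma/N}\circ_N\mu_{\hat{N}}$ is the composite of Formula~\ref{comps}, built from the dual of the degree $-1$ isomorphism $\epsilon=\epsilon(\gamma/N,N,\hat{N})$ of Formula~\ref{Ksigns}. Unpacking $\epsilon$ on basis vectors: its first arrow contributes the sign $(-1)^{|\gamma/N|}$ (the exponent being the number of edges of $\gamma/N$, i.e.\ of $N^c$), and its wedge isomorphism $Det(Edges(\gamma/N))\otimes Det(Edges(\hat{N}))\xrightarrow{\sim}Det(Edges(\gamma))$, read against $\delta_\psi$ on the target and against $\delta_{\psi/N}\otimes\delta_{\psi_{|N}}$ on the source, is precisely multiplication by $sgn(\kappa(\gamma,N))$ --- this is exactly the defining property of the shuffle $\kappa$ as the permutation carrying the ``outside-then-inside'' edge order to $\psi$. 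Once the one-dimensional $\mathfrak{K}$-factors have been resolved into these scalars, the other arrows of Formula~\ref{comps} introduce no further sign, and so the $N$-summand of $d(\mu(\gamma,\psi))$ is exactly $(-1)^{|\gamma/N|}sgn(\kappa(\gamma,N))\big(\mu(\gamma/N,\psi/N)\circ_N\mu(\hat{N},\psi_{|N})\big)$, as claimed. The only genuine obstacle is this sign audit: keeping the Koszul signs from the tensor rearrangements in Formula~\ref{comps}, the (de)suspension conventions, and the shuffle sign all mutually consistent; every other step is a formal change of variables already licensed by Proposition~\ref{wmo}.
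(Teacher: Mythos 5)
Your proposal is correct and is essentially the argument the paper intends: the paper offers no separate proof, presenting Corollary \ref{wmocor} as a direct translation of Proposition \ref{wmo} once the one-dimensional lines $\Sigma^{-1}\mathfrak{K}(\gamma)^\ast$ are trivialized by a choice of edge order, and your identification $\mu(\gamma,\psi):=\mu_\gamma(\delta_\psi\otimes-)$ together with reading the two arrows of Formula \ref{Ksigns} against these basis vectors (yielding $(-1)^{|\gamma/N|}$ and $sgn(\kappa(\gamma,N))$, with the equivariance bullets coming from the alternating $S_{Edges(\gamma)}$-action and the $Aut(\gamma)$-invariance) is exactly that translation. Your explicit flagging of the remaining Koszul-sign bookkeeping in Formula \ref{comps} is, if anything, more careful than what the paper records.
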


\begin{definition}  A {\bf weak $\mathfrak{K}$-modular operad} is an algebra over $(\M_\mathfrak{K})_\infty:=\Omega(\M^\ast)$.	
\end{definition}

If we replace $\mathbb{M}_\mathfrak{K}$ with $\mathbb{M}$ in the analysis leading up to Proposition $\ref{wmo}$, The only difference is the absence of the $\mathfrak{K}(\gamma)$ tensor factor.  This yields the following characterization of weak $\mathfrak{K}$-modular operads.

\begin{proposition}\label{wmo2}  A weak $\mathfrak{K}$-modular operad structure on a $\mathbb{V}$-module $A$ is equivalent to a collection of $Aut(\gamma)$-invariant, degree $-1$ maps
$	\mu_\gamma\colon A(\gamma)\to A(g(\gamma),|leg(\gamma)|)$,
	one for each modular graph $\gamma$, which are $S_{|leg(\gamma)|}$-equivariant and which satisfy the condition
	\begin{equation}
	d(\mu_\gamma) = \ds\sum_{\substack{ \text{non empty nests }  \\ N  \text{ on } \gamma} } \mu_{\gamma/N}\circ_N \mu_{\hat{N}}.
	\end{equation}
\end{proposition}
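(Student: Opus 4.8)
The plan is to follow the pattern already established for weak modular operads in Proposition~\ref{wmo}, merely tracking which tensor factors and degree shifts survive when $\mathbb{M}_\mathfrak{K}$ is replaced by $\mathbb{M}$ throughout. First I would unwind the definition: a weak $\mathfrak{K}$-modular operad is an algebra over $(\M_\mathfrak{K})_\infty = \Omega(\M^\ast)$, so the structure is a morphism of operads $\Omega(\M^\ast) \to End_A$, which by freeness of $\Omega$ is the same data as a morphism of $\mathbb{V}$-colored sequences $\Sigma^{-1}\M^\ast \to End_A$. Unpacking this as in Formula~\ref{mk}, but using $\M$ in place of $\M_\mathfrak{K}$, gives for each color scheme $\vec{v} = ((g_1,n_1)\cdc(g_m,n_m);(g,n))$ an $S_m$- and $\times_{i\geq 1}Aut(v_i)$-invariant map $\Sigma^{-1}(\M(\vec{v}))^\ast \tensor A(in(\vec{v})) \to A(g,n)$. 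The only structural difference from the weak modular case is that $\M(\vec{v})$ carries a basis of homogeneous vectors indexed by modular graphs with \emph{no} chosen mod~$2$ order on edges, whereas $\M_\mathfrak{K}(\vec{v})$ corresponds to modular graphs \emph{with} such an order (Remark~\ref{graphoperations}); equivalently, $\M_\mathfrak{K} \cong Det^{-1}(Edges(-)) \tensor \M$ at the level of the homogeneous basis. Hence the $\mathfrak{K}(\gamma)$ tensor factor simply drops out, and taking $S_m$- and $Aut$-invariants as in the proof of Proposition~\ref{wmo} yields, for each modular graph $\gamma$, an $Aut(\gamma)$-invariant, $S_{|leg(\gamma)|}$-equivariant map $\mu_\gamma\colon A(\gamma)\to A(g(\gamma),|leg(\gamma)|)$, which is now of degree $-1$ uniformly (coming from the single $\Sigma^{-1}$, with no $Det$-induced shift depending on $|\gamma|$).

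Next I would analyze the differential constraint. As in Proposition~\ref{wmo}, compatibility with the cobar differential on $F(\Sigma^{-1}\M^\ast)$ is determined by its restriction to the generators $\Sigma^{-1}(\M(\vec{v}))^\ast$, and further by homogeneous summands $\Sigma^{-1}(\M(\gamma))^\ast$ for modular graphs $\gamma$ of type $\vec{v}$. The cobar differential is the linear dual of the operadic structure maps of $\M$, which are given by graph insertion: a homogeneous vector for $\gamma$ lies in the image of the structure map corresponding to a non-empty nest $N$ on $\gamma$, namely the insertion $\gamma/N \circ_N \hat{N} \mapsto \gamma$. Dualizing and summing over all non-empty nests $N$ gives exactly $d(\mu_\gamma) = \sum_{N} \mu_{\gamma/N}\circ_N \mu_{\hat{N}}$, where $\circ_N$ denotes composition of operations along the vertex substitution as in Formula~\ref{comps} (with the $\mathfrak{K}$ factors removed, so no $\epsilon$-sign appears). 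Conversely any such collection of maps assembles to a well-defined algebra structure. This is precisely the asserted characterization.

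The main thing to check carefully — and where the argument requires a little attention rather than a verbatim copy — is that $\M$, like $\M_\mathfrak{K}$, is finite dimensional and reduced (this was used in Lemma~\ref{duallem} and holds by the same count), so that $\M^\ast$ is genuinely a conilpotent cooperad and $\Omega(\M^\ast)$ makes sense with the stated underlying free operad. One also needs that the graph-insertion structure maps of $\M$ are themselves organized by nests, i.e.\ that a homogeneous basis vector for $\gamma$ appears in the image of the structure map for a level-$2$ tree exactly once for each non-empty nest on $\gamma$; this is the content of Lemma~\ref{felem} together with the correspondence of Subsection~\ref{treegraphcorr}, applied one edge-cluster at a time. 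I expect the only genuine obstacle to be bookkeeping the identification $\M_\mathfrak{K}\cong Det^{-1}(Edges)\tensor\M$ on the homogeneous basis and confirming that, after removing the $\Sigma\mathfrak{K}$ twist, the map $\epsilon(\gamma/N,N,\hat{N})$ degenerates to the bare insertion with no residual sign — everything else is a direct transcription of the proof of Proposition~\ref{wmo} with the $\mathfrak{K}(\gamma)$ factor and the degree-$|\gamma|$ normalization deleted.
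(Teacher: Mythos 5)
Your proposal is correct and follows essentially the same route as the paper, which proves Proposition \ref{wmo2} precisely by rerunning the analysis leading to Proposition \ref{wmo} with $\mathbb{M}_\mathfrak{K}$ replaced by $\mathbb{M}$ and observing that the only change is the disappearance of the $\mathfrak{K}(\gamma)$ tensor factor (hence uniform degree $-1$ operations and no $\epsilon$-twist in the nest-indexed differential). Your additional checks (finite dimensionality and reducedness of $\mathbb{M}$, and the nest/insertion correspondence via Lemma \ref{felem}) are exactly the ingredients the paper's earlier argument relies on.
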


\section{Graph homology and the weak Feynman transform.}\label{secWFT}  In this section we:
\begin{itemize}
	\item Define the weak Feynman transform of a weak modular operad and use Theorem $\ref{koszulthm}$ to establish its homotopy theoretic properties.
	\item Define (hairy) graph homologies and their associated Massey products.
	\item  Use the weak Feynman transform to show that Massey products hit all graph homology classes.
	\item Construct and analyze spectral sequences which compute the homology of the weak Feynman transform from the homology of the classical Feynman transform.
	\item Compute leg free commutative graph homology in genus 3 using these spectral sequences.
	\item Give a list of interesting and expected future directions stemming from the results of this paper.
\end{itemize}

In this section we will consider the categories of weak modular operads with their $\infty$-morphisms.  To simplify the presentation we, following \cite{GeK2}, restrict attention to $\mathbb{V}$-modules which are finite dimensional in each graded component.  This will ensure that the bar and cobar constructions for algebras are intertwined by linear duality (Lemma $\ref{fdbar}$).  We could drop this assumption at the cost of working with four categories of twisted and untwisted modular operads and cooperads.

\begin{definition}\label{FTDef}  The weak Feynman transform is a pair of functors
	\begin{equation*}
\FT^+\colon \left\{ \text{weak } \text{modular operads} \right\}\to \left\{\mathfrak{K}\text{-modular operads} \right\} 
	\end{equation*}
and		\begin{equation*}
\FT^-\colon \left\{ \text{weak } \mathfrak{K}\text{-modular operads} \right\}\to \left\{ \text{modular operads} \right\} 
\end{equation*}

	defined by $\FT^+ :=  \mathsf{B}^\ast_{\M_\infty}$ and $\FT^- := \mathsf{B}^\ast_{(\M_\mathfrak{K})_\infty}$.
\end{definition}

Let's unpack this definition.  The weak Feynman transform of a weak modular operad $A$ may be identified with the free $s^{-1}\M^!\cong\mathbb{M}_\mathfrak{K}$-algebra on $A^\ast$.  By definition this is given object-wise by the space:
\begin{equation}\label{FT1}
\FT^+(A)(g,n)\cong \left[\ds\bigoplus_{\substack{\vec{v} \text{ such that} \\ v_0=(g,n)} } \underline{\M_\mathfrak{K}}(\vec{v})\tensor_{in(\vec{v})} A^\ast(in(\vec{v}))\right]_{S_{|\vec{v}|}},
\end{equation}
where $\underline{\M_\mathfrak{K}}= \M_\mathfrak{K}\oplus \op{I}$ (see Remark $\ref{non-unital algebras}$).
A homogeneous element of this space specifies a graph whose vertices are ordered and labeled by elements of $A^\ast$.  Taking coinvariants of the $S_{|\vec{v}|}$ action has the effect of simply ignoring the vertex order of the graph.  Taking coinvariants with respect to the $in(\vec{v})$-action means that permuting labels of flags which are then glued together to form edges has no effect, thus then these elements are $Aut(\gamma)$-coinvariant.
Thus, splitting each summand in line $\ref{FT1}$ over the underlying graphs yields the following decomposition of graded vector spaces:
\begin{equation*}
\FT^+(A)(g,n)\cong \ds\bigoplus_{(g,n)\text{-graphs } \gamma} (\mathfrak{K}(\gamma) \tensor A^\ast(\gamma))_{Aut(\gamma)}
\end{equation*}
where $A^\ast(\gamma)$ and $\mathfrak{K}(\gamma)$ are as defined in Subsection $\ref{wmosec}$.  The direct sum is taken over all modular graphs of type $(g,n)$, along with the corolla $\ast_{g,n}$ (see Remark $\ref{noedgesremark})$, with convention $A(\ast_{g,n})=A(g,n)$.

Let us now describe the differential on these spaces.  For each modular graph $\gamma$ of type $(g,n)$, define a degree $-1$ map $d_\gamma := (\Sigma \mu_\gamma)^\ast$, where $\mu_\gamma$ is given in Formula $\ref{genops}$.  We then define
\begin{equation*}
d\colon A^\ast(g,n)\to \ds\bigoplus_{(g,n)\text{-graphs } \gamma} (\mathfrak{K}(\gamma)\tensor A^\ast(\gamma))_{Aut(\gamma)}
\end{equation*}
by first summing over all such $d_\gamma$ and then passing to coinvariants.  Due to the genus labeling, there are only finitely many $\gamma$ for a fixed $(g,n)$ (i.e.\ $\mathbb{M}$ is reduced in the sense of Definition $\ref{reduced}$) and so this sum is finite.  This map extends uniquely to a degree $-1$ map of $\mathfrak{K}$-twisted modular operads $d\colon \FT^+(A)\to\FT^+(A)$.   

Conversely, since $s^{-1}\M_\mathfrak{K}^!\cong\M$, the weak Feynman transform of a weak $\mathfrak{K}$-modular operad $B$ may be identified with a free modular operad (not twisted):
\begin{equation*}
\FT^-(B)(g,n)\cong \ds\bigoplus_{(g,n)\text{-graphs } \gamma} B^\ast(\gamma)_{Aut(\gamma)}.
\end{equation*}
In this case the differential may also be described as a sum over all graphs, although now the differential has degree -1 because all the operations had degree -1 to begin with.  The comment about finiteness for terms in the differential still applies.

We remark that the above description may be compared with the original construction of Getzler and Kapranov to show that the Feynman transform of a (strict) modular operad, which we denote $\FTGK$, (resp.\ strict $\mathfrak{K}$-twisted modular operad) as defined in \cite{GeK2} agrees with Definition $\ref{FTDef}$.
This follows from the fact that for a strict modular operad, only the graphs with $1$ edge act non-trivially and so $d$ coincides with the \cite{GeK2} differential, call it $d_\FTGK$.  The external differential in the weak Feynman transform is of the form $d=d_\FTGK$+higher terms, but it is not in general true that $d_\FTGK$ is square zero.

\subsection{Parity} \label{parity}

To formalize considerations of modular versus $\mathfrak{K}$-twisted modular operads we define a category
\begin{equation*}
\{\pm \text{modular operads} \}:= \{\text{modular operads} \} \coprod \{\mathfrak{K}\text{-twisted modular operads}\}
\end{equation*}
Here we take the disjoint union of both objects and morphisms.  From now on we refer, by abuse of terminology, to objects of this category simply as modular operads, and we refer to their parity as odd (for those which are $\mathfrak{K}$-twisted) or even (for those which are not) as needed.  We repeat this construction to form the category of $\{\pm \text{weak modular operads} \}$, with their $\infty$-morphisms.

Note that with this definition $\FT^\pm$ (resp. $\FTGK^\pm$) combine to define parity reversing endofunctors on these categories.  In particular, if we define $\FT:=\FT^+\sqcup \FT^-$, then we may state the following immediate corollary of Theorem $\ref{koszulthm}$ without reference to parity:

\begin{corollary}\label{ftcor}  The functor $\FT$ sends $\infty$-quasi-isomorphisms to quasi-isomorphisms. 
\end{corollary}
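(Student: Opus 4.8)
The plan is to reduce the statement to the already-established fact (Lemma~$\ref{barqi}$) that the bar construction $\mathsf{B}$ for algebras over a Koszul operad preserves quasi-isomorphisms, together with linear dualization (Lemma~$\ref{fdbar}$). Recall that, by Theorem~$\ref{koszulthm}$, both $\M$ and $\M_\mathfrak{K}$ are Koszul, so the machinery of Subsection~$\ref{abcsec}$ applies to $\M_\infty = \Omega((\M_\mathfrak{K})^\ast)$ and $(\M_\mathfrak{K})_\infty = \Omega(\M^\ast)$. By Definition~$\ref{FTDef}$ the weak Feynman transform is $\FT^+ = \mathsf{B}^\ast_{\M_\infty}$ and $\FT^- = \mathsf{B}^\ast_{(\M_\mathfrak{K})_\infty}$, and since the two parity components are handled symmetrically, it suffices to treat $\FT^+$ on a weak modular operad.

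First I would recall what an $\infty$-quasi-isomorphism $f\colon A \rightsquigarrow A'$ of $\M_\infty$-algebras is: by definition it is a $(\M_\mathfrak{K})^\ast$-coalgebra map $\mathsf{B}(A) \to \mathsf{B}(A')$ (using $\M^\uex = (\M_\mathfrak{K})^\ast$ from Lemma~$\ref{duallem}$) whose underlying $\mathbb{V}$-module map $A \to A'$ is a quasi-isomorphism. The key point, already proved in the discussion preceding Lemma~$\ref{barqi}$ and recorded in the remark at the end of Subsection~$\ref{httsec}$'s predecessor, is that $\mathsf{B}$ sends $\infty$-quasi-isomorphisms to (strict) quasi-isomorphisms of dg $(\M_\mathfrak{K})^\ast$-coalgebras. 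Concretely, one filters $\mathsf{B}(A) = (\op{P}^\uex \circ A) \oplus A$ by the degree-filtration (weight) of $\op{P}^\uex = (\M_\mathfrak{K})^\ast$; on the $E^0$-page the differential is the internal differential $d_A$, and the Künneth theorem over the finite groups $\mathrm{Aut}(\vec v)$ (Assumption~$\ref{assumption:finiteaut}$) shows that $f$ induces an isomorphism on $E^1$, hence a quasi-isomorphism of total complexes by convergence.

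The remaining step is to dualize. Since $\M_\mathfrak{K}$ (equivalently $\M$) is finite dimensional and reduced, and since by our standing assumption we work with $\mathbb{V}$-modules finite dimensional in each graded degree, Lemma~$\ref{fdbar}$ gives a natural identification $\FT^+(A) = \mathsf{B}(A)^\ast \cong \Omega(A^\ast)$ of $s^{-1}\M^! \cong \M_\mathfrak{K}$-algebras. Linear dualization of chain complexes over a field of characteristic $0$ is exact and sends quasi-isomorphisms to quasi-isomorphisms (in the finite-dimensional, bounded-in-each-degree setting this is immediate from the universal coefficient / dual of the Künneth argument). Therefore, if $f\colon A \rightsquigarrow A'$ is an $\infty$-quasi-isomorphism, then $\mathsf{B}(f)\colon \mathsf{B}(A)\to\mathsf{B}(A')$ is a quasi-isomorphism by the previous paragraph, and $\FT^+(f) = \mathsf{B}(f)^\ast$ is a quasi-isomorphism of $\M_\mathfrak{K}$-algebras. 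The same argument with the roles of $\M$ and $\M_\mathfrak{K}$ exchanged handles $\FT^-$, and assembling the two parity-reversing functors into $\FT = \FT^+ \sqcup \FT^-$ (Subsection~$\ref{parity}$) gives the statement.

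The main obstacle is purely bookkeeping: verifying that the degree filtration on $\mathsf{B}(A)$ is exhaustive and bounded below in each color scheme so that the spectral sequence converges — this uses that $\M_\mathfrak{K}$ is reduced (Definition~$\ref{reduced}$) and genus-bounded, so for fixed $(g,n)$ only finitely many graphs contribute and the weight filtration is finite in each internal degree. Once convergence is in hand, everything else is formal: the Künneth isomorphism needs semisimplicity of $k[\mathrm{Aut}(\vec v)]$ (Maschke, using $\mathrm{char}\,k = 0$ and finiteness of $\mathrm{Aut}(\vec v)$), exactly as invoked in the proof of Theorem~$\ref{barcobaralg}$, and exactness of $(-)^\ast$ is standard.
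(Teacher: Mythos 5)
Your proposal is correct and follows essentially the same route as the paper, which treats this as an immediate consequence of Theorem $\ref{koszulthm}$: once $\M$ and $\M_\mathfrak{K}$ are known to be Koszul, the Section $\ref{GCOSec}$ machinery (the remark following Lemma $\ref{barqi}$ that $\mathsf{B}$ sends $\infty$-quasi-isomorphisms to quasi-isomorphisms, via the weight filtration and Maschke/K\"unneth) applies, and $\FT^\pm = \mathsf{B}^\ast$ then preserves quasi-isomorphisms because linear dualization over a field is exact. Your filling-in of the filtration/convergence details and the identification $\M^\uex \cong (\M_\mathfrak{K})^\ast$ is accurate and consistent with the paper's intended argument.
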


We remark that Theorem $\ref{koszulthm}$ also shows that, denoting the bar-cobar resolution as $\FT^2$, the natural transformation $\FT^2\Rightarrow id$ consists entirely of quasi-isomorphisms.

\subsection{Definition of graph homology.}
Cyclic operads may be defined as the full subcategory consisting of those modular operads which have $A(g,n)=0$ for $g\geq 1$.  We similarly define the category of weak cyclic operads to be the full subcategory of weak modular operads which is $0$ in genus $\geq 1$.  Notice with our parity conventions, this gives us a notion of both even and odd (weak) cyclic operads. 

We denote the inclusion functor from (weak) cyclic operads to (weak) modular operads by $\iota_!$.  This functor is right adjoint to the functor which forgets higher genus, call this functor $\iota^\ast$.  This functor is itself a right adjoint to a certain $\iota_\ast$, usually called the modular envelope.  This choice of notation for the triple of adjoint functors $(\iota_\ast,\iota^\ast,\iota_!)$ may be justified by considering their compatibility with the bar construction, see section 9.1 of \cite{W6}.

\begin{definition}  For a cyclic operad $\op{O}$, the $\op{O}$-graph homology is the modular operad $\op{G}_{\op{O}}:=H_\ast(\FTGK(\iota_!\op{O}))$.  
\end{definition}

We emphasize the parity change: if $\op{O}$ was even then the $\op{O}$-graph homology is odd and vice versa.  We remark that the relationship between odd cyclic operads and cyclic operads was extensively studied in \cite{KWZ}.  In particular there is an isomorphism of categories between even and odd cyclic operads given by the functor $s\Sigma^{-1} $, shift and suspend.  In this way we can form canonical odd cyclic operads associated to a given cyclic operad.  Thus to any cyclic operad we can consider both its graph homology (the odd modular operad $\op{G}_{\op{O}}$) and the graph homology of its oddification, the even modular operad $\op{G}_{s\Sigma^{-1} \op{O}}$.  Both will play a role in our examples below.

For the reader fluent in the language of twisting cocycles \cite{GeK2}, a cyclic operad may be viewed as a Det-modular operad, and the cocycle Det is cohomologous to $\mathfrak{K}$ via the coboundary associated to $s\Sigma^{-1}$ \cite[Proposition 4.14]{GeK2}.  When comparing to loc.cit., note that here we are using homological conventions, and our definition of operadic suspension is opposite to \cite{GeK2}.

\subsection{Massey products for graph homology}

For us, the terminology ``Massey product'' refers to the transferred weak modular operad structure on the homology of a modular operad (resp.\ the $\mathfrak{K}$-twisted analog).  Let us first give a formal definition.

Let $\rho\colon \mathbb{M}\to End_A$ be a modular operad structure on a dg $\mathbb{V}$ module $A$.  Choose a deformation retract of $\mathbb{V}$-modules:	\begin{equation*}
\xymatrix{  \save !R(-.7) \ar@(ul,dl)_{h} \restore  A \ar@/_1pc/@{->>}[rr]^{\pi} & & \ar@/_1pc/@{->}[ll]_{\iota} H_\ast(A) 
} \end{equation*}
This endows $H_\ast(A)$ with the structure of a weak modular operad $\hat{\rho}\colon \mathbb{M}_\infty\to End_{H_\ast(A)}$ via
Theorem $\ref{htt}$.

\begin{definition}
Let $\gamma$ be a modular graph of type $v_0=(g(\gamma), |leg(\gamma)|)$.  The Massey product $\text{mp}_\gamma$ of type $\gamma$ associated to $\rho,\iota,\pi$ and $h$ is defined to be the weak modular operadic structure map $\mu_\gamma$ defined in Proposition $\ref{wmo}$, with respect to the weak modular operad structure $\hat{\rho}$. Explicitly
\begin{equation*}
\text{mp}_\gamma\colon\mathfrak{K}(\gamma)\tensor H_\ast(A)(\gamma)\to H_\ast(A)(v_0)
\end{equation*}
\end{definition}

We recall that one may derive explicit combinatorial formulas for the transferred structure $\hat{\rho}$ in terms of $\rho,\iota,\pi$ and $h$ (see \cite[Chapter 10.3]{LV} as well as the proof of Theorem $\ref{htt}$ above) and hence explicit combinatorial formulas for $\text{mp}_\gamma$.

There is an alternate description of these operations using Corollary $\ref{wmocor}$.  Let $\gamma$ be modular graph of type $v_0$ whose vertices are of type $(v_1\cdc v_r)$.  Fix an edge order $\sigma\colon Edges(\gamma)\stackrel{\cong}\to \{1\cdc |\gamma|\}$.  To the pair $(\gamma, \sigma)$ we associate the operation $\text{mp}_{(\gamma,\sigma)}:= \mu_{(\gamma,\sigma)}$.  Explicitly, $\text{mp}_{(\gamma,\sigma)}$ is an operation
\begin{equation*}
\text{mp}_{(\gamma,\sigma)}\colon \ds\bigotimes_{i=1}^r H_\ast(A)(v_i)\to H_\ast(A)(v_0)
\end{equation*}
of degree $|\gamma|-1$.  These operations satisfy the three properties given in Corollary $\ref{wmocor}$, with the additional condition that the internal differential is zero in this case.

We similarly define Massey products for a $\mathfrak{K}$-twisted modular operad to be the weak $\mathfrak{K}$-twisted modular operadic structure maps (Proposition $\ref{wmo2}$) relative to the transferred $\Omega(\mathbb{M}^\ast)$-algebra structure on its homology.

In the case of modular operads arising in graph homology, we have the following structural result:

\begin{theorem} \label{vanishing}  Every graph homology class in genus $g\geq 1$ is in the image of some Massey product.
\end{theorem}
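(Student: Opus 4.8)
The plan is to exploit the fact that graph homology $\op{G}_\op{O}$ is defined via the extension by zero $\iota_!\op{O}$, so that $\FT^+(\op{G}_\op{O})\sim \iota_!\op{O}$ (which is concentrated in genus $0$). First I would recall that $\FT^+(\op{G}_\op{O})$ has, as its underlying graded vector space, the free $\M_\fr{K}$-algebra $\bigoplus_\gamma (\fr{K}(\gamma)\tensor \op{G}_\op{O}^\ast(\gamma))_{Aut(\gamma)}$ summed over all modular graphs $\gamma$ together with the corollas $\ast_{g,n}$, and that the differential there is of the form $d_\FTGK$ plus higher terms coming from Massey products with two or more edges (as described in the excerpt just before Subsection \ref{parity}). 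The key input is Corollary \ref{ftcor}: since $\op{G}_\op{O}\sim \FTGK(\iota_!\op{O})$ as weak modular operads and $\FT$ preserves $\infty$-quasi-isomorphisms, we get $H_\ast(\FT^+(\op{G}_\op{O}))\cong H_\ast(\FT^+\FTGK(\iota_!\op{O}))$, and by the bar-cobar resolution property ($\FT^2\Rightarrow id$ consists of quasi-isomorphisms, as remarked after Corollary \ref{ftcor}) this is $\iota_!\op{O}$, hence $0$ in every genus $g\geq 1$.

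Now I would argue by contradiction at the level of cycles. Suppose $x\in \op{G}_\op{O}(g,n)$ is a nonzero class with $g\geq 1$ which is not in the image of any Massey product $\text{mp}_\gamma$. The element $x$, viewed in the corolla summand $\op{G}_\op{O}^\ast(\ast_{g,n})^\ast=\op{G}_\op{O}(g,n)$ of $\FT^+(\op{G}_\op{O})$ — more precisely its linear dual $x^\ast$ — is detected by the genus-$g$ part of $\FT^+(\op{G}_\op{O})$. The differential applied to $x^\ast$ is, by the explicit description of $d$ as $\sum_\gamma d_\gamma$ with $d_\gamma=(\Sigma\mu_\gamma)^\ast$, precisely the sum over all modular graphs $\gamma$ of type $(g,n)$ of the linear duals of the operations $\mu_\gamma=\text{mp}_\gamma$ (note $\mu_{\ast_{g,n}}$ is the internal differential, which vanishes on homology). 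So $x^\ast$ is a cycle in $\FT^+(\op{G}_\op{O})$ exactly when $x$ is annihilated by all the Massey products in the dual sense; but since $H_\ast(\FT^+(\op{G}_\op{O}))(g,n)=0$ for $g\geq 1$, any cycle supported on the corolla summand in genus $g\geq 1$ must be a boundary. Tracking which summands can map onto the corolla summand under $d$: a boundary hitting the corolla part of $\FT^+(\op{G}_\op{O})(g,n)$ comes from classes supported on graphs $\gamma$ with exactly one edge, and the corresponding component of the differential is precisely $\text{mp}_\gamma^\ast$ for $\gamma$ a one-edge graph — thus the dual of $x$ being a boundary says exactly that $x$ lies in the image of a (one-edge, i.e.\ classical composition, or more generally some) Massey product, contradicting our assumption.

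The step I expect to be the main obstacle is the bookkeeping of the bigrading/filtration needed to make ``$x^\ast$ is a cycle iff $x$ is killed by all Massey products, and $x^\ast$ is a boundary iff $x$ is in the image of a Massey product'' precise, since $d$ on $\FT^+(\op{G}_\op{O})$ mixes the corolla summand with multi-edge graph summands and one must check that the only summands mapping \emph{to} the corolla part of $\FT^+(\op{G}_\op{O})(g,n)$ are graph summands $(\fr{K}(\gamma)\tensor\op{G}_\op{O}^\ast(\gamma))_{Aut(\gamma)}$ whose contribution to $d$ on those summands is exactly the transpose of $\text{mp}_\gamma$ composed into the $(g,n)$-output. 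Concretely, I would filter $\FT^+(\op{G}_\op{O})$ by the number of edges of the underlying graph (equivalently, by the genus-labeling / internal-degree filtration used in Theorem C), note that on the associated graded the corolla summand in genus $g\geq 1$ contributes a subquotient whose homology must vanish (since the total homology vanishes and the spectral sequence converges), and read off that the class $x$ — which survives to that subquotient unless it is hit by a Massey product — cannot survive. This gives the claimed statement; the more refined Theorem B then follows by the same argument applied to $\op{O}$ Koszul, where the surviving genus-$0$ classes are exactly the generators of $\op{O}^!$.
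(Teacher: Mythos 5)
Your overall strategy is the paper's own: transfer a weak modular operad structure so that $\op{G_O}\sim\FTGK(\iota_!\op{O})$, invoke Corollary \ref{ftcor} to get $H_\ast(\FT(\op{G_O}))(g,n)\cong\iota_!\op{O}(g,n)=0$ for $g\geq 1$, and study the dual class $x^\ast$ placed on the corolla summand of $\FT(\op{G_O})(g,n)$. But the step that produces your contradiction contains a genuine error: you claim that a boundary hitting the corolla summand comes from one-edge graph summands, with that component of $d$ being $\text{mp}_\gamma^\ast$. The differential of the (weak) Feynman transform runs the other way: $d_\gamma=(\Sigma\mu_\gamma)^\ast$ maps the corolla summand $A^\ast(g,n)$ \emph{into} the summand $(\mathfrak{K}(\gamma)\tensor A^\ast(\gamma))_{Aut(\gamma)}$, and on graph summands it expands vertices into subgraphs, so every component of $d$ strictly increases the number of edges. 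Consequently nothing whatsoever maps onto the corolla summand, and ``$x^\ast$ is a boundary'' cannot be converted into ``$x$ is in the image of a one-edge Massey product''; it is simply impossible for $x\neq 0$. The same directional confusion recurs in your closing filtration paragraph: in the edge (or internal-degree) filtration the corolla class can only die by \emph{supporting} a differential, never by receiving one.

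Once the direction of $d$ is straightened out the argument closes immediately and becomes the paper's proof: since the internal differential on $\op{G_O}$ vanishes and no component of $d$ lands on the corolla summand, $x^\ast$ can never be a boundary; because $H_\ast(\FT(\op{G_O}))(g,n)=0$ for $g\geq 1$, it therefore cannot be a cycle, so $d(x^\ast)=\sum_\gamma \pm\, x^\ast\circ\mu_\gamma\neq 0$, i.e.\ some Massey product $\text{mp}_\gamma$ has image on which $x^\ast$ does not vanish, which is exactly the statement that $x$ is in the image of some Massey product. So the fix is small, but as written the pivotal step of your proposal (the source of the contradiction) would fail.
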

\begin{proof}  
	Fix $\op{O}$ a cyclic operad (of either parity) and choose a deformation retract of $\mathbb{V}$-modules between $\op{G}_{\op{O}}$ and $\FTGK(\iota_!\op{O})$ using Lemma $\ref{eqlem}$.  Using Theorem $\ref{htt}$, we then endow $\op{G_O}$ with the structure of a weak modular operad such that, as weak modular operads $\op{G_O}\sim \FTGK(\iota_!\op{O})$.  Applying the functor $\FT$, we employ Corollary $\ref{ftcor}$ to conclude $\FT^2\sim 0$ for genus $\geq 1$. 
	
	Fix a graph homology class $\eta \in \op{G_O}(g,n)$ with $g\geq 1$.  Consider an element in the complex $\FT(\op{G_O})(g,n)$ corresponding to the corolla with $n$ flags, genus label $g$ and vertex label $\eta^\ast$, a non-zero linear functional supported on the span of $\eta$.  This element is not a boundary (since it labels a corolla and the internal differential is $0$), so it can not be a cycle since $H_\ast(\FT(\op{G_O}))(g,n)\cong \iota_!\op{O}(g,n)$ is zero in genus $g\geq 1$.  It follows that the $\FT$ differential $d$ has $d(\eta^\ast)\neq 0$.  On the other hand if $\eta$ was in the image of no Massey product, we would have $d(\eta^\ast)=0$, hence the claim.
\end{proof}

We remark that if $\op{O}$ is a Koszul cyclic operad, then $\op{G_O}(0,-)$ is simply the Koszul dual cyclic operad $\op{O}^!$, and in particular all classes in genus $0$ are generated by cyclic operadic compositions of the generators of $\op{O}^!$.  So in this case we may conclude that {\it all} homology classes are hit by Massey products of generators of $\op{O}^!$.

\subsection{Filtrations of the weak Feynman transform.}\label{sssec}  Given a weak modular operad whose internal differential is zero, we may consider the underlying (strong) modular operad and its (classical) Feynman transform.  The goal of this subsection will be to come up with filtrations which isolate this differential and hence produce spectral sequences which compute the homology of the weak Feynman transform from the homology of the (classical) Feynman transform.  

\subsubsection{Internal degree filtration}\label{ss1}  
Let $A$ be a weak modular operad of even parity with internal differential $d_A=0$.  A homogeneous element in $\FT(A)(g,n)$ is an element of some $\mathfrak{K}(\beta)\tensor A^\ast(\beta)$, for some modular graph $\beta$.  If $\beta$ has $s$ edges, then we say such an element of degree $m$ has internal degree $r=m+s$.  With respect to the bigrading $(m,r)$, the part of the differential corresponding to a graph $\gamma$ takes:
\begin{equation*}
\FT(A)(g,n)_{m,r}\stackrel{d_\gamma}\longrightarrow \FT(A)(g,n)_{m-1,r-1+e(\gamma)}
\end{equation*}
In particular, since $d_A=0$, the $\FT$ differential can not decrease the internal grading.  We may thus define a filtration on the chain complex $\FT(A)(g,n)$ by defining $F^t(\FT(A)(g,n))\subset \FT(A)(g,n)$ to be the subspace of elements of internal degree $\geq -t$.  

We define $(E^\ast_{\bullet, \bullet}(A),d_\ast)$ to be the family of spectral sequences associated to this filtration at each $(g,n)$.
\begin{lemma} \label{sslem} The spectral sequences $(E^\ast_{\bullet, \bullet}(A),d_\ast)$ take the following form:
\begin{itemize}
	\item $(E^0(A), d_0)$ is the (classical) Feynman transform of the (strong) modular operad underlying $A$. 
	\item  $E^1(A)$ is the homology of the Feynman transform of the (strong) modular operad underlying $A$.  
	\item  $d_n$ is induced by blowing up graphs with $n+1$ edges.  In particular if $A$ is a strong modular operad then $d_n=0$ for $n\geq 1$. 
	\item  If each $A(g,n)$ is bounded below then $E^\infty(A) \cong H_\ast(\FT(A))$.
\end{itemize}
\end{lemma}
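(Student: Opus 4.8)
The plan is to recognize $(E^\ast_{\bullet,\bullet}(A),d_\ast)$ as the spectral sequence of the increasing filtration $F^\bullet$ just introduced and to identify each page by tracking which modular graphs contribute to the relevant piece of the differential. First I would assemble the elementary bookkeeping. Because $\M$ is reduced (Definition \ref{reduced}), for each fixed $(g,n)$ only finitely many modular graphs $\beta$ of type $(g,n)$ occur, so the edge counts $e(\beta)$ appearing in $\FT(A)(g,n)$ are bounded; assigning to a degree-$m$ element of $\mathfrak{K}(\beta)\tensor A^\ast(\beta)$ the internal degree $m+e(\beta)$, the external differential splits as $d=\sum_\gamma d_\gamma$ with each $d_\gamma$ raising internal degree by $e(\gamma)-1\geq 0$, while the hypothesis $d_A=0$ removes the one remaining contribution to the $\FT$-differential, which would \emph{lower} internal degree. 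Hence $F^t(\FT(A)(g,n))$, the span of elements of internal degree $\geq -t$, is a subcomplex, the filtration is increasing, and the associated graded $E^0$ retains exactly the part of $d$ preserving internal degree, namely $\sum_{e(\gamma)=1}d_\gamma$.

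Next I would invoke the comparison with the Getzler--Kapranov construction recalled after Definition \ref{FTDef}: the sum $\sum_{e(\gamma)=1}d_\gamma$ is precisely the classical Feynman transform differential $d_{\FTGK}$ on the Feynman transform of the strong modular operad underlying $A$. This identifies $(E^0(A),d_0)$ with $\FTGK$ of that strong modular operad, proving the first bullet, and $E^1(A)$ is then its homology by definition of the spectral sequence. The one point deserving care is that although $d_{\FTGK}$ need not square to zero on all of $\FT(A)$, it does on the associated graded, since $d^2-d_{\FTGK}^2$ is built from terms strictly raising internal degree and thus dies on $E^0$; making this precise is the technical core of the first two bullets. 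For the higher pages, $d_n$ is by construction induced by the summand of $d$ raising internal degree by exactly $n$, which is $\sum_{e(\gamma)=n+1}d_\gamma$, i.e.\ the sum of linear duals of the generating operations $\mu_\gamma$ for modular graphs with $n+1$ edges, acting by blowing up a vertex into such a $\gamma$; this is the third bullet. In particular when $A$ is a strong modular operad all $\mu_\gamma$ with $e(\gamma)\geq 2$ vanish, so $d_n=0$ for $n\geq 1$.

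Finally, for convergence I would verify that the filtration is bounded in each homological degree of $\FT(A)(g,n)$: boundedness of $e(\beta)$ over the finite set of graphs makes the degree-$m$ part of $F^t$ stabilize to everything for $t\gg 0$, while the hypothesis that each $A(g,n)$ is bounded below (equivalently $A^\ast$ bounded above) bounds the internal degree $m+e(\beta)$ from above, so the degree-$m$ part of $F^t$ vanishes for $t\ll 0$. A filtration bounded in each degree yields a convergent spectral sequence, and since we work over a field $E^\infty(A)$ is (non-canonically) isomorphic to $H_\ast(\FT(A))$, which is the last bullet. I expect the genuine obstacle to be not any isolated computation but the careful matching in the first two bullets: checking that passing to the associated graded really cuts $d$ down to $d_{\FTGK}$ and simultaneously accounts for why $d_{\FTGK}$ fails to be a differential off the graded level.
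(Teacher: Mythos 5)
Your proposal is correct and follows essentially the same route as the paper: filter by internal degree, observe that each $d_\gamma$ shifts internal degree by $e(\gamma)-1\geq 0$ so that $d_0$ is exactly the one-edge (classical Feynman transform) differential, and deduce convergence from the fact that $A(g,n)$ bounded below makes $\FT(A)(g,n)$ bounded above, hence the exhaustive filtration bounded below. Your extra remarks (why $d_{\FTGK}$ squares to zero at the graded level, and the identification of $d_n$ with the $(n+1)$-edge terms) are just fuller versions of the observations the paper leaves implicit.
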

\begin{proof}
	We observe that with respect to this bigrading, $r$ is preserved by $d_\gamma$ for graphs $\gamma$ with $1$ edge and increases otherwise.  Thus $d_0$ blows up one edge, which is precisely the classical Feynman transform differential of the underlying modular operad. 
	If each $A(g,n)$ is bounded below, then each $\FT(A)(g,n)$ is bounded above, and thus this exhaustive filtration is also bounded below.
\end{proof}

\begin{remark}  There is a corresponding spectral sequence for $B$ of odd parity.  In this case we filter by row degree $r-s$ so that $d_\gamma$ raises the row/filtration degree for graphs with more than one edge and preserves the row for graphs with one edge.
\end{remark}

\subsubsection{Genus label filtration}\label{ss2}  Recall (Definition $\ref{modulargraphdef}$) that modular graphs come with a genus labeling of the vertices.  Given a modular graph $\gamma$, we let $\ell(\gamma)$ denote the sum of its vertex labels and $g(\gamma)$ denote its total genus.  These natural numbers are related by the formula $g(\gamma) = \ell(\gamma) + \beta_1(\gamma)$, where $\beta_1(\gamma)$ is the first Betti number of $\gamma$ when viewed as a CW complex.

In this section we let $B$ be a weak modular modular operad (of either parity) and we do not assume that $d_B=0$.
Define $F^q(\FT(B)(g,n)) \subset \FT(B)(g,n)$ to be the subspace of homogeneous elements whose corresponding modular graph has $\ell\leq q$.  Since each $d_\gamma$ decreases $\ell$ by a nonnegative integer (namely $\beta_1(\gamma)$), this filtration is compatible with the differential.  
If we use the bigrading $(r-\ell,\ell)$, for $\FT(B)$, then this is just filtration by rows.  Denote the associated spectral sequences over all $(g,n)$ by $(L^\ast_{\bullet,\bullet}(B), d_\ast)$. Notice this filtration is bounded:
\begin{equation*}
0\subset F^0(\FT(B)(g,n)) \subset ... \subset F^g(\FT(B)(g,n))= \FT(B)(g,n)
\end{equation*}
Thus $(L^\ast_{\bullet,\bullet}(B), d_\ast)$ are upper half plane spectral sequences which converge level-wise to $H_\ast(\FT(B))$.

The spectral sequences $(L_{\bullet,\bullet}^\ast(B), d_\ast)$  are most interesting when we can understand the differential $d_0$.  To fix a context in which this is possible we first make the following observation: many modular operads which are not formal have underlying cyclic operads which are formal.  So when applying homotopy transfer theory to construct weak modular operads, the underlying weak cyclic operads will often be (strong) cyclic operads.

\begin{lemma}  Suppose that the weak cyclic operad $\iota^\ast(B)$ is a (strong) cyclic operad.  Then the upper half plane spectral sequences $(L^0_{\bullet,\bullet}(B), d_0)$ have bottom rows $(L^0_{\bullet,0}(B), d_0)=(\FTGK(\iota_!\iota^\ast(B)), d)$.
\end{lemma}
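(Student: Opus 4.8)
The plan is to unwind both sides of the claimed equality at the level of the underlying graded $\mathbb{V}$-modules, then to match the $d_0$-differentials. First I would recall that $\FT(B)$ has, as a graded vector space, the decomposition
\begin{equation*}
\FT(B)(g,n)\cong \ds\bigoplus_{(g,n)\text{-graphs }\gamma}(\mathfrak{K}(\gamma)\tensor B^\ast(\gamma))_{Aut(\gamma)}
\end{equation*}
(together with the corolla term $\ast_{g,n}$), and that the filtration by $\ell(\gamma)$ has as its bottom row $L^0_{\bullet,0}(B)$ exactly those summands with $\ell(\gamma)=0$, i.e.\ those modular graphs all of whose vertices carry genus label $0$. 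Such a graph $\gamma$ of total genus $g$ is precisely a graph with $\beta_1(\gamma)=g$, all vertices genus $0$ — equivalently an element of the graph complex $\FTGK(\iota_!\iota^\ast(B))(g,n)$, since $\iota_!\iota^\ast(B)$ is supported entirely in genus $0$ and a graph labeled by $\iota_!\iota^\ast(B)$ has all vertices of genus $0$. This gives the desired identification on the level of graded $\mathbb{V}$-modules, using only the explicit description of $\FT$ and $\FTGK$ recalled in Section~\ref{secWFT} and the definition of $\iota_!,\iota^\ast$.

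Next I would identify the differential $d_0$ on the associated graded. By definition of the $\ell$-filtration, $d_0$ is the part of the $\FT$-differential which preserves $\ell(\gamma)$; since each term $d_\gamma$ in the weak Feynman differential decreases $\ell$ by $\beta_1(\gamma)\geq 0$, the $\ell$-preserving part comes exactly from those $\gamma$ with $\beta_1(\gamma)=0$, i.e.\ $\gamma$ a tree (with at least one edge, by our conventions on modular graphs). The corresponding structure maps $\mu_\gamma$ for trees are, by the hypothesis that $\iota^\ast(B)$ is a \emph{strong} cyclic operad, concentrated on the single-edge trees: the higher tree operations $\mu_\gamma$ for trees with $\geq 2$ edges vanish, because a strong cyclic operad structure on $\iota^\ast(B)$ means the transferred $\Omega((\M_\mathfrak{K})^\ast)$-structure restricted to genus $0$ factors through $\M_\mathfrak{K}$ (respectively $\M$), whose generators are all in weight one. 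Hence $d_0$ is the sum over single-edge-tree contractions of the genus-$0$ operadic compositions, which is precisely the classical Feynman transform differential $d_{\FTGK}$ of the cyclic operad $\iota^\ast(B)$ — and, since only genus-$0$ graphs appear on the bottom row, this is $d$ on $\FTGK(\iota_!\iota^\ast(B))$. Combining the two identifications yields $(L^0_{\bullet,0}(B),d_0)\cong(\FTGK(\iota_!\iota^\ast(B)),d)$ as claimed.

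The step I expect to be the main obstacle is the careful verification that the $\ell$-preserving part of the differential on the bottom row really is the \emph{full} classical Feynman transform differential and not a proper summand: one must check that restricting a genus-$0$ graph $\gamma$ (tree with $\geq 1$ edge) and contracting one edge stays within genus-$0$ graphs and reproduces exactly the cyclic-operadic composition, with the correct signs coming from the $\mathfrak{K}(\gamma)$ (i.e.\ $\mathrm{Det}^{-1}(\mathrm{Edges})$) twist as encoded in Formula~\eqref{Ksigns}. This is essentially a sign-bookkeeping exercise identical to the one comparing $\FT$ with $\FTGK$ for strict modular operads (already noted after Definition~\ref{FTDef}), specialized to genus $0$; the only genuinely new input is the vanishing of higher tree operations, which is exactly the hypothesis $\iota^\ast(B)$ strong. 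I would therefore organize the proof as: (i) graded identification of the bottom row via the genus-label bookkeeping $g(\gamma)=\ell(\gamma)+\beta_1(\gamma)$; (ii) identification of $d_0$ with the single-edge part using that $d_\gamma$ lowers $\ell$ by $\beta_1(\gamma)$ and the strong-cyclic hypothesis kills the higher genus-$0$ operations; (iii) recognizing the resulting complex as $\FTGK$ applied to $\iota_!\iota^\ast(B)$ by direct comparison with the Getzler--Kapranov differential, exactly as in the remark following Definition~\ref{FTDef}.
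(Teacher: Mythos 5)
Your proposal is correct and follows essentially the same route as the paper: identify the bottom filtration piece $F^0(\FT(B))=\FT(\iota_!\iota^\ast(B))$ (all vertices genus $0$, so only tree insertions can occur), and then use the hypothesis that $\iota^\ast(B)$ is strong to conclude that only one-edged trees act nontrivially, so the differential is the classical Feynman transform differential. The paper's proof is just a terser version of your steps (i)–(iii).
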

\begin{proof}  For any weak modular operad we have $F^0(\FT(B)) = \FT(\iota_!\iota^\ast(B))$.  The condition that the weak cyclic operad $\iota^\ast(B)$ is strong ensures that the only non-zero compositions in $\iota_!\iota^\ast(B)$ are generated by one-edged trees, from which the equality of the differentials follows.
\end{proof}


\begin{corollary}\label{cycopss} If $B$ is a weak modular operad such that $\iota^\ast(B)$ is a (strong) cyclic operad then  $\op{G}_{\iota^\ast{B}}\cong L^1_{\bullet,0}(B)$.  \end{corollary}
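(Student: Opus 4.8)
The plan is to read the statement off the Lemma immediately preceding it, together with Definition \ref{FTDef} and the definition of $\op{O}$-graph homology. First I would recall the mechanics of the genus label filtration of Subsection \ref{ss2}: with respect to the bigrading $(r-\ell,\ell)$ it is a filtration by rows, so the associated graded differential $d_0$ is the part of the $\FT(B)$-differential of filtration degree $0$. Since each piece $d_\gamma$ lowers $\ell$ by $\beta_1(\gamma)$ (using $g(\gamma)=\ell(\gamma)+\beta_1(\gamma)$) and the internal differential induced by $d_B$ does not alter the underlying modular graph, $d_0$ is precisely the sum of the $d_\gamma$ over tree-shaped graphs plus this internal differential. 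In particular $d_0$ preserves each row, so $L^0_{\bullet,0}(B)$ is a $d_0$-subcomplex of $(L^0_{\bullet,\bullet}(B),d_0)$, there are no incoming or outgoing $d_0$-terms to other rows at the bottom (there is no row of smaller genus label), and hence $L^1_{\bullet,0}(B)=H_\ast\!\big(L^0_{\bullet,0}(B),d_0\big)$.

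Next I would invoke the preceding Lemma, which under the standing hypothesis that $\iota^\ast(B)$ is a strong cyclic operad identifies the bottom row of the $0$-page with the classical Feynman transform: $(L^0_{\bullet,0}(B),d_0)=(\FTGK(\iota_!\iota^\ast(B)),d)$. (The role of the hypothesis is exactly that for a strong cyclic operad the only nonzero compositions in $\iota_!\iota^\ast(B)$ occur along one-edged trees, so the restriction of $d_0$ to the bottom row coincides with the Getzler--Kapranov differential $d_{\FTGK}$ up to the internal differential, which appears on both sides.) Combining the two identifications gives
\[
L^1_{\bullet,0}(B) \;=\; H_\ast\!\big(\FTGK(\iota_!\iota^\ast(B))\big),
\]
and the right-hand side is by definition $\op{G}_{\iota^\ast B}$, the $\iota^\ast(B)$-graph homology. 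Tracking these identifications, the equality is compatible with the modular operad structures and the parity conventions ($\FT$ and $\op{O}\mapsto\op{G}_{\op{O}}$ both reverse parity, consistently), yielding the asserted isomorphism $\op{G}_{\iota^\ast B}\cong L^1_{\bullet,0}(B)$.

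There is no substantial obstacle here; the corollary is genuinely a direct consequence of the Lemma once the spectral sequence bookkeeping is in place. The only point requiring a little care is verifying that $d_0$ really does preserve rows, so that passing to $L^1$ amounts to taking $d_0$-homology row by row — and this is precisely where $g(\gamma)=\ell(\gamma)+\beta_1(\gamma)$ and the observation that the internal differential fixes the underlying graph are used. Everything else is an unwinding of the definitions of $\FT$, the genus label filtration, and graph homology.
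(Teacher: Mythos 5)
Your proposal is correct and follows exactly the route the paper intends: the corollary is an immediate consequence of the preceding Lemma identifying $(L^0_{\bullet,0}(B),d_0)$ with $(\FTGK(\iota_!\iota^\ast(B)),d)$, after which $L^1_{\bullet,0}(B)=H_\ast(\FTGK(\iota_!\iota^\ast(B)))=\op{G}_{\iota^\ast B}$ by the definition of graph homology. Your extra care in checking that $d_0$ preserves rows (via $g(\gamma)=\ell(\gamma)+\beta_1(\gamma)$ and the fact that the internal differential fixes the underlying graph) is just a spelled-out version of the spectral-sequence bookkeeping the paper leaves implicit.
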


\subsection{Comparing Lie and Commutative graph homology.} \label{liesec} 

In this subsection we will see how the spectral sequences above may be used to relate variants of graph homology related to the commutative and Lie operads respectively.  We begin by recalling some known information.

In \cite{CHKV}, the authors introduce and study a family of groups extensions $\Gamma_{g,n}$ of the outer automorphism group of the free group $Out(F_g)$, having the property that $H_\ast(\Gamma_{g,n})\cong \op{G}_{s\Sigma^{-1}Lie}(g,n)$, see also \cite[Theorem 8.8]{CKV13}.  This allows the authors to perform low genus computations of Lie graph homology using the Leray-Serre spectral sequence.

Here are several results from \cite{CHKV}:
\begin{itemize}
	\item The homology of $\Gamma_{g,n}$ is concentrated between degrees $0$ and $2g+n-3$ inclusive -- this upper bound is called the virtual cohomological dimension (VCD) of $\Gamma_{g,n}$.
	\item Classes in the VCD can not be in the image of the modular operad generated by $H_0(\Gamma_{0,3})$ for degree reasons.
	\item The class generating $H_0(\Gamma_{0,3})$ along with those in the VCD generate all classes under the modular operad structure in genera $\leq 2$.  This is conjectured to be the case for all genera. 
	\item  The $S_n$ representation $H_i(\Gamma_{1,n})$ is non-zero if and only if $i$ is even and $0\leq i \leq n-1$.  In this case, $H_i(\Gamma_{1,n})$ is irreducible of type $V_{n-i,1^i}$, and in particular has dimension ${n-1 \choose i}$.
	\item Consequently, $H_{VCD}(\Gamma_{1,n})$ is the alternating representation of the group $S_n$ when $n$ is odd and is $0$ when $n$ is even.  For $n$ odd we fix a non-trivial alternating class $g_n$. \end{itemize}

Next we recall the computations of \cite{CGP}.  The authors consider complexes of metric genus labeled graphs $\Delta_{g,n}$ computing the top weight homology of the moduli space of punctured Riemann surfaces \cite{CGP}.  This homology may be computed via cellular chain complexes which coincide (up to shift in degree) with the linear dual of the Feynman transform of $\iota_\ast Com$ (compare \cite[Remark 3.3]{CGP}).  Explicitly:  
\begin{equation*}
H_{\ast-1}(\Delta_{g,n})\cong H_\ast(\FTGK(\iota_\ast Com)(g,n)^\ast).
\end{equation*}
In genus 1 this homology has rank $(n-1)!/2$ in degree $n$ and $0$ elsewhere (Theorem 1.2 of \cite{CGP}).  It is generated by vertex labellings of an $n$-gon.

It is interesting to see how the two spectral sequences in Subsection $\ref{sssec}$ can be played off each other in this example.  Abstractly, this can be done for arbitrary $(g,n)$, and in particular establishes Corollary $\ref{sscor}$, but we find the most traction when either $g$ or $n$ is small.

\subsubsection{The case $g=1$.}

Let's first perform some analysis in the case $g=1$. Consider the genus label filtration spectral sequence (Subsection $\ref{ss2}$) applied to $B=H_\ast(\Delta):=\{H_\ast(\Delta_{g,n})\}_{g,n}$, viewed as a weak modular operad via homotopy transfer.  In genus $1$ it takes the form:

\begin{enumerate}
	\item  There are two rows: the top corresponds to trees with a distinguished vertex (labeled by $1$).
	\item  The bottom row on page $L^0$ is $\FTGK( \iota_!(s\Sigma^{-1}Lie))$ (since $\iota^\ast(H_\ast(\FTGK(\iota_\ast Com)))=s\Sigma^{-1}Lie$).
	\item  The bottom row on page $L^1$ is $H_\ast(\Gamma_{1,n})$ (by Corollary $\ref{cycopss}$).
	\item  It converges to $k$ in every bidegree.
\end{enumerate}
See Figure $\ref{fig:ss2}$ for an example.
\begin{figure}
	\centering
	\includegraphics[width=0.7\linewidth]{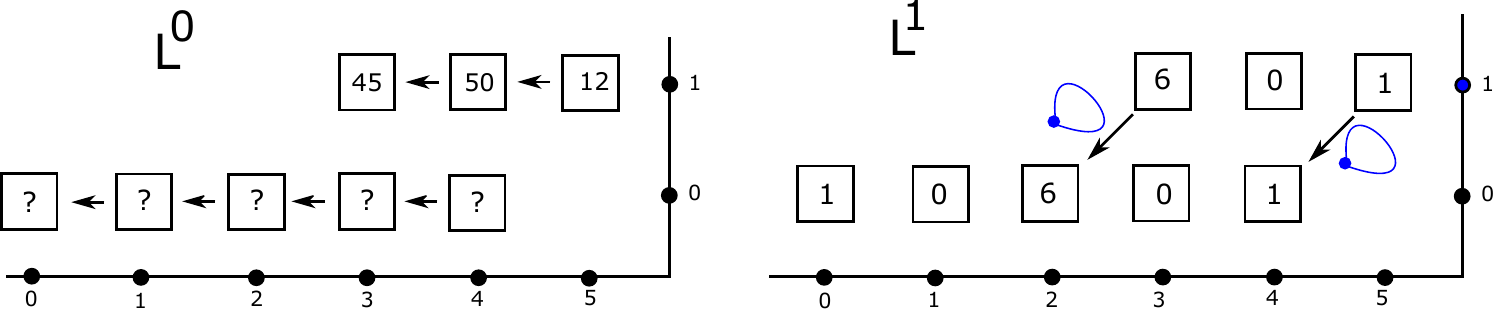}
	\caption{Dimensions in the spectral sequence $L_{\bullet,\bullet}(H_\ast(\Delta))(1,5)$.  Here the sum of genus labels is on the vertical axis and total degree is on the horizontal axis. The bottom row on page 1 is $H_\ast(\Gamma_{1,5})$.}
	\label{fig:ss2}
\end{figure}

As an immediate corollary we see that the row $\ell=1$ of this spectral sequence computes the reduced homology $\widetilde{H}_\ast(\Gamma_{1,n})$.  We remark that this complex is substantially smaller than the $\ell=0$ row; when $n=4$ the bottom row has dimension 174, the top row has dimension 9; when $n=3$ the bottom row has dimension 18, the top row has dimension 1.  In particular the cycle in $Lie(2n+3)^{S_2}$ representing $g_{2n+1}$ may be found by applying the differential $d_1$ to the anti-invariant class of $H_\ast(\Delta_{1,2n+1})$, which is represented by a sum over labellings of a $2n+1$-gon with a leg at each vertex.

By contrast, we may consider $H_\ast(\Gamma):=\{ H_\ast(\Gamma_{g,n})\}_{g,n}$, which we also view as a weak modular operad via homotopy transfer, and consider the internal degree filtration spectral sequence (Subsection $\ref{ss1}$) applied to $A=H_\ast(\Gamma)$. In genus 1 it takes the following form:
\begin{enumerate}
\item  Only the non-negative even rows are non-zero.
\item  For $i\geq 1$, the $2i^{th}$-row is a complex of trees with a distinguished vertex having genus label 1.  The distinguished vertex is in turn labeled by a class in $H_{2i}(\Gamma_{1,m})$, where $m$ is its valence, while the non-distinguished vertices carry commutative labels of degree $0$.  
\item  The $0^{th}$-row is $\FTGK(\iota_\ast Com)(1,n)$. 
\item  The spectral sequence converges to $0$ since $g\geq 1$.
\end{enumerate}
See Figure $\ref{fig:ss1}$ for an example when $n=5$.

\begin{figure}
	\centering
	\includegraphics[scale=1]{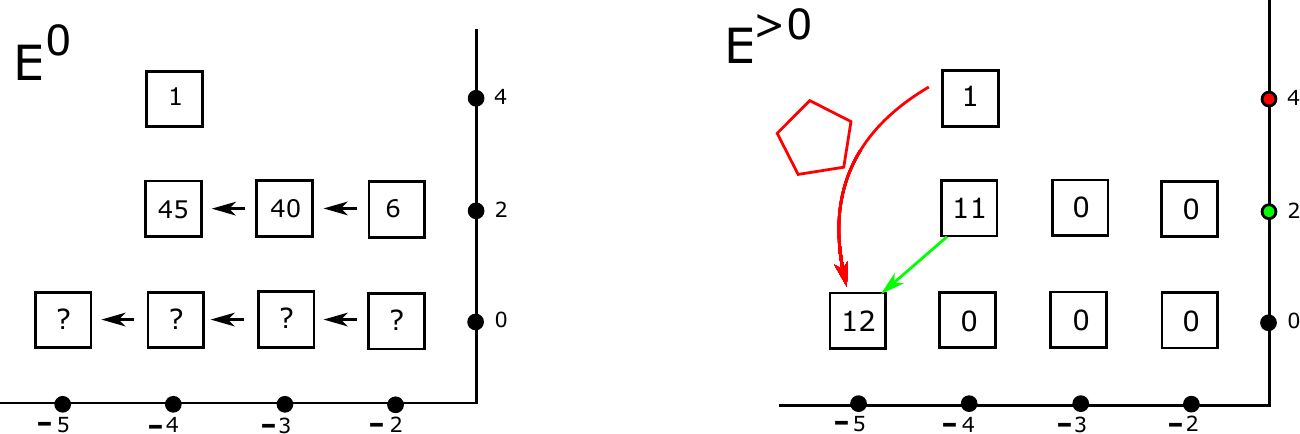}
	\caption{Dimensions in the spectral sequence $E_{\bullet,\bullet}(H_\ast(\Gamma))(1,5)$.  
		The vertical axis shows the negative of the internal degree (which are the degrees in $H_\ast(\Gamma)$ before linear dualization). The horizontal axis shows the total degree.  The sum of the two coordinates is negative the number of edges. Differentials corresponding to Massey products appear on pages 2 and 4.  The bottom row on page 1 is $H_{\ast+1}(\Delta_{1,5})^\ast$.}
	\label{fig:ss1}
\end{figure}

The higher differentials in this spectral sequence depend on the choice of deformation retract, but there are some features which are true regardless of choices.  For example:

\begin{proposition}\label{Lieprop} (Massey Products for $H_\ast(\Gamma_{1,m})$)  The semi-classical weak modular operad $H_\ast(\Gamma_{g\leq 1}, n)$ is generated by $H_0(\Gamma_{0, 3})$.
\end{proposition}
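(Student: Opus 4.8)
The plan is to combine the two structural inputs we have already developed: Theorem \ref{vanishing}, which says that every graph homology class in genus $g\geq 1$ lies in the image of some Massey product, together with the explicit low-genus computations of \cite{CHKV} recalled above. Since $H_\ast(\Gamma_{g,n})\cong \op{G}_{s\Sigma^{-1}Lie}(g,n)$ and the graph homology of a cyclic operad is generated in genus $0$ precisely by the Koszul dual when the operad is Koszul (the remark following Theorem \ref{vanishing}), in genus $0$ the weak modular operad $H_\ast(\Gamma)$ restricts to the cyclic operad $\op{G}_{s\Sigma^{-1}Lie}(0,-)\cong s\Sigma^{-1}Com$, which is generated as a cyclic operad by its arity-$3$ part $\op{G}(0,3)\cong H_0(\Gamma_{0,3})\cong k$. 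So the genus-$0$ part of the claim is immediate; what remains is genus $1$.

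First I would set up the internal-degree filtration spectral sequence $E_{\bullet,\bullet}(H_\ast(\Gamma))(1,m)$ of Subsection \ref{ss1}, whose four structural features are listed just above the statement: only non-negative even rows are nonzero, the $0^{th}$ row is $\FTGK(\iota_\ast Com)(1,m)$, the $2i^{th}$ row is built from trees with a distinguished genus-$1$ vertex labeled by a class of $H_{2i}(\Gamma_{1,\bullet})$, and the spectral sequence converges to $0$ because $g\geq 1$. The key point is that convergence to $0$ forces every class in the homology of the Feynman transform of the underlying \emph{strong} modular operad (which is $E^1$) to be killed by some higher differential $d_n$ with $n\geq 2$; by Lemma \ref{sslem} these $d_n$ are precisely (linear duals of) Massey products involving $n+1$ edges. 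Dualizing, a nonzero class $\eta\in H_{2i}(\Gamma_{1,m})$ with $i\geq 1$ gives a corolla cocycle $\eta^\ast$ in $\FT(H_\ast(\Gamma))(1,m)$ that cannot survive, hence $d(\eta^\ast)\neq 0$, which by the definition of the weak Feynman transform differential means $\eta$ is hit by some $\mu_\gamma$ with $\gamma$ a genus-$1$ modular graph whose vertices carry classes of strictly lower valence or lower genus-label. Iterating this descent (the valence or the number of lower-genus vertices strictly decreases, and genus-$0$ vertices are already accounted for by $H_0(\Gamma_{0,3})$-generation), every $\eta\in H_{2i}(\Gamma_{1,m})$ for $i\geq 1$ is expressed through Massey products whose inputs are ultimately classes in $H_0(\Gamma_{0,\bullet})$, i.e.\ built from $H_0(\Gamma_{0,3})$.

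The one genuinely new point, and the step I expect to be the main obstacle, is handling the ``bottom'' classes in each $H_{2i}(\Gamma_{1,m})$: the inductive descent above reduces an arbitrary class to classes of smaller valence, but it does not immediately reduce across genus, since the distinguished vertex always retains genus label $1$. Here one must verify that the descent terminates — that there is no nonzero class $\eta\in H_{2i}(\Gamma_{1,m})$ ($i\geq 1$) all of whose Massey-product expressions again involve a positive-degree genus-$1$ vertex of the \emph{same or larger} valence. The resolution is the dimension/degree bookkeeping from \cite{CHKV}: $H_{2i}(\Gamma_{1,m})$ is concentrated in degrees $0\leq 2i\leq m-1$, it is the irreducible $V_{m-2i,1^{2i}}$, and a Massey product $\mu_\gamma$ raises degree by $|\gamma|-1\geq 1$; combined with the fact that a genus-$1$ graph $\gamma$ with $\geq 1$ edge and a genus-$1$ vertex of valence $\geq m$ would force $\beta_1(\gamma)\geq 1$ and hence total valence/genus strictly exceeding what a $(1,m)$-graph allows, one sees the only terms in $d(\eta^\ast)$ that can be nonzero strictly decrease the complexity invariant. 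I would organize this as a single well-founded induction on the pair $(i,m)$ ordered lexicographically (or on $2i+m$), using the explicit representation theory of $H_\ast(\Gamma_{1,\bullet})$ to rule out the would-be stationary cases, and invoke Theorem \ref{vanishing} for the existence of \emph{some} nonzero Massey product at each stage. This then establishes that $H_\ast(\Gamma_{g\leq 1,n})$ is generated, as a weak modular operad, by $H_0(\Gamma_{0,3})$.
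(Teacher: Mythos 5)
Your route is genuinely different from the paper's, and as written it has real gaps. The paper does not run a descent at all: it quotes the assembling theorem of \cite{CHKV} (recalled in the bulleted list of Subsection \ref{liesec}) that $H_0(\Gamma_{0,3})$ together with the classes in the virtual cohomological dimension already generate everything in genus $\leq 2$ under the \emph{strong} modular operad structure. This reduces the proposition to the single statement that the one-dimensional VCD groups $H_{2m}(\Gamma_{1,2m+1})=k\cdot g_{2m+1}$ lie in the image of Massey products with genus-zero inputs, which follows from Theorem \ref{vanishing} plus one degree count: for a tree-shaped $(1,2m+1)$-graph with $s$ edges and genus-one vertex of valence $n_i$, stability gives $n_i+s\leq 2m+1$, so the output degree is at most $(n_i-1)+(s-1)\leq 2m-1<2m$; hence $\beta_1(\gamma)=1$ and all inputs are genus zero. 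No spectral sequence is needed for this, and the fact that the VCD group is the (one-dimensional) alternating representation is what makes ``in the image of some Massey product'' equivalent to ``$g_{2m+1}$ is generated.''

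Your inductive replacement does not close as stated. First, Theorem \ref{vanishing} only gives $d(\eta^\ast)\neq 0$, i.e.\ $\eta$ pairs nontrivially with the span of the images of the operations $\mu_\gamma$; for the higher-dimensional intermediate groups $H_{2i}(\Gamma_{1,m})$ you must upgrade this to the statement that the \emph{generated} subspace is all of $H_{2i}(\Gamma_{1,m})$ (say by an annihilator argument choosing $\eta^\ast$ to vanish on the generated part), which your write-up skips. Second, the induction is not closed over the inputs that actually occur: degree-zero genus-one classes $H_0(\Gamma_{1,n})$ appear as labels of the genus-one vertex in one-edge tree operations and are themselves part of the operad to be generated, but you never treat them; and the base/terminal cases of your descent (the VCD classes, beginning with $H_2(\Gamma_{1,3})$, where the genus-one vertex cannot be pushed to smaller valence with nonzero positive-degree homology) require precisely the paper's degree count, which your third paragraph only gestures at — indeed the inequality you use, that $\mu_\gamma$ raises degree by $|\gamma|-1\geq 1$, is false for one-edge graphs, and the ``valence/genus bookkeeping'' sentence does not actually exclude the tree case. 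If you supply all of these steps you will have essentially re-proved the genus-one part of the assembling theorem of \cite{CHKV}; the paper instead cites it and spends its effort only on the VCD classes.
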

\begin{proof}  By ``semi-classical'' we mean the restriction to genera $\leq 1$, after \cite{GetSC}.
	The results of \cite{CHKV} reduce the statement to showing that the classes of degree equal to the VCD are in the image of some Massey product with genus $0$ source.   In genus $1$, the homology classes of degree equal to the VCD are spanned by $g_{2m+1}\in H_{2m}(\Gamma_{1,2m+1})$.  By Theorem $\ref{vanishing}$, $g_{2m+1}$ must be in the image of some Massey product
	\begin{equation*}
	\text{mp}_{(\gamma,\sigma)}\colon (\tensor_i H_\ast(\Gamma_{g_i,n_i}))\to H_{2m}(\Gamma_{1,2m+1}),
	\end{equation*}
	such that  $\beta_1(\gamma)+\sum g_i=1$.
	
	Suppose that $\beta_1(\gamma)=0$.  Then the modular graph $\gamma$ is a tree with $s\geq 1$ edges along with some distinguished vertex $v_i$ of type $(1,n_i)$.  By stability considerations we must have $n_i+s\leq 2m+1$, since vertices of type $(0,2)$ and $(0,1)$ are not allowed in $\gamma$.  The degree of $\text{mp}_{(\gamma,\sigma)}$ is $s-1$ and the degree of the input is determined by the genus 1 label and must be $\leq n_i-1$.  Thus, the maximum output degree of such an operation is $n_i-1+s-1 \leq 2m-1$, which would be a contradiction. We thus conclude $\beta_1(\gamma)=1$ from which it follows that $g_i=0$ for all $i$.  The claim then follows from the fact that each $H_\ast(\Gamma_{0,n_i})$ is generated by $H_0(\Gamma_{0,3})$ under cyclic operadic compositions.
\end{proof}

\subsubsection{The case $n=0$.}

We conclude this paper with a complete calculation of the leg free commutative graph homology in genus $3$.  This calculation agrees with the computer calculations done in \cite[Appendix A]{CGP}.
The novel feature of this calculation is that it requires no analysis of any differentials; it relies only on the representation theory of $H_\ast(\Gamma_{1,n})$ and the elementary properties of the spectral sequence $E^\ast_{\bullet,\bullet}(H_\ast(\Gamma))$ constructed above.

\begin{theorem}\label{genus3} $H_{d}(\FTGK(\iota_\ast Com))(3,0) = \begin{cases}
		k & \text{ if } d=-6 \\
		0 & \text{ else }
	\end{cases}$
\end{theorem}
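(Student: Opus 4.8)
The plan is to compute $H_\ast(\FTGK(\iota_\ast Com))(3,0)$ by playing off the two spectral sequences of Subsection~\ref{sssec} against the known low-genus Lie graph homology from \cite{CHKV}. First I would fix $A := H_\ast(\Gamma) = \{H_\ast(\Gamma_{g,n})\}$, made a weak modular operad by homotopy transfer (Theorem~\ref{htt}), and apply the internal degree filtration spectral sequence $E^\ast_{\bullet,\bullet}(A)$ of Subsection~\ref{ss1} at the color $(3,0)$. By Lemma~\ref{sslem} and Corollary~\ref{cycopss}, its bottom row ($\ell=0$) on page $E^1$ is exactly $H_\ast(\FTGK(\iota_\ast Com))(3,0)$ — wait, more precisely the bottom row of the \emph{genus label} spectral sequence $L^\ast_{\bullet,\bullet}$ computes $\FTGK(\iota_\ast Com)$ since $\iota^\ast A = s\Sigma^{-1}Lie$ is a strong cyclic operad; so I would actually run $L^\ast_{\bullet,\bullet}(A)(3,0)$. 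This is an upper half-plane spectral sequence with rows indexed by $\ell \in \{0,1,2,3\}$ (the sum of vertex genus labels), bounded, converging level-wise to $H_\ast(\FT(A))(3,0) = H_\ast(\FT(\op{G}_{s\Sigma^{-1}Lie}))(3,0)$, which by Corollary~\ref{ftcor} and $\FT^2\sim id$ is $\iota_\ast Com(3,0)$ — but at genus $3$ with no legs, the modular envelope $\iota_\ast Com(3,0)$ is $k$ concentrated in a single (even) degree. Hence the abutment is one-dimensional.

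The key computational step is then to identify the rows of $L^0_{\bullet,\bullet}(A)(3,0)$ as explicit complexes built from $H_\ast(\Gamma_{g_i,n_i})$ labels on modular graphs of total genus $3$ with $\sum_i g_i = \ell$ and $\beta_1 = 3-\ell$, and to compute their homology. The $\ell=3$ row involves only genus-$3$ corollas labeled by $H_\ast(\Gamma_{3,0})$ (plus trees with genus-labeled vertices summing to $3$); the $\ell=2$, $\ell=1$ rows involve graphs with one or more genus-$1$ (or genus-$2$) vertices. Here I would use the representation-theoretic input recalled in Subsection~\ref{liesec}: $H_i(\Gamma_{1,n})$ is nonzero iff $i$ is even with $0\le i\le n-1$, irreducible of type $V_{n-i,1^i}$ of dimension $\binom{n-1}{i}$, and in particular $H_{VCD}(\Gamma_{1,n})$ is the sign representation when $n$ odd and $0$ when $n$ even. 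Combined with stability ($||v||+2g-3\ge 0$) and the finiteness of the graph count, each row is a finite complex whose Euler characteristic and dimensions I can pin down; taking $Aut(\gamma)$-invariants (equivalently coinvariants, by Assumption~\ref{assumption:finiteaut}) of the $H_\ast(\Gamma_{1,n_i})$-labels frequently forces vanishing because the sign representation is killed by graph automorphisms that transpose legs at a genus-$1$ vertex. I expect all rows $\ell\geq 1$ to have trivial homology after passing to invariants (this is the heart of the argument), so that the whole spectral sequence collapses and $H_\ast(\FTGK(\iota_\ast Com))(3,0) = L^1_{\bullet,0}(A)(3,0)$ is forced to equal the abutment $\iota_\ast Com(3,0) = k$, with the degree being read off as $-6$ from the Euler characteristic / edge count of the generating graph (the genus-$3$ theta-like graph with $6$ edges contributing a degree shift of $-6$ after operadic desuspension).

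The main obstacle will be the row-by-row vanishing for $\ell = 1$ and $\ell = 2$: one must show that the subcomplexes of $L^0_{\bullet,\bullet}(A)(3,0)$ concentrated in those rows are acyclic. For $\ell=1$ this means analyzing genus-$3$ graphs with exactly one genus-$1$ vertex (and $\beta_1 = 2$) labeled by some $H_{2i}(\Gamma_{1,m})$, with the remaining vertices labeled by $H_\ast(\Gamma_{0,n_j}) = s\Sigma^{-1}Lie(n_j)$; the differential $d_0$ corresponds to blowing up a single edge, so this row is the Feynman transform of a one-dimensional genus-$1$ extension of the Lie cyclic operad, and I would compute its homology using the explicit knowledge of $H_\ast(\Gamma_{1,\bullet})$ as $S_\bullet$-representations together with $Aut(\gamma)$-invariance. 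For $\ell=2$ one similarly needs $H_\ast(\Gamma_{1,m})\otimes H_\ast(\Gamma_{1,m'})$ or $H_\ast(\Gamma_{2,m})$ inputs; here the known vanishing of $H_\ast(\Gamma_{2,n})$ outside a small range (and Proposition~\ref{Lieprop}-style generation statements) should do the job. The expected punchline is that only the $\ell=0$ and a single surviving $\ell\geq 1$ contribution can be nonzero, but since the total abutment is $1$-dimensional and the $\ell=0$ bottom row already surjects onto it via the edge of the spectral sequence, a dimension count forces $H_\ast(\FTGK(\iota_\ast Com))(3,0)$ to be exactly $k$ in degree $-6$; no differential needs to be computed, only the ranks of a handful of small graph complexes and their invariants under finite automorphism groups.
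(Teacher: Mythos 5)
Your overall strategy---playing the spectral sequences of Subsection \ref{sssec} against the computations of \cite{CHKV} and avoiding any analysis of differentials---is the paper's, but the specific spectral sequence you settle on, and the shape of the argument you build on it, are wrong. Your first instinct (the internal degree filtration $E^\ast_{\bullet,\bullet}(H_\ast(\Gamma))$ of Subsection \ref{ss1} at $(3,0)$) was the correct one; the ``correction'' to the genus-label filtration $L^\ast_{\bullet,\bullet}(H_\ast(\Gamma))$ is a mistake. The bottom row of $L^0(B)$ is $\FTGK(\iota_!\iota^\ast B)$ --- extension by zero, not the modular envelope --- so for $B=H_\ast(\Gamma)$ the bottom row of $L^1$ is Lie graph homology $H_\ast(\Gamma_{g,n})$ itself (Corollary \ref{cycopss}), not $H_\ast(\FTGK(\iota_\ast Com))$. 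Your abutment is also wrong: $\FT(\op{G}_{s\Sigma^{-1}Lie})\sim \iota_!(s\Sigma^{-1}Lie)$, which vanishes at $(3,0)$ since $g\geq 1$; you have conflated $\iota_!$ with $\iota_\ast$, and the spectral sequence converging to $k\cong\iota_\ast Com(g,n)$ in Corollary \ref{sscor}(2) is the one built from $H_\ast(\Delta)$, not from $H_\ast(\Gamma)$. The paper instead runs $E^\ast_{\bullet,\bullet}(H_\ast(\Gamma))(3,0)$, whose $E^1$ bottom row is $H_\ast(\FTGK(\iota_\ast Com))(3,0)$ by Lemma \ref{sslem}, and which converges to $0$.

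Second, the heart of the argument is not the blanket acyclicity of the higher rows that you ``expect,'' and in the correct setup that expectation is false. The real content is a graph-by-graph case analysis of the positive-internal-degree part at $(3,0)$: using the vanishing of $H_{>0}(\Gamma_{g,n})$ for $(g,n)=(3,0)$, $(2,n\leq 3)$, $(1,n\leq 2)$, $(0,n)$, stability, and a short representation-theoretic computation (for the one-vertex two-loop graph the label lies in $V_{2,1,1}$, which has no invariants under the dihedral automorphism group $D_4$), one finds that exactly one class off the bottom row survives to $E^1$: the two-vertex, three-edge theta graph with genus labels $1$ and $0$, whose genus-one trivalent vertex carries the sign representation $H_2(\Gamma_{1,3})$ --- and this class does \emph{not} vanish under $Aut(\gamma)$, contrary to your heuristic that sign representations are always killed by automorphisms. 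Since the abutment is $0$, bottom-row classes can only die by receiving a differential from this single class, and that class must itself die; this forces the bottom row to be exactly one copy of $k$, placed in total degree $-6$ (one below the degree $-5$ of the surviving class). So a correct write-up needs (i) the $E$, not the $L$, spectral sequence, with abutment $0$ rather than $k$, and (ii) the explicit case analysis isolating the single surviving off-row class; as stated, your dimension count (abutment $k$ together with acyclic higher rows) does not cohere and would not yield the theorem.
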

\begin{proof}  We use the computations from \cite{CHKV} that if $r>0$ then:
	\begin{equation}\label{vanishing1}
		 H_r(\Gamma_{g,n})=0 \text{ if } (g,n)=(3,0),(2,0),(2,1),(2,2),(2,3),(1,1),(1,2),(0,n).
	\end{equation}
In particular the genus 1 and 2 computations are explicitly presented in \cite[Proposition 2.7 and Theorem 2.10]{CHKV},  the genus $0$ computation is equivalent to Koszulity of the Lie operad, and the $(g,n)=(3,0)$ computation is known from the calculation of $H_\ast(Out(F_3))$.

Let $r>0$ and suppose $x\in E_{m,-r}^0(H_\ast(\Gamma))(3,0)$ is a non-zero homogeneous vector.  Any such $x$ has an underlying odd modular graph, call it $\gamma$, with $V$ vertices and $E$ edges.  The graph $\gamma$ has no legs and total genus $3$, so satisfies

\begin{equation*}
 3=\beta_1(\gamma)+\sum_{v\in Vert(\gamma)} g(v) \ \ \ \text{ where } \ \ \ \beta_1(\gamma):=E-V+1,
\end{equation*}
and where $g(v)$ is the genus label of vertex $v$.  Notice that since $\gamma$ appears in row $-r\neq 0$, it can not have $g(v)=0 \ \forall \ v$, since $\Gamma_{0,n}$ has no higher homology.  Thus $\beta_1(\gamma)<3$.  So, writing $||v||$ for the valence of a vertex $v$, for a fixed vertex $v_0$ we calculate:

\begin{equation*}
3\leq ||v_0|| \leq (\sum_{v\in Vert(\gamma)}||v||) -3(V-1) \leq 2E-3V+3 = 2(\beta_1(\gamma)-1)-V+3 \leq 5-V
\end{equation*}

The second inequality follows from the fact that all vertices are at least trivalent.  The third inequality follows from the fact that $\gamma$, being of type $(3,0)$, has no legs.  Notice that one consequence of this inequality is that $\gamma$ has either 1 or 2 vertices.

We will now argue that there is a unique such $\gamma$ for which $x$ can be non-vanishing.  Suppose $\gamma$ had a vertex $v$ with $g(v)=3$.  Then $\beta_1(\gamma)=E-V+1=0$, and $V$ can not be 1, since $H_r(\Gamma_{3,0})=0$, so $V=2$ and $E=1$.  This violates stability, since the vertex which is not $v$ would have valence $1$ and genus $0$.  We conclude that $\gamma$ has no vertex with $g(v)=3$

Suppose  that $\gamma$ has a vertex with $g(v)=2$.  By Equation $\ref{vanishing1}$, $v$ must have $\geq 4$ adjacent flags to be non-vanishing.  If $V=1$, this implies $E\geq 2$ and all edges are loops, contradicting $g(\gamma)=3$.  If $V=2$, then the formula $3-g(v)\geq\beta_1(\gamma)=E-V+1$ implies $E$ is at most 2, and since $\gamma$ is connected this would imply $
||v||<4$.  We conclude that $\gamma$ has no vertex with $g(v)=2$

Suppose that $\gamma$ has a lone vertex $v$ with $g(v)=1$.  Then $\beta_1(\gamma)=E-V+1=2$, and so $\gamma$ has $2$ edges which must be loops.  Define $X$ to be the set of flags adjacent to $v$. Then the homogeneous element $x$ determines an element in $(H_r(\Gamma_{1,X})^\ast)_{Aut(\gamma)}$ which labels $v$.  Let us calculate the dimension of this space. Choose an isomorphism $X\cong \{1,2,3,4\}$ such that the preimages of $1,2$ and $3,4$ form the two edges.  The calculation of the dimension will not depend on this choice of isomorphism. Since $r>0$, the homology group $H_r(\Gamma_{1,4})$ is only non-zero when $r=2$, in which case it is isomorphic to $V_{2,1,1}$ as an $S_4$-representation \cite{CHKV}. The automorphism group of $\gamma$ is the subgroup of $S_4$ generated by the permutations $(12),(34), (13)(24)$, which is isomorphic to the dihedral group $D_{4}$ with $4*2=8$ elements.  Consider $\mathbb{Z}_2\oplus\mathbb{Z}_2\subset D_4$ to be the subgroup generated by $(12)$ and $(34)$.  Then:
\begin{equation*}
(H_2(\Gamma_{1,X})^\ast)_{Aut(\gamma)}\cong H_2(\Gamma_{1,4})^{Aut(\gamma)}\cong V_{2,1,1}^{D_4}\subset V_{2,1,1}^{\mathbb{Z}_2\oplus \mathbb{Z}_2}
\end{equation*} 
It is an elementary exercise in representation theory (using the Littlewood-Richardson rule \cite{FH}) that the dimension of $(V_{2,1,1})^{\mathbb{Z}_2\oplus\mathbb{Z}_2}= 0$.  Thus such an $x$ could not be non-zero.  We conclude that $\gamma$ does not have a lone vertex of genus $1$.

Since $\gamma$ corresponds to an element with internal degree $r>0$, it must have some vertex of non-zero genus. Having ruled out the above cases, we conclude that $\gamma$ has two vertices $v$ and $w$, at least one of which has genus $1$.  Suppose both have genus $1$.  Then $\beta_1(\gamma)=E-V+1=1$, implies $\gamma$ has $2$ edges and so $4$ flags.  On the other hand, Equation $\ref{vanishing1}$ implies that each vertex has at least $3$ adjacent flags, which is a contradiction.

The only remaining possibility is that $\gamma$ has two vertices, $v$ and $w$ with $g(v)=1$ and $g(w)=0$.  From $\beta_1(\gamma)=E-V+1=2$, we conclude there are $3$ edges and hence $6$ flags.  Each vertex must have valence 3 or higher ($w$ by stability and $v$ by Equation $\ref{vanishing1}$), so each vertex has valence equal to $3$.  Let us show that if $x$ is not zero then such a $\gamma$ has no loops.

If there is a loop adjacent to $v$, then $v$ is labeled by the alternating representation of $S_3$, but the automorphism of $\gamma$ which exchanges the flags in such a loop acts by the identity, and so $x$ will vanish.  Thus there are no loops adjacent to $v$, and hence three edges connecting $v$ to $w$, and thus no loops adjacent to $w$.  There is a unique such graph, up to isomorphism, and a one dimensional space on which such an $x$ can lie. It does not vanish in $E^0_{-5,-2}(H_\ast(\Gamma))(0,3)$ because permuting the edges produces the sign of the permutation twice: once since the edges are odd and once since the flags at $v$ carry the alternating representation.  Therefore we conclude, by doing no calculations with any differentials, that $E^1_{m,r}(H_\ast(\Gamma))(0,3)$ is $0$ unless $r=0$ or $(m,r)=(-5,-2)$ in which case $E^0_{-5,-2}(H_\ast(\Gamma))(0,3)\cong k$.

Since $E^1_{m,0}(H_\ast(\Gamma))(3,0)=H_m(\FTGK \iota_\ast Com)(3,0)$, the theorem follows by convergence of this spectral sequence to $0$. \end{proof}

\subsection{Future Directions}\label{futuredirections}

We conclude by outlining several directions for future study.

{\bf I.}  The complex $\oplus_g\FTGK(\iota_\ast Com)(g,0)$ is closely related to a graph complex $\mathsf{GC}_2$ whose $0^{th}$-cohomology coincides with the Grothendieck-Teichm{\"u}ller Lie algebra $\mathfrak{grt}_1$ \cite{WTw}.  Using this correspondence Willwacher describes a family of graph cohomology classes $\sigma_{2j+1}$ corresponding to known conjectural generators of $\mathfrak{grt}_1$ whose leading terms are given by wheel graphs. 
Formally, it must be possible to detect these classes using the spectral sequence $E_{\bullet,\bullet}(H_\ast(\Gamma))$ and Theorem $\ref{genus3}$ does precisely that in the case $j=1$.  It would be desirable to witness this correspondence for $j>1$.

{\bf II.}   A similar but seemingly separate question concerns witnessing homology classes of the outer automorphism group of the free groups $Out(F_g)$.  We recall from \cite{CHKV} that there are two infinite sequences of such classes, the Morita and Eisenstein classes, which are not known to be non-trivial in general.  Formally, the genus label filtration spectral sequence tells us that any non-trivial class can be witnessed by graphs labeled by commutative graph homology classes.   To what extent is it possible to describe, in analogy with (I) above, particular elements in this spectral sequence which correspond to Morita or Eisenstein classes?

{\bf III.} Consider $\op{M}_{g,n}$, the moduli spaces of Riemann surfaces of genus $g$ and $n$ punctures and $\overline{\op{M}}_{g,n}$, their Deligne-Mumford compactifications.  Then  $H_\ast(\overline{\op{M}}_{g,n})$ forms a modular operad which is formal \cite{GNPR} while $H_\ast(\op{M}_{g,n})$ forms a $\mathfrak{K}^{-1}$-twisted modular operad which is not formal \cite{AP}.  Equivalently, $\mathfrak{p}H_{\ast}(\op{M}_{g,n}):= H_{\ast+6-6g-2n}(\op{M}_{g,n})$ forms a $\mathfrak{K}$-twisted modular operad \cite{GeK2}.  In this example the spectral sequence by edge filtration may also be constructed topologically  \cite{KSV2}. 
		The notion of weak modular operads allows for reinterpreting the chain level Koszul duality between $\overline{\op{M}}_{g,n}$ and $\op{M}
	_{g,n}$ \cite[Proposition 6.11]{GeK2} as the existence of a weak $\mathfrak{K}^{-1}$-twisted modular operad structure on $H_\ast(\op{M}_{g,n})$ along with an $\infty$-quasi-isomorphism of weak $\mathfrak{K}$-twisted modular operads:
	\begin{equation*}
	\FTGK (H_\ast(\overline{\op{M}}_{g,n})) \stackrel{\sim}{\rightsquigarrow} \mathfrak{p}H_\ast(\op{M}_{g,n}).
	\end{equation*}
	It would be interesting to give an explicit description of the higher operations having target $H_\ast(\op{M}_{1,n})$. 
	These operations are encoded in the differential of $\FT(H_\ast(\op{M}_{g\leq 1,n}))$, which is in turn reflected as relations in $H_\ast(\overline{\op{M}}_{1,n})$, so this amounts (after \cite{PetG1})  to reinterpreting Getzler's elliptic relation \cite{GetGW} as a higher homology operation on $H_\ast(\op{M}_{g\leq 1,n})$ landing in $H_3(\op{M}_{1,4})$.  With these higher operations we could then view $\FT (\mathfrak{p}H_\ast(\op{M}_{g,n}))$ as a model for semi-classical homotopy cohomological field theories, compare \cite{DSVV}.

{\bf IV.}  A related problem would be to study the weak modular operad $H_\ast(\widehat{\op{M}}_{g,n})$ where $\widehat{\op{M}}_{g,n}$ is the topological modular operad of moduli spaces of Riemann surfaces with parameterized boundary components.  In parallel with the genus $0$ story (cf \cite{DCV,Wardsdp}), I expect the weak Feynman transform  $\FT (H_\ast(\widehat{\op{M}}_{g,n}))$ to be related to an extension of $H_\ast(\op{M}_{g,n})$ to an ``unstable weak modular operad'' which includes the unstable term $H_\ast(\op{M}_{0,2}):= H_\ast(BS^1)$.  
	These weak modular operads (and their associated weak wheeled operads) could then be employed to develop the homotopy theory of higher genus analogs of Batalin-Vilkovisky algebras; of interest from a number or perspectives \cite{GCTV},\cite{DSV13}.

{\bf V.}   Subsequent approaches to modular operads up-to-homotopy have recently appeared, see \cite{HRY} and \cite{MB2}, and it would be desirable to work out precise relationships between these different notions.   It should be possible to directly compare Definition $\ref{wmodef}$ to the strongly homotopy modular operads defined in \cite[Theorem 26]{MB2} using the language of operadic categories.  The precise relationship with \cite{HRY} seems more subtle and we refer to page 3 of loc.cit.\ for a discussion of this question.  It should also be possible to adapt the methods here-in to study up-to-homotopy analogs of many other generalizations of  operads, see Remark $\ref{cubicalrmk}$ and Corollary $\ref{cycopscor}$.



\end{document}